\tikzstyle{annular}=[scale=.6, inner sep=1mm, baseline]
\tikzstyle{STrain}=[baseline=0,scale=2]
\tikzstyle{shaded}=[fill=red!10!blue!20!gray!40!white]
\tikzset{string/.style={ultra thick}}
\tikzset{smallstring/.style={thick,scale=0.75,every node/.style={transform shape}}}
\tikzset{
    triple/.style args={[#1] in [#2] in [#3]}{
        #1,preaction={preaction={draw,#3},draw,#2}
    }
}
\tikzset{
    quadruple/.style args={[#1] in [#2] in [#3] in [#4]}{
        #1,preaction={preaction={preaction={draw,#4},draw,#3}, draw,#2}
    }
} 
\tikzset{
	super thick/.style={line width=3pt},
	more thick/.style={line width=1pt},
}
\newcommand{%
\input{\pathtotrunk diagrams/tikz/.tex}%
}[1]{%
\input{\pathtotrunk diagrams/tikz/#1.tex}%
}
\tikzstyle{box} = [rectangle,draw,rounded corners=5pt,very thick]
\newcommand{\roundNbox}[6]{
	\draw[rounded corners=5pt, very thick, #1] ($#2+(-#3,-#3)+(-#4,0)$) rectangle ($#2+(#3,#3)+(#5,0)$);
	\coordinate (ZZa) at ($#2+(-#4,0)$);
	\coordinate (ZZb) at ($#2+(#5,0)$);
	\node at ($1/2*(ZZa)+1/2*(ZZb)$) {#6};
}
\newcommand{\ncircle}[5]{
 \draw[very thick, #1] #2 circle (#3);
 \node at #2 {#5};
 \node at ($#2+(#4:.15cm)+(#4:#3cm)$) {\scriptsize{$\star$}};
}
\tikzstyle{mid>}=[decoration={markings, mark=at position 0.5 with {\arrow{>}}}, postaction={decorate}]
\tikzstyle{mid<}=[decoration={markings, mark=at position 0.5 with {\arrow{<}}}, postaction={decorate}]
\tikzstyle{upper>}=[decoration={markings, mark=at position 0.8 with {\arrow{>}}}, postaction={decorate}]
\tikzstyle{upper<}=[decoration={markings, mark=at position 0.8 with {\arrow{<}}}, postaction={decorate}]
\tikzstyle{lower>}=[decoration={markings, mark=at position 0.25 with {\arrow{>}}}, postaction={decorate}]
\tikzstyle{lower<}=[decoration={markings, mark=at position 0.25 with {\arrow{<}}}, postaction={decorate}]
\newcolumntype{C}{>{$}c<{$}}
\definecolor{dark-red}{rgb}{0.7,0.25,0.25}
\definecolor{dark-blue}{rgb}{0.15,0.15,0.55}
\definecolor{medium-blue}{rgb}{0,0,0.65}
\definecolor{DarkGreen}{RGB}{0,150,0}
\newcommand{\ShadeOne}{red!20}
\newcommand{\ShadeTwo}{blue!20}
\newcommand{\ShadeThree}{DarkGreen!20}
\newcommand{\ColorDot}[1]{
\begin{tikzpicture}[baseline=-.1cm]
	\filldraw[#1] (0,0) circle (.08cm);
\end{tikzpicture}
}
\newcommand{\hashdef}[2]{\@namedef{#1}{#2}}
\newcommand{\hashlookup}[1]{\@nameuse{#1}}
\newcommand{\googlebooks}[1]{(preview at \href{https://books.google.com/books?id=#1}{google books})}
\newcommand{\numdam}[1]{}
\theoremstyle{plain}
\newtheorem{prop}{Proposition}[section]
\newtheorem{thm}[prop]{Theorem}
\newtheorem{lem}[prop]{Lemma}
\newtheorem{theorem}[prop]{Theorem}
\newtheorem{lemma}[prop]{Lemma}
\newtheorem{cor}[prop]{Corollary}
\newtheorem{corollary}[prop]{Corollary}
\numberwithin{equation}{section}
\theoremstyle{remark}
\newtheorem{example}[prop]{Example}
\newtheorem{remark}[prop]{Remark}
\newtheorem{warn}[prop]{Warning}
\newtheorem{problem}[prop]{Problem}
\theoremstyle{definition}
\newtheorem{defn}[prop]{Definition}         
\newtheorem{fact}[prop]{Fact}
\newtheorem{nota}[prop]{Notation}
\newcommand{\sslash}{\mathbin{/\mkern-6mu/}}
\DeclareMathOperator{\ev}{ev}
\DeclareMathOperator{\coeff}{coeff}
\DeclareMathOperator{\coev}{coev}
\DeclareMathOperator{\Dim}{Dim}
\DeclareMathOperator{\End}{End}
\DeclareMathOperator{\Hom}{Hom}
\DeclareMathOperator{\id}{id}
\DeclareMathOperator{\Irr}{Irr}
\DeclareMathOperator{\Tr}{Tr}
\DeclareMathOperator{\tr}{tr}
\newcommand{\Bim}{{\mathsf {Bim}}}
\newcommand{\Hilb}{{\mathsf {Hilb}}}
\newcommand{\Fun}{{\mathsf {Fun}}}
\newcommand{\Mod}{{\mathsf {Mod}}}
\renewcommand{\Vec}{{\mathsf {Vec}}}
\newcommand{\Rep}{{\mathsf {Rep}}}
\newcommand{\jw}[1]{f^{(#1)}}
\newcommand{\JW}[1]{f^{(#1)}}
\newcommand{\Cstar}{{\rm C^*}}
\newcommand{\scrC}{{\mathscr C}}
\def\semicolon{;}
\def\applytolist#1{
    \expandafter\def\csname multi#1\endcsname##1{
        \def\multiack{##1}\ifx\multiack\semicolon
            \def\next{\relax}
        \else
            \csname #1\endcsname{##1}
            \def\next{\csname multi#1\endcsname}
        \fi
        \next}
    \csname multi#1\endcsname}
\def\calc#1{\expandafter\def\csname c#1\endcsname{{\mathcal #1}}}
\def\bbc#1{\expandafter\def\csname bb#1\endcsname{{\mathbb #1}}}
\def\bfc#1{\expandafter\def\csname bf#1\endcsname{{\mathbf #1}}}
\newlength{\L@UnitsRaiseDisplaystyle}
\newlength{\L@UnitsRaiseTextstyle}
\newlength{\L@UnitsRaiseScriptstyle}
\DeclareRobustCommand*{\@UnitsNiceFrac}[3][]{%
  \ifthenelse{\boolean{mmode}}{%
    \settoheight{\L@UnitsRaiseDisplaystyle}{%
      \ensuremath{\displaystyle#1{M}}%
    }%
    \settoheight{\L@UnitsRaiseTextstyle}{%
      \ensuremath{\textstyle#1{M}}%
    }%
    \settoheight{\L@UnitsRaiseScriptstyle}{%
      \ensuremath{\scriptstyle#1{M}}%
    }%
    \settoheight{\@tempdima}{%
      \ensuremath{\scriptscriptstyle#1{M}}%
    }%
    \addtolength{\L@UnitsRaiseDisplaystyle}{%
      -\L@UnitsRaiseScriptstyle%
    }%
    \addtolength{\L@UnitsRaiseTextstyle}{%
      -\L@UnitsRaiseScriptstyle%
    }%
    \addtolength{\L@UnitsRaiseScriptstyle}{-\@tempdima}%
    \mathchoice
      {%
        \raisebox{\L@UnitsRaiseDisplaystyle}{%
          \ensuremath{\scriptstyle#1{#2}}%
        }%
      }%
      {%
        \raisebox{\L@UnitsRaiseTextstyle}{%
          \ensuremath{\scriptstyle#1{#2}}%
        }%
      }%
      {%
        \raisebox{\L@UnitsRaiseScriptstyle}{%
          \ensuremath{\scriptscriptstyle#1{#2}}%
        }%
      }%
      {%
        \raisebox{\L@UnitsRaiseScriptstyle}{%
          \ensuremath{\scriptscriptstyle#1{#2}}%
        }%
      }%
    \mkern-2mu{\sslash}\mkern-1mu%
    \bgroup
      \mathchoice
        {\scriptstyle}%
        {\scriptstyle}%
        {\scriptscriptstyle}%
        {\scriptscriptstyle}%
      #1{#3}%
    \egroup
  }%
  {%
    \settoheight{\L@UnitsRaiseTextstyle}{#1{M}}%
    \settoheight{\@tempdima}{%
      \ensuremath{%
        \mbox{\fontsize\sf@size\z@\selectfont#1{M}}%
      }%
    }%
    \addtolength{\L@UnitsRaiseTextstyle}{-\@tempdima}%
    \raisebox{\L@UnitsRaiseTextstyle}{%
      \ensuremath{%
        \mbox{\fontsize\sf@size\z@\selectfont#1{#2}}%
      }%
    }%
    \ensuremath{\mkern-2mu}{\sslash}\ensuremath{\mkern-1mu}%
    \ensuremath{%
      \mbox{\fontsize\sf@size\z@\selectfont#1{#3}}%
    }%
  }%
}
\DeclareRobustCommand*{\nicefrac}{\@UnitsNiceFrac}%
\newcommand{\drawS}[3]{%
	\filldraw[fill=white,thick] (#1,#2) ellipse (3mm and 3mm);
	\node at (#1,#2) {\Large $S$};
	\path(#1,#2) ++(#3:0.37) node {$\star$};
}
\newcommand{\RainbowOne}{
	\fill[shaded] (-0.8,0) -- (-0.8,0.6) arc (180:0:0.8) -- (0.8,0) -- (0.2,0) -- (0.2,1) -- (-0.2,1) -- (-0.2,0);
	\draw (-0.8,0) -- (-0.8,0.6) arc (180:0:0.8) -- (0.8,0);
	\draw (-0.2,0) -- (-0.2,1);
	\draw (0.2,0) -- (0.2,1);
	\node at (0.3,0.5) {\footnotesize$15$};
	\drawS{0}{1}{-90}
}
\newcommand{\RainbowTwo}{
	\draw (0,0) -- (0,1);
	\node at (0.15,0.5) {\footnotesize$16$};
	\drawS{0}{1}{180}
	\filldraw[shaded] (-1,0) -- (-1,0.5) arc (180:0:1) -- (1,0)  -- (0.9,0) -- (0.9,0.5) arc (0:180:0.9) -- (-0.9,0);
}
\newcommand{\JWPlusTwo}{%
	\filldraw[fill=white,thick] (-1,-0.2) rectangle (1,0.2);
	\node at (0,0) {\Large$\JW{18}$};
}
\newcommand{\JWPlusFour}{%
	\filldraw[fill=white,thick] (-1.2,-0.2) rectangle (1.2,0.2);
	\node at (0,0) {\Large$\JW{20}$};
}
\newcommand{\STrainOneOne}{%
	\foreach \x in {-1,0} {
		\node[anchor=south] at (\x+0.5,1) {\footnotesize$7$};
		\draw (\x,1) -- (\x + 1, 1);
	}
	\foreach \x in {-1,0,1} {
		\drawS{\x}{1}{90}
	}
}
\newcommand{\STrainStrings}[2]{%
	\fill[shaded] (-0.5,0) rectangle (0.5,1);
	\draw (-0.5,1) -- (-0.5,0);
	\node[anchor=west] at (-0.5,0.5) {\footnotesize#1};
	\draw (0.5,1) -- (0.5,0);
	\node[anchor=west] at (0.5,0.5) {\footnotesize#2};
}
\newcommand{\STrainOne}{%
	\node[anchor=south] at (0,1) {\footnotesize$7$};
	\draw (-0.5,1) -- (0.5, 1);
	\foreach \x in {-0.5,0.5} {
		\drawS{\x}{1}{90}
	}
}
\newcommand{\STrainThreeStrings}[3]{%
	\fill[shaded] (-1,1) rectangle (1,0);
	\foreach \x in {-1,0,1} {
		\draw (\x,0) -- (\x,1);
	}
        \node [anchor=west] at (-1,0.5) {\footnotesize#1};
        \node [anchor=west] at (0,0.5) {\footnotesize#2};
	\node [anchor=west] at (1,0.5) {\footnotesize#3};
}
\begin{document}

\title{The Extended Haagerup fusion categories}
\author{Pinhas Grossman, Scott Morrison, David Penneys, Emily Peters, and Noah Snyder}
\date{\today}
\begin{abstract}
In this paper we construct two new fusion categories and  many new subfactors related to the exceptional Extended Haagerup subfactor.  

The Extended Haagerup subfactor has two even parts $\cE\cH_1$ and $\cE\cH_2$.  
These fusion categories are mysterious and are the only known fusion categories which appear to be unrelated to finite groups, quantum groups, or Izumi quadratic categories.  
One key technique which has previously revealed hidden structure in fusion categories is to study all other fusion categories in the Morita equivalence class, and hope that one of the others is easier to understand.  
In this paper we show that there are exactly four categories ($\cE\cH_1, \cE\cH_2, \cE\cH_3, \cE\cH_4$) in the Morita equivalence class of Extended Haagerup, and that there is a unique Morita equivalence between each pair.  
The existence of $\cE\cH_3$ and $\cE\cH_4$ gives a number of interesting new subfactors.  
Neither $\cE\cH_3$ nor $\cE\cH_4$ appears to be easier to understand than the Extended Haaerup subfactor, providing further evidence that Extended Haagerup does not come from known constructions.  
We also find several interesting intermediate subfactor lattices related to Extended Haagerup. 

The method we use to construct $\cE\cH_3$ and $\cE\cH_4$ is interesting in its own right and gives a general computational recipe for constructing fusion categories in the Morita equivalence class of a subfactor.  We show that pivotal module $\rm C^*$ categories over a given subfactor correspond exactly to realizations of that subfactor planar algebra as a planar subalgebra of a graph planar algebra.  This allows us to construct $\cE\cH_3$ and $\cE\cH_4$ by realizing the Extended Haagerup subfactor planar algebra inside the graph planar algebras of two new graphs.  This technique also answers a long-standing question of Jones: which graph planar algebras contain a given subfactor planar algebra?
\end{abstract}
\maketitle



\section{Introduction}

The Extended Haagerup subfactor gives a Morita equivalence between two unitary fusion categories called $\cE\cH_1$ and $\cE\cH_2$.  The goal of this paper is to find all fusion categories Morita equivalent to these fusion categories.  We find two new fusion categories, seven new subfactors (along with their duals and reduced subfactors), and several interesting new intermediate subfactor lattices.

\begin{thm}
\label{thm:Main}
There are exactly two further fusion categories in the Morita equivalence class of $\mathcal{EH}_1$ and $\mathcal{EH}_2$, which we call $\mathcal{EH}_3$ and $\mathcal{EH}_4$.  Between any two of these four fusion categories, there is exactly one Morita equivalence.
\end{thm}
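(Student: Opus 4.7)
The strategy is to apply the correspondence established earlier in the paper between pivotal module $\Cstar$-categories over a subfactor planar algebra $P$ and realizations of $P$ as a planar subalgebra of a graph planar algebra $GPA(\Gamma)$. Via this dictionary, classifying fusion categories in the Morita class of $\cE\cH_1$ (and hence of $\cE\cH_2$) reduces to classifying all embeddings of the Extended Haagerup subfactor planar algebra into graph planar algebras; the dual fusion category of the corresponding module category is read off the embedding, and Morita equivalences between pairs are counted by module categories producing a given dual.

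First I would enumerate candidate bipartite graphs. A graph $\Gamma$ can support such a module category only if its norm squared equals the Extended Haagerup index and if its dimension vector solves the compatibility relations imposed by the fusion rules of $\cE\cH_1$ on the module. Combined with standard graph-norm and connectedness constraints, and with the explicit fusion tables for $\cE\cH_1$ and $\cE\cH_2$, this cuts the problem down to a finite list of candidates. For each candidate, I would either exhibit generators in $GPA(\Gamma)$ satisfying the Extended Haagerup relations, or obstruct such an embedding by showing the relevant systems of equations have no solution. The two Cayley-type graphs associated with the principal and dual principal graphs of the subfactor recover the module categories giving $\cE\cH_1$ and $\cE\cH_2$; the new content is the discovery of exactly two further graphs admitting a realization, giving $\cE\cH_3$ and $\cE\cH_4$.

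Once all indecomposable module categories are classified, I would compute the dual fusion category $\Fun_{\cE\cH_1}(\cM,\cM)$ for each $\cM$ from the graph data, verifying that the four module categories produce four inequivalent duals, which we label $\cE\cH_1,\dots,\cE\cH_4$. The uniqueness of Morita equivalences then follows because indecomposable $(\cE\cH_1,\cE\cH_i)$-bimodule categories (equivalently invertible ones, since we are in the fusion setting) correspond to indecomposable $\cE\cH_1$-modules whose dual is $\cE\cH_i$; our count gives exactly one such module per $i$. In particular, $\BrPic(\cE\cH_i)$ is trivial for each $i$, and by symmetry the same conclusion holds when $\cE\cH_1$ is replaced by any of the other $\cE\cH_j$.

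The main obstacle is the second step. Enumerating candidate graphs is disciplined by norm and dimension arithmetic, but confirming that only four graphs work requires ruling out every near-miss candidate and explicitly constructing embeddings into the two new graph planar algebras. The new graphs must first be guessed via dimension and fusion-rule heuristics, and then the existence of the corresponding planar subalgebra must be verified by solving a large nonlinear system for the images of the Extended Haagerup generators, subject to the defining relations of the planar algebra. This delicate case analysis, combined with the explicit planar-algebra computations needed to certify the two new realizations, is the technical heart of the argument.
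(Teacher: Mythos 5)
Your construction half matches the paper: $\cE\cH_3$ and $\cE\cH_4$ are indeed obtained by embedding the Extended Haagerup planar algebra into the graph planar algebras of two new graphs, via the module-category embedding theorem. But your upper bound --- ``only four graphs admit embeddings, hence only four fusion categories'' --- is not how the paper argues, and as stated it has genuine gaps. First, the embedding theorem classifies \emph{pivotal module $\Cstar$ categories}, i.e., unitary module categories equipped with a compatible trace. A fusion category Morita equivalent to $\cE\cH_1$ corresponds to an indecomposable module category with no unitarity or pivotality assumed, and a priori such a module category need not arise from any graph planar algebra embedding (the paper explicitly flags that an algebraic module category might fail to admit a compatible trace, or even a dagger structure). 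So classifying embeddings cannot, by itself, bound the set of all fusion categories in the Morita class. The paper instead bounds the possibilities purely combinatorially: it enumerates all fusion modules over the fusion rings $EH_1$ and $EH_2$ (a finite nonnegative-integer-matrix computation, yielding $7$ and $5$ respectively) and eliminates candidates using multiplicative compatibility of fusion bimodules against the one bimodule known to be categorified, namely the Extended Haagerup subfactor itself. This works at the level of Grothendieck rings and therefore constrains arbitrary module categories.

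Second, your enumeration of candidate bipartite graphs by norm and dimension arithmetic, followed by obstructing embeddings by showing large nonlinear systems have no solution, is not a disciplined finite procedure as described, and the paper never performs any such obstruction; the graphs $\Gamma_3$ and $\Gamma_4$ fall out of the fusion-bimodule computation rather than a graph search. Third, your uniqueness claim for Morita equivalences silently assumes that the bimodule structure extending a given module structure is unique, which requires the outer automorphism groups of the $\cE\cH_i$ to be trivial; the paper proves $\mathrm{Out}(\cE\cH_2)=1$ from the uniqueness and uncappability of the planar algebra generator, and this input is absent from your argument. Finally, uniqueness of the module category realizing each surviving fusion module (e.g., that $1\oplus f^{(2)}$ carries a unique algebra structure, by $3$-supertransitivity together with uniqueness of the Extended Haagerup subfactor, and the analogous statements for the new modules deduced from the triviality of the Brauer--Picard group) is also needed and unaddressed.
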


For every choice of simple object in each of these Morita equivalences, we get a subfactor.  In addition to the original $7$-supertransitive Extended Haagerup subfactor, we get two new $3$-supertransitive subfactors: one is self-dual and comes from the Morita auto-equivalence of $\cE\cH_3$ and the other comes from the Morita equivalence between $\cE\cH_3$ and $\cE\cH_4$.  Their principal graphs are:
$$
\begin{tikzpicture}[baseline=0mm, scale=.7]
	\draw (0,0) -- (3,0);
	\draw (3,0) -- (4,.5);
	\draw (3,0) -- (4,-.5);
	\draw (4,.5) -- (5,1);
	\draw (4,.5) -- (5,.5);
	\draw (4,.5) -- (5,0);	
	\draw (4,-.5) -- (5,0);
	\draw (4,-.5) -- (5,-1);
	\draw (5,.5) -- (6,.5);
	\draw (5,0) -- (6,0);
	\draw (5,0) -- (6,-.5);	
	\draw (5,-1) -- (6,-1);	
	\draw (6,0) -- (7,0);
	\draw (6,-1) -- (7,-1);
		
	\filldraw              (0,0) node [above] {$*$} circle (1mm);
	\filldraw [fill=white] (1,0) circle (1mm);
	\filldraw              (2,0) circle (1mm);
	\filldraw [fill=white] (3,0) circle (1mm);
	\filldraw              (4,.5) circle (1mm);	
	\filldraw              (4,-.5) circle (1mm);
	\filldraw [fill=white] (5,1) circle (1mm);
	\filldraw [fill=white] (5,.5) circle (1mm);
	\filldraw [fill=white] (5,0) circle (1mm);
	\filldraw [fill=white] (5,-1) circle (1mm);
	\filldraw              (6,.5) circle (1mm);
	\filldraw              (6,0) circle (1mm);
	\filldraw              (6,-.5) circle (1mm);
	\filldraw              (6,-1) circle (1mm);
	\filldraw [fill=white] (7,0) circle (1mm);
	\filldraw [fill=white] (7,-1) circle (1mm);
\end{tikzpicture}
\hspace{.25in}
\text{and}
\hspace{.25in}
\left(\begin{tikzpicture}[baseline=-1mm, scale=.35]
	\draw (0,0) -- (2,0);
	\draw (2,0) -- (4,0);
	\draw (4,0) -- (6,0);
	\draw (6,0) -- (8,1.5);
	\draw (6,0) -- (8,.5);	
	\draw (6,0) -- (8,-.5);
	\draw (6,0) -- (8,-1.5);

	\draw (8,-.5) -- (10,0);
	\draw (8,-.5) -- (10,-1);
	\draw (8,-.5) -- (10,-2);
	\draw (8,-1.5) -- (10,-1);
	\draw (8,-1.5) -- (10,-2);
	
	\draw (10,0) -- (12,.5);
	\draw (10,0) -- (12,-.5);
		
	\filldraw (0,0) node [above] {$*$} circle (2mm);
	\filldraw [fill=white] (2,0) circle (2mm) ;
	\filldraw  (4,0) circle (2mm);
	\filldraw [fill=white] (6,0) circle (2mm) ;	
	\filldraw  (8,1.5) circle (2mm);
	\filldraw  (8,.5) circle (2mm);
	\filldraw  (8,-.5) circle (2mm);
	\filldraw  (8,-1.5) circle (2mm);	
	\filldraw [fill=white] (10,0) circle (2mm) ;		
	\filldraw [fill=white] (10,-1) circle (2mm) ;		
	\filldraw [fill=white] (10,-2) circle (2mm) ;		
	\filldraw  (12,.5) circle (2mm);
	\filldraw  (12,-.5) circle (2mm);	
\end{tikzpicture}
\;,\hspace{.1in}
\begin{tikzpicture}[baseline=-1mm, scale=.35]
	\draw (0,0) -- (2,0);
	\draw (2,0) -- (4,0);
	\draw (4,0) -- (6,0);

	\draw (6,0) -- (8,1.5);
	\draw (6,0) -- (8,.5);	
	\draw (6,0) -- (8,-.5);
	\draw (6,0) -- (8,-1.5);

	\draw (8,1.5) -- (10,1);
	\draw (8,.5) -- (10,0);
	\draw (8,-.5) -- (10,0);
	\draw (8,-.5) -- (10,-1);	
	\draw (8,-1.5) -- (10,-1);
		
	\draw (10,1) -- (12,.5);
	\draw (10,1) -- (12,-.5);
	\draw (10,0) -- (12,.5);
	\draw (10,-1) -- (12,-.5);
		
	\filldraw (0,0) node [above] {$*$} circle (2mm);
	\filldraw [fill=white] (2,0) circle (2mm) ;
	\filldraw  (4,0) circle (2mm) ;
	\filldraw [fill=white] (6,0) circle (2mm) ;	
	\filldraw  (8,1.5) circle (2mm);
	\filldraw  (8,.5) circle (2mm);
	\filldraw  (8,-.5) circle (2mm);
	\filldraw  (8,-1.5) circle (2mm);	
	\filldraw [fill=white] (10,1) circle (2mm) ;		
	\filldraw [fill=white] (10,0) circle (2mm) ;		
	\filldraw [fill=white] (10,-1) circle (2mm) ;		
	\filldraw  (12,.5) circle (2mm);
	\filldraw  (12,-.5) circle (2mm);	
\end{tikzpicture}\right).
$$

The structures of $\mathcal{EH}_3$ and $\mathcal{EH}_4$ are explained in more detail in Section \ref{sec:StructureOfEH3}.  Neither appears to be easily understood using any general techniques, but we encourage the reader to look for a new way to construct them which could give a better understanding of the Extended Haagerup subfactor.  

At first it was thought that all fusion categories might come from groups or quantum groups at roots of unity.  
The first ``exotic'' examples which appeared to be unrelated to finite groups or quantum groups came from Haagerup's small index classification program \cite{MR1317352}, namely the Haagerup and Asaeda--Haagerup subfactors constructed in \cite{MR1686551} and the Extended Haagerup subfactor constructed in \cite{MR2979509} (after numerical evidence for existence was given by \cite{MR1633929}).
However, with time, the first two of these three examples have been seen to be related to Izumi quadratic categories.  
Izumi generalized the Haagerup subfactor to a possibly infinite family of quadratic $3^G$ subfactors \cite{MR1832764,MR2837122}.  
Recently in \cite{MR3859276}, Grossman-Izumi-Snyder found all fusion categories Morita equivalent to the Asaeda-Haagerup fusion categories, and discovered that one is an Izumi quadratic category.  The only remaining fusion categories which appear unrelated to groups, quantum groups at roots of unity, or Izumi quadratic categories are the Extended Haagerup fusion categories.
Unlike in the Asaeda--Haagerup case where one of the new categories was a quadratic category, neither $\cE\cH_3$ nor $\cE\cH_4$ has nontrivial invertible objects and so neither can be a quadratic category.  

The proof of the main theorem has two parts.  On the one hand we need to limit the possible fusion categories and Morita equivalences, and on the other hand we need to construct the remaining possibilities.  The former is an application of the techniques introduced in \cite{MR3449240}, using combinatorial restrictions for compatible fusion rules for the hypothetical fusion categories and bimodule categories.

We construct $\cE\cH_3$ and $\cE\cH_4$ using a general graph planar algebra \cite{MR1865703} technique for finding module categories over any fusion category where we have a good skein theoretic description.  
This technique can be viewed as a generalization of the Ocneanu cell technique for $SU(n)$ (\cite{MR1907188}, \cite{MR2545609}, \cite{MR1839381}) to arbitrary tensor categories with good skein theoretic descriptions.  
From our combinatorial calculation we know that there is at most one module category over each of $\cE\cH_1$ and $\cE\cH_2$ whose dual can be $\cE\cH_3$ (or $\cE\cH_4$).  
So if we can construct a module category with the correct fusion rules, we will have a construction of $\cE\cH_3$ (or $\cE\cH_4$) as the commutant category.  

We can package  $\cE\cH_1$ and $\cE\cH_2$ together with their Morita equivalence into a single multifusion category which is the Extended Haagerup planar algebra.  The fusion rules for tensoring simple objects in a module with the single strand in Extended Haagerup give a bipartite graph $\Gamma$.  
We prove a graph planar algebra embedding theorem for module categories,
which shows such module categories for this planar algebra correspond (up to gauging and graph automorphism) to embeddings of the Extended Haagerup subfactor planar algebra inside the graph planar algebra of $\Gamma$.     This generalizes the original graph planar algebra embedding theorem \cite{MR2812459} which only applied to the principal graphs.

\begin{thm}
\label{cor:EmbeddingGivesModule}
Suppose $\cP_\bullet$ is a finite depth subfactor planar algebra. Let $\scrC$ denote the unitary multifusion category of projections in $\cP_\bullet$, with distinguished object $X=\id_{1,+}\in\cP_{1,+}$ and the standard unitary pivotal structure with respect to $X$. There is an equivalence between:
\begin{enumerate}
\item
Planar $\dag$-algebra embeddings $\cP_\bullet \to \cG\cP\cA(\Gamma)_\bullet$, where $\Gamma$ is a finite connected bipartite graph, and
\item
indecomposable finitely semisimple pivotal left $\scrC$-module $\Cstar$ categories $\cM$ whose fusion graph with respect to $X$ is $\Gamma$.
\end{enumerate}
\end{thm}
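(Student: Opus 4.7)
The plan is to construct explicit natural maps in both directions and verify they are mutually inverse up to the gauge and graph-automorphism equivalences implicit in the statement. The conceptual heart is that $\cG\cP\cA(\Gamma)_\bullet$ is itself the planar algebra of a canonical pair $(\cD_\Gamma, \cM_\Gamma)$, where $\cD_\Gamma$ is the unitary multifusion category of projections in $\cG\cP\cA(\Gamma)_\bullet$ and $\cM_\Gamma$ is a pivotal $\Cstar$-module category over $\cD_\Gamma$ whose simples are indexed by the vertices of $\Gamma$; the fusion graph of $\cM_\Gamma$ with respect to the canonical generator $\id_{1,+}\in\cD_\Gamma$ is $\Gamma$ by construction. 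Under this viewpoint, a planar $\dag$-algebra embedding $\cP_\bullet\hookrightarrow\cG\cP\cA(\Gamma)_\bullet$ is equivalent to a fully faithful pivotal $\dag$-functor $\scrC\hookrightarrow\cD_\Gamma$ sending $X$ to the canonical generator, so restriction of scalars along such a functor turns $\cM_\Gamma$ into a $\scrC$-module category with fusion graph $\Gamma$.

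For the direction $(2)\Rightarrow(1)$, given $\cM$, I would pick representatives $m_v$ of the simples indexed by $v\in V(\Gamma)$ and orthonormal bases for each one-dimensional space $\Hom_\cM(X\otimes m_v, m_w)$ corresponding to an edge $(v,w)$ of $\Gamma$. An element $\beta\in\cP_{n,\pm}$, viewed as an endomorphism of $X^{\otimes n}$ in $\scrC$, acts on each $X^{\otimes n}\otimes m_v$, and expanding in the iterated edge-bases produces a matrix of coefficients labeled by pairs of length-$2n$ loops in $\Gamma$ based at $v$ -- precisely an element of $\cG\cP\cA(\Gamma)_{n,\pm}$ at $v$. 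Assembling over all $v$ defines the desired map $\Phi:\cP_\bullet\to\cG\cP\cA(\Gamma)_\bullet$. Planarity is forced by the module action being a pivotal $\dag$-functor (tangle composition and gluing in $\cP_\bullet$ translate to composition and tensor product in $\cM$, which correspond to path concatenation in $\cG\cP\cA(\Gamma)$); the $\dag$-compatibility follows from the $\Cstar$-structure and orthonormality of the bases; and injectivity comes from indecomposability of $\cM$ combined with $X$ generating $\scrC$ as a tensor category.

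For the direction $(1)\Rightarrow(2)$, given an embedding $\Phi$, I would pull back the canonical module structure on $\cM_\Gamma$ along the induced pivotal functor $\scrC\hookrightarrow\cD_\Gamma$, obtaining a pivotal $\scrC$-module $\Cstar$ structure on $\cM_\Gamma$. Finiteness and semisimplicity follow from $\cG\cP\cA(\Gamma)_{0,\pm}$ being a finite-dimensional $\Cstar$-algebra with matrix blocks indexed by $V(\Gamma)$, indecomposability follows from connectedness of $\Gamma$, and the fusion graph is $\Gamma$ because $\Phi$ sends $X$ to the canonical generator. The two constructions are mutually inverse: the round trip $(2)\to(1)\to(2)$ recovers $\cM$ because the Hom-spaces and module action of the reconstructed category are tautologically the matrix coefficients used to define $\Phi$, and the reverse round trip recovers $\Phi$ by reading off matrix coefficients of the restricted action. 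The gauge freedom on the module category side (choice of bases and vertex labels) matches exactly the gauging and graph-automorphism freedom on the embedding side.

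The main obstacle will be the reverse direction: formalizing and verifying that $\cG\cP\cA(\Gamma)_\bullet$ really is the planar algebra of the canonical pair $(\cD_\Gamma,\cM_\Gamma)$, and checking that the pivotal structure on $\scrC$ induced by the standard trace on $\cP_\bullet$ matches the one pulled back from $\cD_\Gamma$ (which is built from the Perron--Frobenius weights on $\Gamma$). Matching these two pivotal structures along $\Phi$ is precisely what ensures the restricted module category inherits the required properties, and it is where the $\dag$-preserving hypothesis on $\Phi$ plays the key role; this generalizes the pivotality check implicit in the original graph planar algebra embedding theorem of \cite{MR2812459} from the principal-graph case to arbitrary fusion graphs.
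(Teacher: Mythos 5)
Your proposal is correct and follows essentially the same route as the paper: the paper also identifies $\cG\cP\cA(\Gamma)_\bullet$ with the planar algebra of the canonical pair $(\End^\dag(\cM),\vee_{\rm standard},F)$ for $\cM=\Hilb^{V_+}\oplus\Hilb^{V_-}$ via the loop-basis/matrix-coefficient isomorphism (Theorems \ref{thm:StateSumForMonoidalAlgebra} and \ref{thm:StateSumForPlanarAlgebra}), and then converts pivotal dagger tensor functors $\scrC\to\End^\dag(\cM)$ into traced module categories using Lemma \ref{lem:DaggerModules}, Proposition \ref{prop:TracesToPivotalStructures}, and Theorem \ref{thm:PivotalModulesAndPivotalFunctors}, which is exactly the matching of pivotal structures you flag as the main obstacle. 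Two small corrections: the spaces $\cM(X\vartriangleright m_v\to m_w)$ need not be one-dimensional (their dimensions are the edge multiplicities of $\Gamma$), and injectivity of the resulting planar algebra map is automatic from positivity/simplicity of the subfactor planar algebra rather than from indecomposability of $\cM$.
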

This theorem answers a long-standing question of Vaughan Jones: given a finite depth subfactor planar algebra $\cP_\bullet$, determine all bipartite graphs $\Gamma$ for which $\cP_\bullet$ embeds in $\cG\cP\cA(\Gamma)_\bullet$.

By the skein theoretic description of Extended Haagerup in \cite{MR2979509}, in order to construct a map from Extended Haagerup into a graph planar algebra, we need only specify a number for each loop of length $16$ and check a large number of linear and quadratic equations in these numbers.  

\begin{thm}
\label{thm:GraphsForModules}
The Extended Haagerup planar algebra can be embedded into the graph planar algebras of each of the following bipartite graphs:
$$
\Gamma_3 = 
\begin{tikzpicture}[baseline, scale=.7]
	\draw (-2,-1) -- (0,0);
	\draw (-2,1)--(0,0);
	\draw (0,0)--(3,0);
	\draw (3,0)--(4,.5);
	\draw (3,0)--(5,-1);
	\filldraw [fill]  (-2,-1) circle (1mm);
	\filldraw [fill]  (-2,1) circle (1mm);
	\filldraw [fill=white] (-1,-.5) circle (1mm);
	\filldraw [fill=white] (-1,.5) circle (1mm);
	\filldraw [fill]  (0,0) circle (1mm);
	\filldraw [fill=white] (1,0) circle (1mm);
	\filldraw [fill]  (2,0) circle (1mm);
	\filldraw [fill=white] (3,0) circle (1mm);
	\filldraw [fill]  (4,-.5) circle (1mm);
	\filldraw [fill]  (4,.5) circle (1mm);
	\filldraw [fill=white] (5,-1) circle (1mm);
\end{tikzpicture}
\hspace{.1in} \text{and} \hspace{.1in}
\Gamma_4 = 
\begin{tikzpicture}[baseline, scale=.7]
	\draw (-1,-.5) -- (0,0);
	\draw (-1,.5)--(0,0);
	\draw (0,0)--(5,0);
	\draw (5,0)--(6,.5);
	\draw (5,0)--(6,-.5);
	\draw (3,0)--(3,-1);
	\filldraw [fill=white] (-1,-.5) circle (1mm);
	\filldraw [fill=white] (-1,.5) circle (1mm);
	\filldraw [fill]  (0,0) circle (1mm);
	\filldraw [fill=white] (1,0) circle (1mm);
	\filldraw [fill]  (2,0) circle (1mm);
	\filldraw [fill=white] (3,0) circle (1mm);
	\filldraw [fill]  (4,0) circle (1mm);
	\filldraw [fill=white] (5,0) circle (1mm);
	\filldraw [fill]  (6,-.5) circle (1mm);
	\filldraw [fill]  (6,.5) circle (1mm);
	\filldraw [fill] (3,-1) circle (1mm);
\end{tikzpicture}
.$$
\end{thm}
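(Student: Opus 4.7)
The plan is to leverage the skein-theoretic presentation of the Extended Haagerup subfactor planar algebra established in \cite{MR2979509}. That presentation reduces specifying a planar $\dag$-algebra homomorphism out of $\cP_\bullet$ to specifying the image of a single low-weight generator $S \in \cP_{8,+}$, provided its image satisfies a finite list of relations: an eigenvalue equation for the rotation, uncappability, annihilation by the appropriate Jones--Wenzl projections, and a quadratic jellyfish-type relation. Since $\cP_\bullet$ is simple as a planar algebra, any nonzero planar algebra homomorphism is automatically injective, so for each $i \in \{3,4\}$, constructing an embedding $\cP_\bullet \hookrightarrow \cG\cP\cA(\Gamma_i)_\bullet$ reduces to exhibiting one element $S_i \in \cG\cP\cA(\Gamma_i)_{8,+}$ satisfying these relations.

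First I would perform preliminary sanity checks: compute the Perron--Frobenius eigenvalue of each of $\Gamma_3$ and $\Gamma_4$ and verify that it equals the Extended Haagerup index, so that the moduli of $\cP_\bullet$ and $\cG\cP\cA(\Gamma_i)_\bullet$ agree. Elements of $\cG\cP\cA(\Gamma_i)_{8,+}$ are complex-valued functions on loops of length $16$ on $\Gamma_i$ based at even vertices, giving $S_i$ as an unknown vector with one coordinate per such loop.

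Next I would impose the defining relations as equations on these unknowns. The rotation, uncappability, and Jones--Wenzl annihilation conditions are linear, so solving them first cuts the ambient space down to a small subspace of ``low-weight'' candidates. The quadratic jellyfish relation together with the self-adjointness requirement then produces a finite polynomial system in the remaining coordinates. The visible graph automorphisms of $\Gamma_3$ and $\Gamma_4$ can be exploited to restrict $S_i$ to the symmetric subspace, cutting down the number of unknowns still further; any solution in the full space could only break such a symmetry by combining with it, so working equivariantly loses nothing.

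The main obstacle is the sheer size of this polynomial system: a purely symbolic Gr\"obner basis computation is almost certainly infeasible. The strategy is therefore to first obtain a high-precision numerical solution of the combined linear and quadratic system, recognize its entries as algebraic numbers in the natural number field associated with Extended Haagerup (generated by the index together with the known jellyfish coefficients), and then verify all defining relations exactly in that number field. Once a valid $S_i$ is exhibited, Theorem \ref{cor:EmbeddingGivesModule} promotes each embedding to the pivotal module $\Cstar$ category needed to construct $\cE\cH_3$ and $\cE\cH_4$ as commutant categories.
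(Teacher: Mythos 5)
Your overall strategy is exactly the paper's: use the generators-and-relations presentation of $\cE\cH_\bullet$ from \cite{MR2979509} (recalled in Proposition \ref{prop:maps-out-of-EH} and Corollary \ref{cor:LopsidedHomomorphismsOutOfEH}) to reduce the embedding to finding a self-adjoint, uncappable rotational $(-1)$-eigenvector $S$ in the $16$-boundary-point box space of each graph planar algebra satisfying the two quadratic relations, then solve the resulting linear-plus-quadratic system and verify the relations exactly in a number field (the paper works in the lopsided convention precisely to keep the number fields small, and exhibits the solutions via minimal polynomials in the accompanying notebooks).

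One concrete step of your plan would fail, however: restricting $S_i$ to the subspace invariant under the graph automorphisms. Your justification (``any solution in the full space could only break such a symmetry by combining with it, so working equivariantly loses nothing'') is not valid --- the automorphism group acts on the solution set, but nothing forces a fixed point. In fact the paper finds that for $\Gamma_3$ the two solutions are interchanged by the unique nontrivial graph automorphism, and for $\Gamma_4$ the graph automorphism group acts \emph{freely and transitively} on the four solutions. Hence the equivariant subspace contains no solutions at all, and your symmetrized system would be inconsistent. You would need to work in the full $18$-dimensional (for $\Gamma_3$) and $20$-dimensional (for $\Gamma_4$) lowest-weight rotational eigenspaces, as the paper does. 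This is a recoverable misstep --- dropping the symmetrization restores the paper's argument --- but as written that reduction is wrong.
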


Thus, the existence of $\cE\cH_3$ and $\cE\cH_4$ is now a corollary to Theorems \ref{cor:EmbeddingGivesModule} and \ref{thm:GraphsForModules}.

From our complete classification of module categories, we see that there are exactly four graphs whose graph planar algebras take maps from the Extended Haagerup planar algebra: the principal and dual principal graphs for the original subfactor, and the two graphs in the previous theorem.  
(One may think of these embeddings as giving four independent constructions of the Extended Haagerup subfactor planar algebra.)

Theorem \ref{cor:EmbeddingGivesModule} also connects the results of \cite{MR2679382} and \cite{MR2909758} to complete the classification of graph planar algebra embeddings for the Haagerup planar algebra.  
In the last section of \cite{MR2679382}, three embeddings of the Haagerup planar algebra into graph planar algebras were found, corresponding to the two principal graphs and the `broom' graph.  However, it was not proven there could not be others.
The main result of \cite{MR2909758} shows there are exactly three module categories over the Haagerup subfactor planar algebra.  Thus we have:
\begin{cor} 
\label{cor:hrp_graphs}
The Haagerup subfactor planar algebra embeds into $\cG\cP\cA(\Gamma)_{\bullet}$ if and only if $\Gamma$ is one of the following:
\begin{align*}
&
\begin{tikzpicture}[baseline, scale=.7]
	\draw (0,0)--(3,0);
	\draw (3,0)--(4,.5) -- (6,.5);
	\draw (3,0)--(4,-.5) -- (6,-.5);
	\filldraw [fill]  (0,0) circle (1mm);
	\filldraw [fill=white] (1,0) circle (1mm);
	\filldraw [fill]  (2,0) circle (1mm);
	\filldraw [fill=white] (3,0) circle (1mm);
	\filldraw [fill]  (4,-.5) circle (1mm);
	\filldraw [fill]  (4,.5) circle (1mm);
	\filldraw [fill=white] (5,-.5) circle (1mm);
	\filldraw [fill=white] (5,.5) circle (1mm);
	\filldraw [fill] (6,-.5) circle (1mm);
	\filldraw [fill] (6,.5) circle (1mm);
\end{tikzpicture}
&&
\text{(Haagerup principal graph)}
\\
&
\begin{tikzpicture}[baseline, scale=.7]
	\draw (0,0)--(3,0);
	\draw (3,0)--(4,.5);
	\draw (3,0)--(4,-.5) -- (5,0);
	\draw (4,-.5) -- (5,-1);
	\filldraw [fill=white]  (0,0) circle (1mm);
	\filldraw [fill] (1,0) circle (1mm);
	\filldraw [fill=white]  (2,0) circle (1mm);
	\filldraw [fill] (3,0) circle (1mm);
	\filldraw [fill=white]  (4,-.5) circle (1mm);
	\filldraw [fill=white]  (4,.5) circle (1mm);
	\filldraw [fill] (5,0) circle (1mm);
	\filldraw [fill] (5,-1) circle (1mm);
\end{tikzpicture}
&&
\text{(dual principal graph)}
\\
&
\begin{tikzpicture}[baseline, scale=.7]
	\draw (1,0)--(4,0);
	\draw (4,.5)--(3,0)--(4,-.5);
	\filldraw [fill=white] (1,0) circle (1mm);
	\filldraw [fill]  (2,0) circle (1mm);
	\filldraw [fill=white] (3,0) circle (1mm);
	\filldraw [fill]  (4,-.5) circle (1mm);
	\filldraw [fill]  (4,.5) circle (1mm);
	\filldraw [fill] (4,0) circle (1mm);
\end{tikzpicture}
&&
\text{(`broom' graph)}
\end{align*}
Here, the two unshaded vertices of the `broom' graph correspond to the third $\cH_2$-module from \cite[Cor.~3.16]{MR2909758}.
\end{cor}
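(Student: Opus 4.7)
The plan is to derive the corollary as an almost immediate consequence of Theorem \ref{cor:EmbeddingGivesModule} together with the classification in \cite[Cor.~3.16]{MR2909758}, with essentially all of the real work already done elsewhere.

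\emph{Step 1 (upper bound).} Apply Theorem \ref{cor:EmbeddingGivesModule} with $\cP_\bullet$ the Haagerup subfactor planar algebra: up to gauging and graph automorphism, the bipartite graphs $\Gamma$ admitting an embedding $\cP_\bullet \hookrightarrow \cG\cP\cA(\Gamma)_\bullet$ are in bijection with the indecomposable finitely semisimple pivotal left $\scrC$-module $\Cstar$ categories over the multifusion category $\scrC$ of projections in $\cP_\bullet$, with $\Gamma$ recovered as the fusion graph of the module category with respect to $X=\id_{1,+}$. By \cite[Cor.~3.16]{MR2909758}, there are exactly three such module categories, so at most three graphs $\Gamma$ can appear.

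\emph{Step 2 (lower bound and identification).} Two of these module categories are the ``regular'' ones arising from the Morita equivalence $\cH_1\simeq\cH_2$ furnished by the Haagerup subfactor itself; by construction their fusion graphs with respect to $X$ are, respectively, the principal and dual principal graphs of the Haagerup subfactor, which are the first two graphs in the list. It remains to identify the fusion graph of the third module category $\cM_3$ with the broom graph. This can be done in two equivalent ways: either directly, by reading off the action of $X$ on the (two) simples of $\cM_3$ from the fusion data in \cite{MR2909758} and counting edge multiplicities; or indirectly, by observing that \cite{MR2679382} already constructs an explicit embedding $\cP_\bullet\hookrightarrow\cG\cP\cA(\text{broom})_\bullet$, which by Step 1 must correspond to one of the three module categories. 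Since it is not one of the first two (the broom graph is not the principal or dual principal graph), it must correspond to $\cM_3$, and we conclude $\cM_3$ has fusion graph equal to the broom graph.

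Steps 1 and 2 together show that the three listed graphs are exactly those that admit an embedding of $\cP_\bullet$. The only part of the argument with any genuine content is the identification of the fusion graph of $\cM_3$ in Step 2, and this is rendered essentially free by the explicit embedding already supplied by \cite{MR2679382}; the main content of the corollary is really the combinatorial input from \cite{MR2909758} and the equivalence provided by Theorem \ref{cor:EmbeddingGivesModule}, both of which we are entitled to use.
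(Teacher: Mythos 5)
Your proposal is correct and is essentially the paper's own argument: the paper derives the corollary by combining Theorem \ref{cor:EmbeddingGivesModule} with the classification of the three Haagerup module categories in \cite{MR2909758} and the three explicit embeddings (principal, dual principal, broom) already constructed in \cite{MR2679382}. Your alternative ``direct'' identification of the third fusion graph also appears in the paper, in the remark at the end of \S\ref{sec:FusionGraphsForEH2Modules}, where the broom graph is computed from the fusion data via Frobenius reciprocity.
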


Section \ref{sec:StructureOfEH3} summarizes the combinatorial structure of the four Extended Haagerup fusion categories and the Morita equivalences between them.  In particular, we describe the fusion rules for each fusion category, and give the principal and dual principal graphs for all the subfactors coming from small objects in the bimodule categories.  This section concludes with a table of all lattices of intermediate subfactors coming from these fusion categories, which can be read off from the fusion rules of the bimodule categories following \cite[Cor. 2.4]{MR2670925}.  There are several particularly interesting examples: a (3,3)-supertransitive non-commuting but cocommuting quadrilateral with indices $(7.0283\ldots, 8.0283\ldots)$, a (2,2)-supertransitive non-commuting but cocommuting quadrilateral with indices $(13.3305\ldots, 14.3305\ldots)$, and a hexagonal intermediate subfactor lattice whose lower and upper inclusions are the $7$-supertransitive index $4.3772\ldots$ extended Haagerup subfactor, and whose middle inclusions are $2$-supertransitive with index $13.3305 \ldots$  The first of these quadrilaterals is the smallest known example of index above $4$ in Class II of the Grossman-Izumi classification \cite{MR2418197} of highly supertransitive non-commuting quadrilaterals.  It is striking that the smallest examples in Class III and Class IV are respectively the Asaeda--Haagerup and Haagerup subfactors.

The main goal of Section \ref{sec:ModuleGPA} is to prove Theorem \ref{cor:EmbeddingGivesModule}.  
We begin by recalling some key background about module categories, bimodule categories, Brauer--Picard groupoids, and the Maximal Atlas.  
This background is used throughout the paper.  We also recall  the relationship between module categories and functors $\scrC \rightarrow \End(M)$.  We then relate $\End(M)$ to a version of graph planar algebra. 
We hope this exposition will make graph planar algebra techniques accessible to readers with a background in tensor categories.  
In particular, we prove a purely algebraic analogue of Theorem \ref{cor:EmbeddingGivesModule} directly from the action map $\scrC \rightarrow \End(M)$.  
In order to adapt this simple algebraic argument to prove Theorem \ref{cor:EmbeddingGivesModule}, we recall some technical background on the definition and classification of unitary pivotal structures from \cite{1808.00323}, and the correct unitary pivotal analogues of module categories (analogous to \cite{MR3420332} in the unitary non-pivotal case and \cite{MR3019263} in the non-unitary pivotal case).  
Note that the unitary pivotal version of this result is essential to our main results, because the characterization of maps out of the Extended Haagerup planar algebra in \cite{MR2979509} relies on positive definiteness in order to check more complicated skein relations based on only a few simple skein relations.

In Section \ref{sec:combinatorics} we show that there are at most four fusion categories in the Extended Haagerup Morita equivalence class and exactly one Morita equivalence between each pair of categories in this class.  Furthermore we determine all the fusion rules between objects in each of these potential fusion categories and bimodule categories.  This closely follows the techniques introduced in \cite{MR3449240} by computing compatible fusion rules for the hypothetical fusion categories and bimodule categories.

In Section \ref{sec:construction} we recall and slightly modify the characterization of maps out of the Extended Haagerup planar algebra proved in \cite{MR2979509}.  We then prove Theorem \ref{thm:GraphsForModules}, again following the outline of \cite{MR2979509}.

In the first appendix we give an alternate construction of $\cE\cH_3$ by directly constructing a Q-system on $1 \oplus f^{(6)}$ in the Extended Haagerup planar algebra using its explicit embedding into the graph planar algebra of its principal graph.  
This approach does not work for $\cE\cH_4$ because the smallest Q-system yielding $\cE\cH_4$ lives in too large a box space for computer calculations to be feasible.  
In the second appendix we outline one promising skein theoretic approach which could give a more natural description of $\cE\cH_3$ and $\cE\cH_4$.  These appendices are not intended to appear in the published version.

Throughout the paper we will use the notation $\scrC(X \rightarrow Y)$ to denote the morphisms between objects $X$ and $Y$ in the category $\scrC$.

\subsection{Acknowledgements}

This work was completed at the 2016 and 2017 AIM SQuaRE ``Classifying fusion categories.''
The authors would like to thank AIM for their hospitality.  
NS and DP would like to thank Andr\'e Henriques and Corey Jones for helpful conversations.  In particular, Andr\'e had an immense impact on the ideas and results in Section \ref{sec:ModuleGPA}.
PG was supported by ARC grants DP140100732 and DP170103265.
SM was supported by Discovery
Projects ‘Subfactors and symmetries’ DP140100732 and ‘Low dimensional categories’ DP160103479,
and a Future Fellowship ‘Quantum symmetries’ FT170100019 from the Australian Research Council.
DP was supported by NSF DMS grants 1500387/1655912 and 1654159.
EP was supported by NSF DMS grant 1501116. 
NS was supported by NSF DMS grant 1454767.

\section{Facts about the Extended Haagerup fusion categories}
\label{sec:StructureOfEH3}

In this section, we describe the Extended Haagerup fusion categories from Theorem \ref{thm:Main}. 
The logic here is somewhat convoluted; the statements of this section logically depend on the later sections (and we're careful not to use the statements here in those sections!). We have decided to put this summary first in order to make the structure of these new fusion categories as accessible as possible.

Recall that by the main results of this paper, there are exactly four unitary fusion categories $\cE\cH_1$, $\cE\cH_2$, $\cE\cH_3$, and $\cE\cH_4$ in the Extended Haagerup Morita equivalence class, and between any two of these four, there is exactly one Morita equivalence.  
Furthermore, because the center of the Extended Haagerup fusion category has no nontrivial invertible objects, there is a unique composition for each tensor product of bimodules $$\Phi:
{}_\mathcal{A} \mathcal{K} {}_\mathcal{B} \boxtimes_{\mathcal{B}} {}_\mathcal{B} \mathcal{L} {}_\mathcal{C} 
\cong  
{}_\mathcal{A} \mathcal{M} {}_\mathcal{C}.
$$

\begin{nota}
\label{nota:ExtendedHaagerupMultifusionCategory}
For $1\leq i,j\leq 4$, we denote by $\cE\cH_{ij}$ the unique invertible $\cE\cH_i - \cE\cH_j$ 
bimodule category, and notice that $\cE\cH_{ii} = \cE\cH_i$.
One may view $\cE\cH = (\cE\cH_{ij})_{i,j=1}^4$ as a single $4\times 4$ unitary multifusion category.

We interpret the fusion ring $EH:=K_0(\cE\cH)$ for $\cE\cH$ as a single ring whose basis consists of the disjoint union of a set of representatives of simple objects $\Irr(\cE\cH_{ij})$ for each $\cE\cH_{ij}$.
We denote by $EH_{ij}$ the span of the set of representatives of simple objects in $\Irr(\cE\cH_{ij})$, and notice that $EH_{ii} = K_0(\cE\cH_i)$.
Of course the products of objects which are not composable are declared to be zero; that is, $EH$ is faithfully graded by the standard system of matrix units for $M_4(\bbC)$.

Within each $\cE\cH_{ij}$ we order simple objects by increasing dimension, so $O_{ij}^k$ denotes the $k$th smallest simple object in $\cE\cH_{ij}$ (or, abusing notation, the corresponding basis element in the Grothendieck ring).  
When there are duplicate dimensions the ties are broken arbitrarily.

We describe the fusion ring $EH$ in the Mathematica notebook \texttt{EHmult.nb}, which is a wrapper for the data file \texttt{EHmult.txt}, both of which are bundled with the arXiv sources of this article.
Therein, we supply a 6-dimensional tensor $T$ whose $(i,j,k,x,y,z)$-entry is the coefficient of 
$z$-th basis element of $EH_{ik}$ in the product of the
$x$-th basis element of $EH_{ij}$ 
and the
$y$-th basis element of $EH_{jk}$. 
On the level of categories, these coefficients are the dimensions of $\text{Hom} $ spaces between simple objects and tensor products of pairs of simple objects.
That is,
$$
O_{ij}^x \otimes O_{jk}^y \cong \bigoplus_z T(i,j,k,x,y,z) O_{ik}^z.
$$ 
\end{nota}

In this section, we gather information on all the Extended Haagerup fusion categories, including fusion rules, the simplest Q-systems, and intermediate subfactor lattices.

\begin{nota}
\label{notation:TensorOnRight}
Our convention for principal graphs of subfactors and fusion graphs of fusion categories is that we always tensor on the \emph{right}.
In particular, the fusion graph for $X$ has $\dim(\cE\cH(A\otimes X \to B))$ oriented edges between simples $A$ and $B$.
Later, in \S\ref{sec:FusionGraphsForEH2Modules}, we will discuss fusion graphs for left module categories, which use the opposite convention.
\end{nota}

\subsection{Structure of \texorpdfstring{$\cE\cH_1$}{EH1}}
\label{sec:StructureOfEH1}

The fusion rules for $\cE\cH_1$ (which is the dual even half of the Extended Haagerup subfactor) are given by 
\begin{center}
\scalebox{.73}{
\begin{tabular}{c||c|c|c|c|c|}
 $\otimes$ & $\jw{2}$ & $\jw{4}$ & $\jw{6}$ & $P'$ & $Q'$ \\ \hline \hline
 $\jw{2}  $ & $1 \oplus  \jw{2} \oplus  \jw{4}  $ & $ \jw{2} \oplus  \jw{4} \oplus  \jw{6}  $ & $ \jw{4} \oplus  \jw{6} \oplus  P' \oplus  Q'  $ & $ \jw{6} \oplus  2 P' \oplus  Q'  $ & $ \jw{6} \oplus  P' $\\ \hline
$ \jw{4}  $ & $ \jw{2} \oplus  \jw{4} \oplus  \jw{6}  $ & $\begin{array}{c} 1 \oplus  \jw{2} \oplus  \jw{4} \\  \oplus  \jw{6} \oplus  P' \oplus  Q' \end{array}$ & $\begin{array}{c} \jw{2} \oplus  \jw{4} \oplus  2 \jw{6} \\ \oplus  3 P' \oplus  Q'  \end{array}$ & $ \begin{array}{c} \jw{4} \oplus  3 \jw{6} \\ \oplus  3 P' \oplus  2 Q'  \end{array}$ & $\begin{array}{c} \jw{4} \oplus  \jw{6} \\ \oplus  2 P' \oplus  Q' \end{array}$ \\ \hline
$\jw{6}$ & $\jw{4} \oplus  \jw{6} \oplus  P' \oplus  Q'  $ & $\begin{array}{c} \jw{2} \oplus  \jw{4} 2 \jw{6} \\ \oplus  3 P' \oplus  Q'  \end{array}$ & $\begin{array}{c} 1 \oplus \jw{2} \oplus  2 \jw{4}  \\ \oplus  4 \jw{6} \oplus  5 P' \oplus  3 Q'  \end{array}$ & $\begin{array}{c} \jw{2} \oplus  3 \jw{4} \oplus  5 \jw{6}  \\ \oplus  6 P' \oplus  3 Q' \end{array} $ & $\begin{array}{c} \jw{2} \oplus  \jw{4} \oplus  3 \jw{6} \\ \oplus  3 P' \oplus  2 Q' \end{array}$ \\ \hline
$P'$ & $ \jw{6} \oplus  2 P' \oplus  Q' $ & $\begin{array}{c} \jw{4} \oplus  3 \jw{6} \\ \oplus  3 P' \oplus  2 Q'\end{array}$ & $\begin{array}{c} \jw{2} \oplus  3 \jw{4} \oplus  5 \jw{6} \\ \oplus  6 P' \oplus  3 Q'  \end{array}$ & $\begin{array}{c} 1 \oplus  2 \jw{2} \oplus  3 \jw{4} \\ \oplus  6 \jw{6} \oplus  7 P' \oplus  4 Q'   \end{array}$ & $ \begin{array}{c} \jw{2} \oplus  2 \jw{4} \oplus  3 \jw{6} \\ \oplus  4 P' \oplus  2 Q' \end{array}$\\ \hline
 $Q'  $ & $ \jw{6} \oplus  P'  $ & $\begin{array}{c} \jw{4} \oplus  \jw{6} \\ \oplus  2 P' \oplus  Q'  \end{array}$ & $\begin{array}{c} \jw{2} \oplus  \jw{4} \oplus  3 \jw{6} \\ \oplus  3 P' \oplus  2 Q' \end{array}$ & $\begin{array}{c} \jw{2} \oplus  2 \jw{4} \oplus  3 \jw{6} \\ \oplus  4 P' \oplus  2 Q' \end{array}$ & $\begin{array}{c}1 \oplus  \jw{4} \oplus  2 \jw{6}  \\ \oplus  2 P' \oplus  Q' \end{array}$ \\ \hline
\end{tabular}}
\end{center}

Here we have given more informative names to the objects, corresponding to those used in \cite{MR2979509} rather than merely naming them $O_{11}^x$. 
The dimensions of the objects $(\jw{2}, \jw{4}, \jw{6}, P', Q')$ are roughly $(3.4, 7.0, 13.3, 16.0, 8.7)$, and this determines the ordering used in the $O_{11}^x$ notation. 
In particular $O_{11}^1 = \mathbf{1}$, $O_{11}^2 = f^{(2)}$, $O_{11}^3 = f^{(4)}$, $O_{11}^4 = Q'$ (as it has the next smallest dimension), $O_{11}^5 = f^{(6)}$, and $O_{11}^6 = P'$.

A Morita equivalence between $\scrC$ and $\cD$ gives a braided equivalence $Z(\scrC) \cong Z(\cD)$.  Any such $\cD$ is of the form $A$-mod for $A$ a Lagrangian algebra in $Z(\scrC)$  \cite{MR1822847, MR2677836, MR3039775}.  In general there might be several Lagrangian algebras yielding a given $\cD$, but in our case since the Brauer--Picard group is trivial there is a unique Lagrangian algebra for each $\cD$.  Using the notation of \cite{MR3719545} for the objects in the center $Z(\cE\cH)$, the Lagrangian algebra giving $\cE\cH_1$ has underlying object:
$$\omega_0\oplus \omega_1\oplus \omega_2 \oplus \alpha_1 \oplus \alpha_2 \oplus \alpha_3.$$

\subsection{Structure of \texorpdfstring{$\cE\cH_2$}{EH2}}
\label{sec:StructureOfEH2}

The fusion rules for $\cE\cH_2$ (which is the principal even half of the Extended Haagerup subfactor) are given by 
\begin{center}
\scalebox{0.7}{
\begin{tabular}{c||c|c|c|c|c|c|c|}
$\otimes$  & $\jw{2}$ & $\jw{4}$ & $\jw{6}$ & $P$ & $Q$ & $A$ & $B$ \\
\hline\hline
 $\jw{2}$ & $1\oplus  \jw{2}\oplus \jw{4}$ & $\jw{2}\oplus \jw{4}\oplus \jw{6}$ & $\jw{4} \oplus  W$ & $B \oplus  W$ & $A\oplus  W$ & $Q$ & $P$ \\
 \hline
 $\jw{4}$ & $\jw{2} \oplus  \jw{4} \oplus  \jw{6}$ & $\begin{array}{c}1 \oplus  \jw{2} \oplus  \\ \jw{4} \oplus  W \end{array}$ & $\begin{array}{c} \jw{2} \oplus  \jw{4} \oplus  \\ A \oplus  B \oplus  2 W \end{array}$ & $\jw{4} \oplus  A \oplus  2 W$ & $\jw{4} \oplus  B \oplus  2 W$ & $\jw{6} \oplus  P$ & $\jw{6} \oplus  Q$ \\
 \hline
 $\jw{6}$ & $\jw{4} \oplus  W$ & $\begin{array}{c}\jw{2} \oplus  \jw{4} \oplus  \\ A \oplus  B \oplus  2 W \end{array}$ & $1 \oplus  W \oplus  Z$ & $\jw{6} \oplus  Q \oplus  Z$ & $\jw{6} \oplus  P \oplus  Z$ & $\jw{4} \oplus  B \oplus  W$ & $\jw{4} \oplus  A \oplus  W$ \\
 \hline
 $P$ & $A \oplus  W$ & $\jw{4} \oplus  B \oplus  2 W$ & $\jw{6} \oplus  Q \oplus  Z$ & $1 \oplus  P \oplus  Z$ & $\jw{6} \oplus  Z$ & $\jw{2} \oplus  A \oplus  W$ & $\jw{4} \oplus  W$ \\
 \hline
 $Q$ & $B \oplus  W$ & $ \jw{4} \oplus  A \oplus  2 W$ & $\jw{6} \oplus  P \oplus  Z$ & $\jw{6} \oplus  Z$ & $1 \oplus  Q \oplus  Z$ & $\jw{4} \oplus  W$ & $\jw{2} \oplus  B \oplus  W$ \\
 \hline
 $A$ & $P$ & $\jw{6} \oplus  Q$ & $ \jw{4} \oplus  B \oplus  W$ & 
  $\jw{4} \oplus  W$ & $\jw{2} \oplus  A \oplus  W$ &
  $\jw{6}$ & $1 \oplus  P$ \\
 \hline
 $B$ & $Q$ & $\jw{6} \oplus  P$ & $\jw{4} \oplus  A \oplus  W$ & 
  $\jw{2} \oplus  B \oplus  W$ & $\jw{4} \oplus  W$ &
 $1 \oplus  Q$ & $\jw{6}$ \\ \hline
\end{tabular}
}
\end{center}
\noindent
using the abbreviations $W  = \jw{6}+P+Q$ and $Z  =A + B + \jw{2} + 2 \jw{4} + 3 \jw{6} + 3 P + 3 Q$.

\begin{remark}
In the first appendix we give an alternate construction of $\cE\cH_3$ by constructing a $Q$-system on $1+\jw{6}$ in $\cE\cH_2$ whose dual category is $\cE\cH_3$.  This construction is viable because $$\dim(\Hom(\jw{6}\otimes \jw{6} \to \jw{6})) = 4$$ is not too large.
\end{remark}

The dimensions of $(\jw{2}, \jw{4}, \jw{6}, P, Q, A, B)$ are roughly $(3.4, 7.0, 13.3, 12.3, 12.3, 3.7, 3.7)$, which again determines the ordering in the $O_{22}^x$ naming.

Since $A$ and $B = \overline{A}$ have small dimension we also record the fusion graph for tensoring with $A$ and the principal graph for the corresponding subfactor. The fusion graph is:

$$
\begin{tikzpicture}[baseline=-1mm, scale=.65]
	\node (1) at (0,0)  {\scriptsize{$1$}};
	\node (A) at (2,-2)  {\scriptsize{$A$}};
	\node (B) at (2,2)  {\scriptsize{$B$}};
	\node (6) at (4,0)  {\scriptsize{$\jw{6}$}};
	\node (4) at (6,0)  {\scriptsize{$\jw{4}$}};
	\node (2) at (8,0)  {\scriptsize{$\jw{2}$}};
	\node (Q) at (6,2)  {\scriptsize{$Q$}};
	\node (P) at (6,-2)  {\scriptsize{$P$}};
	\draw[->] (1) -- (A);
	\draw[->] (A) -- (6);
	\draw[->] (6) -- (B);
	\draw[->] (B) -- (1);
	\draw[<->] (6) -- (4);
	\draw[<->] (6) -- (P);
	\draw[<->] (6) -- (Q);
	\draw[->] (P) -- (2);	
	\draw[->] (2) -- (Q);
	\draw[->] (Q) -- (4);
	\draw[->] (4) -- (P);
	\draw[->] (B) -- (Q);
	\draw[->] (P) -- (A);	
	\draw[->] (6) to [out=250,in=290,looseness=8] (6);
	\draw[->] (P) to [out=300,in=330,looseness=8] (P);
	\draw[->] (Q) to [out=30,in=60,looseness=8] (Q);
	\draw[<->] (P) to [out=60,in=300] (Q);
\end{tikzpicture}.
$$

The corresponding subfactor $1 \subset A\overline{A}$ has index roughly $13.3$.  
It has the following principal and dual principal graph.
Notice that every simple appears twice --- once as an even vertex and once as an odd vertex.

\begin{equation} \label{eq:S13}
\begin{tikzpicture}[baseline=-1mm, scale=.65]
	\draw (0,0) -- (2,0);
	\draw (2,0) -- (4,0);
	
	\draw (4,0) -- (6,0);
	\draw (4,0) -- (2,2);
	\draw (4,0) -- (4,2);
	
	\draw (6,0) -- (8,0);
	\draw (6,0) -- (6,2);
	\draw (6,0) -- (8,2);

	\draw(8,0) -- (10,0);

	\draw(2,2) -- (2,4);	
	\draw(4,2) -- (2,4);

	\draw(6,2) -- (8,4);	
	\draw(8,2) -- (8,4);

	\draw(2,2) to [out=30, in=150] (8,2);
	\draw(2,2) to [out=-30, in=-150] (6,2);
	\draw(4,2) to [out=-30, in=-150] (8,2);
	\draw(4,2) -- (6,2);

	\draw(2,1) -- (4,0);
	\draw(8,1) -- (6,0);

	\draw(4,2) -- (4,4);
	\draw(6,2) -- (6,4);	
							
	\filldraw (0,0) node [above] {$*$} node [below] {$1$} circle (1mm);
	\filldraw [fill=white] (2,0) node [below] {$A$} circle (1mm);
	\filldraw (4,4) node [left] {$A$} circle (1mm);
	\filldraw (4,0) node [below right] {$P$} circle (1mm);
	\filldraw (8,1) node [right] {$f^{(2)}$} circle (1mm);
	\filldraw [fill=white] (6,0) node [below left] {$Q$} circle (1mm);
	\filldraw [fill=white] (2,1) node [left] {$f^{(2)}$} circle (1mm);
	\filldraw (8,0) node [below] {$B$} circle (1mm);
	\filldraw [fill=white] (6,4) node [right] {$B$} circle (1mm);
	\filldraw [fill=white] (10,0) node [below] {$1$} circle (1mm);;
	\filldraw [fill=white] (2,2) node [left] {$P$} circle (1mm);
	\filldraw [fill=white] (4,2) node [left] {$f^{(6)}$} circle (1mm);
	\filldraw (6,2) node [right] {$\jw{6}$} circle (1mm);
	\filldraw (8,2) node [right] {$Q$} circle (1mm);
	\filldraw (2,4) node [left] {$\jw{4}$} circle (1mm);	
	\filldraw [fill=white] (8,4) node [right] {$f^{(4)}$} circle (1mm);
\end{tikzpicture}
\end{equation}

The Extended Haagerup subfactor planar algebra constructed in \cite{MR2979509} (see \S\ref{sec:construction}) provides a Morita equivalence between $\cE\cH_1$ and $\cE\cH_2$.  
The generating object $X$ in the Morita equivalence is the smallest object $O_{12}^1$ in the unique invertible bimodule category between these two fusion categories.
The principal graphs are
$$
\left(
\begin{tikzpicture}[baseline=-1mm, scale=.5]
	\draw (0,0) -- (7,0);
	\draw (7,0)--(7.7,.7)--(9.7,.7);
	\draw (7,0)--(7.7,-.7)--(9.7,-.7);

	\filldraw              (0,0) node [above] {$1$} circle (1mm);
	\filldraw [fill=white] (1,0) node {} circle (1mm);
	\filldraw              (2,0) node [above] {$\jw{2}$}  circle (1mm);
	\filldraw [fill=white] (3,0) node {}  circle (1mm);
	\filldraw              (4,0) node [above] {$\jw{4}$}  circle (1mm);
	\filldraw [fill=white] (5,0) node {}  circle (1mm);
	\filldraw              (6,0) node [above] {$\jw{6}$}  circle (1mm);
	\filldraw [fill=white] (7,0) node {}  circle (1mm);

	\filldraw              (7.7,.7) node [above] {$P$}  circle (1mm);
	\filldraw [fill=white] (8.7,.7) node [above] {}  circle (1mm);
	\filldraw              (9.7,.7) node [above] {$A$}  circle (1mm);
	\filldraw              (7.7,-.7) node [below] {$Q$}  circle (1mm);
	\filldraw [fill=white] (8.7,-.7) node [above] {}  circle (1mm);
	\filldraw              (9.7,-.7) node [below] {$B$}  circle (1mm);
\end{tikzpicture}
\,,\,
\begin{tikzpicture}[baseline=-1mm, scale=.5]
	\draw (0,0) -- (7,0);
	\draw (7,0)--(7.7,.7);
	\draw (7,0)--(7.7,-.7);
	\draw (7.7,.7)--(8.4,1.4);
	\draw (7.7,.7)--(8.4,0);

	\filldraw              (0,0) node [above] {$1$} circle (1mm);
	\filldraw [fill=white] (1,0) node {} circle (1mm);
	\filldraw              (2,0) node [above] {$\jw{2}$}  circle (1mm);
	\filldraw [fill=white] (3,0) node {}  circle (1mm);
	\filldraw              (4,0) node [above] {$\jw{4}$}  circle (1mm);
	\filldraw [fill=white] (5,0) node {}  circle (1mm);
	\filldraw              (6,0) node [above] {$\jw{6}$}  circle (1mm);
	\filldraw [fill=white] (7,0) node {}  circle (1mm);

	\filldraw              (7.7,.7) node [above] {$P'$}  circle (1mm);
	\filldraw              (7.7,-.7) node [below] {$Q'$}  circle (1mm);
	\filldraw [fill=white] (8.4,1.4) node [above] {}  circle (1mm);
	\filldraw [fill=white] (8.4,0) node [above] {}  circle (1mm);
\end{tikzpicture}
\right).
$$
Notice all even vertices are self dual except for $\overline{A} \cong B$.

The Lagrangian algebra giving $\cE\cH_2$ has underlying object:
$$\omega_0\oplus \omega_1\oplus \omega_2 \oplus 2\alpha_1 \oplus \alpha_2.$$

\subsection{Structure of \texorpdfstring{$\cE\cH_3$}{EH3}}
The fusion rules for $\cE\cH_3$ are as follows.

\begin{center}
\scriptsize{
\begin{longtable}{c||c|c|c|c|c|c|c|}
$\otimes$ 
&
 $X$ 
&
 $\overline{X}$ 
&
 $Y_ 1$ 
&
 $Y_ 2$ 
&
 $U$ 
&
 $V$ 
&
 $W$ 
\\\hline \hline
$X$ 
&
 $U$ 
&
 $1 {\oplus} Y_ 2$ 
&
 $V {\oplus} X$ 
&
 $W$ 
&
 $\overline{X} {\oplus} W$ 
&
 $V {\oplus} W {\oplus} Y_ 2$ 
&
 $U {\oplus} V {\oplus} W {\oplus} Y_ 1$ 
\\\hline 
$\overline{X}$ 
&
 $1 {\oplus} Y_ 1$ 
&
 $U$ 
&
 $W$ 
&
 $\overline{X} {\oplus} V$ 
&
 $W {\oplus} X$ 
&
 $V {\oplus} W {\oplus} Y_ 1$ 
&
 $U {\oplus} V {\oplus} W {\oplus} Y_ 2$ 
\\\hline 
$Y_ 1$ 
&
 $W$ 
&
 $\overline{X} {\oplus} V$ 
&
 $1 {\oplus} V {\oplus} W {\oplus} Y_ 1$ 
&
 $U {\oplus} V {\oplus} W$ 
&
 $U {\oplus} V {\oplus} W {\oplus} Y_ 2$ 
&
 $\begin{array}{c}
 \overline{X} {\oplus} U {\oplus} 2 V {\oplus}
\\
 2 W {\oplus} Y_ 1 {\oplus} Y_ 2
\end{array}$ 
&
 $\begin{array}{c}
 U {\oplus} 2 V {\oplus} 3 W {\oplus}
\\
 X {\oplus} Y_ 1 {\oplus} Y_ 2
\end{array}$ 
\\\hline 
$Y_ 2$ 
&
 $V {\oplus} X$ 
&
 $W$ 
&
 $U {\oplus} V {\oplus} W$ 
&
 $1 {\oplus} V {\oplus} W {\oplus} Y_ 2$ 
&
 $U {\oplus} V {\oplus} W {\oplus} Y_ 1$ 
&
 $\begin{array}{c}
 U {\oplus} 2 V {\oplus} 2 W {\oplus}
\\
 X {\oplus} Y_ 1 {\oplus} Y_ 2
\end{array}$ 
&
 $\begin{array}{c}
 \overline{X} {\oplus} U {\oplus} 2 V {\oplus}
\\
 3 W {\oplus} Y_ 1 {\oplus} Y_ 2
\end{array}$ 
\\\hline 
$U$ 
&
 $\overline{X} {\oplus} W$ 
&
 $W {\oplus} X$ 
&
 $U {\oplus} V {\oplus} W {\oplus} Y_ 2$ 
&
 $U {\oplus} V {\oplus} W {\oplus} Y_ 1$ 
&
 $\begin{array}{c}
 1 {\oplus} U {\oplus} V {\oplus}
\\
 W {\oplus} Y_ 1 {\oplus} Y_ 2
\end{array}$ 
&
 $U {\oplus} 2 V {\oplus} 3 W {\oplus} Y_ 1 {\oplus} Y_ 2$ 
&
 $\begin{array}{c}
 \overline{X} {\oplus} U {\oplus} 3 V {\oplus}
\\
 3 W {\oplus} X {\oplus} Y_ 1 {\oplus} Y_ 2
\end{array}$ 
\\\hline 
$V$ 
&
 $V {\oplus} W {\oplus} Y_ 1$ 
&
 $V {\oplus} W {\oplus} Y_ 2$ 
&
 $\begin{array}{c}
 U {\oplus} 2 V {\oplus} 2 W {\oplus}
\\
 X {\oplus} Y_ 1 {\oplus} Y_ 2
\end{array}$ 
&
 $\begin{array}{c}
 \overline{X} {\oplus} U {\oplus} 2 V {\oplus}
\\
 2 W {\oplus} Y_ 1 {\oplus} Y_ 2
\end{array}$ 
&
 $U {\oplus} 2 V {\oplus} 3 W {\oplus} Y_ 1 {\oplus} Y_ 2$ 
&
 $\begin{array}{c}
 1 {\oplus} \overline{X} {\oplus} 2 U {\oplus} 4 V {\oplus}
\\
 5 W {\oplus} X {\oplus} 2 Y_ 1 {\oplus} 2 Y_ 2
\end{array}$ 
&
 $\begin{array}{c}
 \overline{X} {\oplus} 3 U {\oplus} 5 V {\oplus}
\\
 6 W {\oplus} X {\oplus} 2 Y_ 1 {\oplus} 2 Y_ 2
\end{array}$ 
\\\hline 
$W$ 
&
 $U {\oplus} V {\oplus} W {\oplus} Y_ 2$ 
&
 $U {\oplus} V {\oplus} W {\oplus} Y_ 1$ 
&
 $\begin{array}{c}
 \overline{X} {\oplus} U {\oplus} 2 V {\oplus}
\\
 3 W {\oplus} Y_ 1 {\oplus} Y_ 2
\end{array}$ 
&
 $\begin{array}{c}
 U {\oplus} 2 V {\oplus} 3 W {\oplus}
\\
 X {\oplus} Y_ 1 {\oplus} Y_ 2
\end{array}$ 
&
 $\begin{array}{c}
 \overline{X} {\oplus} U {\oplus} 3 V {\oplus}
\\
 3 W {\oplus} X {\oplus} Y_ 1 {\oplus} Y_ 2
\end{array}$ 
&
 $\begin{array}{c}
 \overline{X} {\oplus} 3 U {\oplus} 5 V {\oplus}
\\
 6 W {\oplus} X {\oplus} 2 Y_ 1 {\oplus} 2 Y_ 2
\end{array}$ 
&
 $\begin{array}{c}
 1 {\oplus} \overline{X} {\oplus} 3 U {\oplus} 6 V {\oplus}
\\
 7 W {\oplus} X {\oplus} 3 Y_ 1 {\oplus} 3 Y_ 2
\end{array}$ 
\\
\hline
\end{longtable}
}\end{center}

The dimensions of $(X,\overline{X}, Y_1, Y_2, U, V, W)$ are approximately $(2.6, 2.6, 6.0, 6.0, 7.0, 13.3, 15.9)$.  They are listed in the order $O_{33}^2$, $O_{33}^3$, etc.

The fusion graph for $X\in \cE\cH_3$ is given by
$$
\begin{tikzpicture}[baseline=-1mm, scale=.6]
	\node (1) at (0,0)  {\scriptsize{$1$}};
	\node (X) at (2,-2)  {\scriptsize{$X$}};
	\node (Xb) at (2,2)  {\scriptsize{$\overline{X}$}};
	\node (U) at (4,0)  {\scriptsize{$U$}};
	\node (W) at (6,0)  {\scriptsize{$W$}};
	\node (V) at (8,0)  {\scriptsize{$V$}};
	\node (Y1) at (6,2)  {\scriptsize{$Y_1$}};
	\node (Y2) at (6,-2)  {\scriptsize{$Y_2$}};
	\draw[->] (1) -- (X);
	\draw[->] (X) -- (U);
	\draw[->] (U) -- (Xb);
	\draw[->] (Xb) -- (1);
	\draw[<->] (U) -- (W);
	\draw[->] (Xb) -- (Y1);
	\draw[->] (Y1) -- (W);
	\draw[<->] (W) -- (V);
	\draw[<-] (V) -- (Y2);
	\draw[->] (W) -- (Y2);
	\draw[->] (V) -- (Y1);
	\draw[->] (Y2) -- (X);
	\draw[->] (W) to [out=300,in=330,looseness=8] (W);
	\draw[->] (V) to [out=300,in=330,looseness=8] (V);
\end{tikzpicture}
$$
The principal graph of the subfactor corresponding to the algebra object $X\otimes \overline{X} \in \cE\cH_3$ is given below.  
It has index roughly $7.0$.  
Notice that each simple of $\cE\cH_3$ appears twice --- once as an even vertex and once as an odd vertex.

\begin{equation} \label{eq:S7}
\begin{tikzpicture}[baseline=-1mm, scale=.6]
	\draw (0,0) -- (7,0);
	\draw (3,0)--(3,3);
	\draw (4,0)--(4,3);
	\draw (2,1)--(5,1);

	\filldraw              (0,0) node [above] {$*$} node [below] {$1$} circle (1mm);
	\filldraw [fill=white] (1,0) circle (1mm) node [below] {$X$};
	\filldraw              (2,0) circle (1mm) node [below] {$Y_2$};
	\filldraw [fill=white] (3,0) circle (1mm) node [below] {$V$};
	\filldraw              (4,0) circle (1mm) node [below] {$V$};
	\filldraw [fill=white] (5,0) circle (1mm) node [below] {$Y_1$};
	\filldraw              (6,0) circle (1mm) node [below] {$\overline{X}$};
	\filldraw [fill=white] (7,0) circle (1mm) node [below] {$1$};

	\filldraw [fill=white] (2,1) circle (1mm) node [left] {$Y_2$};
	\filldraw              (3,1) circle (1mm) node [below left] {$W$};
	\filldraw [fill=white] (4,1) circle (1mm) node [below right] {$W$};;
	\filldraw              (5,1) circle (1mm) node [right] {$Y_1$};

	\filldraw [fill=white] (3,2) circle (1mm) node [left] {$U$};
	\filldraw              (4,2) circle (1mm) node [right] {$U$};

	\filldraw              (3,3) circle (1mm) node [left] {$X$};
	\filldraw [fill=white] (4,3) circle (1mm) node [right] {$\overline{X}$};
\end{tikzpicture}
\,.
\end{equation}

This paper establishes that $\cE\cH_3$ is a categorification of the above ring, but we don't know that this is the only such categorification.  Thus in order to construct the Extended Haagerup subfactor from an alternative proposed construction of $\cE\cH_3$ one would need to do additional work.
Since the even parts of the Extended Haagerup subfactor are the only categorifications of their fusion rings, it would be enough to construct a categorification of the $\cE\cH_3$ fusion ring plus an algebra structure on $1\oplus U$, and check that the fusion ring of the commutant category corresponding to the algebra $1\oplus U$ is the fusion ring of $\cE\cH_2$. 

We are able to list all Q-systems in $\cE\cH_3$.  The Q-system corresponding to $O_{23}^1$ has relatively small dimension and its underlying object is $1 \oplus U$.  The dual algebra in this case is $1 \oplus f^{(6)}$ in $\cE\cH_2$. 
The index of this subfactor is approximately $14.3$, and the principal graphs are:

\begin{equation}\label{eq:S14}
\begin{tikzpicture}[baseline=-1mm, scale=.7]
	\draw (0,0) -- (2,0);
	\draw (2,0) -- (4,0);
	\draw (4,0) -- (6,1.5);
	\draw (4,0) -- (6,.5);
	\draw (4,0) -- (6,-.5);
	\draw (4,0) -- (6,-1.5);
	\draw (6,1.5)--(8,2.5);
	\draw (6,.5)--(8,2.5);
	\draw (6,-.5)--(8,2.5);
	\draw (6,-1.5)--(8,2.5);
	\draw (6,1.5)--(8,1.5);
	\draw (6,.5)--(8,1.5);
	\draw (6,-.5)--(8,1.5);
	\draw (6,-1.5)--(8,1.5);
	\draw (6,-.5)--(8,.5);
	\draw (6,-.5)--(8,-.5);
	\draw (6,-1.5)--(8,-.5);
	\draw (6,-1.5)--(8,-1.5);
	\draw (6,-1.5)--(8,-2.5);

	\filldraw              (0,0) node [above] {$*$} node [below] {$1$} circle (1mm);
	\filldraw [fill=white] (2,0) circle (1mm) ;
	\filldraw  (4,0) circle (1mm) node [below] {$f^{(6)}$};
	\filldraw [fill=white] (6,1.5) circle (1mm) ;	
	\filldraw [fill=white] (6,.5) circle (1mm) ;	
	\filldraw [fill=white] (6,-.5) circle (1mm) ;	
	\filldraw [fill=white] (6,-1.5) circle (1mm) ;	
	\filldraw  (8,2.5) circle (1mm) node [right] {$P$} ;
	\filldraw  (8,1.5) circle (1mm) node [right] {$Q$} ;
	\filldraw  (8,.5) circle (1mm) node [right] {$f^{(2)}$} ;
	\filldraw  (8,-.5) circle (1mm) node [right] {$f^{(4)}$} ;	
	\filldraw  (8,-1.5) circle (1mm) node [right] {$A$} ;
	\filldraw  (8,-2.5) circle (1mm) node [right] {$B$} ;	
\end{tikzpicture}
\hspace{1in}
\begin{tikzpicture}[baseline=-1mm, scale=.7]
	\draw (0,0) -- (2,0);
	\draw (2,0) -- (4,0);
	\draw (4,0) -- (6,1.5);
	\draw (4,0) -- (6,.5);
	\draw (4,0) -- (6,-.5);
	\draw (4,0) -- (6,-1.5);

	\draw (6,1.5)--(8,2.5);

	\draw (6,.5)--(8,1.5);

	\draw (6,1.5)--(8,.5);
	\draw (6,-.5)--(8,.5);

	\draw (6,.5)--(8,-.5);
	\draw (6,-.5)--(8,-.5);

	\draw (6,-.5)--(8,-1.5);
	\draw (6,-1.5)--(8,-1.5);
	
	\draw[double] (6,-1.5)--(8,-2.5);
	\draw (6,-.5)--(8,-2.5);
	\draw (6,.5)--(8,-2.5);
	\draw (6,1.5)--(8,-2.5);

	\filldraw (0,0) node [above] {$*$} node [below] {$1$} circle (1mm);
	\filldraw [fill=white] (2,0) circle (1mm) ;
	\filldraw  (4,0) circle (1mm) node [below] {$V$};
	\filldraw [fill=white] (6,1.5) circle (1mm) ;	
	\filldraw [fill=white] (6,.5) circle (1mm) ;	
	\filldraw [fill=white] (6,-.5) circle (1mm) ;	
	\filldraw [fill=white] (6,-1.5) circle (1mm) ;	
	\filldraw  (8,2.5) circle (1mm) node [right] {$X$} ;
	\filldraw  (8,1.5) circle (1mm) node [right] {$\overline{X}$} ;
	\filldraw  (8,.5) circle (1mm) node [right] {$Y_2$} ;
	\filldraw  (8,-.5) circle (1mm) node [right] {$Y_1$} ;	
	\filldraw  (8,-1.5) circle (1mm) node [right] {$U$} ;
	\filldraw  (8,-2.5) circle (1mm) node [right] {$W$} ;	
\end{tikzpicture}
\,.
\end{equation}


The Lagrangian algebra in $Z(\cE\cH)$ giving $\cE\cH_3$ has underlying object:
$$\omega_0\oplus \omega_1\oplus \omega_2 \oplus 2\alpha_1 \oplus \alpha_2.$$
Note that this is the same underlying object as the Lagrangian algebra corresponding to $\cE\cH_2$; the algebra structures must be different.

\subsection{Structure of \texorpdfstring{$\cE\cH_4$}{EH4}}

We now turn to describing some of the combinatorial structure of $\cE\cH_4$.  The fusion rules for $\cE\cH_4$ are as follows.

{\tiny{
\begin{longtable}{c||c|c|c|c|c|c|c|}
$\otimes$ 
&
 $Z$ 
&
 $\overline{Z}$ 
&
 $G$ 
&
 $H$ 
&
 $K_ 1$ 
&
 $K_ 2$ 
&
 $L$ 
\\\hline \hline
$Z$ 
&
 $G {\oplus} K_ 1 {\oplus} K_ 2 {\oplus} L$ 
&
 $\begin{array}{c}
 1 {\oplus} G {\oplus}
\\
 H {\oplus} K_ 2 {\oplus} L
\end{array}$ 
&
 $\begin{array}{c}
 H {\oplus} K_ 2 {\oplus}
\\
 L {\oplus} \overline{Z} {\oplus} Z
\end{array}$ 
&
 $\begin{array}{c}
 G {\oplus} H {\oplus} K_ 1 {\oplus}
\\
 K_ 2 {\oplus} L {\oplus} Z
\end{array}$ 
&
 $\begin{array}{c}
 G {\oplus} H {\oplus} K_ 1 {\oplus}
\\
 K_ 2 {\oplus} L {\oplus} \overline{Z} {\oplus} Z
\end{array}$ 
&
 $\begin{array}{c}
 H {\oplus} K_ 1 {\oplus}
\\
 K_ 2 {\oplus} 2 L {\oplus} \overline{Z}
\end{array}$ 
&
 $\begin{array}{c}
 G {\oplus} H {\oplus} 2 K_ 1 {\oplus}
\\
 K_ 2 {\oplus} 2 L {\oplus} \overline{Z} {\oplus} Z
\end{array}$ 
\\\hline 
$\overline{Z}$ 
&
 $\begin{array}{c}
 1 {\oplus} G {\oplus}
\\
 H {\oplus} K_ 1 {\oplus} L
\end{array}$ 
&
 $G {\oplus} K_ 1 {\oplus} K_ 2 {\oplus} L$ 
&
 $\begin{array}{c}
 H {\oplus} K_ 1 {\oplus}
\\
 L {\oplus} \overline{Z} {\oplus} Z
\end{array}$ 
&
 $\begin{array}{c}
 G {\oplus} H {\oplus} K_ 1 {\oplus}
\\
 K_ 2 {\oplus} L {\oplus} \overline{Z}
\end{array}$ 
&
 $\begin{array}{c}
 H {\oplus} K_ 1 {\oplus}
\\
 K_ 2 {\oplus} 2 L {\oplus} Z
\end{array}$ 
&
 $\begin{array}{c}
 G {\oplus} H {\oplus} K_ 1 {\oplus}
\\
 K_ 2 {\oplus} L {\oplus} \overline{Z} {\oplus} Z
\end{array}$ 
&
 $\begin{array}{c}
 G {\oplus} H {\oplus} K_ 1 {\oplus}
\\
 2 K_ 2 {\oplus} 2 L {\oplus} \overline{Z} {\oplus} Z
\end{array}$ 
\\\hline 
$G$ 
&
 $\begin{array}{c}
 H {\oplus} K_ 1 {\oplus}
\\
 L {\oplus} \overline{Z} {\oplus} Z
\end{array}$ 
&
 $\begin{array}{c}
 H {\oplus} K_ 2 {\oplus}
\\
 L {\oplus} \overline{Z} {\oplus} Z
\end{array}$ 
&
 $\begin{array}{c}
 1 {\oplus} G {\oplus} H {\oplus}
\\
 K_ 1 {\oplus} K_ 2 {\oplus} L
\end{array}$ 
&
 $\begin{array}{c}
 G {\oplus} H {\oplus} K_ 1 {\oplus}
\\
 K_ 2 {\oplus} L {\oplus} \overline{Z} {\oplus} Z
\end{array}$ 
&
 $\begin{array}{c}
 G {\oplus} H {\oplus} K_ 1 {\oplus}
\\
 K_ 2 {\oplus} 2 L {\oplus} Z
\end{array}$ 
&
 $\begin{array}{c}
 G {\oplus} H {\oplus} K_ 1 {\oplus}
\\
 K_ 2 {\oplus} 2 L {\oplus} \overline{Z}
\end{array}$ 
&
 $\begin{array}{c}
 G {\oplus} H {\oplus} 2 K_ 1 {\oplus}
\\
 2 K_ 2 {\oplus} 2 L {\oplus} \overline{Z} {\oplus} Z
\end{array}$ 
\\\hline 
$H$ 
&
 $\begin{array}{c}
 G {\oplus} H {\oplus} K_ 1 {\oplus}
\\
 K_ 2 {\oplus} L {\oplus} Z
\end{array}$ 
&
 $\begin{array}{c}
 G {\oplus} H {\oplus} K_ 1 {\oplus}
\\
 K_ 2 {\oplus} L {\oplus} \overline{Z}
\end{array}$ 
&
 $\begin{array}{c}
 G {\oplus} H {\oplus} K_ 1 {\oplus}
\\
 K_ 2 {\oplus} L {\oplus} \overline{Z} {\oplus} Z
\end{array}$ 
&
 $\begin{array}{c}
 1 {\oplus} G {\oplus} H {\oplus} K_ 1 {\oplus}
\\
 K_ 2 {\oplus} 2 L {\oplus} \overline{Z} {\oplus} Z
\end{array}$ 
&
 $\begin{array}{c}
 G {\oplus} H {\oplus} 2 K_ 1 {\oplus}
\\
 K_ 2 {\oplus} 2 L {\oplus} \overline{Z} {\oplus} Z
\end{array}$ 
&
 $\begin{array}{c}
 G {\oplus} H {\oplus} K_ 1 {\oplus}
\\
 2 K_ 2 {\oplus} 2 L {\oplus} \overline{Z} {\oplus} Z
\end{array}$ 
&
 $\begin{array}{c}
 G {\oplus} 2 H {\oplus} 2 K_ 1 {\oplus}
\\
 2 K_ 2 {\oplus} 3 L {\oplus} \overline{Z} {\oplus} Z
\end{array}$ 
\\\hline 
$K_ 1$ 
&
 $\begin{array}{c}
 H {\oplus} K_ 1 {\oplus}
\\
 K_ 2 {\oplus} 2 L {\oplus} \overline{Z}
\end{array}$ 
&
 $\begin{array}{c}
 G {\oplus} H {\oplus} K_ 1 {\oplus}
\\
 K_ 2 {\oplus} L {\oplus} \overline{Z} {\oplus} Z
\end{array}$ 
&
 $\begin{array}{c}
 G {\oplus} H {\oplus} K_ 1 {\oplus}
\\
 K_ 2 {\oplus} 2 L {\oplus} \overline{Z}
\end{array}$ 
&
 $\begin{array}{c}
 G {\oplus} H {\oplus} 2 K_ 1 {\oplus}
\\
 K_ 2 {\oplus} 2 L {\oplus} \overline{Z} {\oplus} Z
\end{array}$ 
&
 $\begin{array}{c}
 1 {\oplus} G {\oplus} 2 H {\oplus} K_ 1 {\oplus}
\\
 2 K_ 2 {\oplus} 2 L {\oplus} \overline{Z} {\oplus} Z
\end{array}$ 
&
 $\begin{array}{c}
 G {\oplus} H {\oplus} 2 K_ 1 {\oplus}
\\
 2 K_ 2 {\oplus} 2 L {\oplus} \overline{Z} {\oplus} Z
\end{array}$ 
&
 $\begin{array}{c}
 2 G {\oplus} 2 H {\oplus} 2 K_ 1 {\oplus}
\\
 2 K_ 2 {\oplus} 3 L {\oplus} \overline{Z} {\oplus} 2 Z
\end{array}$ 
\\\hline 
$K_ 2$ 
&
 $\begin{array}{c}
 G {\oplus} H {\oplus} K_ 1 {\oplus}
\\
 K_ 2 {\oplus} L {\oplus} \overline{Z} {\oplus} Z
\end{array}$ 
&
 $\begin{array}{c}
 H {\oplus} K_ 1 {\oplus}
\\
 K_ 2 {\oplus} 2 L {\oplus} Z
\end{array}$ 
&
 $\begin{array}{c}
 G {\oplus} H {\oplus} K_ 1 {\oplus}
\\
 K_ 2 {\oplus} 2 L {\oplus} Z
\end{array}$ 
&
 $\begin{array}{c}
 G {\oplus} H {\oplus} K_ 1 {\oplus}
\\
 2 K_ 2 {\oplus} 2 L {\oplus} \overline{Z} {\oplus} Z
\end{array}$ 
&
 $\begin{array}{c}
 G {\oplus} H {\oplus} 2 K_ 1 {\oplus}
\\
 2 K_ 2 {\oplus} 2 L {\oplus} \overline{Z} {\oplus} Z
\end{array}$ 
&
 $\begin{array}{c}
 1 {\oplus} G {\oplus} 2 H {\oplus} 2 K_ 1 {\oplus}
\\
 K_ 2 {\oplus} 2 L {\oplus} \overline{Z} {\oplus} Z
\end{array}$ 
&
 $\begin{array}{c}
 2 G {\oplus} 2 H {\oplus} 2 K_ 1 {\oplus}
\\
 2 K_ 2 {\oplus} 3 L {\oplus} 2 \overline{Z} {\oplus} Z
\end{array}$ 
\\\hline 
$L$ 
&
 $\begin{array}{c}
 G {\oplus} H {\oplus} K_ 1 {\oplus}
\\
 2 K_ 2 {\oplus} 2 L {\oplus} \overline{Z} {\oplus} Z
\end{array}$ 
&
 $\begin{array}{c}
 G {\oplus} H {\oplus} 2 K_ 1 {\oplus}
\\
 K_ 2 {\oplus} 2 L {\oplus} \overline{Z} {\oplus} Z
\end{array}$ 
&
 $\begin{array}{c}
 G {\oplus} H {\oplus} 2 K_ 1 {\oplus}
\\
 2 K_ 2 {\oplus} 2 L {\oplus} \overline{Z} {\oplus} Z
\end{array}$ 
&
 $\begin{array}{c}
 G {\oplus} 2 H {\oplus} 2 K_ 1 {\oplus}
\\
 2 K_ 2 {\oplus} 3 L {\oplus} \overline{Z} {\oplus} Z
\end{array}$ 
&
 $\begin{array}{c}
 2 G {\oplus} 2 H {\oplus} 2 K_ 1 {\oplus}
\\
 2 K_ 2 {\oplus} 3 L {\oplus} 2 \overline{Z} {\oplus} Z
\end{array}$ 
&
 $\begin{array}{c}
 2 G {\oplus} 2 H {\oplus} 2 K_ 1 {\oplus}
\\
 2 K_ 2 {\oplus} 3 L {\oplus} \overline{Z} {\oplus} 2 Z
\end{array}$ 
&
 $\begin{array}{c}
 1 {\oplus} 2 G {\oplus} 3 H {\oplus} 3 K_ 1 {\oplus}
\\
 3 K_ 2 {\oplus} 4 L {\oplus} 2 \overline{Z} {\oplus} 2 Z
\end{array}$ 
\\
\hline
\end{longtable}
}}
The dimensions of the objects $(Z, \overline{Z}, G, H, K_1, K_2, L)$ are approximately $(6.3, 6.3, 7.0, 8.6, 9.6, 9.6, 13.3)$.  These are already in increasing order, so the objects $O_{44}^x$ appear in this order.

None of the objects in $\cE\cH_4$ is small enough to have a nice fusion graph.  The Q-system corresponding to $O_{34}^1$ does give a $3$-supertransitive subfactor.  The underlying object of this Q-system is $1 \oplus G$ and the dual Q-system is $1 \oplus U$ in $\cE\cH_3$, and the index is roughly $8.0$.  The principal graphs for this subfactor are given by:

\begin{align}\label{eq:S8}
\begin{tikzpicture}[baseline=-1mm, scale=.7]
	\draw (0,0) -- (2,0);
	\draw (2,0) -- (4,0);
	\draw (4,0) -- (6,0);
	\draw (6,0) -- (8,1.5);
	\draw (6,0) -- (8,.5);	
	\draw (6,0) -- (8,-.5);
	\draw (6,0) -- (8,-1.5);
	\draw (8,-.5) -- (10,0);
	\draw (8,-.5) -- (10,-1);
	\draw (8,-.5) -- (10,-2);
	\draw (8,-1.5) -- (10,-1);
	\draw (8,-1.5) -- (10,-2);
	\draw (10,0) -- (12,.5);
	\draw (10,0) -- (12,-.5);
	\filldraw (0,0) node [above] {$*$} node [below] {$1$} circle (1mm);
	\filldraw [fill=white] (2,0) circle (1mm) ;
	\filldraw  (4,0) circle (1mm) node [below] {$U$};
	\filldraw [fill=white] (6,0) circle (1mm) ;	
	\filldraw  (8,1.5) circle (1mm) node [below] {$Y_1$} ;
	\filldraw  (8,.5) circle (1mm) node [below] {$Y_2$} ;
	\filldraw  (8,-.5) circle (1mm) node [below] {$W$} ;
	\filldraw  (8,-1.5) circle (1mm) node [below] {$V$} ;	
	\filldraw [fill=white] (10,0) circle (1mm) ;		
	\filldraw [fill=white] (10,-1) circle (1mm) ;		
	\filldraw [fill=white] (10,-2) circle (1mm) ;		
	\filldraw  (12,.5) circle (1mm) node [right] {$X$} ;
	\filldraw  (12,-.5) circle (1mm) node [right] {$\overline{X}$} ;	
\end{tikzpicture}
\displaybreak[1]\\
\begin{tikzpicture}[baseline=-1mm, scale=.7]
	\draw (0,0) -- (2,0);
	\draw (2,0) -- (4,0);
	\draw (4,0) -- (6,0);
	\draw (6,0) -- (8,1.5);
	\draw (6,0) -- (8,.5);	
	\draw (6,0) -- (8,-.5);
	\draw (6,0) -- (8,-1.5);
	\draw (8,1.5) -- (10,1);
	\draw (8,.5) -- (10,0);
	\draw (8,-.5) -- (10,0);
	\draw (8,-.5) -- (10,-1);	
	\draw (8,-1.5) -- (10,-1);
	\draw (10,1) -- (12,.5);
	\draw (10,1) -- (12,-.5);
	\draw (10,0) -- (12,.5);
	\draw (10,-1) -- (12,-.5);
	\filldraw (0,0) node [above] {$*$} node [below] {$1$} circle (1mm);
	\filldraw [fill=white] (2,0) circle (1mm) ;
	\filldraw  (4,0) circle (1mm) node [below] {$G$};
	\filldraw [fill=white] (6,0) circle (1mm) ;	
	\filldraw  (8,1.5) circle (1mm) node [below] {$H$} ;
	\filldraw  (8,.5) circle (1mm) node [below] {$K_1$} ;
	\filldraw  (8,-.5) circle (1mm) node [below] {$L$} ;
	\filldraw  (8,-1.5) circle (1mm) node [below] {$K_2$} ;	
	\filldraw [fill=white] (10,1) circle (1mm) ;		
	\filldraw [fill=white] (10,0) circle (1mm) ;		
	\filldraw [fill=white] (10,-1) circle (1mm) ;		
	\filldraw  (12,.5) circle (1mm) node [right] {$\overline{Z}$} ;
	\filldraw  (12,-.5) circle (1mm) node [right] {$Z$} ;	
\end{tikzpicture}
\notag
\end{align}


As with $\cE\cH_2$ and $\cE\cH_3$, the Lagrangian algebra giving $\cE\cH_4$ has underlying object:
$$\omega_0\oplus \omega_1\oplus \omega_2 \oplus 2\alpha_1 \oplus \alpha_2.$$  We thus see that this object must have three distinct algebra structures on it.

\subsection{Intermediate subfactors}
\label{sec:intermediate}

In this section we describe all the lattices of intermediate subfactors for subfactors coming from the objects in the Extended Haagerup bimodule categories.  
These can be read directly from the fusion rules for the bimodules following \cite[Cor. 2.4]{MR2670925}.  
The relationship between bimodule fusion rules and intermediate subfactors is particularly simple for Extended Haagerup because the categories have no nontrivial outer automorphisms and no invertible objects. 
In such a case intermediate subfactors correspond exactly to triples of objects $X$, $Y$, $Z$ in the bimodule categories such that $X \otimes Y \cong Z$.  
The corresponding subfactors are $1 \subset X \otimes \overline{X} \subset  X \otimes (Y \otimes \overline{Y}) \otimes \overline{X} = Z \otimes \overline{Z}$.  
The small subfactors $1 \subset X \otimes \overline{X}$ and $X \otimes \overline{X} \subset X \otimes (Y \otimes \overline{Y}) \otimes \overline{X}$ are irreducible exactly when $X$ and $Y$ are, and the large subfactor $1 \subset Z \otimes \overline{Z}$ is irreducible exactly when $X \otimes Y$ is simple.  
Note that it is rare to have a pair of simple objects $X, Y$ such that the product $X \otimes Y$ is simple, and this explains why 
irreducible subfactors typically have few intermediates.

Each row of the following table lists triples of simple objects $X = O_{ij}^x$, $Y = O_{jk}^y$, and their simple product $X \otimes Y = O_{ik}^z$.
The columns labelled `ST' and `Index' indicate the supertransitivity and index of the corresponding subfactors.
We only list one representative of each dual pair, so in the tables we always have $i<j$.  
We've grouped rows together according to the identity of the large object $Z$, as these rows are all intermediate subfactors of the same large subfactor.

We recall that:
\begin{itemize}
\item $O_{21}^1$ and $O_{12}^1$ correspond to the Extended Haagerup subfactor and its dual,
\item $O_{33}^2$ corresponds to the subfactor with principal graph shown in Equation \eqref{eq:S7}, 
\item $O_{34}^1$ and $O_{43}^1$ correspond to the subfactor with principal graph shown in Equation \eqref{eq:S8},
\item $O_{22}^3$ and  $O_{22}^4$ both correspond to the subfactor with principal graph shown in Equation \eqref{eq:S13}, and 
\item $O_{23}^1$ and $O_{32}^1$ correspond to the subfactor with principal graph shwon in Equation \eqref{eq:S14}.
\end{itemize}

\begin{longtable}{C|C||C|C|C||C|C|C}
Z& \mathrm{Index}(Z)& X& \mathrm{ST}(X)& \mathrm{Index}(X)& Y& \mathrm{ST}(Y)& \mathrm{Index}(Y) \\ \hline
O_{11}^6 & 255.411 & O_{12}^4& 1 & 58.3502& O_{21}^1 & 7 & 4.3772 \\
 & & O_{12}^3 & 1 & 58.3502 & O_{21}^1 & 7 & 4.3772 \\
  & & O_{12}^1 & 7 & 4.3772 & O_{21}^3& 1 & 58.3502 \\
  & & O_{12}^1 & 7 & 4.3772 & O_{21}^4& 1 & 58.3502 \\ \hline
O_{12}^3 & 58.3502 & O_{12}^1& 7 & 4.3772 & O_{22}^4 & 2 & 13.3305 \\ \hline
O_{12}^4 & 58.3502 & O_{12}^1& 7 & 4.3772 & O_{22}^3 & 2 & 13.3305 \\ \hline
O_{12}^6 & 329.743 & O_{13}^1 & 1 & 23.0099 &  O_{32}^1 & 2 & 14.3305\\
 &  & O_{13}^2 & 1 & 23.0099 &  O_{32}^1 & 2 & 14.3305\\
 &  & O_{12}^2 & 1 & 24.736  & O_{22}^3 & 2 & 13.3305\\
 &  & O_{12}^2 & 1 & 24.736  & O_{22}^4 & 2 & 13.3305\\
 &  & O_{11}^4 & 1 & 75.3318 & O_{12}^1 & 7 & 4.3772\\ \hline
O_{13}^4 & 62.7274 & O_{12}^1 & 7 & 4.3772 & O_{23}^1 & 2 & 14.3305 \\ \hline
O_{13}^5 & 161.72 & O_{13}^1 & 1 & 23.0099 & O_{33}^2 & 3 & 7.0283 \\
 &  & O_{13}^2 & 1 & 23.0099 & O_{33}^3 & 3 & 7.0283 \\ \hline
O_{13}^6 & 262.439 & O_{14}^1 & 1 & 32.6893 & O_{43}^1 & 3 & 8.0283 \\
 & & O_{14}^2 & 1 & 32.6893 & O_{43}^1 & 3 & 8.0283 \\
 & & O_{13}^3 & 1 & 37.3404 & O_{33}^2 & 3 & 7.0283 \\ 
 & & O_{13}^3 & 1 & 37.3404 & O_{33}^3 & 3 & 7.0283 \\ 
 & & O_{11}^2 & 1 & 11.4055 & O_{13}^1 & 1 & 23.0099 \\
 & & O_{11}^2 & 1 & 11.4055 & O_{13}^2 & 1 & 23.0099 \\ \hline
O_{22}^6 & 152.041 & O_{22}^2 & 1 & 11.4055 & O_{22}^4 & 2 & 13.3305 \\
 & & O_{22}^3 & 2 & 13.3305 & O_{22}^2 & 1 & 11.4055 \\ \hline
O_{22}^7 & 152.041 & O_{22}^2 & 1 & 11.4055 & O_{22}^3 & 2 & 13.3305 \\
 & & O_{22}^4 & 2 & 13.3305 & O_{22}^2 & 1 & 11.4055 \\ \hline
O_{22}^8 & 177.702 & O_{22}^3& 2 & 13.3305 & O_{22}^3 & 2 & 13.3305\\
 &  & O_{22}^4 & 2 & 13.3305 & O_{22}^4 & 2 & 13.3305\\ \hline
O_{23}^2 & 100.719 & O_{23}^1 & 2 & 14.3305 & O_{33}^3 & 3 & 7.0283 \\
 & & O_{21}^1 & 7 & 4.3772 & O_{13}^2 & 1 & 23.0099\\ \hline
O_{23}^3 & 100.719 & O_{23}^1 & 2 & 14.3305 & O_{33}^2 & 3 & 7.0283 \\
 & & O_{21}^1 & 7 & 4.3772 & O_{13}^1 & 1 & 23.0099\\ \hline
 O_{23}^4 & 163.446 & O_{24}^1 & 1 & 20.3588 & O_{43}^1 & 3 & 8.0283\\
& & O_{24}^2 & 1 & 20.3588 & O_{43}^1 & 3 & 8.0283\\
& & O_{22}^2 & 1 & 11.4055 & O_{23}^1 & 2 & 14.3305 \\
& & O_{21}^1 & 7 & 4.3772 & O_{13}^3 & 1 & 37.3404\\ \hline
O_{23}^5 & 191.032 & O_{22}^3 & 2 & 13.3305 & O_{23}^1 & 2 & 14.3305 \\
 &  & O_{22}^4 & 2 & 13.3305 & O_{23}^1 & 2 & 14.3305 \\
O_{24}^3 & 115.049 & O_{23}^1 & 2 & 14.3305 & O_{34}^1 & 3 & 8.0283 \\ \hline
O_{24}^4 & 143.088 & O_{21}^1 & 7 & 4.3772 & O_{14}^1 & 1 & 32.6893 \\
 &  & O_{21}^1 & 7 & 4.3772 & O_{14}^2 & 1 & 32.6893 \\ \hline
O_{24}^5 & 271.392 & O_{22}^4 & 2 & 13.3305 & O_{24}^1 & 1 & 20.3588 \\
 &  & O_{22}^3 & 2 & 13.3305 & O_{24}^2 & 1 & 20.3588 \\
 & & O_{21}^1 & 7 & 4.3772 & O_{14}^3 &1 & 62.0013 \\ \hline
O_{33}^6 & 49.3969 & O_{33}^2 & 3 & 7.0283 & O_{33}^2 & 3 & 7.0283 \\
 &  & O_{33}^3 & 3 & 7.0283 & O_{33}^3 & 3 & 7.0283 \\ \hline
 O_{33}^8 & 255.411 & O_{33}^5 & 1 & 36.3404 & O_{33}^3 & 3 & 7.0283\\
  & & O_{33}^4 & 1 & 36.3404 & O_{33}^2 & 3 & 7.0283 \\
  & & O_{33}^3 & 3 & 7.0283 & O_{33}^4 & 1 & 36.3404 \\ 
  & & O_{33}^2 & 3 & 7.0283 & O_{33}^5 & 1 & 36.3404 \\ \hline\newpage\hline
 O_{34}^2 & 56.4252 & O_{33}^2 & 3 & 7.0283 & O_{34}^1 & 3 & 8.0283\\
 &  & O_{33}^3 & 3 & 7.0283 & O_{34}^1 & 3 & 8.0283\\ \hline
O_{34}^5 & 291.751 & O_{33}^4 & 1 & 36.3404 & O_{34}^1 & 3 & 8.0283\\
 &  & O_{33}^5 & 1 & 36.3404 & O_{34}^1 & 3 & 8.0283\\
 & & O_{32}^1 & 2 & 14.3305 & O_{24}^1 & 1 & 20.3588 \\
 & & O_{32}^1 & 2 & 14.3305 & O_{24}^2 & 1 & 20.3588 \\
\end{longtable}

Let us briefly summarise the interesting subfactor lattices encoded in the above table.  

There are four lines with $Z = O_{11}^6$, so there are four intermediate subfactors of the index $255.411\ldots$ subfactor corresponding to $O_{11}^6$.  
Note that $O_{11}^6$ denotes the 6th smallest object in $\cE\cH_1$, which is $P'$, so this subfactor is the reduced subfactor corresponding to $P'$.  
For two of these intermediates the lower inclusion is Extended Haagerup while the upper inclusions have index $58.3502\ldots$ and come from the reduced subfactor construction for $O_{12}^3$ or $O_{12}^4$ (these are the odd vertices near the ends of the Extended Haagerup principal graph).  For the other two, the lower and upper parts are switched.  
We next want to see how these fit together into a lattice.  From the next two lines in the table we see that the index $58.3502\ldots$ subfactors themselves each have a single intermediate, with one inclusion being Extended Haagerup and the other being the $2$-supertransitive subfactor of index $13.3305\ldots$ with principal graph shown in Equation \eqref{eq:S13}.  
It follows that the lattice is a hexagon, where the upper and lower edges are the Extended Haagerup subfactor and the middle edges are the index $13.3305\ldots$ subfactors.

Note that since none of the other entries in the $X$ or $Y$ columns also occurs in the $Z$ column, other than the hexagon every lattice will just be $M_n$, the lattice with one maximal element, one minimal element, and $n$ incomparable elements between them.  The number of such incomparable entries is simply the number of rows with that $Z$; for example, $O_{13}^6$ has intermediate subfactor lattice $M_6$.

In addition to the hexagon there are a few notable examples of quadrilaterals where all inclusions are at least $2$-supertransitive.

\begin{itemize}
\item A quadrilateral from $O_{33}^6 = U$ which follows from $X \otimes X \cong U \cong \overline{X} \otimes \overline{X}$. This quadrilateral is commuting and cocommuting.  This quadrilateral suggests a possible diagrammatic presentation for $\cE\cH_3$, described in Appendix \ref{sec:YBR}.

\item A quadrilateral from $O_{22}^8 = \jw{6}$ which follows from $A \otimes A \cong \jw{6} \cong \jw{6} \cong B \otimes B$ in $\cE\cH_2$.  This quadrilateral is commuting and cocommuting.  As for the previous quadrilateral, this may lead to a diagrammatic presentation for $\cE\cH_2$.

\item A quadrilateral from $O_{23}^5$ whose dual is a non-commuting but cocommuting $(2,2)$-supertransitive quadrilateral where the subfactor is the index $14.3305\ldots$ one from Equation \eqref{eq:S14} and the upper quadrilateral is the index $13.3305\ldots$ from Equation \eqref{eq:S13}.

\item A quadrilateral from $O_{34}^2$ whose dual is a non-commuting but cocommuting $(3,3)$-supertransitive quadrilateral where the subfactor is the index $8.0283\ldots$ one from Equation \eqref{eq:S8} and the upper quadrilateral is the index $7.0283\ldots$ from Equation \eqref{eq:S7}.
\end{itemize}

In \cite{MR2418197} non-commuting cocommuting supertransitive quadrilaterals are divided into three cases (called Classes II, III, and IV, while Class I referred to non-cocommuting) based on whether the Galois group is trivial, cyclic of order $2$, or cyclic of order $3$.  
The third and fourth examples above both fit into Class II.  
We expect that the fourth example is the smallest quadrilateral in Class II with indices above $4$.  
Note that the smallest example with indices above $4$ in Class III comes from the Asaeda--Haagerup subfactor, while the smallest example with indices above $4$ in Class IV comes from the Haagerup subfactor.    
It is striking that Extended Haagerup appears to be the smallest example in the remaining case.


\section{Module categories and graph planar algebra embeddings} \label{sec:ModuleGPA}

The graph planar algebra embedding theorem from \cite{MR2812459} states that any subfactor planar algebra embeds in the graph planar algebra \cite{MR1865703} of either of its principal graphs.  Peters observed in \cite{MR2679382} that it is possible for a subfactor planar algebra to embed in the graph planar algebra of other graphs; in particular she
found that the Haagerup planar algebra embeds in the graph planar algebra of a third graph, called the `broom'. 
In this section we strengthen the graph planar algebra embedding theorem, to obtain a classification of embeddings in graph planar algebras. In particular, we show that a subfactor planar algebra embeds into the graph planar algebra of a bipartite graph \emph{if and only if} the graph is the fusion graph of a unitary module category with a compatible trace.

We begin with the simple observation that a module category $\cM$ for a tensor category $\scrC$
is exactly the data of a tensor functor $\scrC \rightarrow \End(\cM)$.
As we proceed through this section, we elaborate this observation in various directions, eventually obtaining our theorem.
This involves four adjustments:
\begin{itemize}
\item describing endofunctors in $\End(\cM)$ as graphs,
\item adapting to the shaded setting required for subfactor planar algebras,
\item working in the unitary setting, and
\item understanding the additional data corresponding to pivotal structures.
\end{itemize}
Note that in order to be able to characterize maps out of the Extended Haagerup subfactor planar algebra (and hence characterize modules for the module categories),
we will rely on the unitary pivotal structure (see Remark \ref{rem:UnitaryEssential}). Thus even if the reader is only interested in the algebraic classification of modules over the Extended Haagerup fusion categories, they still need to understand the unitary pivotal version of the graph planar algebra embedding theorem!

We will assume that the reader is familiar with tensor categories following \cite{MR3242743}, 
but we will not assume previous familiarity with graph planar algebras. 
We take this pedagogical approach for several reasons.
First, it was this algebraic perspective that allowed us to see that one should expect a GPA embedding theorem for modules.
Thus this approach unifies
(unitary) module category classification results like \cite{MR2046203,MR3420332} 
and 
GPA embedding constructions like \cite{MR1929335,MR2679382,MR2979509}, which will hopefully make GPA embeddings more accessible to algebraists.
Second, an independent purely subfactor theoretic proof of our classification of embeddings will appear soon in \cite{ModuleEmbedding}, using towers of algebras.
Subfactor experts may prefer to read that paper as a replacement for this section.

Here is a more detailed breakdown of this section.  In \S \ref{sec:EndEmbedding} we recall some background on module categories, Morita equivalences, and the endofunctor embedding theorem (that giving a module category $\cM$ over $\scrC$ is the same as giving a functor $\scrC \rightarrow \End(\cM)$).  In \S \ref{sec:AlgebraicGPA} we introduce an ``unbiased" definition of monoidal categories which we call monoidal algebras.  Monoidal algebras are an analogue of planar algebras without rotational symmetry.  In Section \ref{sec:MonoidalAlgebraEmbedding} we introduce the graph monoidal algebra (which is an analogue of the graph planar algebra), explain its relationship to $\End(\cM)$, and see that the endomorphism embedding theorem yields a graph monoidal algebra embedding theorem for module categories.  This is the simplest non-technical version of our main result, and contains the major idea of this section.

The rest of the section is dedicated to adapting the graph monoidal algebra embedding theorem for module categories to the pivotal and unitary pivotal settings where it becomes the graph planar algebra embedding theorem for appropriate pivotal and unitary pivotal analogues of module categories.  These analogues of module categories  involve both {\em structure} on $\cM$ and {\em compatibility} of that structure with the module action.  
In the semisimple pivotal setting Schaumann \cite{MR3019263} showed that the appropriate structure is a choice of trace on $\cM$.
In \S \ref{sec:PlanarAlgebras}, we recall the definitions of planar algebra, unitary dual functors, and unitary pivotal structure, and we explain the relationship between planar algebras and unitary pivotal fusion categories.  
In \S \ref{sec:GPAModuleEmbedding},  we recall Schaumann's notion of trace and modify this notion to the unitary setting, we then define (unitary) pivotal modules, prove a (unitary) pivotal analogue of endofunctor embedding, and translate that into the desired graph planar algebra embedding theorem.

\subsection{Module categories, Morita equivalences, and endofunctor embedding} \label{sec:EndEmbedding}

Recall that a \emph{multitensor category} over $\mathbb{C}$ is a $\mathbb{C}$-linear abelian category together with a tensor product $\otimes$, a unit object $1_\scrC$, and unitors and associators satisfying natural axioms, where every object $X$ has a dual $X^\vee$.  
We call $\scrC$ a \emph{tensor category} if $1_\scrC$ is simple.  
A \emph{multifusion category} is a semisimple multitensor category with finitely many isomorphism classes of simples, and a \emph{fusion category} is a multifusion category with $1_\scrC$ simple.  
For further details see the book \cite{MR3242743}.

Tensor categories can be thought of as categorical analogues of ordinary algebras.  
Many ordinary algebraic notions have analogues for tensor categories, and in particular the analogues of modules, bimodules, and Morita equivalences play a key role in studying tensor categories, as pioneered by Ocneanu, M\"uger, Ostrik, and others \cite{MR1907188,MR1966524,MR1976459}.  
Again see \cite[\S7]{MR3242743} for further details.

In particular, we have the following two important problems about the ``representation theory'' of fusion categories.

\begin{problem}[Classification of Modules]
\label{problem:Modules}
Classify all indecomposable semisimple module categories over a given fusion category $\scrC$.
\end{problem}

\begin{problem}[Morita Equivalence]
\label{problem:Morita}
Classify all fusion categories $\cD$ (up to tensor equivalence) which are Morita equivalent to $\scrC$, and all the Morita equivalences between them (up to bimodule equivalence).
Furthermore, understand the Brauer-Picard groupoid, which describes the compositions of these Morita equivalences under balanced tensor product ${}_\scrC \cM \boxtimes_\cD \cN_\cE$.
\end{problem}

From a higher categorical perspective it is somewhat unnatural to only study equivalence classes, and it is more natural to consider Etingof-Nikshych-Ostrik's Brauer-Picard $3$-groupoid \cite{MR2677836} which consists of fusion categories Morita equivalent to $\scrC$, Morita equivalences between them, bimodule equivalences between these Morita equivalences, and bimodule natural isomorphisms.  
The higher structure of this $3$-groupoid is essential for classifying $G$-graded extensions of fusion categories.  
The Morita equivalence problem asks for the fundamental $1$-groupoid of this $3$-groupoid.  
As it turns out, for the examples considered in this paper, the higher structure of the $3$-groupoid is trivial (see Corollary \ref{cor:HomotopyTypeOfBPG}).

The following theorem shows that the two problems above are closely related.  
Recall that if $X$ is an invertible object, then conjugation by $X$ is an inner autoequivalence.

\begin{thm}[{\cite[Prop.~4.2 and \S 4.3]{MR2677836}, \cite[\S 7.12]{MR3242743}}]
If $\scrC$ is a fusion category and $\cM$ is a semisimple $\scrC$-module category, 
then $\cC-\cD$ bimodule category structures on $\cM$ which extend the $\cC$-module structure correspond exactly to functors $\cF: \cD \rightarrow \End_{\scrC}(\cM)$, and such a bimodule is a Morita equivalence if and only if $\cF$ is an equivalence of multitensor categories.  
Two such bimodule categories are equivalent if and only if the functors differ by an inner autoequivalence.  
Furthermore, $\End_{\scrC}(\cM)$ is a tensor category (with simple unit object) if and only if $\cM$ is indecomposable.
\end{thm}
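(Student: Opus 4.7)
The plan is to unpack the correspondence in both directions and then verify the extra claims. Given a $\scrC$-$\cD$ bimodule structure on $\cM$ extending the $\scrC$-action, define $\cF: \cD \to \End_\scrC(\cM)$ by $\cF(D) := (-)\otimes D$. The middle associator $(c \otimes m)\otimes d \cong c \otimes (m\otimes d)$ equips each $\cF(D)$ with the structure of a left $\scrC$-module endofunctor, while the right associator $(m\otimes d)\otimes d' \cong m\otimes (d\otimes d')$ supplies the tensorator $\cF(D)\circ \cF(D') \cong \cF(D\otimes D')$; the bimodule pentagons then translate directly into the monoidal coherence axioms for $\cF$. Conversely, given a tensor functor $\cF:\cD\to\End_\scrC(\cM)$, set $m\otimes d := \cF(D)(m)$; the $\scrC$-module structure on each $\cF(D)$ provides the middle associator, and the tensorator provides the right associator. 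These two constructions are mutually inverse up to canonical isomorphism, giving the first bijection.

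For the Morita equivalence clause, recall that a $\scrC$-$\cD$ bimodule is a Morita equivalence precisely when the two induced tensor functors $\scrC \to \End_\cD(\cM)^{\op}$ and $\cD \to \End_\scrC(\cM)$ are equivalences of multitensor categories. Under the correspondence above, the second of these functors is exactly $\cF$, so the Morita condition becomes the statement that $\cF$ is a tensor equivalence. For the uniqueness clause, suppose $\cF_1$ and $\cF_2$ yield equivalent bimodule structures via a bimodule equivalence $\Phi: \cM \to \cM$ that preserves the left $\scrC$-module structure. Then $\Phi$ is an invertible object of $\End_\scrC(\cM)$, and the bimodule compatibility $\Phi \circ (-\otimes_1 D) \cong (-\otimes_2 D)\circ \Phi$ translates into $\cF_2(D) \cong \Phi \circ \cF_1(D)\circ \Phi^{-1}$, so $\cF_1$ and $\cF_2$ differ by the inner autoequivalence $\Ad_\Phi$. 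Conversely, any such $\Phi$ furnishes an equivalence of bimodule structures.

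For the final statement, the unit object of $\End_\scrC(\cM)$ is $\id_\cM$, whose endomorphism algebra $\End(\id_\cM) = \Nat(\id_\cM,\id_\cM)$ is the $\scrC$-module center of $\cM$. Because $\cM$ is finite semisimple over $\scrC$, this center decomposes as $\prod_i \bbC$ indexed by the indecomposable $\scrC$-module summands of $\cM$, so $\id_\cM$ is simple iff there is a unique summand, iff $\cM$ is indecomposable; in that case $\End_\scrC(\cM)$ has simple unit and is therefore a tensor category (finite semisimplicity and rigidity of $\End_\scrC(\cM)$ following from the general theory of functor categories between finite semisimple module categories). The main obstacle is mostly bookkeeping: ensuring that the cubical diagram of coherences (pentagon for $\cD$, $\scrC$-linearity hexagons, and bimodule middle-pentagon) matches on the nose between the two descriptions, and checking that bimodule equivalences are genuinely parametrized by invertible objects rather than merely by natural isomorphism classes of $\scrC$-module autoequivalences. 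Both points, however, are essentially formal and appear in \cite{MR2677836} and \cite[\S7.12]{MR3242743}, which we are free to cite.
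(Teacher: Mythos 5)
Your proposal is correct and is essentially the standard argument from the cited sources (\cite{MR2677836}, \cite[\S 7.12]{MR3242743}); the paper itself offers no independent proof of this theorem, so there is nothing further to compare. The only point to watch is the composition-order convention: with the usual composition of endofunctors, $(\cF(D)\circ\cF(D'))(m)=(m\otimes d')\otimes d\cong m\otimes(d'\otimes d)$, so $\cF$ is monoidal only once one equips $\End_{\scrC}(\cM)$ with the reversed composition as its tensor product (equivalently, replaces $\cD$ by $\cD^{\op}$), a bookkeeping choice you implicitly make and which the references fix by convention.
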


In particular, in order to solve the Problem \ref{problem:Morita} about Morita equivalence, it is enough to solve Problem \ref{problem:Modules} about modules, and further solve the following.

\begin{problem}[Outer Automorphisms]
For each $\cD$ in the Morita equivalence class of $\scrC$, find the outer automorphism group of $\cD$.
\end{problem}

None of the fusion categories we study in this article have outer automorphisms.
Thus classifying modules and Morita equivalences are essentially the same.  
However, the reader should note that given $\scrC$ and $\cM$, actually calculating the structure of the dual category $\End_{\scrC}(\cM)$ may be quite difficult.  
The dual categories $\End_{\scrC}(\cM)$ are essentially the same thing as the dual parts of GHJ subfactors \cite{MR999799}.
We refer the reader to \cite{MR1355948} for a notable concrete example where understanding the detailed structure of the dual category is difficult.

All of the above problems are ``external'' problems, relating $\scrC$ to other tensor categories and module categories.  
However, they are closely related by a theorem of Ostrik \cite{MR1976459} to ``internal'' problems about algebra objects or Q-systems inside $\scrC$.  
Two such algebras are internally Morita equivalent if there is an invertible bimodule object between them.  

\begin{thm}[\cite{MR1976459}]
Given $A$, a connected semisimple algebra, $\Mod_\scrC(A)$ is an indecomposable module category. Moreover every indecomposable $\cC$-module category $\cM$ is equivalent to one of this form, by taking $A = \underline{\End}_\cC(m)$ for any simple $m \in \cM$.

The collection of connected semisimple algebras $\left \{B \mid \Mod_\scrC(B) \cong \Mod_\scrC(A) \right \}$ is exactly the internal Morita equivalence class of $A$.

The dual category $\End_{\scrC}(\Mod_\scrC(A))$ is canonically identified with the category of $A-A$ bimodules in $\cC$.
\end{thm}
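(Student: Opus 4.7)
The plan is to build every statement from the internal Hom functor. For a finitely semisimple $\scrC$-module category $\cM$ and objects $m,n \in \cM$, I would define $\underline{\Hom}_\scrC(m,n) \in \scrC$ as the object representing the functor $c \mapsto \Hom_\cM(c \otimes m, n)$. Such an object exists because $\scrC$ is finitely semisimple, so one can write it down explicitly as a direct sum over simples of $\scrC$. Composition in $\cM$ transports through this representability to multiplication maps $\underline{\Hom}_\scrC(n,p) \otimes \underline{\Hom}_\scrC(m,n) \to \underline{\Hom}_\scrC(m,p)$; in particular $A := \underline{\End}_\scrC(m)$ becomes an algebra in $\scrC$, and $\underline{\Hom}_\scrC(m,\cdot)$ takes values naturally in right $A$-modules.

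For the existence part of the first claim, $\Mod_\scrC(A)$ carries the $\scrC$-action $c \cdot X := c \otimes X$ with $A$ acting only on $X$; connectivity $\dim \scrC(\mathbf{1} \to A) = 1$ implies that $A$ as a right $A$-module has endomorphism algebra $\mathbb{C}$, so it is a simple generator and $\Mod_\scrC(A)$ is indecomposable. For the converse, given an indecomposable $\cM$ with a chosen simple $m$ and $A := \underline{\End}_\scrC(m)$, the functor $F : \cM \to \Mod_\scrC(A)$ given by $F(n) := \underline{\Hom}_\scrC(m,n)$ with its canonical right $A$-action is fully faithful by the defining adjunction. Essential surjectivity is the key nontrivial point, and I would prove it by checking that $m$ generates $\cM$: indecomposability implies that the full $\scrC$-submodule category of $\cM$ generated by $m$ has no complement and so must equal $\cM$, which forces every simple of $\cM$ to appear as a summand of some $c \otimes m$ and hence lie in the image of $F$.

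The Morita equivalence statement then follows formally: given an equivalence $\Phi : \Mod_\scrC(A) \simeq \Mod_\scrC(B)$ of $\scrC$-module categories, $M := \Phi(A)$ is a right $B$-module, and functoriality of $\Phi$ applied to left multiplications by $A$ equips $M$ with a commuting left $A$-action, making $M$ an $A$-$B$-bimodule whose inverse is $\Phi^{-1}(B)$. Conversely, tensoring with an invertible bimodule yields an equivalence of module categories. For the final identification, the functor sending an $A$-$A$-bimodule $M$ to $- \otimes_A M \in \End_\scrC(\Mod_\scrC(A))$ is monoidal by associativity of $\otimes_A$, with inverse sending an endofunctor $G$ to $G(A)$, equipped with its right $A$-action from $G(A) \in \Mod_\scrC(A)$ and its left $A$-action from applying $G$ to left multiplications $A \to A$; the two constructions are readily checked to be mutually inverse. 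The only substantive obstacle throughout the argument is essential surjectivity of $F$ in the second paragraph, which is precisely where indecomposability of $\cM$ is used.
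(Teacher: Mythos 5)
The paper does not prove this theorem; it is quoted from Ostrik \cite{MR1976459}, so there is no internal proof to compare against. Your reconstruction is the standard internal-Hom argument and is essentially sound: the existence of $\underline{\Hom}$, the algebra structure on $\underline{\End}_\scrC(m)$, the indecomposability of $\Mod_\scrC(A)$ from connectedness (since $\End_{\Mod_\scrC(A)}(A)\cong\scrC(\mathbf{1}\to A)\cong\bbC$ makes $A$ a simple generator), and the Eilenberg--Watts-style identifications in the last two paragraphs are all correct.

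The one place where your bookkeeping is off is the claim that $F=\underline{\Hom}_\scrC(m,-)$ is ``fully faithful by the defining adjunction,'' with indecomposability reserved for essential surjectivity. In fact full faithfulness of $F$ is exactly the statement that the counit $\underline{\Hom}_\scrC(m,n)\otimes_A m\to n$ is an isomorphism for all $n$, which fails whenever $m$ does not generate: if $\cM=\cM_1\oplus\cM_2$ with $m\in\cM_1$, then $F$ annihilates every nonzero $n\in\cM_2$ and is not faithful. So the generation argument you give (correctly) from indecomposability is what delivers full faithfulness. Essential surjectivity then comes from the complementary observation: the unit $X\to FG(X)$ is an isomorphism on free modules since $F(c\otimes m)=c\otimes A$, every $A$-module is a summand of a free one by semisimplicity, and the image of a fully faithful functor out of an idempotent-complete category is closed under summands. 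All the ingredients are present in your write-up; they just need to be attached to the right steps.
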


This theorem shows that the above problems are closely related to Ocneanu's ``maximal atlas'' \cite{MR1907188}.

\begin{defn}
Let $\scrC$ be a fusion category.
A \emph{maximal atlas} for $\scrC$ is a choice of a semisimple connected algebra $A$ in each internal Morita equivalence class.
From such a maximal atlas, one gets a collection of fusion categories $\Bim_\scrC(A,A)$ and Morita equivalences $\Bim_\scrC(A,B)$.\footnote{
The distinction between thinking of the maximal atlas as a collection of algebras and bimodules or as a collection of tensor categories and Morita equivalences is often elided.
}
\end{defn}

In general, a maximal atlas will contain less information than the Brauer-Picard groupoid, because it does not remember the tensor equivalences between the fusion categories $\Bim_\scrC(A,A)$ 

\begin{example}  
For $\scrC = \Vec(\mathbb{Z}/3\mathbb{Z})$, a maximal atlas is given by $1$ and the group algebra $A = \mathbb{C}[\mathbb{Z}/3\mathbb{Z}]$ (with each group element in its own grade).  
The category of bimodules $\Bim_\scrC(A,A)$ is $\Rep(\mathbb{Z}/3\mathbb{Z})$, which is (non-canonically!) equivalent to $\Vec(\mathbb{Z}/3\mathbb{Z})$.  
The outer automorphism group of  $\Vec(\mathbb{Z}/3\mathbb{Z})$ is the group of units $(\mathbb{Z}/3\mathbb{Z})^\times$ acting by permuting simple objects, so we get two distinct equivalences $\Rep(\mathbb{Z}/3\mathbb{Z}) \cong\Vec(\mathbb{Z}/3\mathbb{Z})$.  
Thus the aforementioned maximal atlas of $\scrC$ consists of two tensor categories (which happen to be tensor equivalent) and a single bimodule between the two, while the Brauer-Picard groupoid consists of one tensor category and four Morita autoequivalences.  

One can then determine the group structure of this set of four autoequivalences.
By a result of Etingof-Nikshych-Ostrik \cite[Cor.~1.2]{MR2677836}, this Brauer-Picard group must be the split orthogonal group $O_2(\mathbb{F}_3 \oplus \mathbb{F}_3^*)$, which is the Klein four group.  
Note that in the maximal atlas formalism one can not even ask about the structure of this group.
In a sense the maximal atlas is a ``universal cover'' of the Brauer-Picard groupoid, and has lost all the interesting topological information about the latter (while still retaining the combinatorial information).
However, for all examples in this article, the Brauer-Picard group is trivial, and so these subtleties between the Brauer-Picard groupoid and the maximal atlas do not play an important role.
(In contrast, this distinction was critical in the study of the Asaeda--Haagerup subfactor \cite{MR3449240}, which has Brauer-Picard group the Klein four group.)
\end{example}

Just as a module $M$ over an algebra $A$ is equivalent to a homomorphism $A \to \End(M)$,
module categories $\cM$ over $\scrC$ are equivalent to tensor functors $\scrC \rightarrow \End(\cM)$ \cite[Prop. 7.1.3.]{MR3242743}.  Thus the module classification problem is equivalent to the following.

\begin{problem}[Endofunctor embedding]
Classify all semisimple categories $\cM$ and all tensor functors $\scrC \rightarrow \End(\cM)$, up to conjugation by an autoequivalence of $\cM$.
\end{problem}

The following omnibus theorem summarizes much of the above.

\begin{thm}\label{thm:AlgOmnibus}
Suppose that $\scrC$ is a fusion category.
\begin{itemize}
\item Module category structures on a semisimple category $\cM$ correspond exactly to tensor functors $\scrC \rightarrow \End(\cM)$.
\item A fusion category $\cD$ is Morita equivalent to $\scrC$ if and only if there is an indecomposable semisimple $\cC$-module category $\cM$ such that $\cD$ is tensor equivalent to $\End_\scrC(\cM)$.  
Furthermore the Morita equivalences ${}_\scrC\cN_\cD$ such that ${}_\scrC\cN$ is equivalent to ${}_\scrC \cM$ are a torsor for the group of outer automorphisms $\mathrm{Out}(\cD)$.
\item 
Pairs $(\cM, m)$, where $\cM$ is a semisimple indecomposable $\scrC$-module category and $m \in \cM$ is a simple object, correspond exactly to connected semisimple algebras $A$ in $\scrC$, via $A\mapsto \Mod_{\scrC}(A)$ and $(\cM,m) \mapsto \underline{\End}_{\scrC}(m)$. 
The dual category $\End_\cC(\cM)$ corresponding to $A$ is the category of $A$-$A$ bimodules in $\scrC$.
\end{itemize}
\end{thm}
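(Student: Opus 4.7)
The plan is to observe that Theorem~\ref{thm:AlgOmnibus} is essentially a repackaging of the three cited results of Etingof--Nikshych--Ostrik and Ostrik that have just appeared in the excerpt, so the work reduces to checking that the pieces line up. I would dispose of the first bullet immediately by invoking \cite[Prop.~7.1.3]{MR3242743}: a $\bbC$-bilinear functor $\otimes : \scrC \times \cM \to \cM$ equipped with associator and unitor natural isomorphisms satisfying the pentagon and triangle coherences is, by currying $X \mapsto X \otimes (-)$, precisely the same data as a monoidal functor $\scrC \to \End(\cM)$, where $\End(\cM)$ is the tensor category of linear endofunctors of $\cM$ under composition.

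For the second bullet, I would invoke the theorem attributed to \cite[Prop.~4.2]{MR2677836} that has just been stated: a $\scrC$--$\cD$ bimodule category structure extending a given left $\scrC$-module structure on $\cM$ corresponds to a tensor functor $\cD \to \End_\scrC(\cM)$, and the bimodule is a Morita equivalence exactly when that functor is a tensor equivalence. Starting with a Morita equivalent $\cD$, the associated invertible bimodule yields such a tensor equivalence, and hence an indecomposable semisimple $\cM$ with $\cD \simeq \End_\scrC(\cM)$; conversely, any such tensor equivalence pulls back to make $\cM$ into a Morita $\scrC$--$\cD$ bimodule. For the torsor statement, two bimodule structures on the same underlying left module $\cM$ give two tensor equivalences $\cD \rightrightarrows \End_\scrC(\cM)$; composing one with the inverse of the other produces a well-defined element of $\Aut(\cD)$, and by the same cited theorem the two bimodules are equivalent exactly when this element is inner, so the set of bimodule structures with fixed underlying left $\scrC$-module is a torsor for $\mathrm{Out}(\cD) = \Aut(\cD)/\mathrm{Inn}(\cD)$.

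The third bullet is exactly Ostrik's classification \cite{MR1976459}, quoted verbatim in the excerpt, so I would just cite it: given a pair $(\cM, m)$ with $m$ simple, the internal endomorphism object $A := \underline{\End}_\scrC(m)$ is a connected semisimple algebra in $\scrC$ (connectedness because $\Hom_\scrC(1_\scrC, A) = \Hom_\cM(m, m) = \bbC$ by simplicity of $m$), and the internal Hom functor $\underline{\Hom}_\scrC(m, -) : \cM \xrightarrow{\sim} \Mod_\scrC(A)$ is an equivalence of $\scrC$-module categories sending $m$ to $A$; the inverse construction sends $A$ to the pair $(\Mod_\scrC(A), A)$. The identification of $\End_\scrC(\Mod_\scrC(A))$ with the category of $A$--$A$ bimodules in $\scrC$ is the final assertion of the same theorem.

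Nothing conceptual is the obstacle here; the only genuine task is notational bookkeeping, namely matching up the three different incarnations (module category, tensor functor into $\End$, connected algebra) of what should morally be the same piece of data, and verifying in the second bullet that the torsor action descends to $\mathrm{Out}(\cD)$ rather than all of $\Aut(\cD)$. Both are immediate from the cited references once the correspondences are written down explicitly.
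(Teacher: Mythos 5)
Your proposal matches the paper's treatment: Theorem \ref{thm:AlgOmnibus} is stated there without proof, explicitly as a summary of the immediately preceding cited results (\cite[Prop.~7.1.3]{MR3242743} for the first bullet, \cite[Prop.~4.2 and \S 4.3]{MR2677836} for the second including the torsor statement via the ``equivalent iff the functors differ by an inner autoequivalence'' clause, and Ostrik's theorem \cite{MR1976459} for the third). Your assembly of these citations, including the derivation of the $\mathrm{Out}(\cD)$-torsor structure and the connectedness of $\underline{\End}_\scrC(m)$ from simplicity of $m$, is correct and is essentially the same argument the paper intends.
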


\subsubsection{Modules for multifusion categories}
\label{sec:ModulesForMultifusionCats}

Recall that a multifusion category $\scrC$ is like a fusion category, except $1_\scrC$ is no longer simple.
Since $\scrC$ is semisimple and $\scrC(1_\scrC \to 1_\scrC)$ is a commutative algebra, $1_\scrC$ breaks up as a sum of $r$ distinct simple objects $1_\scrC = \bigoplus_{i=1}^r 1_i$.
We call such a multifusion category \emph{$r$-shaded}.
We denote by $\scrC_{ij}$ the summand $1_i\otimes \scrC \otimes 1_j$.

\begin{prop} \label{prop:MultifusionReconstruction}
If $\scrC$ is an $r$-shaded multifusion category, then each $\scrC_{ii}$ is a fusion category. When $\scrC$ is indecomposable as a multifusion category each $\scrC_{ij}$ is a Morita equivalence between $\scrC_{ii}$ and $\scrC_{jj}$.  Furthermore, the tensor product map $\scrC_{ij} \boxtimes_{\scrC_{jj}} \scrC_{jk} \rightarrow \scrC_{ik}$ is an equivalence.

Conversely, given fusion categories $\cD_{11},\dots, \cD_{rr}$ and a Morita equivalence $\cD_{1j}$ between $\cD_{11}$ and $\cD_{jj}$ for each $1< j\leq r$, we define $\cD_{ik}:= \cD_{1i}^{-1}\boxtimes_{\cD_{11}}  \cD_{1k}$ for each $i,k\in \{1,\dots, r\}$ to get an indecomposable multifusion category $\cD = \bigoplus_{i,k=1}^r (\cD_{ik})$. These constructions are mutually inverse.
\footnote{
\label{footnote:MultifusionFromSumOfAlgebras}
One can avoid the relative tensor product to obtain a multifusion category equivalent to $\cD$ as follows.
First, choose a simple object $d_i\in \cD_{1i}$ for all $i=1,\dots, r$, and consider the connected algebra objects $A_i = \underline{\End}_{\cD_{11}}(d_i)$.
Then $\cD$ is equivalent to the category of $A-A$ bimodules internal to $\cD_{11}$ where $A=\bigoplus_{i=1}^r A_i$.
}
\end{prop}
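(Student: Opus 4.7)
The plan is to prove both directions of the correspondence and then verify they are mutually inverse; the tools throughout are the theory of bimodule categories over fusion categories, the universal property of the relative Deligne tensor product $\boxtimes$, and Theorem \ref{thm:AlgOmnibus}.

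For the forward direction, each $\scrC_{ii} = 1_i \otimes \scrC \otimes 1_i$ inherits $\otimes$, semisimplicity, and rigidity from $\scrC$ and has simple unit $1_i$, so it is a fusion category, and each $\scrC_{ij}$ is a $(\scrC_{ii}, \scrC_{jj})$-bimodule category under $\otimes$. To see this bimodule is a Morita equivalence when $\scrC$ is indecomposable, I would show the right action gives a tensor equivalence $\scrC_{jj}^\op \to \End_{\scrC_{ii}}(\scrC_{ij})$. Fully faithfulness is immediate from $\Hom_\scrC(X, Y) = 0$ for $X \in \scrC_{ij}$, $Y \in \scrC_{ij'}$ with $j \ne j'$, which follows from the decomposition $1_\scrC = \bigoplus_k 1_k$ into distinct simples. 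Nonzeroness of $\scrC_{ij}$ and essential surjectivity both reduce to indecomposability: a proper sub-bimodule of $\scrC_{ij}$, or the vanishing of $\scrC_{ij}$, would partition $\{1,\dots,r\}$ in a way that decomposes $\scrC$ as a multifusion category. The composition $\scrC_{ij} \boxtimes_{\scrC_{jj}} \scrC_{jk} \to \scrC_{ik}$ is then a bimodule functor between invertible bimodule categories with matching sources and targets; fully faithfulness follows from the universal property of $\boxtimes_{\scrC_{jj}}$ combined with the Morita equivalence just established, and essential surjectivity from the same indecomposability argument.

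For the reverse direction, write $\cD_{i1} := \cD_{1i}^{-1}$ for the inverse Morita equivalence, set $\cD_{ik} := \cD_{i1} \boxtimes_{\cD_{11}} \cD_{1k}$, and define $\cD := \bigoplus_{i,k=1}^r \cD_{ik}$ with composition $\cD_{ij} \boxtimes_{\cD_{jj}} \cD_{jk} \to \cD_{ik}$ built from the evaluation $\cD_{1j} \boxtimes_{\cD_{jj}} \cD_{j1} \simeq \cD_{11}$ witnessing invertibility, unit $\bigoplus_i 1_{\cD_{ii}}$, and duals inherited from the factors. Mutual inverseness is then straightforward: from $\scrC$, the reconstructed components $\scrC_{i1} \boxtimes_{\scrC_{11}} \scrC_{1k}$ map to $\scrC_{ik}$ by the equivalences already proved; from the data $(\cD_{ii}, \cD_{1j})$, the first row of the reconstruction is $\cD_{11} \boxtimes_{\cD_{11}} \cD_{1k} \simeq \cD_{1k}$ and the diagonal is $\cD_{1i}^{-1} \boxtimes_{\cD_{11}} \cD_{1i} \simeq \cD_{ii}$, both canonically. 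The main obstacle is the coherence check for the reverse construction: verifying that the associator and unitor for $\cD$, built from the associator of $\boxtimes_{\cD_{11}}$ and the evaluation/coevaluation of the invertible bimodules $\cD_{1j}$, satisfy the pentagon and triangle axioms globally is a standard but lengthy diagram chase in the 3-category of fusion categories, which I would handle by citing the coherence results from \cite{MR2677836,MR3242743} rather than writing out all the diagrams.
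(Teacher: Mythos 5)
Your overall route is the same as the paper's: the forward direction is the groupoid-graded version of the Etingof--Nikshych--Ostrik extension theory (the paper simply cites \cite[Thm.~6.1]{MR2677836} and \cite[Prop.~7.17.5]{MR3242743} for this), and your converse construction, with the composition functors built from the evaluation equivalences and duals identified via $\cD_{ij}^{\op}\cong\cD_{ji}$, is exactly what the paper writes down. So the architecture is fine; the issue is one step in your attempted direct proof of the forward direction.

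Indecomposability alone does not give essential surjectivity of $\scrC_{jj}^{\op}\to\End_{\scrC_{ii}}(\scrC_{ij})$. Your "partition of $\{1,\dots,r\}$" argument correctly shows that each $\scrC_{ij}$ is nonzero and indecomposable as a $(\scrC_{ii},\scrC_{jj})$-bimodule (hence as a left $\scrC_{ii}$-module), and full faithfulness does follow from the vanishing of Homs between distinct graded pieces. But those facts only tell you that the dual category $\End_{\scrC_{ii}}(\scrC_{ij})$ is a fusion category containing $\scrC_{jj}^{\op}$ as a full tensor subcategory; they do not rule out the dual category being strictly larger, i.e.\ containing module endofunctors that are not of the form $-\otimes Y$ for $Y\in\scrC_{jj}$. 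Closing this gap requires an extra input: either the Frobenius--Perron dimension count $\mathrm{FPdim}(\scrC_{ij})^2=\mathrm{FPdim}(\scrC_{ii})\cdot\mathrm{FPdim}(\scrC_{jj})$ (which is how \cite{MR2677836} proves the group-graded case), or the internal-end/double-commutant argument identifying $\End_{\scrC_{ii}}(\scrC_{ij})$ with $A$--$A$ bimodules for $A=\underline{\End}_{\scrC_{ii}}(m)$, $m\in\scrC_{ij}$ simple, and then matching that category with $\scrC_{jj}$ inside $\scrC$. The same input is what makes the composition functor $\scrC_{ij}\boxtimes_{\scrC_{jj}}\scrC_{jk}\to\scrC_{ik}$ an equivalence rather than merely a dominant bimodule functor. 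Everything else in your proposal --- the converse construction, rigidity via the opposite-category identification, and the mutual-inverse check on the first row and the diagonal --- is correct and matches the paper.
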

\begin{proof}
The forward direction is \cite[Thm. 6.1]{MR2677836} where instead of a grading group we have a grading by the groupoid of standard matrix units $E_{ij}$.  The proof for groupoids is parallel to the proof for groups.  (See also \cite[Prop.~7.17.5]{MR3242743} which shows the first two parts.)

For the converse direction $\bigoplus_{i,k=1}^r  \cD_{1i}^{-1}\boxtimes_{\cD_{11}}  \cD_{1k}$ has a monoidal structure given by 
\begin{align*}
\left(\cD_{1i}^{-1}\boxtimes_{\cD_{11}}  \cD_{1k}\right) \boxtimes \left(\cD_{1k}^{-1}\boxtimes_{\cD_{11}}  \cD_{1\ell}\right) 
& \rightarrow \cD_{1i}^{-1}\boxtimes_{\cD_{11}}  \cD_{1k} \boxtimes_{\cD_{kk}} \cD_{1k}^{-1}\boxtimes_{\cD_{11}}  \cD_{1\ell} \displaybreak[1] \\
& \rightarrow \cD_{1i}^{-1}\boxtimes_{\cD_{11}}  \cD_{11} \boxtimes_{\cD_{11}}  \cD_{1\ell}   \displaybreak[1] \\
& \rightarrow \cD_{1i}^{-1}\boxtimes_{\cD_{11}} \cD_{1\ell}
\end{align*}
That this monoidal category is rigid follows from the identification of the inverse of a Morita equivalence $\cD_{ij}$ with the opposite category $\cD_{ij}^{\text{op}}$ from \cite[Prop. 4.2]{MR2677836}, where the dual of an object in $\cD_{ij}$ is the same object thought of in $\cD_{ij}^{\text{op}} \cong \cD_{ji}$.
\end{proof}

\begin{remark}
To each multifusion category $\scrC$ there is a corresponding rigid $2$-category whose objects are the indices, whose 1-morphisms are the objects in the $\scrC_{ij}$, and whose $2$-morphisms are the morphisms in $\scrC_{ij}$.  There is not an important difference between this $2$-category and the multifusion category, but in this paper, we use the multifusion language to align with the results of \cite{MR3242743,1808.00323}.
\end{remark}

The non-pivotal algebraic analogue of an irreducible finite depth subfactor $N \subset M$ is a pair $(\scrC, A)$ where $\scrC$ is a fusion category (which corresponds to the $N-N$ bimodules generated by $M$) and $A\in \scrC$ is a semisimple connected algebra object (which corresponds to $M$).
Given such an $A \in \scrC$, we get a Morita equivalence $\Mod_\scrC(A)= \Bim_\scrC(1_\scrC, A)$ between $\scrC$ and $\Bim_\scrC(A)$, and we get a 2-shaded indecomposable multifusion category as in Footnote \ref{footnote:MultifusionFromSumOfAlgebras} by
$$
\Bim_\scrC(1_\scrC \oplus A , 1_\scrC \oplus A)
=
\begin{pmatrix}
\Bim_\scrC(1_\scrC, 1_\scrC) & \Bim_\scrC(1_\scrC, A)
\\
\Bim_\scrC(A, 1_\scrC) & \Bim_\scrC(A,A)
\end{pmatrix}
$$
with tensor product given by $\otimes_\scrC$, $\otimes_A$, or zero as appropriate.
Notice that $\Bim_\scrC(1_\scrC, 1_\scrC) = \scrC$.

There's an analogue of Prop. \ref{prop:MultifusionReconstruction} for module categories.

\begin{prop}
Suppose that $\cD$ is an $r$-shaded multifusion category with components $\cD_{ij}$.  Suppose that $\cM$ is an indecomposable module category over $\cD$.  Then $\cM = \bigoplus_{j=1}^r \cM_j$ where $\cM_i =1_i \vartriangleright \cM$.  Furthermore, the action maps $\cD_{ij} \boxtimes_{\cD_{jj}} \cM_j \rightarrow \cM_i$ are equivalences.

Conversely, given an indecomposable module category $\cM_1$ over $\cD_{11}$, we define $$\cF(\cM_1) := \bigoplus_{i=1}^r \cD_{i1} \boxtimes_{\cD_{11}} \cM_1.$$  We can endow $\cF(\cM_1)$ with the structure of a $\cD$-module category via 
$$\cD_{kj} \boxtimes \cF(\cM_1) \cong \bigoplus_{i=1}^r \cD_{kj} \boxtimes \cD_{i1} \boxtimes_{\cD_{11}} \cM_1 \rightarrow \cD_{kj} \boxtimes_{\cD_{jj}} \cD_{j1} \boxtimes_{\cD_{11}} \cM_1 \cong \cD_{k1} \boxtimes_{\cD_{11}} \cM_1 \subseteq \cF(\cM_1).$$
 These constructions are mutually inverse with the isomorphism $\cF(\cM_1)\rightarrow \cM$ being the direct sum of the action maps $\cD_{i1} \boxtimes_{\cD_{11}} \cM_1 \cong \cM_i$.
\end{prop}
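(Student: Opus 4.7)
The plan is to use the orthogonal central idempotents $\{1_i\}$ of $\cD$ to decompose $\cM$, and to use the Morita equivalence structure of each $\cD_{ij}$ (from Proposition 3.4) to identify the pieces. Since $1_i\otimes 1_j \cong \delta_{ij}1_i$, the functors $1_i\vartriangleright -$ are mutually orthogonal idempotents on $\cM$ and induce an orthogonal decomposition of abelian categories $\cM = \bigoplus_i\cM_i$ with $\cM_i := 1_i\vartriangleright\cM$; this is the categorification of the splitting of a module over a direct sum of rings. For $X\in\cD_{ij}$ and $m\in\cM_k$, the unit constraint identifies $X\vartriangleright m$ with $(1_i\otimes X\otimes 1_j)\vartriangleright m$, which is zero unless $k=j$ and lies in $\cM_i$ otherwise, so each $\cM_j$ inherits the structure of a $\cD_{jj}$-module category. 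The module associator restricted to $X\in\cD_{ij}$, $Y\in\cD_{jj}$, $m\in\cM_j$ shows that the restricted action is $\cD_{jj}$-balanced, so it descends to a canonical $\cD_{ii}$-module functor $\alpha_{ij}\colon\cD_{ij}\boxtimes_{\cD_{jj}}\cM_j\to\cM_i$.

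The main step is to show each $\alpha_{ij}$ is an equivalence. By Proposition 3.4, $\cD_{ij}$ is invertible as a $\cD_{ii}$-$\cD_{jj}$-bimodule with inverse $\cD_{ji}$, and the unit/counit isomorphisms $\cD_{ij}\boxtimes_{\cD_{jj}}\cD_{ji}\simeq\cD_{ii}$ and $\cD_{ji}\boxtimes_{\cD_{ii}}\cD_{ij}\simeq\cD_{jj}$ come from the tensor product in $\cD$. Coherence of the module action with this tensor product yields natural equivalences $\cD_{ji}\boxtimes_{\cD_{ii}}\cD_{ij}\boxtimes_{\cD_{jj}}\cM_j\simeq\cM_j$ and $\cD_{ij}\boxtimes_{\cD_{jj}}\cD_{ji}\boxtimes_{\cD_{ii}}\cM_i\simeq\cM_i$, under which $\alpha_{ji}$ becomes a two-sided quasi-inverse to $\alpha_{ij}$. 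The hard part will be the coherence bookkeeping: verifying that the Morita unit/counit isomorphisms interact correctly with the module pentagon to produce mutually inverse natural isomorphisms $\alpha_{ij}\circ(\mathrm{id}\boxtimes\alpha_{ji})\simeq\mathrm{id}_{\cM_i}$ and $\alpha_{ji}\circ(\mathrm{id}\boxtimes\alpha_{ij})\simeq\mathrm{id}_{\cM_j}$.

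For the converse, $\cF(\cM_1) := \bigoplus_i\cD_{i1}\boxtimes_{\cD_{11}}\cM_1$ is made into a $\cD$-module using the composition maps $\cD_{kj}\boxtimes_{\cD_{jj}}\cD_{j1}\simeq\cD_{k1}$ supplied by Proposition 3.4, and the module pentagon reduces to the associators of $\cD$ together with the module coherences of $\cM_1$. Starting from a $\cD$-module $\cM$, the equivalences $\alpha_{i1}$ assemble into a canonical $\cD$-module equivalence $\cF(\cM_1)\simeq\cM$; conversely the $1_1$-piece of $\cF(\cM_1)$ is $\cD_{11}\boxtimes_{\cD_{11}}\cM_1\simeq\cM_1$, so the two constructions are mutually inverse. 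Because each $\alpha_{ij}$ is an equivalence, indecomposability transfers across the correspondence in both directions: any $\cD$-submodule decomposition of $\cM$ restricts to a $\cD_{11}$-submodule decomposition of $\cM_1$, and any decomposition of $\cM_1$ propagates through the Morita equivalences to all slots of $\cF(\cM_1)$.
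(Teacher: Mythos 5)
Your proof is correct in substance, but it takes a genuinely different route at the one step the paper itself flags as nontrivial: showing that the action maps $\cD_{ij} \boxtimes_{\cD_{jj}} \cM_j \to \cM_i$ are equivalences. The paper constructs an explicit inverse functor via internal Homs: choosing simples $m_j\in\cM_j$ and $x_{ij}\in\cD_{ij}$ with internal endomorphism algebras $B_j = \underline{\End}_{\cD_{jj}}(m_j)$ and $A_{ij} = {}^\vee x_{ij}\otimes x_{ij}$, it sends $m\mapsto {}^\vee x_{ij}\otimes\underline{\Hom}_{\cD}(m,m_j)$, identifying $\cM$ with $\Bim_{\cD}(A_{ij},B_j)$ in the Ostrik/ENO style. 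You instead bootstrap from the forward direction of Proposition~\ref{prop:MultifusionReconstruction} --- the invertibility of $\cD_{ij}$ as a $\cD_{ii}$--$\cD_{jj}$-bimodule --- and observe that the composites $\alpha_{ji}\circ(\id\boxtimes\alpha_{ij})$ and $\alpha_{ij}\circ(\id\boxtimes\alpha_{ji})$ are each identified, via associativity of the relative tensor product, with a Morita unit/counit followed by a unitor, hence are equivalences. This is a clean reduction that avoids re-running the internal-Hom machinery, and both approaches ultimately rest on the same inputs (ENO Thm.~6.1 / EGNO Prop.~7.17.5). Two points you should make explicit to close the argument: first, $\alpha_{ji}$ is not literally a two-sided quasi-inverse of $\alpha_{ij}$ (their sources and targets do not match), so from the two composite equivalences you still need a two-out-of-six-style step --- e.g., deduce that $\id_{\cD_{ji}}\boxtimes\alpha_{ij}$ admits both a left and a right quasi-inverse, hence is an equivalence, and then cancel $\cD_{ji}$ using its invertibility to conclude that $\alpha_{ij}$ itself is one. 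Second, you implicitly need each $\cM_j$ to be nonzero; this follows from the module associator applied to $X\otimes Y$ with $X\in\cD_{ji}$, $Y\in\cD_{ij}$ (whose span contains summands of $1_j$), but is worth a sentence, since neither it nor indecomposability of $\cD$ is free.
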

\begin{proof}
The only nontrivial step is that $\cD_{ij} \boxtimes_{\cD_{jj}} \cM_j \rightarrow \cM_i$ is an equivalence.  This follows either by the techniques of \cite[Thm. 6.1]{MR2677836} or of \cite[Prop.~7.17.5]{MR3242743}. 
Choose a simple object $m_j$ in $\cM_j$, and let $B_j = \underline{\End}_{\cD_{jj}}(m_j)$ be the internal endomorphisms of $m_j$ in $\cD_{jj}$.  
Similarly, choose a simple object $x_{ij}$ in $\cD_{ij}$ and let $A_{ij}={}^\vee x_{ij} \otimes x_{ij}$ be its internal endomorphism algebra in $\cD_{jj}$.  
We have an equivalence:
$$
\cM \rightarrow \Bim_{\cD}(A_{ij},B_j)
$$ 
via $m \mapsto {}^\vee x_{ij} \otimes \underline{\Hom}_{\cD}(m,m_j)$.  
The restriction of this functor to $\cM_{i}$ then gives an inverse to the map  $\cD_{ij} \boxtimes_{\cD_{jj}} \cM_j \rightarrow \cM_i$.
\end{proof}

Thus classifying modules for $\cD_{11}$ (answering Problem \ref{problem:Modules}) is equivalent to classifying modules for $\cD$.
In particular, given an algebraic analogue of a subfactor $A \in \scrC$, we can instead solve the module problem over the corresponding indecomposable $2$-shaded multifusion category $\Bim_\scrC(1_\scrC \oplus A, 1_\scrC \oplus A)$ which is the purely algebraic, non-pivotal analogue of the subfactor planar algebra.
That is, we construct module categories for $\cE\cH_1$ by constructing module categories over the indecomposable $2$-shaded multifusion category which combines $\cE\cH_1$ and $\cE\cH_2$. This strategy is successful because the extended Haagerup subfactor planar algebra has a better skein theoretic description than either of the fusion categories $\cE\cH_1$ and $\cE\cH_2$ individually.

\subsubsection{Module \texorpdfstring{$\Cstar$}{Cstar} categories for unitary multitensor categories}

In the nomenclature of \cite{1808.00323},
a \emph{unitary multitensor category} $\scrC$ is a Cauchy complete rigid tensor $\Cstar$ category, which is semisimple by \cite{MR1444286}.
We call $\scrC$ a \emph{unitary tensor category} if $1_\scrC$ is simple.
Similar to the above characterization of module categories, given a (Cauchy complete) $\Cstar$ category $\cM$, endowing $\cM$ with the structure of a $\scrC$-module $\Cstar$ category is equivalent to supplying a dagger tensor functor $\scrC \to \End^\dag(\cM)$, the $\Cstar$ category of dagger endofunctors of $\cM$.\footnote{
In order for $\End^\dag(\cM)$ to be $\Cstar$, we only work with bounded natural transformations, i.e., those $\theta: F \Rightarrow G$ such that $\sup_{c\in \scrC} \|\theta_c\| <\infty$.
One then defines $\theta^\dag$ component-wise: $(\theta^\dag)_c := (\theta_c)^\dag$.
}
We provide the following lemma for those less familiar with $\Cstar$ categories, which also appears as \cite[Lem.~A.4.1]{MR3121622}. 

\begin{lem}
\label{lem:DaggerModules}
Suppose $\scrC$ is a unitary multitensor category and $\cM$ is a $\Cstar$ category.
Equipping $\cM$ with the structure of a $\scrC$-module $\Cstar$ category is equivalent to supplying a dagger tensor functor $(\Psi,\mu):\scrC \to \End^\dag(\cM)$.
\end{lem}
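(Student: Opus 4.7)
The plan is to parallel the classical non-dagger correspondence between $\scrC$-module structures on $\cM$ and tensor functors $\scrC \to \End(\cM)$, and then verify at each step that the dagger structure is preserved. I will describe both directions of the correspondence explicitly and indicate which axioms need to be unpacked to see they land in the $\dag$-world rather than the bare one.

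For the forward direction, assume $\cM$ is a $\scrC$-module $\Cstar$ category with action $\vartriangleright: \scrC \times \cM \to \cM$ and unitary associator $\alpha_{c,d,m}: (c \otimes d) \vartriangleright m \to c \vartriangleright (d \vartriangleright m)$. I would define $\Psi(c) := c \vartriangleright (-)$ on objects, and $\Psi(f) := f \vartriangleright \id$ on morphisms; the functoriality and linearity are automatic. The key dagger check is that $\Psi(c)$ is a dagger endofunctor, i.e.\ $(f \vartriangleright \id_m)^\dag = f^\dag \vartriangleright \id_m$ for all $f: c \to c'$ in $\scrC$ and $m \in \cM$. This is precisely part of the definition of a $\Cstar$-action (the action is a $\dag$-bifunctor). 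Boundedness of the natural transformation $f \vartriangleright \id$ follows from $\|f \vartriangleright \id_m\| \leq \|f\|$, which is part of the $\Cstar$-module axioms. The tensorator $\mu_{c,d}:\Psi(c)\circ\Psi(d)\Rightarrow\Psi(c\otimes d)$ is defined component-wise by $(\alpha_{c,d,m})^{-1}$, which is unitary because the module associator is unitary. The pentagon and triangle axioms for $(\Psi,\mu)$ as a tensor functor are exactly rephrasings of the pentagon and triangle for the module structure.

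For the reverse direction, given a dagger tensor functor $(\Psi,\mu): \scrC \to \End^\dag(\cM)$, define $c \vartriangleright m := \Psi(c)(m)$, with action on morphisms given by $\Psi$ on the $\scrC$-side and by the functor $\Psi(c)$ on the $\cM$-side. The module associator is $\alpha_{c,d,m} := (\mu_{c,d})_m^{-1}$, which is unitary because $\mu$ is a unitary natural isomorphism (part of the definition of a $\dag$-tensor functor). That this makes $\cM$ into a module $\Cstar$ category — in particular that the action is a $\dag$-bifunctor — follows because each $\Psi(c)$ is a $\dag$-functor and $\Psi$ itself is $\dag$-preserving on morphism spaces.

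Finally, I would check that these two assignments are mutually inverse, which is essentially a tautology: starting from $\vartriangleright$, building $(\Psi,\mu)$, and then reading off a new action recovers exactly the original data up to the canonical identifications $\Psi(c)(m) = c \vartriangleright m$. The main potential obstacle is bookkeeping around the footnote's boundedness hypothesis for $\End^\dag(\cM)$; this needs to be verified in the forward direction, but it follows from the standard fact in $\Cstar$ categories that the norm of $f \vartriangleright \id_m$ is uniformly bounded by $\|f\|$ (independent of $m$). With that point dispatched, the remainder of the proof is a routine translation of well-known coherence data between its two equivalent packagings, and the lemma then furnishes the $\dag$-refinement needed for the subsequent unitary pivotal discussion.
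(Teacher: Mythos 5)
Your proof is correct and follows essentially the same route as the paper's: both directions are obtained by the standard translation $\Psi(c)=c\vartriangleright -$, $\Psi(f)=f\vartriangleright\id$, with the tensorator given by the (unitary) module associator, and the dagger compatibility read off from the axioms of a module $\Cstar$ category. The only differences are cosmetic (your associator convention forces the tensorator to be $\alpha^{-1}$ rather than $\alpha$, and you spell out the boundedness check that the paper relegates to a footnote).
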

\begin{proof}
We show how each structure induces the other, and we leave it to the reader to check these two processes are mutually inverse (up to dagger equivalence).

Suppose $\cM$ is a $\scrC$-module dagger category.
Note that $c\vartriangleright - $ is a dagger functor in $\End^\dag(\cM)$ for each $c\in \scrC$.
Moreover, if $f\in \scrC(a\to b)$, then $(f\vartriangleright - )^\dag = f^\dag \vartriangleright -$.
Hence $\Psi:\scrC \to \End^\dag(\cM)$ given by $\Psi(c) = c\vartriangleright -$ and $\Psi(f) = f\vartriangleright -$ defines a dagger functor.
Now defining 
$$
\mu^{a,b} : \Psi(a) \circ \Psi(b) = a\vartriangleright b\vartriangleright - \Rightarrow a\otimes b\vartriangleright - = \Psi(a\otimes b)
$$ 
by $\mu^{a,b}_m := \alpha_{a,b,m} : a\vartriangleright b\vartriangleright m \to a\otimes b \vartriangleright m$ defines a unitary natural isomorphism, equipping $\Psi$ with the structure of a dagger tensor functor.

Conversely, suppose $(\Psi,\mu) : \scrC \to \End^\dag(\cM)$ is a dagger tensor functor.
For $c\in \scrC$ and $m\in \cM$, define $c\vartriangleright m := \Psi(c)(m)$.
For $c\in\scrC$ and $g\in \cM(m\to n)$, define $\id_c \vartriangleright g := \Psi(c)(f)$.
For $f\in \scrC(a\to b)$ and $m\in \cM$, define $f\vartriangleright \id_m := \Psi(f)_m$.
To show that $\vartriangleright : \scrC \times \cM \to \cM$ defines a bifunctor, it suffices to prove the exchange relation, which follows immediately from naturality.
That is, for $f\in \scrC(a\to b)$ and $g\in \cM(m\to n)$, the following diagrams commute:
\begin{equation*}
\begin{tikzcd}
\Psi(c)(m)\ar[r, "\Psi(f)_m"] \ar[d, "\Psi(c)(g)"]  & \Psi(d)(m)\ar[d, "\Psi(d)(g)"] 
\\
\Psi(c)(n)\ar[r, "\Psi(f)_n"]  & \Psi(d)(n)
\end{tikzcd}
\qquad
=
\qquad
\begin{tikzcd}
c\vartriangleright m \ar[r, "f\vartriangleright \id_m"] \ar[d, "\id_c \vartriangleright g"]  &d \vartriangleright m\ar[d, "\id_d \vartriangleright g"] 
\\
c \vartriangleright m\ar[r, "f\vartriangleright \id_n"]  &d \vartriangleright n
\end{tikzcd}
\end{equation*}
W define the natural unitary associator isomorphism $\alpha_{a,b,m} \in \cM( a\vartriangleright b \vartriangleright m \to a\otimes b \vartriangleright m)$ by $\alpha_{a,b,m} := \mu^{a.b}_m : [\Psi(a) \circ \Psi(b)](m) \to \Psi(a\otimes b)(m)$.
Notice that $\mu^{a,b} : \Psi(a)\circ \Psi(b) \Rightarrow \Psi(a\otimes b)$ is unitary if and only if $\mu^{a,b}_m$ is unitary for all $m\in \cM$.
Now one calculates
$(f \vartriangleright \id_m)^\dag = \Psi(f)_m^\dag :=(\Psi(f)^\dag)_m = \Psi(f^\dag)_m = f^\dag \vartriangleright \id_m$
and
$(\id_c \vartriangleright g)^\dag = \Psi(c)(g)^\dag = \Psi(c)(g^\dag) = \id_c \vartriangleright g^\dag$.
Thus $\cM$ is a $\scrC$-module dagger category.
\end{proof}

\begin{warn}
We do not yet state 
a $\Cstar$ version of Theorem \ref{thm:AlgOmnibus} as the above theorem implicitly uses rigidity for the statements on Morita equivalence and algebras.
When $\scrC$ is $\Cstar$, it is natural to impose compatibility conditions between the duality functor (implementing rigidity) and the dagger structure.
We will explain this in detail in \S\ref{sec:UnitaryPivotal} below.
\end{warn}

\subsection{Monoidal algebras} 
\label{sec:AlgebraicGPA}

Most algebraic structures have both a biased definition, like the usual definition of an algebra which emphasizes multiplying exactly two elements together, and an unbiased definition, like the definition of an algebra in which you can multiply arbitrary strings.\footnote{
See \cite{1404.0135} for a delightful elementary discussion of the unbiased definition of an algebra. 
}  
The usual definition of monoidal category is biased as it emphasizes tensoring two objects and composing two morphisms.
In Definition \ref{def:MonoidalAlgebra} below, we give an unbiased definition of monoidal category using the graphical calculus; we will see in \S\ref{sec:ModuleEmbedding} below that planar algebras are the analogous unbiased definition of a pivotal monoidal category.

As a warm-up, we first recall the diagrammatic version of the unbiased definitions of an algebra and of a linear category.

\begin{defn}
The $E_1$-\emph{operad} (or \emph{little intervals operad}) consists of 1D \emph{Swiss cheese diagrams} \cite{MR1718089} consisting of a large interval, several removed subintervals called \emph{holes}, all considered up to diffeomorphism.\footnote{In the literature, $E_1$ is an $\infty$-\emph{operad}, which means that instead of being considered up to isotopy, we instead have the isotopies induce higher isomorphisms. 
Since we only care about algebras over operads valued in the ordinary category of vector spaces, we can safely ignore this issue.}  
These interval diagrams can be composed by plugging some new big intervals into the holes to get a new diagram.  
$$
\left(
\begin{tikzpicture}[baseline=-.1cm, scale=.7]
	\draw[very thick] (0,0) -- (1,0);
	\draw[very thick] (2,0) -- (3,0);
	\draw[very thick] (4,0) -- (5,0);
	\filldraw[thick] (0,0) circle (.05cm);
	\filldraw[thick, fill=white] (1,0) circle (.05cm);
	\filldraw[thick, fill=white] (2,0) circle (.05cm);
	\filldraw[thick, fill=white] (3,0) circle (.05cm);
	\filldraw[thick, fill=white] (4,0) circle (.05cm);
	\filldraw[thick] (5,0) circle (.05cm);
	\node at (1.5,0) {\scriptsize{$1$}};
	\node at (3.5,0) {\scriptsize{$2$}};
\end{tikzpicture}
\right)
\circ_2
\left(
\begin{tikzpicture}[baseline=-.1cm, scale=.7]
	\draw[very thick] (0,0) -- (1,0);
	\draw[very thick] (2,0) -- (3,0);
	\draw[very thick] (4,0) -- (5,0);
	\filldraw[thick] (0,0) circle (.05cm);
	\filldraw[thick, fill=white] (1,0) circle (.05cm);
	\filldraw[thick, fill=white] (2,0) circle (.05cm);
	\filldraw[thick, fill=white] (3,0) circle (.05cm);
	\filldraw[thick, fill=white] (4,0) circle (.05cm);
	\filldraw[thick] (5,0) circle (.05cm);
	\node at (1.5,0) {\scriptsize{$1$}};
	\node at (3.5,0) {\scriptsize{$2$}};
\end{tikzpicture}
\right)
=
\left(
\begin{tikzpicture}[baseline=-.1cm, scale=.7]
	\draw[very thick] (0,0) -- (1,0);
	\draw[very thick] (2,0) -- (3,0);
	\draw[very thick] (4,0) -- (5,0);
	\draw[very thick] (6,0) -- (7,0);
	\filldraw[thick] (0,0) circle (.05cm);
	\filldraw[thick, fill=white] (1,0) circle (.05cm);
	\filldraw[thick, fill=white] (2,0) circle (.05cm);
	\filldraw[thick, fill=white] (3,0) circle (.05cm);
	\filldraw[thick, fill=white] (4,0) circle (.05cm);
	\filldraw[thick, fill=white] (5,0) circle (.05cm);
	\filldraw[thick, fill=white] (6,0) circle (.05cm);
	\filldraw[thick] (7,0) circle (.05cm);
	\node at (1.5,0) {\scriptsize{$1$}};
	\node at (3.5,0) {\scriptsize{$2$}};
	\node at (5.5,0) {\scriptsize{$3$}};
\end{tikzpicture}
\right)
$$
An $E_1$-\emph{algebra} in vector spaces is an algebra for this operad, which means it consists of a vector space $A$ together with a linear map $A^{\otimes h} \rightarrow A$ attached to each linear Swiss cheese diagram with $h$ holes.   
These maps must be compatible with the operad structure (i.e., plugging elements of $A$ into holes, and plugging diagrams into larger diagrams, associates).  
Unpacking this definition, an $E_1$-algebra in vector spaces consists of multiplication maps $\mu_n: A^{\otimes n} \rightarrow A$ for every natural number $n$ ($n=0$ gives the unit) which satisfy the appropriate associativity relations.
This is exactly the unbiased definition of a unital associative algebra.
\end{defn}

\begin{defn}
The colored operad of linear tangles with label set $S$ consists of a large interval with several holes removed, together with a labelling by an element of $S$ for each connected component of the diagram modulo diffeomorphism.  
(To match this up with future examples, it helps to think of these components as ``strings" connecting each hole to the next hold or to the outside interval.)  
$$
\begin{tikzpicture}[baseline=-.1cm, scale=.7]
	\draw[very thick] (0,0) -- node[above] {\scriptsize{$w$}} (1,0);
	\draw[very thick] (2,0) -- node[above] {\scriptsize{$x$}} (3,0);
	\draw[very thick] (4,0) -- node[above] {\scriptsize{$y$}} (5,0);
	\draw[very thick] (6,0) -- node[above] {\scriptsize{$z$}} (7,0);
	\filldraw[thick] (0,0) circle (.05cm);
	\filldraw[thick, fill=white] (1,0) circle (.05cm);
	\filldraw[thick, fill=white] (2,0) circle (.05cm);
	\filldraw[thick, fill=white] (3,0) circle (.05cm);
	\filldraw[thick, fill=white] (4,0) circle (.05cm);
	\filldraw[thick, fill=white] (5,0) circle (.05cm);
	\filldraw[thick, fill=white] (6,0) circle (.05cm);
	\filldraw[thick] (7,0) circle (.05cm);
	\node at (1.5,0) {\scriptsize{$1$}};
	\node at (3.5,0) {\scriptsize{$2$}};
	\node at (5.5,0) {\scriptsize{$3$}};
\end{tikzpicture}
$$
Again the operadic structure comes from gluing linear tangles into the holes, but since substitution only makes sense when the labels match, this is a \emph{colored operad}.  
An algebra ${}_\bullet V_\bullet$ for this operad consists of a family of vector spaces $\{{}_{x} V_y\}_{x,y\in S}$ together with an action of linear tangles with holes.
That is, to each linear tangle $T$ with components labelled by $x_1,\dots, x_n$, we get a linear map
 $Z(T) : {}_{x_1}\! V_{x_2} \otimes \cdots  \otimes{}_{x_{n-1}}\!V_{x_n} \rightarrow {}_{x_1}\!V_{x_n}$ which is compatible with composition of linear tangles with holes.  
It is not difficult to see that an algebra for the operad of linear tangles with label set $S$ gives an unbiased definition of a linear category whose set of objects is $S$, whose hom spaces $\Hom(x\to y)$ are the vector spaces ${}_{x} V_{y}$, and whose composition of morphisms is the action of linear tangles.  The identity morphisms come from the tangles with no holes.
\end{defn}

We now give an unbiased definition of monoidal category.  
This is quite similar to the above definition, but we now have two dimensional diagrams where the vertical direction represents composition using labelled strings as before, and the horizontal direction represents tensor product without strings.

\begin{defn}
A \emph{monoidal tangle} with label set $S$ is a rectangle, with several smaller rectangles (with edges parallel to those of the big one) removed, and some non-crossing smooth strings labelled by elements of $S$ which are oriented upward, have no minima or maxima, and begin and end on the tops or bottoms of the rectangles. 
We say a monoidal tangle $T$ has \emph{type} $((s_0,t_0); (s_1,t_1),\dots, (s_k,t_k))$ where $s_0,\dots, s_k, t_0,\dots, t_k$ are finite words on $S$ if the tangle $T$ has $k$ input rectangles, and there are $|s_i|, |t_i|$ strings attached to the bottom and top respectively of the $i$-th rectangle (the zeroth rectangle is the output rectangle and $1\leq i\leq k$ corresponds to the $i$-th input rectangle), which are labelled by the characters in the words $s_i, t_i$ respectively.
Here is an example of a tangle with 
$S = \{
\ColorDot{red}
\,,\,
\ColorDot{blue}
\,,\,
\ColorDot{DarkGreen}
\}$,
where we color the strings instead of labelling them:
$$
\begin{tikzpicture}[baseline = -.1cm, scale=.6]
	\draw[very thick, rounded corners = 5pt] (-2,-2) rectangle (2,2);
	\draw[very thick, rounded corners = 5pt] (-1.5,.5) rectangle (-.5,1.5);
	\draw[very thick, rounded corners = 5pt] (-1,-1.5) rectangle (.5,-.5);
	\draw[very thick, rounded corners = 5pt] (.25,.5) rectangle (1.25,1.5);
	\node at (-1,1) {\scriptsize{$1$}};
	\node at (-.25,-1) {\scriptsize{$2$}};
	\node at (.75,1) {\scriptsize{$3$}};
	\draw[thick, red] (-.8,.5) .. controls ++(270:.5cm) and ++(90:.5cm) .. (-.75,-.5);
	\draw[thick, blue] (-1.5,-2) .. controls ++(90:.5cm) and ++(270:.5cm) .. (-1.2,.5);
	\draw[thick, blue] (.25,-.5) .. controls ++(90:.5cm) and ++(270:.5cm) .. (.5,.5);
	\draw[thick, DarkGreen] (-1,1.5) -- (-1,2);
	\draw[thick, red] (-.5,-2) -- (-.5,-1.5);
	\draw[thick, red] (-.25,-.5) -- (-.25,2);
	\draw[thick, DarkGreen] (0,-2) -- (0,-1.5);
	\draw[thick, DarkGreen] (1,-2) -- (1,.5);
	\draw[thick, blue] (1.5,-2) -- (1.5,2);
\end{tikzpicture}
\qquad
\text{has type}
\qquad
((
\underbrace{\ColorDot{blue}\,\ColorDot{red}\,\ColorDot{DarkGreen}\,\ColorDot{DarkGreen}\,\ColorDot{blue}}_{s_0}
\,,\,
\underbrace{\ColorDot{DarkGreen}\,\ColorDot{red}\,\ColorDot{blue}}_{t_0}
);(
\underbrace{\ColorDot{blue}\,\ColorDot{red}}_{s_1}
\,,\,
\underbrace{\ColorDot{DarkGreen}}_{t_1}
),(
\underbrace{\ColorDot{red}\,\ColorDot{DarkGreen}}_{s_2}
\,,\,
\underbrace{\ColorDot{red}\,\ColorDot{red}\,\ColorDot{blue}}_{t_2}
),(
\underbrace{\ColorDot{blue}\,\ColorDot{DarkGreen}}_{s_3}
\,,\,
\underbrace{\emptyset}_{t_3}
)).
$$
Monoidal tangles are considered up to isotopy (through diagrams that again have no minima or maxima). 
Monoidal tangles form a colored operad, because you can insert monoidal tangles into the rectangles of a large monoidal tangle to get a new monoidal tangle.   
\end{defn}

\begin{defn}
\label{def:MonoidalAlgebra}
A \emph{monoidal algebra} with label set $S$ is an algebra for the operad of monoidal tangles with label set $S$.
Unpacking this definition, a monoidal algebra $\cP_{\bullet \to \bullet}$ consists of a family of finite dimensional vector spaces $\cP_{s\to t}$ where $s,t$ are finite words in $S$, together with an action of monoidal tangles.
To each monoidal tangle $T$ of type $((s_0,t_0); (s_1,t_1,\dots, (s_k,t_k)))$, we associate a multilinear map
$Z(T):\prod_{j=1}^k \cP_{s_j\to t_j}\to \cP_{s_0 \to t_0}$,
and composition of monoidal tangles corresponds to composition of multilinear maps.
Here is an an  example:
$$
Z\left(\,\,
\begin{tikzpicture}[baseline = -.1cm, scale=.6]
	\draw[very thick, rounded corners = 5pt] (-2,-2) rectangle (2,2);
	\draw[very thick, rounded corners = 5pt] (-1.5,.5) rectangle (-.5,1.5);
	\draw[very thick, rounded corners = 5pt] (-1,-1.5) rectangle (.5,-.5);
	\draw[very thick, rounded corners = 5pt] (.25,.5) rectangle (1.25,1.5);
	\node at (-1,1) {\scriptsize{$1$}};
	\node at (-.25,-1) {\scriptsize{$2$}};
	\node at (.75,1) {\scriptsize{$3$}};
	\draw[thick, red] (-.8,.5) .. controls ++(270:.5cm) and ++(90:.5cm) .. (-.75,-.5);
	\draw[thick, blue] (-1.5,-2) .. controls ++(90:.5cm) and ++(270:.5cm) .. (-1.2,.5);
	\draw[thick, blue] (.25,-.5) .. controls ++(90:.5cm) and ++(270:.5cm) .. (.5,.5);
	\draw[thick, DarkGreen] (-1,1.5) -- (-1,2);
	\draw[thick, red] (-.5,-2) -- (-.5,-1.5);
	\draw[thick, red] (-.25,-.5) -- (-.25,2);
	\draw[thick, DarkGreen] (0,-2) -- (0,-1.5);
	\draw[thick, DarkGreen] (1,-2) -- (1,.5);
	\draw[thick, blue] (1.5,-2) -- (1.5,2);
\end{tikzpicture}
\,\,\right)
:
\cP_{
\ColorDot{blue}\,\ColorDot{red}
\,\to\,
\ColorDot{DarkGreen}
}
\times
\cP_{
\ColorDot{red}\,\ColorDot{DarkGreen}
\,\to\,
\ColorDot{red}\,\ColorDot{red}\,\ColorDot{blue}
}
\times
\cP_{
\ColorDot{blue}\,\ColorDot{DarkGreen}
\,\to\,
\emptyset
}
\to
\cP_{
\ColorDot{blue}\,\ColorDot{red}\,\ColorDot{DarkGreen}\,\ColorDot{DarkGreen}\,\ColorDot{blue}
\,\to\,
\ColorDot{DarkGreen}\,\ColorDot{red}\,\ColorDot{blue}
}
$$
A monoidal algebra is called \emph{semisimple} if for every pair of words $s,t$ on $S$, the $2\times2$ \emph{linking algebra}
\begin{equation}
\label{eq:LinkingAlgebra}
\cL(s,t)
:=
\begin{pmatrix}
\cP_{s\to s}
&
\cP_{t\to s}
\\
\cP_{s\to t}
&
\cP_{t\to t}
\end{pmatrix}
\end{equation}
whose multiplication given by matrix multiplication together with the appropriate `stacking' multiplication tangles is a finite dimensional semisimple algebra.
\end{defn}

\begin{example}
\label{example:MonoidalAlgebraViaGraphicalCalculus}
Suppose $\scrC$ is a linear monoidal category with a set of objects $\mathscr{S}:=\{X_s\}_{s\in S}$ which \emph{monoidally generates} $\scrC$, i.e., every object in $\scrC$ is isomorphic to a tensor product of objects in $\mathscr{S}$.
We define a monoidal algebra $\cP(\scrC, \mathscr{S})_{\bullet \to \bullet}$ with label set $S$ as follows.
For $s_1,\dots, s_k,t_1,\dots, t_\ell\in S$, we define 
$$
\cP(\scrC, \mathscr{S})_{s_1\cdots s_k\to t_1\cdots t_\ell} := \scrC(X_{s_1}\otimes \cdots \otimes X_{s_k} \to X_{t_1}\otimes \cdots \otimes X_{t_\ell}).
$$
We use the convention that if $\emptyset$ is the empty word on $S$, then the empty tensor product of objects is $1_{\scrC}$.
The action of tangles is just the graphical calculus for tensor categories.
See \cite{MR0281657,MR1036112,MR1113284} for a summary of many versions of the graphical calculus; additional resources include \cite{MR2767048} and \cite[\S2.1 and 2.3]{MR3578212}.
\end{example}

\begin{remark}
The monoidal algebra $\cP(\scrC, X)_{\bullet \to\bullet}$ is similar in spirit to the way the term `monoidal algebra' is used in the work of Wenzl on constructing and classifying subfactors and fusion categories from quantum groups \cite{MR2132671,MR2988502} which is based on the original towers of algebras approach to subfactor theory \cite{MR0696688,MR936086,MR999799,MR1334479}.
\end{remark}

\begin{thm}
\label{thm:MonoidalAlgebraEquivalence}
There is an equivalence of categories\,\footnote{\label{footnote:Truncation}
Pairs $(\scrC, \{X_{s}\}_{s\in S})$ form a $2$-category where between any two 1-morphisms, there is at most one 2-morphism, which is necessarily invertible when it exists \cite[Lem.~3.5]{1607.06041}.
Hence this 2-category is equivalent to its truncation to a 1-category.
} 
\[
\left\{\, 
\parbox{7cm}{\rm Monoidal algebras $\cP_{\bullet\to \bullet}$ with label set $S$ and finite dimensional box spaces $\cP_{m\to n}$}\,\left\}
\,\,\,\,\cong\,\,
\left\{\,\parbox{7.8cm}{\rm Pairs $(\scrC, \{X_s\}_{s\in S})$ with $\scrC$ a linear monoidal category with generators $X_s\in \scrC$ for $s\in S$}\,\right\}.
\right.\right.
\]
\end{thm}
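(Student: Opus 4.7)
The plan is to construct an inverse to the functor $(\scrC, \{X_s\}_{s\in S}) \mapsto \cP(\scrC, \{X_s\})_{\bullet\to \bullet}$ from Example \ref{example:MonoidalAlgebraViaGraphicalCalculus}, then verify the two round-trips give equivalences in the appropriate sense.

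First I would define the reconstruction functor. Given a monoidal algebra $\cP_{\bullet\to \bullet}$, build a strict monoidal $\mathbb{C}$-linear category $\scrC_\cP$ whose objects are the finite words on $S$ (including the empty word $\emptyset$ as unit), with $\scrC_\cP(s\to t) := \cP_{s\to t}$. Composition is defined as the action of the elementary vertical stacking tangle (one input box stacked above another, with matching strings connecting them), the identity morphisms come from the no-hole tangle of straight vertical strings, and tensor product on objects is concatenation while tensor product on morphisms is the action of the elementary horizontal juxtaposition tangle. Distinguish the generating objects $X_s := s \in S \subset \scrC_\cP$. The monoidal algebra axioms (associativity of the operadic action together with finite-dimensionality) immediately yield associativity and unitality of composition, the bifunctoriality of $\otimes$, and strict associativity and unitality of $\otimes$; these are all just specializations of the identity $Z(T_1 \circ_i T_2) = Z(T_1) \circ_i Z(T_2)$ to particular pictures.

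Next I would verify the two round-trip composites. Starting from $(\scrC, \{X_s\}_{s\in S})$, the reconstructed category $\scrC_{\cP(\scrC,\{X_s\})}$ is the strict monoidal category with objects given by words $s = s_1\cdots s_k$ and morphisms $\scrC(X_{s_1}\otimes \cdots \otimes X_{s_k} \to X_{t_1}\otimes \cdots \otimes X_{t_\ell})$; the functor $s \mapsto X_{s_1}\otimes \cdots \otimes X_{s_k}$ is fully faithful by definition and essentially surjective because $\{X_s\}$ monoidally generates $\scrC$, hence a monoidal equivalence. Starting from a monoidal algebra $\cP$, the roundtrip monoidal algebra $\cP(\scrC_\cP, \{s\}_{s\in S})$ has box spaces $\scrC_\cP(s\to t) = \cP_{s\to t}$ on the nose, and its tangle action agrees with the original one; this last point is the content that needs genuine verification.

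The main obstacle is showing that the action of a general monoidal tangle in $\cP$ is recovered from the graphical calculus applied to the basic generating tangles (identity strings, horizontal juxtaposition, vertical stacking). This is the standard generators-and-relations presentation of the colored operad of monoidal tangles: any tangle can be put into a ``Morse position'' where all input boxes lie at distinct heights and internally nothing exciting happens at other heights, decomposing it into successive horizontal juxtapositions of basic pictures and vertical stackings, and any two such decompositions differ by isotopy moves which reduce (by the Reidemeister-zero style arguments of e.g. \cite{MR1113284}) to the bifunctoriality of tensor product, interchange law, and strict associativity of the monoidal structure. Since the reconstructed $\scrC_\cP$ satisfies exactly these relations by construction, the two actions coincide.

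Finally, for functoriality, a morphism of monoidal algebras $\cP \to \cP'$ is a collection of linear maps on box spaces commuting with all tangle actions; by restricting to stacking and juxtaposition tangles this is precisely a strict monoidal $\mathbb{C}$-linear functor $\scrC_\cP \to \scrC_{\cP'}$ that preserves the distinguished generators on the nose. In the other direction, any monoidal functor $F\colon \scrC \to \scrC'$ with chosen isomorphisms $F(X_s)\cong X'_s$ induces linear maps between the corresponding hom spaces compatible with tangle actions (by coherence, using the cited Lemma~3.5 of \cite{1607.06041} to handle the choice of bracketings, which is where the truncation in footnote \ref{footnote:Truncation} is used). Together with the mutual inverse round-trips above, this gives the claimed equivalence of categories.
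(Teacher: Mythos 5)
Your proposal is correct and follows essentially the same route the paper intends: the paper's own ``proof'' is a one-line deferral to the standard reconstruction (word category from box spaces, with the details left to the cited references), and you have simply written out that standard argument — the word-category inverse functor, the two round-trips, the Morse-position/coherence argument for recovering general tangles from stacking and juxtaposition, and functoriality via the truncation footnote. The only cosmetic difference is that the paper's sentence mentions the ``idempotent category,'' which is really only needed for the Cauchy-completed version in the subsequent corollary; for the theorem as stated your plain word category is the right construction.
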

The equivalence here is given by taking the idempotent category of a monoidal algebra; the details are standard, following e.g. \cite{MR2559686,MR2811311}. We leave as an exercise to the reader to check that the definition of semisimplicity given above corresponds to the usual definition for the Cauchy completion of the idempotent category.

In the case we care most about, $\scrC$ is a semisimple monoidal category, and it is hopeless to expect to have a finite monoidal generating set.  
Instead, one typically has a collection of objects $\mathscr{S}:=\{X_s\}_{s\in S}$ labelled by $S$ which \emph{Cauchy tensor generates} $\scrC$ in the sense that every object in $\scrC$ is a direct summand of a direct sum of tensor products of objects in $\mathscr{S}$. 
This is not a big problem because of the following well-known theorem:

\begin{thm}
\label{thm:CauchyCompletion}
Suppose $\scrC$ is a semisimple monoidal category and $\mathscr{S}:=\{X_s\}_{s\in S}$ is a set of objects that Cauchy tensor generates $\scrC$. 
\begin{itemize}
\item 
Let $\scrC_{\mathscr{S}}$ be the full monoidal subcategory of $\scrC$ whose objects are tensor products of objects in $\mathscr{S}$.  Then $\scrC_{\mathscr{S}}$ is a monoidal category which is monoidally generated by $\mathscr{S}$.
\item 
The monoidal category $\scrC$ is monoidally equivalent to the idempotent completion of the additive envelope (also known as the Cauchy completion, pseudo-abelian envelope, or Karoubi envelope) of $\scrC_{\mathscr{S}}$.
\end{itemize}
\end{thm}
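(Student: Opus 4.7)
The first bullet is essentially immediate. The collection of objects of the form $X_{s_1} \otimes \cdots \otimes X_{s_k}$ (with the empty tensor product taken to be $1_\scrC$) is closed under $\otimes$ and contains the unit, so the full subcategory $\scrC_{\mathscr{S}}$ inherits a monoidal structure from $\scrC$, and by construction $\mathscr{S}$ monoidally generates it.

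For the second bullet, my plan is to extend the inclusion $\iota : \scrC_{\mathscr{S}} \hookrightarrow \scrC$ to a monoidal functor $F : \operatorname{Kar}(\operatorname{Add}(\scrC_{\mathscr{S}})) \to \scrC$ and verify it is an equivalence of monoidal categories. Since $\scrC$ is semisimple it is in particular Cauchy complete (both additive and idempotent complete), so the universal property of the Cauchy completion extends $\iota$ to a unique (up to unique natural isomorphism) $\mathbb{C}$-linear functor $F$: concretely, $F$ sends a pair $(\bigoplus_i Y_i, e)$ consisting of a formal direct sum together with an idempotent $e$ to the image of $e$ in the direct sum $\bigoplus_i Y_i$ computed inside $\scrC$.

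Full faithfulness then reduces to a Hom-space calculation: morphism spaces in $\operatorname{Kar}(\operatorname{Add}(\scrC_{\mathscr{S}}))$ are matrices of morphisms in $\scrC_{\mathscr{S}}$ cut down by pre- and post-composition with idempotents, and the same description applies to morphisms in $\scrC$ between direct sums and splittings of the image idempotents; since $\iota$ is fully faithful, the induced map on Homs is an isomorphism. Essential surjectivity is exactly the Cauchy tensor generation hypothesis: every $Y \in \scrC$ is a summand of a finite direct sum of tensor products of objects in $\mathscr{S}$, hence isomorphic to the image under $F$ of a suitable idempotent on a formal direct sum.

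Finally, I need to put a monoidal structure on $\operatorname{Kar}(\operatorname{Add}(\scrC_{\mathscr{S}}))$ making $F$ (strict) monoidal. The tensor product on $\scrC_{\mathscr{S}}$ extends to the additive envelope by distributing over direct sums, and then to the Karoubi envelope via $(Y,e) \otimes (Z,f) := (Y \otimes Z,\, e \otimes f)$, using that $e \otimes f$ is idempotent when $e$ and $f$ are. Associators and unitors descend term-by-term from $\scrC$. The main technical obstacle is the coherence bookkeeping, but this is handled by a standard density argument: any two morphisms in $\scrC$ between objects coming from $\operatorname{Kar}(\operatorname{Add}(\scrC_{\mathscr{S}}))$ that agree after restriction to the tensor-generating subcategory $\scrC_{\mathscr{S}}$ must coincide, so every coherence diagram in the Cauchy completion commutes because it does in $\scrC$.
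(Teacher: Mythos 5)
Your argument is correct. Note that the paper itself offers no proof of this theorem: it is stated as a well-known fact and used as a black box, so there is no in-paper argument to compare yours against. Your proof is the standard one — extend the inclusion $\scrC_{\mathscr{S}}\hookrightarrow\scrC$ along the universal property of the Cauchy completion using that a semisimple category is additively and idempotent complete, check full faithfulness by the matrix description of Hom spaces in $\operatorname{Kar}(\operatorname{Add}(-))$, and observe that essential surjectivity is literally the Cauchy tensor generation hypothesis. One small stylistic point: in your final paragraph the coherence check is better justified by \emph{faithfulness} of the comparison functor $F$ (a diagram in $\operatorname{Kar}(\operatorname{Add}(\scrC_{\mathscr{S}}))$ commutes iff its $F$-image commutes in $\scrC$, and the pentagon and triangle identities hold in $\scrC$) than by a ``density'' or ``restriction to the generating subcategory'' argument, which is not quite what is doing the work; but this does not affect the validity of the proof.
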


Combining Theorems \ref{thm:MonoidalAlgebraEquivalence} and \ref{thm:CauchyCompletion}, given a semisimple linear monoidal category $\scrC$ and a set $\mathscr{S}:=\{X_s\}_{s\in S}$ of objects that Cauchy tensor generate $\scrC$, we get a semisimple monoidal algebra from $\scrC_{\mathscr{S}}$.
Conversely, given a semisimple monoidal algebra $\cP_{\bullet \to \bullet}$ with label set $S$, we can take the idempotent completion of the corresponding monoidal category to recover the semisimple monoidal category $\scrC$ and a set of Cauchy tensor generating objects $\mathscr{S}:=\{X_s\}_{s\in S}$ corresponding to the strands labelled by $s\in S$.

\begin{cor}
\label{cor:SemisimpleMonoidalAlgebraEquivalence}
There is an equivalence of categories
(see Footnote \ref{footnote:Truncation})
\[
\left\{\, 
\parbox{5.5cm}{\rm Semisimple monoidal algebras $\cP_{\bullet\to \bullet}$ with label set $S$}\,\left\}
\,\,\,\,\cong\,\,
\left\{\,\parbox{7cm}{\rm Pairs $(\scrC, \{X_s\}_{s\in S})$ with $\scrC$ a semisimple linear monoidal category with Cauchy tensor generators $X_s\in \scrC$ for $s\in S$}\,\right\}.
\right.\right.
\]
\end{cor}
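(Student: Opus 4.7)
The strategy is to combine the two theorems immediately preceding the corollary and to verify that the semisimplicity conditions on the two sides match up under the resulting correspondence.

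First I would construct the two functors. In one direction, given a semisimple monoidal algebra $\cP_{\bullet\to\bullet}$ with label set $S$, I apply Theorem \ref{thm:MonoidalAlgebraEquivalence} to obtain a pair $(\scrC_0,\{X_s\}_{s\in S})$ in which $\scrC_0$ is a linear monoidal category \emph{monoidally} generated by the $X_s$, and then pass to the idempotent completion of the additive envelope $\scrC:=\overline{\scrC_0}$. Since Cauchy completion is monoidal, $\scrC$ inherits a monoidal structure, and by construction every object of $\scrC$ is a summand of a direct sum of tensor products of the $X_s$, so $\mathscr{S}=\{X_s\}$ Cauchy tensor generates $\scrC$. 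In the other direction, given $(\scrC,\{X_s\})$ with $\scrC$ semisimple and $\mathscr{S}$ Cauchy tensor generating, Theorem \ref{thm:CauchyCompletion} guarantees that the full monoidal subcategory $\scrC_\mathscr{S}$ on tensor products of the $X_s$ is monoidally generated by $\mathscr{S}$, and Theorem \ref{thm:MonoidalAlgebraEquivalence} then produces a monoidal algebra $\cP(\scrC,\mathscr{S})_{\bullet\to\bullet}$ as in Example \ref{example:MonoidalAlgebraViaGraphicalCalculus}.

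Next I would check that these constructions are mutually quasi-inverse. The second bullet of Theorem \ref{thm:CauchyCompletion} says that $\scrC$ is monoidally equivalent to the Cauchy completion of $\scrC_\mathscr{S}$; combined with the fact that Cauchy completion of $\scrC_0$ does not enlarge the spaces of morphisms between tensor products of the generators, this shows that the round trip starting from a pair returns an equivalent pair (in the sense of Footnote \ref{footnote:Truncation}, i.e. up to monoidal equivalence intertwining the chosen generators). The round trip starting from a monoidal algebra is likewise the identity on the nose, because the box spaces $\cP_{s\to t}$ are, by Theorem \ref{thm:MonoidalAlgebraEquivalence}, precisely the hom spaces $\scrC_0(X_{s_1}\otimes\cdots\otimes X_{s_k}\to X_{t_1}\otimes\cdots\otimes X_{t_\ell})$, which are unchanged by Cauchy completion.

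The main step, and the one to handle carefully, is to show that the semisimplicity conditions on the two sides correspond. In one direction, suppose $\scrC$ is semisimple. The linking algebra $\cL(s,t)$ from \eqref{eq:LinkingAlgebra} is precisely $\End_{\scrC}(X_s\oplus X_t)$ where $X_s=X_{s_1}\otimes\cdots\otimes X_{s_k}$ and similarly for $X_t$, since matrix multiplication of hom blocks is exactly composition in $\scrC$; as the endomorphism algebra of an object in a semisimple linear category with finite dimensional hom spaces, this is finite dimensional semisimple. Conversely, if every linking algebra $\cL(s,t)$ is finite dimensional semisimple, then in particular each $\End_{\scrC_0}(X_s)$ is finite dimensional semisimple, so every idempotent in $\scrC_0$ splits in the Cauchy completion $\scrC$ and every object of $\scrC$ becomes a finite direct sum of simples; semisimplicity of $\scrC$ then follows from the standard fact that Cauchy completion of a linear category all of whose endomorphism algebras are finite dimensional semisimple is itself semisimple. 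This matching of the semisimplicity hypotheses is the only point where the argument requires more than quoting Theorems \ref{thm:MonoidalAlgebraEquivalence} and \ref{thm:CauchyCompletion}, and it is the part I would expect a reader to want spelled out; the rest of the corollary is a formal combination of these two results.
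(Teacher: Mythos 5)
Your proposal is correct and follows essentially the same route as the paper, which derives the corollary by combining Theorems \ref{thm:MonoidalAlgebraEquivalence} and \ref{thm:CauchyCompletion} and explicitly leaves the matching of the two semisimplicity conditions as an exercise to the reader. Your spelled-out version of that exercise is the intended one; just note that in the converse direction the hypothesis you actually need (and have, via the linking algebras $\cL(s,t)$) is semisimplicity of $\End(a\oplus b)$ for pairs of objects in the additive envelope, not merely of the individual endomorphism algebras $\End_{\scrC_0}(X_s)$.
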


\begin{example}
\label{example:GMAfromCX}
Expanding on Example \ref{example:MonoidalAlgebraViaGraphicalCalculus}, given a semisimple linear monoidal category $\scrC$ and an object $X\in \scrC$ which Cauchy tensor generates $\scrC$, we get a semisimple monoidal algebra $\cP(\scrC, X)_{\bullet \to \bullet}$ with label set $\bbN_{\geq 0}$ by defining $\cP(\scrC, X)_{m\to n} := \scrC(X^{\otimes m} \to X^{\otimes n})$.
\end{example}

\subsubsection{Shaded semisimple monoidal algebras and semisimple monoidal categories}

We next extend the discussion in \S\ref{sec:ModulesForMultifusionCats} on $r$-shaded multifusion categories to the case of $r$-shaded semisimple monoidal categories.
Suppose $\scrC$ is a semisimple linear monoidal category, and $1_{\scrC} = \bigoplus_{i=1}^r 1_i$ is a decomposition of $1_\scrC$ into simples.
We call such a $\scrC$ an $r$-\emph{shaded semisimple monoidal category}.
We write $\scrC_{ij} = 1_i \otimes \scrC \otimes 1_j$, and we note that $\scrC = \bigoplus_{i,j=1}^r \scrC_{ij}$.
We also have distinguished idempotents $p_i \in \scrC(1_\scrC \to 1_\scrC)$ corresponding to the summand $1_i$ for $1\leq i\leq r$.
In the graphical calculus, we represent these projections, which freely float about in their regions, as a single shading.
For example, we could denote
$$
\tikz[baseline=.1cm]{\draw[fill=\ShadeOne, rounded corners=5, very thin, baseline=1cm] (0,0) rectangle (.5,.5);}=p_i
\qquad
\tikz[baseline=.1cm]{\draw[fill=\ShadeTwo, rounded corners=5, very thin, baseline=1cm] (0,0) rectangle (.5,.5);}=p_j
$$
Then for objects $a,b\in \scrC_{ij}$, we would denote a morphism $f\in \scrC(a\to b)$ by
$$
\begin{tikzpicture}
	\fill[\ShadeOne] (-.5,-.7) rectangle (0,.7);
	\fill[\ShadeTwo] (.5,-.7) rectangle (0,.7);
	\draw (0,-.7) -- (0,.7);
	\roundNbox{fill=white}{(0,0)}{.3}{0}{0}{$f$}
\end{tikzpicture}
$$
This motivates the following definition.

\begin{defn}
An $R$-\emph{shaded monoidal tangle} with label set $S$ is 
a monoidal tangle with label set $S$ whose regions are shaded by the elements of $R$
such that 
each element $x\in S$ has a left \emph{source} shading $s_x\in R$ and a right \emph{target} shading $t_y\in R$.
For example, for the shading set
$R = \{
\ColorDot{\ShadeOne}
\,,\,
\ColorDot{\ShadeTwo}
\,,\,
\ColorDot{\ShadeThree}
\}$,
and the label set
$
S= 
\{
\begin{tikzpicture}[baseline=-.1cm]
	\fill[\ShadeOne] (-.2,-.2) rectangle (0,.2);
	\fill[\ShadeTwo] (.2,-.2) rectangle (0,.2);
	\draw (0,-.2) -- (0,.2);
\end{tikzpicture}
\,,\,
\begin{tikzpicture}[baseline=-.1cm, xscale =-1]
	\fill[\ShadeOne] (-.2,-.2) rectangle (0,.2);
	\fill[\ShadeTwo] (.2,-.2) rectangle (0,.2);
	\draw (0,-.2) -- (0,.2);
\end{tikzpicture}
\,,\,
\begin{tikzpicture}[baseline=-.1cm]
	\fill[\ShadeTwo] (-.2,-.2) rectangle (0,.2);
	\fill[\ShadeThree] (.2,-.2) rectangle (0,.2);
	\draw (0,-.2) -- (0,.2);
\end{tikzpicture}
\,,\,
\begin{tikzpicture}[baseline=-.1cm, xscale = -1]
	\fill[\ShadeTwo] (-.2,-.2) rectangle (0,.2);
	\fill[\ShadeThree] (.2,-.2) rectangle (0,.2);
	\draw (0,-.2) -- (0,.2);
\end{tikzpicture}
\}
$,
we have the following $R$-shaded monoidal tangle with label set $S$:
$$
\begin{tikzpicture}[baseline = -.1cm, scale=.6]
	\fill[\ShadeOne, rounded corners = 5pt] (-2,-2) rectangle (1,2);
	\fill[\ShadeTwo, rounded corners = 5pt] (-.5,-2) rectangle (2,2);
	\fill[\ShadeTwo] (-.8,1.5) rectangle (-1.2,2);
	\fill[\ShadeTwo] (-1.5,-2) .. controls ++(90:.5cm) and ++(270:.5cm) .. (-1.2,.5) --
		(-.8,.5) .. controls ++(270:.5cm) and ++(90:.5cm) .. (-.5,-.5) -- (-.5,-1.5) -- (-.5,-2) -- (-1.5,-2);
	\fill[\ShadeThree] (0,-.5) .. controls ++(90:.5cm) and ++(270:.5cm) .. (.5,.5) --
		(1,.5) -- (1,-2) -- (1.5,-2) -- (1.5,2) --(-.8,2) -- (-.8,1.5) -- 
		(-.8,.5) .. controls ++(270:.5cm) and ++(90:.5cm) .. (-.5,-.5) -- (0,-.5);
	\fill[\ShadeOne] (-.5,-2) rectangle (0,-1.5);
	\draw[very thick, rounded corners = 5pt] (-2,-2) rectangle (2,2);
	\draw[fill=white, very thick, rounded corners = 5pt] (-1.5,.5) rectangle (-.5,1.5);
	\draw[fill=white, very thick, rounded corners = 5pt] (-1,-1.5) rectangle (.5,-.5);
	\draw[fill=white, very thick, rounded corners = 5pt] (.25,.5) rectangle (1.25,1.5);
	\node at (-1,1) {\scriptsize{$1$}};
	\node at (-.25,-1) {\scriptsize{$2$}};
	\node at (.75,1) {\scriptsize{$3$}};
	\draw (-.8,.5) .. controls ++(270:.5cm) and ++(90:.5cm) .. (-.5,-.5);
	\draw (-1.5,-2) .. controls ++(90:.5cm) and ++(270:.5cm) .. (-1.2,.5);
	\draw (0,-.5) .. controls ++(90:.5cm) and ++(270:.5cm) .. (.5,.5);
	\draw (-1.2,1.5) -- (-1.2,2);
	\draw (-.8,1.5) -- (-.8,2);
	\draw (-.5,-2) -- (-.5,-1.5);
	\draw (0,-2) -- (0,-1.5);
	\draw (1,-2) -- (1,.5);
	\draw (1.5,-2) -- (1.5,2);
\end{tikzpicture}
$$
\end{defn}

\begin{defn}
An $R$-\emph{shaded monoidal algebra} with label set $S$ is an algebra over the operad of $R$-shaded monoidal tangles with label set $S$.
Notice this means that the spaces $\cP_{x \to y}$ are only well-defined when 
consecutive characters in the words $x$ and $y$ have compatible target and source shadings, and the source and target shadings of the words $x$ and $y$ agree.
\end{defn}

We have the following shaded version Theorem \ref{thm:MonoidalAlgebraEquivalence}.

\begin{cor}
\label{cor:SemisimpleShadedMonoidalAlgebraEquivalence}
There is an equivalence of categories
(see Footnote \ref{footnote:Truncation})
\[
\left\{\, 
\parbox{5cm}{\rm Semisimple $\{1,\dots, r\}$-shaded monoidal algebras $\cP_{\bullet\to \bullet}$ with label set $S$}\,\left\}
\,\,\,\,\cong\,\,
\left\{\,\parbox{9cm}{\rm Pairs $(\scrC, \{X_y\}_{y\in S})$ with $\scrC$ an $r$-shaded semisimple monoidal category with 
decomposition $1 = \bigoplus_{i=1}^r 1_i$ with Cauchy tensor generators $X_y\in \scrC_{s_y, t_y}$ for $y\in S$
}\,\right\}.
\right.\right.
\]
\end{cor}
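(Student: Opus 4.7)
The plan is to reduce directly to Corollary~\ref{cor:SemisimpleMonoidalAlgebraEquivalence}, by proving a shaded analog of Theorem~\ref{thm:MonoidalAlgebraEquivalence} and then applying Theorem~\ref{thm:CauchyCompletion}. The shaded analog states: there is an equivalence between $\{1,\dots,r\}$-shaded monoidal algebras with label set $S$ and finite-dimensional box spaces, and pairs $(\scrC, \{X_y\}_{y\in S})$ where $\scrC$ is an $r$-shaded linear monoidal category with a fixed decomposition $1_\scrC = \bigoplus_{i=1}^r 1_i$ and the $X_y \in \scrC_{s_y,t_y}$ \emph{monoidally} generate $\scrC$, with the convention that the empty tensor product with outer shading $i$ is $1_i$.

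In the forward direction of this shaded Theorem~\ref{thm:MonoidalAlgebraEquivalence}, given $(\scrC, \{X_y\})$, I would set
\[
\cP_{x_1\cdots x_k \to y_1\cdots y_\ell} := \scrC(X_{x_1}\otimes\cdots\otimes X_{x_k} \to X_{y_1}\otimes\cdots\otimes X_{y_\ell})
\]
whenever $x$ and $y$ are shading-compatible words with matching source and target shadings, interpret the empty word of shading $i$ as $1_i$, and let a shaded monoidal tangle act via the graphical calculus of $\scrC$, with each shaded region interpreted as the idempotent $p_i \in \scrC(1_\scrC \to 1_\scrC)$ (a harmless insertion since $p_i$ is the identity on its summand). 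In the reverse direction, given a shaded monoidal algebra $\cP_{\bullet\to\bullet}$, I would form the linear monoidal category $\cD_{\cP}$ whose objects are shading-compatible words on $S$ together with $r$ distinguished empty words $\emptyset_1,\dots,\emptyset_r$; the hom spaces are $\cD_{\cP}(x\to y):=\cP_{x\to y}$; composition and tensor product are given by the vertical-stacking and horizontal-juxtaposition tangles (the latter requiring shading compatibility at the interface). Passing to the additive envelope and setting $1 := \bigoplus_{i=1}^r \emptyset_i$ gives an $r$-shaded linear monoidal category.

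To upgrade this to the corollary, I would apply Theorem~\ref{thm:CauchyCompletion} in both directions: given a semisimple shaded $\scrC$ with Cauchy tensor generators $\{X_y\}$, restrict to the monoidally generated subcategory $\scrC_{\mathscr{S}}$ and apply the shaded Theorem~\ref{thm:MonoidalAlgebraEquivalence}, observing that the semisimple linking algebra condition~\eqref{eq:LinkingAlgebra} is exactly the condition needed to guarantee that Cauchy completion of $\cD_{\cP}$ is semisimple (including simplicity of each $1_i$). Conversely, from a semisimple shaded monoidal algebra one recovers $\scrC$ by Cauchy completion of the additive envelope of $\cD_{\cP}$. Mutual inverseness of the two constructions up to evident equivalence is formal and parallels the arguments of \cite{MR2559686,MR2811311} used for Theorem~\ref{thm:MonoidalAlgebraEquivalence}.

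The main obstacle, beyond this routine bookkeeping, is checking that the shading data on the tangle operad corresponds correctly to the idempotent decomposition $1_\scrC = \bigoplus_{i=1}^r 1_i$: one must verify that shading compatibility of tangles exactly forbids action between incompatibly shaded boxes, so that the summands $1_i$ are preserved as distinct simple objects under Cauchy completion, and conversely that every object of $\scrC$ lies in some $\scrC_{ij}$ determined by the ambient shadings. Once these compatibilities are set up carefully for the shaded operad, the remainder of the argument is a direct adaptation of the unshaded proof of Corollary~\ref{cor:SemisimpleMonoidalAlgebraEquivalence}.
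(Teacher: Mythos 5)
Your proposal is correct and is essentially the approach the paper (implicitly) takes: the paper states Corollary~\ref{cor:SemisimpleShadedMonoidalAlgebraEquivalence} without proof as ``the shaded version'' of Theorem~\ref{thm:MonoidalAlgebraEquivalence} combined with Theorem~\ref{thm:CauchyCompletion}, which is exactly the reduction you carry out, with the shadings corresponding to the idempotents $p_i$ of the decomposition $1_\scrC=\bigoplus_i 1_i$ as in the paper's graphical-calculus discussion preceding the definition of $R$-shaded tangles. Your write-up simply makes explicit the bookkeeping the paper leaves to the reader.
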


\subsubsection{Unitary monoidal algebras}

\begin{defn}
A \emph{dagger monoidal algebra} with label set $S$ is a monoidal algebra $\cP_{\bullet \to \bullet}$ with label set $S$ equipped with antilinear maps $\dag: \cP_{s \to t} \to \cP_{t\to s}$ for all words $s,t$ on $S$ such that 
\begin{itemize}
\item
$\dag\circ \dag = \id$ and 
\item
for every monoidal tangle $T$, $T^\dag(x_1^\dag,\dots, x_k^\dag) = T(x_1,\dots, x_k)^\dag$ where $T^\dag$ denotes the vertical reflection of $T$ about the $x$-axis.
\end{itemize}
A dagger monoidal algebra is called a $\Cstar$ \emph{monoidal algebra} if in addition
\begin{itemize}
\item
Every $\dag$-algebra $\cP_{s\to s}$ with the stacking multiplication is a $\Cstar$ algebra,\footnote{
\label{footnote:CstarAlgebraProperty}
Being a $\Cstar$ algebra is a property of a complex $*$-algebra and not extra structure.
Indeed, every $\Cstar$ algebra has a unique $\Cstar$ norm, which can be recovered from the spectral radius, which is defined purely algebraically.
} and
\item
for all $f\in \cP_{s\to t}$, there is a $g\in \cP_{s\to s}$ such that $f^\dag \circ f = g^\dag \circ g$.
\end{itemize}
Finally, a \emph{unitary monoidal algebra} is a semisimple $\Cstar$ monoidal algebra.  
\end{defn}

When $\scrC$ is a $\Cstar$ monoidal category, the unitary Cauchy completion (a.k.a~unitary Karoubi envelope) is the orthogonal projection completion of the orthogonally additive envelope, i.e., we add formal orthogonal direct sums of objects, and then we take the category of orthogonal projections.
Since finite dimensional $\Cstar$ algebras are semisimple, we see that if $\scrC$ is a $\Cstar$ monoidal category whose endomorphisms spaces are finite dimensional, then the unitary Cauchy completion is semisimple.
In this case, we say a set of objects $\mathscr{S} = \{X_s\}_{s\in S}$ \emph{Cauchy tensor generates} $\scrC$ if every object of $\scrC$ is \emph{unitarily} isomorphic to an orthogonal direct summand of an orthogonal direct sum of tensor products of objects in $\mathscr{S}$. 

We have the following unitary version Theorem \ref{thm:MonoidalAlgebraEquivalence}.

\begin{cor}
\label{cor:SemisimpleCstarMonoidalAlgebraEquivalence}
There is an equivalence of categories
(see Footnote \ref{footnote:Truncation})
\[
\left\{\, 
\parbox{6.3cm}{\rm (Semisimple) $\Cstar$ monoidal algebras $\cP_{\bullet\to \bullet}$ with label set $S$}\,\left\}
\,\,\,\,\cong\,\,
\left\{\,\parbox{7cm}{\rm Pairs $(\scrC, \{X_s\}_{s\in S})$ with $\scrC$ a (semisimple) $\Cstar$ monoidal category with Cauchy tensor generators $X_s\in \scrC$ for $s\in S$}\,\right\}.
\right.\right.
\]
\end{cor}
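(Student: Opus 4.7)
The plan is to upgrade Theorem \ref{thm:MonoidalAlgebraEquivalence} (and Theorem \ref{thm:CauchyCompletion}) to the $\Cstar$ setting by tracking the dagger structure through the construction and verifying compatibility with the $\Cstar$ axioms. Since the underlying equivalence between monoidal algebras and monoidal categories with chosen generators is already established, the task reduces to matching up the additional dagger and $\Cstar$ data on the two sides.

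In the forward direction, given a pair $(\scrC, \{X_s\}_{s\in S})$ with $\scrC$ a $\Cstar$ monoidal category, form the monoidal algebra $\cP(\scrC, \{X_s\})_{\bullet\to\bullet}$ as in Example \ref{example:MonoidalAlgebraViaGraphicalCalculus}. The dagger on each hom space of $\scrC$ gives the antilinear involution $\dag\colon \cP_{s\to t}\to \cP_{t\to s}$. Compatibility with the action of monoidal tangles follows from the fact that the graphical calculus for a dagger monoidal category interprets vertical reflection of a diagram as applying $\dag$ (together with the interchange of inputs/outputs). The $\Cstar$ property of each stacking algebra $\cP_{s\to s}$ is immediate since $\scrC(X_{s_1}\otimes\cdots\otimes X_{s_n}\to X_{s_1}\otimes\cdots\otimes X_{s_n})$ is a finite-dimensional $\Cstar$ algebra, and the ``positivity of $f^\dag\circ f$'' condition follows by taking $g$ to be a positive square root of $f^\dag\circ f$ inside the $\Cstar$ algebra $\cP_{s\to s}$.

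In the reverse direction, given a $\Cstar$ monoidal algebra $\cP_{\bullet\to\bullet}$, Theorem \ref{thm:MonoidalAlgebraEquivalence} supplies a monoidal category $\scrC(\cP)$ whose hom spaces are the box spaces and whose distinguished generators $X_s$ correspond to the strands labelled by $s\in S$. The dagger on $\cP$ endows each $\scrC(\cP)(X_s\to X_t)$ with an antilinear involution, and compatibility with the stacking and horizontal composition tangles makes $\scrC(\cP)$ into a dagger monoidal category. The $\Cstar$-algebra axiom on each $\cP_{s\to s}$ provides a unique $\Cstar$ norm on each endomorphism algebra (see Footnote \ref{footnote:CstarAlgebraProperty}). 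The key step is then to promote these norms to a consistent $\Cstar$ structure on all of $\scrC(\cP)$ by defining $\|f\| := \|f^\dag\circ f\|^{1/2}$ for $f\in \cP_{s\to t}$, and to check that this norm is compatible with composition, tensor product, and satisfies the $\Cstar$ identity. Passing to Cauchy completions in the unitary sense (replacing direct sums with orthogonal direct sums and idempotents with orthogonal projections) then yields the semisimple statement via the unitary analogue of Theorem \ref{thm:CauchyCompletion}, using that finite-dimensional $\Cstar$ algebras are automatically semisimple and orthogonal projections split unitarily.

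The main obstacle will be showing that the ``mixed'' positivity axiom (for $f\in \cP_{s\to t}$, there exists $g\in \cP_{s\to s}$ with $f^\dag\circ f=g^\dag\circ g$) is exactly what is needed to ensure $\scrC(\cP)$ is genuinely a $\Cstar$ category rather than merely a dagger monoidal category with $\Cstar$ endomorphism algebras. Once this is established, a short calculation using the $2\times 2$ linking algebra $\cL(s,t)$ from \eqref{eq:LinkingAlgebra} verifies the $\Cstar$ identity $\|f^\dag\circ f\|=\|f\|^2$ on mixed hom spaces, and functoriality of the two constructions together with naturality of the generator assignment then yield the claimed equivalence of categories.
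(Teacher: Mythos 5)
Your proposal is correct and follows exactly the route the paper intends: the paper states this corollary without proof as the ``unitary version'' of Theorem \ref{thm:MonoidalAlgebraEquivalence}, whose own proof is deferred to standard references, and your sketch fills in precisely the expected details (transporting the dagger through the graphical calculus, using the finite-dimensionality of box spaces and Footnote \ref{footnote:CstarAlgebraProperty} for the $\Cstar$ property of the stacking algebras, recognizing the mixed positivity axiom as the defining condition for a $\Cstar$ category, and passing to unitary Cauchy completions). No gaps.
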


\subsection{Graph monoidal algebra embedding} 
\label{sec:MonoidalAlgebraEmbedding}

In this section we relate endofunctor embeddings $\scrC \rightarrow \End(\cM)$ to embeddings of monoidal algebras into 
graph monoidal algebras, which is the non-pivotal analog of embedding planar algebras into graph planar algebras.
We give a $2$-shaded multifusion version which applies to an algebraic analog of a finite depth subfactor standard invariant.

\begin{defn}
Let $J$ be a finite set.
The tensor category $\Vec(J\times J)$ of \emph{bi-$J$-graded vector spaces} has objects finite dimensional vector spaces which decompose as direct sums $V = \bigoplus_{i,j\in J} V_{ij}$, morphisms linear maps which preserve the bi-grading, i.e., $f: V\to W$ is a sum $f=\sum_{ij} f_{ij}: V_{ij} \to W_{ij}$, and composition the composition of linear maps.
The tensor product of two bi-graded vector spaces is given by convolution
$$
(V\otimes W)_{ik} := \bigoplus_{j\in J} V_{ij} \otimes W_{jk},
$$
as is the tensor product of morphisms, i.e., if $f: V^1 \to V^2$ and $g: W^1 \to W^2$, then
$$
(f\otimes g)_{ik} 
:=
\bigoplus_{j \in J}
f_{ij}\otimes g_{jk}
:
\bigoplus_{j \in J} 
V^1_{ij}\otimes W^1_{jk}
\longrightarrow
\bigoplus_{j\in J} 
V^2_{ij} \otimes W^2_{jk}.
$$
It is straightforward to see that $\Vec(J\times J)$ is a finitely semisimple rigid tensor category.
A set of representatives of the simple objects is given by $\{E_{ij}\}_{i,j\in J}$, where $E_{ij}$ has a copy of $\bbC$ in the $ij$-graded component and the zero vector space everywhere else.
The dual of $V$ is given by $(V^\vee)_{ij} := (V_{ji})^\vee$, the space of linear functionals $V_{ji} \to \bbC$, with obvious evaluation and coevaluation maps.
Indeed, it is straightforward to verify that $\Vec(J\times J)$ is monoidally equivalent to the 
tensor category $\Vec[\cG_r]$ of $\cG_r$-graded vector spaces, where $\cG_r$ is the groupoid with $r:=|J|$ objects and a unique isomorphism between any two object.
In turn, $\Vec[\cG_r]$ is easily seen to be monoidally equivalent to $\End(\cM)$ where $\cM$ is a finitely semisimple category such that a set of representatives of the simple objects $\Irr(\cM)$ is in bijection with $J$.
\end{defn}

\begin{defn}
\label{defn:VecGraph}
Given a bi-graded vector space $V\in \Vec(J\times J)$, we may think of it as a \emph{$\Vec$-enriched graph} $\vec{\Gamma}=(J,V)$, whose vertices are the set $J$, and whose edges are the finite dimensional vector spaces $V_{ij}$.
We call a bi-graded vector space \emph{connected} if given any two vertices $i,k\in J$, there is a sequence of vertices
$(i=j_0, j_1, \dots, j_n=k)$ such that $V_{j_{\ell-1}j_{\ell}}\neq (0)$ for all $\ell=1,\dots, n$.
Observe that if $\Gamma$ is connected, then $\Gamma$ Cauchy tensor generates $\Vec(J\times J)$.

Given a (connected) $\Vec$-graph $\vec{\Gamma} = (J, V)$, 
we get an honest (connected) graph $\Gamma$ with vertex set $J$ and whose edges from $i$ to $j$ is are some choice of basis for the space $V_{ij}$.
Clearly picking different bases yields isomorphic graphs.
\end{defn}

\begin{remark}
This approach is very similar to that in the classification of Temperley-Lieb module categories using weighted graphs from \cite{MR2046203}.
In \cite{MR3420332}, the authors classify unitary Temperley-Lieb module categories using bi-graded Hilbert spaces, which we discuss briefly (with a warning) in \S\ref{sec:EmbeddingUnitaryMultifusionIntoGMA} below.
\end{remark}

\begin{defn}
\label{defn:GraphMonoidalAlgebra}
Suppose $\Gamma$ is a connected directed graph with vertex set $J$ and edge set $V_{ij}$ from $i$ to $j$.
For an edge $\varepsilon \in V_{ij}$, we write $s(\varepsilon) = i$ and $t(\varepsilon) = j$, the \emph{source} and \emph{target} of $\varepsilon$.

We define the \emph{graph monoidal algebra} $\cG\cM\cA(\Gamma)_{\bullet \to \bullet}$ as follows.
For $m,n\geq 0$, we define
$\cG\cM\cA(\Gamma)_{m \to n}$
to be the $\bbC$-vector space with distinguished basis the set of pairs $(p,q)$ where 
$p,q$ are paths on $\Gamma$ of length $m,n$ respectively whose sources and targets agree.

The action of tangles is given by a state-sum model similar to a graph planar algebra:
\begin{equation}
\label{eq:StateSumForGMA}
T((p_1,q_1), \dots, (p_k, q_k))
:=
\sum_{
\text{states $\sigma$ on $T$}
}
\prod_{1\leq i\leq k}
\delta_{\sigma|_i =  (p_i,q_i)}
\sigma|_0
\end{equation}
Here, $T$ is a monoidal tangle with $k$ input disks, and the $(p_i,q_i)$ are basis elements (pairs of paths) in $\cG\cM\cA(\Gamma)_{m_i \to n_i}$.
A \emph{state} $\sigma$ on a monoidal tangle $T$ is an assignment of vertices and edges of $\Gamma$ to the regions and strings of $T$ respectively such that if a string labelled by $\varepsilon$ separates the left region $R_\ell$ from the right region $R_r$, then $R_\ell$ is labelled by $s(\varepsilon)$ and $R_r$ is labelled by $t(\varepsilon)$.
Now $\sigma|_i$ denotes the pair of paths in $\cG\cM\cA(\Gamma)_{m_i \to n_i}$ obtained from reading the bottom and top boundaries of the $i$-th input disk from left to right respectively.
In other words, we only sum over states which are `compatible' with the paths we input.
\end{defn}

\begin{example}
Consider the following directed graph:
$$
\begin{tikzpicture}[baseline = -.1cm, scale=.6]
	\node[draw, inner sep=.5mm, circle, thick] (a) at (0,0) [below] {\scriptsize{$a$}};
	\node[draw, inner sep=.5mm, circle, thick] (b) at (2,0) [below] {\scriptsize{$b$}};
	\node[draw, inner sep=.5mm, circle, thick] (c) at (4,0) [below] {\scriptsize{$c$}};
	\node[draw, inner sep=.5mm, circle, thick] (d) at (6,0) [below] {\scriptsize{$d$}};
	\draw[->] (a) to[out=30, in=150] node [above] {\scriptsize{$\varepsilon$}} (b);
	\draw[->] (b) to[out=-150, in=-30] node [below] {\scriptsize{$\varepsilon^*$}} (a);
	\draw[->] (b) to[out=30, in=150] node [above] {\scriptsize{$\xi$}} (c);
	\draw[->] (c) to[out=-150, in=-30] node [below] {\scriptsize{$\xi^*$}} (b);
	\draw[->] (c) to[out=30, in=150] node [above] {\scriptsize{$\kappa$}} (d);
	\draw[->] (d) to[out=-150, in=-30] node [below] {\scriptsize{$\kappa^*$}} (c);
\end{tikzpicture}
$$
For the monoidal tangle displayed below on the left, there are exactly two compatible states for the input
$(x_1=(\varepsilon\xi, \varepsilon\xi), x_2=(\xi\xi^*, \varepsilon^*\varepsilon), x_3=(\emptyset, \xi^*\xi))$, which are displayed below on the right. 
$$
\begin{tikzpicture}[baseline = -.1cm, scale=.6]
	\draw[very thick, rounded corners = 5pt] (-2,-2) rectangle (2,2);
	\draw[fill=white, very thick, rounded corners = 5pt] (-1.5,.5) rectangle (-.5,1.5);
	\draw[fill=white, very thick, rounded corners = 5pt] (-1,-1.5) rectangle (.5,-.5);
	\draw[fill=white, very thick, rounded corners = 5pt] (.25,.5) rectangle (1.25,1.5);
	\node at (-1,1) {\scriptsize{$1$}};
	\node at (-.25,-1) {\scriptsize{$2$}};
	\node at (.75,1) {\scriptsize{$3$}};
	\draw (-.8,.5) .. controls ++(270:.5cm) and ++(90:.5cm) .. (-.5,-.5);
	\draw (-1.5,-2) .. controls ++(90:.5cm) and ++(270:.5cm) .. (-1.2,.5);
	\draw (0,-.5) .. controls ++(90:.5cm) and ++(270:.5cm) .. (.5,.5);
	\draw (-1.2,1.5) -- (-1.2,2);
	\draw (-.8,1.5) -- (-.8,2);
	\draw (-.5,-2) -- (-.5,-1.5);
	\draw (0,-2) -- (0,-1.5);
	\draw (1,-2) -- (1,.5);
	\draw (1.5,-2) -- (1.5,2);
\end{tikzpicture}
\qquad
\leadsto
\qquad
\begin{tikzpicture}[baseline = -.1cm, scale=.7]
	\draw[very thick, rounded corners = 5pt] (-2,-2) rectangle (2,2);
	\draw[fill=white, very thick, rounded corners = 5pt] (-1.5,.5) rectangle (-.5,1.5);
	\draw[fill=white, very thick, rounded corners = 5pt] (-1,-1.5) rectangle (.5,-.5);
	\draw[fill=white, very thick, rounded corners = 5pt] (.25,.5) rectangle (1.25,1.5);
	\node at (-1,1) {\scriptsize{$x_1$}};
	\node at (-.25,-1) {\scriptsize{$x_2$}};
	\node at (.75,1) {\scriptsize{$x_3$}};
	\draw (-.8,.5) .. controls ++(270:.5cm) and ++(90:.5cm) .. (-.5,-.5) ;
	\draw (-1.5,-2) .. controls ++(90:.5cm) and ++(270:.5cm) .. (-1.2,.5);
	\draw (0,-.5) .. controls ++(90:.5cm) and ++(270:.5cm) .. (.5,.5);
	\draw (-1.2,1.5) -- (-1.2,2);
	\draw (-.8,1.5) -- (-.8,2);
	\draw (-.5,-2) -- (-.5,-1.5);
	\draw (0,-2) -- (0,-1.5);
	\draw (1,-2) -- (1,.5);
	\draw (1.5,-2) -- (1.5,2);
	\node at (-1.35,1.75) {\scriptsize{$\varepsilon$}};
	\node at (-.65,1.75) {\scriptsize{$\xi$}};
	\node at (1.75,0) {\scriptsize{$\xi^*$}};
	\node at (-1.4,0) {\scriptsize{$\varepsilon$}};
	\node at (-.9,0) {\scriptsize{$\xi$}};
	\node at (1.15,0) {\scriptsize{$\xi$}};
	\node at (0,0) {\scriptsize{$\xi^*$}};
	\node at (-.75,-1.75) {\scriptsize{$\varepsilon^*$}};
	\node at (.15,-1.75) {\scriptsize{$\varepsilon$}};
\end{tikzpicture}
\,,\,
\begin{tikzpicture}[baseline = -.1cm, scale=.7]
	\draw[very thick, rounded corners = 5pt] (-2,-2) rectangle (2,2);
	\draw[fill=white, very thick, rounded corners = 5pt] (-1.5,.5) rectangle (-.5,1.5);
	\draw[fill=white, very thick, rounded corners = 5pt] (-1,-1.5) rectangle (.5,-.5);
	\draw[fill=white, very thick, rounded corners = 5pt] (.25,.5) rectangle (1.25,1.5);
	\node at (-1,1) {\scriptsize{$x_1$}};
	\node at (-.25,-1) {\scriptsize{$x_2$}};
	\node at (.75,1) {\scriptsize{$x_3$}};
	\draw (-.8,.5) .. controls ++(270:.5cm) and ++(90:.5cm) .. (-.5,-.5) ;
	\draw (-1.5,-2) .. controls ++(90:.5cm) and ++(270:.5cm) .. (-1.2,.5);
	\draw (0,-.5) .. controls ++(90:.5cm) and ++(270:.5cm) .. (.5,.5);
	\draw (-1.2,1.5) -- (-1.2,2);
	\draw (-.8,1.5) -- (-.8,2);
	\draw (-.5,-2) -- (-.5,-1.5);
	\draw (0,-2) -- (0,-1.5);
	\draw (1,-2) -- (1,.5);
	\draw (1.5,-2) -- (1.5,2);
	\node at (-1.35,1.75) {\scriptsize{$\varepsilon$}};
	\node at (-.65,1.75) {\scriptsize{$\xi$}};
	\node at (1.65,0) {\scriptsize{$\kappa$}};
	\node at (-1.4,0) {\scriptsize{$\varepsilon$}};
	\node at (-.9,0) {\scriptsize{$\xi$}};
	\node at (1.15,0) {\scriptsize{$\xi$}};
	\node at (0,0) {\scriptsize{$\xi^*$}};
	\node at (-.75,-1.75) {\scriptsize{$\varepsilon^*$}};
	\node at (.15,-1.75) {\scriptsize{$\varepsilon$}};
\end{tikzpicture}
$$
Hence the output of the tangle on the left applied to the input $(x_1,x_2,x_3)$ is
$(\varepsilon \xi\xi^*,\varepsilon\varepsilon^*\varepsilon\xi\xi^*) + (\varepsilon \xi\kappa,\varepsilon\varepsilon^*\varepsilon\xi\kappa)$.
\end{example}

The graph monoidal algebra of $\Gamma$ is really the non-pivotal analog of the graph planar algebra of $\Gamma$.
The reader is encouraged to compare the above definition with that of the graph planar algebra of a bipartite graph in Definition \ref{defn:GPA} below.

\begin{thm}
\label{thm:StateSumForMonoidalAlgebra}
Given a connected $\Vec$-graph $\vec{\Gamma} = (J, V) \in \Vec(J\times J)$, the semisimple monoidal algebra $\cP(\Vec(J\times J),\vec{\Gamma})_{\bullet \to\bullet}$ from Example \ref{example:GMAfromCX} is isomorphic to the graph monoidal algebra $\cG\cM\cA(\Gamma)_{\bullet \to \bullet}$.
\end{thm}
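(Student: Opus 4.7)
The plan is to construct an explicit isomorphism of monoidal algebras by first matching box spaces on a natural distinguished basis, then checking that the two elementary tangle operations (vertical composition and horizontal juxtaposition) agree. Since monoidal tangles have strings with no minima or maxima, every tangle decomposes into these two elementary moves together with identity strands, so matching these is enough.

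First I would compute the box spaces of $\cP(\Vec(J\times J), \vec{\Gamma})_{\bullet \to \bullet}$. Unwinding the convolution tensor product in $\Vec(J\times J)$, one sees by induction that
\[
(\vec{\Gamma}^{\otimes m})_{ij} \;=\; \bigoplus_{\substack{p : i \to j \\ \ell(p)=m}} V_{p_0 p_1}\otimes V_{p_1 p_2}\otimes\cdots\otimes V_{p_{m-1}p_m},
\]
and fixing a basis of each $V_{kl}$ to be the set of edges from $k$ to $l$ of $\Gamma$, one obtains a distinguished basis of $(\vec{\Gamma}^{\otimes m})_{ij}$ indexed by length-$m$ paths from $i$ to $j$. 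A morphism in $\Vec(J\times J)$ preserves bigrading, so
\[
\cP(\Vec(J\times J),\vec{\Gamma})_{m\to n} \;=\; \bigoplus_{i,j\in J}\Hom\bigl((\vec{\Gamma}^{\otimes m})_{ij}, (\vec{\Gamma}^{\otimes n})_{ij}\bigr),
\]
which has a basis of matrix units $E_{p,q}$ indexed by pairs $(p,q)$ of paths of lengths $m$ and $n$ respectively with $s(p)=s(q)$ and $t(p)=t(q)$. This is exactly the distinguished basis of $\cG\cM\cA(\Gamma)_{m\to n}$, so I define $\Phi: \cG\cM\cA(\Gamma)_{m\to n} \to \cP(\Vec(J\times J),\vec{\Gamma})_{m\to n}$ by sending the basis element $(p,q)$ to $E_{p,q}$.

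Next I would verify that $\Phi$ intertwines the actions of the two elementary tangles. For the vertical composition tangle, the action on $\cG\cM\cA$ side is $(p_1, q_1)\cdot (p_2, q_2) = \delta_{q_1 = p_2}(p_1, q_2)$ by the state sum formula (the state is forced along the middle boundary), while on the category side this is composition of matrix units $E_{p_1,q_1}\circ E_{p_2,q_2} = \delta_{q_1,p_2}E_{p_1,q_2}$, which manifestly agrees. For the horizontal juxtaposition tangle, the $\cG\cM\cA$ state sum forces the region between the two input disks to be labeled by a single vertex, giving $(p_1,q_1)\otimes (p_2,q_2) = \delta_{t(p_1)=s(p_2)}\delta_{t(q_1)=s(q_2)}(p_1 p_2, q_1 q_2)$; on the category side, under the convolution tensor product, $E_{p_1,q_1}\otimes E_{p_2,q_2}$ is nonzero precisely on the $(s(p_1),t(p_2))$-graded piece and is nonzero only when the middle indices match, yielding the matrix unit $E_{p_1 p_2, q_1 q_2}$ with the same compatibility conditions.

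Every monoidal tangle is isotopic to a composition of identity strands, stacking tangles, and juxtaposition tangles (inserting input disks via composition with the identity tangle), so by operadic associativity the above suffices to show $\Phi$ intertwines the full tangle action. The main bookkeeping obstacle is simply confirming that the state-sum constraint ``states must be compatible with the input paths'' corresponds exactly to the Kronecker delta appearing in matrix-unit composition and in the convolution product formula; once one checks that the compatibility conditions on shadings of regions match the bigrading conditions on matrix units, the theorem is immediate. Since $\Phi$ is a linear bijection on the distinguished bases in each degree and preserves the action of the two generators of the operad of monoidal tangles, it is the desired isomorphism of monoidal algebras.
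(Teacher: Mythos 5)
Your proposal is correct and follows essentially the same route as the paper's proof: identify the box spaces on both sides with the span of matrix units indexed by compatible pairs of paths, and then check that the linear bijection intertwines the actions of the generating tangles (vertical stacking and horizontal concatenation), which agree because matrix-unit composition and the convolution tensor product reproduce exactly the Kronecker deltas of the state sum. The only cosmetic differences are the direction in which you define the isomorphism and that the paper also explicitly records that the single vertical strand (the identity tangle with no inputs) is preserved, which your identification of bases handles implicitly.
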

\begin{proof}
Denoting $\vec{\Gamma}^{\otimes n} = (J, V^{\otimes n})$, we have a canonical isomorphism
$$
V^{\otimes n}_{ik}
\cong
\bigoplus_{j_1,\dots, j_{n-1} \in J} V_{ij_1} \otimes V_{j_1, j_2} \otimes \cdots \otimes V_{j_{n-1} k}.
$$
Observe that an $f\in \Hom_{\Vec(J\times J)}(\vec{\Gamma}^{\otimes m} \to \vec{\Gamma}^{\otimes n})$ is completely determined by its component maps
$$
\left\{
f_{i\ell} 
: 
\bigoplus_{j_1,\dots, j_{m-1} \in J} V_{ij_1} \otimes V_{j_1 j_2} \otimes \cdots \otimes V_{j_{m-1} \ell}
\to 
\bigoplus_{k_1,\dots, k_{n-1} \in J} V_{ik_1} \otimes V_{k_1 k_2} \otimes \cdots \otimes V_{k_{n-1} \ell}.
\right\}_{i,\ell\in J}.
$$
Now fix a basis $\{\varepsilon_{i\ell}^k\}$ for each $V_{i\ell}$, and for each pair of paths on $\Gamma$ from $i$ to $\ell$
$$
p = \varepsilon_{ij_1}^{p_1} \otimes \cdots \otimes \varepsilon_{j_{m-1}\ell}^{p_m}
\qquad
\qquad
q = \varepsilon_{ik_1}^{q_1} \otimes \cdots \otimes \varepsilon_{k_{n-1}\ell}^{q_n},
$$ 
of lengths $m$ and $n$ respectively, we let $F^{i\ell}_{p\to q}\in \Hom_{\Vec(J\times J)}(\vec{\Gamma}^{\otimes m} \to \vec{\Gamma}^{\otimes n})$ be the unique $i\ell$-component map sending $p$ to $q$ and all other paths $p'$ from $i$ to $\ell$ of length $m$ to zero.
We see then that
\begin{equation}
\label{eq:BasisOfVecGMA}
\Vec(J\times J, \vec{\Gamma})_{m \to n}
:=
\Hom_{\Vec(J\times J)}(\vec{\Gamma}^{\otimes m} \to \vec{\Gamma}^{\otimes n})
=
\bigoplus_{i,\ell\in J} \operatorname{span}_\bbC
\left\{
F^{i\ell}_{p\to q}
\right\}_{\text{$p,q$ paths $i$ to $\ell$}}.
\end{equation}
Now it is straightforward to verify that the linear extension 
$$
\Phi_{m\to n} : \Vec(J\times J, \vec{\Gamma})_{m \to n} \to \cG\cM\cA(\Gamma)_{m\to n}
$$
of $F^{i\ell}_{p\to q} \mapsto (p,q)$ is a linear isomorphism for all $m,n\geq 0$.

It remains to see that this isomorphism is compatible with the action of monoidal tangles.
It suffices to show that $\Phi$ intertwines the actions of a single vertical strand with no input disk, vertical stacking tangles, and horizontal concatenation tangles, as these tangles generate the monoidal operad.
The vertical strand in $\cP(\Vec(J\times J), \vec{\Gamma})_{1\to 1}$ is given by
$$
\bigoplus_{i,j\in J} \id_{V_{ij}}
=
\bigoplus_{i,j\in J}
\sum_{k} F^{ij}_{\varepsilon_{ij}^k\to \varepsilon_{ij}^k}
\overset{\Phi_{1\to 1}}{\longmapsto}
\bigoplus_{i,j\in J}
\sum_{k} (\varepsilon_{ij}^k , \varepsilon_{ij}^k)
=
\id_{\cG\cM\cA(\Gamma)_{1\to 1}}
=
\begin{tikzpicture}[baseline = -.1cm]
	\draw[very thick, fill=white, rounded corners = 5pt] (-.3,-.3) rectangle (.3,.3);
	\draw (0,-.3) -- (0,.3);
\end{tikzpicture}\,.
$$
Hence $\Phi_{1\to 1}$ preserves the strand.

To see that $\Phi_{\bullet\to \bullet}$ preserves composition, we check on our basis \eqref{eq:BasisOfVecGMA}.
Suppressing subscripts on edges for simplicity, suppose
$$
p = \varepsilon^{p_1} \otimes \cdots \otimes \varepsilon^{p_k} 
\qquad
\qquad
q = \varepsilon^{q_1} \otimes \cdots \otimes \varepsilon^{q_\ell}
\qquad
\qquad
r = \varepsilon^{r_1} \otimes \cdots \otimes \varepsilon^{r_m}
\qquad
\qquad
s = \varepsilon^{s_1} \otimes \cdots \otimes \varepsilon^{s_n}
$$ 
are paths on $\Gamma$ from $i$ to $j$.
Then
$$
\Phi_{k \to \ell}(F^{ij}_{r\to s})
\circ
\Phi_{m \to n}(F^{ij}_{p\to q})
=
\begin{tikzpicture}[baseline = -.1cm]
	\draw (-.7,-1.2) -- (-.7,1.2);
	\draw (-.3,-1.2) -- (-.3,1.2);
	\draw (.7,-1.2) -- (.7,1.2);
	\draw[very thick, fill=white, rounded corners = 5pt] (-1,-1) rectangle (1,-.4);
	\draw[very thick, fill=white, rounded corners = 5pt] (-1,.4) rectangle (1,1);
	\node at (0,.7) {$(r,s)$};
	\node at (0,-.7) {$(p,q)$};
	\node at (-.9,.17) {\scriptsize{$\varepsilon^{r_1}$}};
	\node at (-.9,-.23) {\scriptsize{$\varepsilon^{q_1}$}};
	\node at (-.5,.17) {\scriptsize{$\varepsilon^{r_2}$}};
	\node at (-.5,-.23) {\scriptsize{$\varepsilon^{q_2}$}};
	\node at (.05,0) {\scriptsize{$\cdots$}};
	\node at (.5,.17) {\scriptsize{$\varepsilon^{r_m}$}};
	\node at (.5,-.23) {\scriptsize{$\varepsilon^{q_\ell}$}};
	\node at (-.7,1.4) {\scriptsize{$\varepsilon^{s_1}$}};
	\node at (-.3,1.4) {\scriptsize{$\varepsilon^{s_2}$}};
	\node at (.2,1.4) {\scriptsize{$\cdots$}};
	\node at (.7,1.4) {\scriptsize{$\varepsilon^{s_n}$}};
	\node at (-.7,-1.4) {\scriptsize{$\varepsilon^{p_1}$}};
	\node at (-.3,-1.4) {\scriptsize{$\varepsilon^{p_2}$}};
	\node at (.2,-1.4) {\scriptsize{$\cdots$}};
	\node at (.7,-1.4) {\scriptsize{$\varepsilon^{p_k}$}};
\end{tikzpicture}
=
\delta_{\ell=m}
\delta_{q=r}
(p,s)
=
\delta_{\ell=m}
\delta_{q=r}
\Phi_{k \to n}(F^{ij}_{p\to s})
=
\Phi_{k \to n}(F^{ij}_{r\to s}\circ F^{ij}_{p\to q}).
$$
As composition is multi-linear, the general case follows by taking linear combinations.

Finally, to show $\Phi_{\bullet\to \bullet}$ preserves tensor product, we again work with our basis \eqref{eq:BasisOfVecGMA}.
Again suppressing subscripts for simplicity, suppose 
$$
p^{gh} = \varepsilon^{p_1} \otimes \cdots \otimes \varepsilon^{p_k} 
\qquad
\qquad
q^{gh} = \varepsilon^{q_1} \otimes \cdots \otimes \varepsilon^{q_\ell}
\qquad
\qquad
r^{ij} = \varepsilon^{r_1} \otimes \cdots \otimes \varepsilon^{r_m}
\qquad
\qquad
s^{ij} = \varepsilon^{s_1} \otimes \cdots \otimes \varepsilon^{s_n}
$$ 
are paths on $\Gamma$,
where $p^{gh}, q^{gh}$ go from $g$ to $h$, and $r^{ij}, s^{ij}$ go from $i$ to $j$.
We calculate
\begin{align*}
\Phi_{k \to \ell}(F^{gh}_{p\to q})
\otimes
\Phi_{m \to n}(F^{ij}_{r\to s})
&=
\begin{tikzpicture}[baseline = -.1cm]
	\draw (-.5,-.5) -- (-.5,.5);
	\draw (-.2,-.5) -- (-.2,.5);
	\draw (.5,-.5) -- (.5,.5);
	\draw (1.1,-.5) -- (1.1,.5);
	\draw (1.4,-.5) -- (1.4,.5);
	\draw (2.1,-.5) -- (2.1,.5);
	\draw[very thick, fill=white, rounded corners = 5pt] (-.7,-.3) rectangle (.7,.3);
	\draw[very thick, fill=white, rounded corners = 5pt] (.9,-.3) rectangle (2.3,.3);
	\node at (0,0) {$(p,q)$};
	\node at (-.5,.7) {\scriptsize{$\varepsilon^{q_1}$}};
	\node at (-.2,.7) {\scriptsize{$\varepsilon^{q_2}$}};
	\node at (.15,.7) {\scriptsize{$\cdots$}};
	\node at (.5,.7) {\scriptsize{$\varepsilon^{q_\ell}$}};
	\node at (-.5,-.7) {\scriptsize{$\varepsilon^{p_1}$}};
	\node at (-.2,-.7) {\scriptsize{$\varepsilon^{p_2}$}};
	\node at (.15,-.7) {\scriptsize{$\cdots$}};
	\node at (.5,-.7) {\scriptsize{$\varepsilon^{p_k}$}};
	\node at (1.6,0) {$(r,s)$};
	\node at (1.1,.7) {\scriptsize{$\varepsilon^{s_1}$}};
	\node at (1.4,.7) {\scriptsize{$\varepsilon^{s_2}$}};
	\node at (1.75,.7) {\scriptsize{$\cdots$}};
	\node at (2.1,.7) {\scriptsize{$\varepsilon^{s_n}$}};
	\node at (1.1,-.7) {\scriptsize{$\varepsilon^{r_1}$}};
	\node at (1.4,-.7) {\scriptsize{$\varepsilon^{r_2}$}};
	\node at (1.75,-.7) {\scriptsize{$\cdots$}};
	\node at (2.1,-.7) {\scriptsize{$\varepsilon^{r_m}$}};
\end{tikzpicture}
=
\delta_{h=i}
\begin{tikzpicture}[baseline = -.1cm]
	\draw (-.5,-.5) -- (-.5,.5);
	\draw (-.2,-.5) -- (-.2,.5);
	\draw (.5,-.5) -- (.5,.5);
	\draw (1.1,-.5) -- (1.1,.5);
	\draw (1.4,-.5) -- (1.4,.5);
	\draw (2.1,-.5) -- (2.1,.5);
	\draw[very thick, fill=white, rounded corners = 5pt] (-.7,-.3) rectangle (2.3,.3);
	\node at (.8,0) {$(pr,qs)$};
	\node at (-.5,.7) {\scriptsize{$\varepsilon^{q_1}$}};
	\node at (-.2,.7) {\scriptsize{$\varepsilon^{q_2}$}};
	\node at (.15,.7) {\scriptsize{$\cdots$}};
	\node at (.5,.7) {\scriptsize{$\varepsilon^{q_\ell}$}};
	\node at (-.5,-.7) {\scriptsize{$\varepsilon^{p_1}$}};
	\node at (-.2,-.7) {\scriptsize{$\varepsilon^{p_2}$}};
	\node at (.15,-.7) {\scriptsize{$\cdots$}};
	\node at (.5,-.7) {\scriptsize{$\varepsilon^{p_k}$}};
	\node at (1.1,.7) {\scriptsize{$\varepsilon^{s_1}$}};
	\node at (1.4,.7) {\scriptsize{$\varepsilon^{s_2}$}};
	\node at (1.75,.7) {\scriptsize{$\cdots$}};
	\node at (2.1,.7) {\scriptsize{$\varepsilon^{s_n}$}};
	\node at (1.1,-.7) {\scriptsize{$\varepsilon^{r_1}$}};
	\node at (1.4,-.7) {\scriptsize{$\varepsilon^{r_2}$}};
	\node at (1.75,-.7) {\scriptsize{$\cdots$}};
	\node at (2.1,-.7) {\scriptsize{$\varepsilon^{r_m}$}};
\end{tikzpicture}
\\&=
\delta_{h=i}
\Phi_{k+m \to \ell+n}(F^{gj}_{pr\to qs})
=
\Phi_{k+m \to \ell+n}(F^{gh}_{p\to q}\otimes F^{ij}_{r\to s}).
\end{align*}
Again, the general case follows by taking linear combinations.

Since the actions of the generating tangles agree, we are finished.
\end{proof}

\begin{defn}
\label{defn:FusionGraph}
Suppose $\scrC$ is a semisimple monoidal category Cauchy tensor generated by $X$, and $\cM$ is a finitely semisimple module category.  Let $\Irr(\cM)=\{m_1,\dots,m_r\}$ be a set of representatives of simple objects of $\cM$, and define $J:= \{1,\dots, r\}$.  The \emph{fusion $\Vec$-graph} $\vec{\Gamma}$ of $\cM$ with respect to $X$ is the $\Vec$-graph whose vertices are $J$ and whose edge spaces are given by
$$
V_{ij} 
:= 
\cM(X \vartriangleright m_i \to m_j).
$$
\end{defn}

\begin{prop}[Graph monoidal algebra embedding]
\label{prop:GMAEmbedding}
Suppose $\scrC$ is a semisimple monoidal category Cauchy tensor generated by $X$, 
$\cM$ is a finitely semisimple  category with isomorphism classes of simple objects indexed by $J$, and
$\vec{\Gamma}$ is a $\Vec$-graph whose vertices are $J$.
Equipping $\cM$ with the structure of an indecomposable left $\scrC$-module category 
whose connected fusion $\Vec$-graph with respect to $X$ is $\vec{\Gamma}$
is equivalent to embedding the monoidal algebra $\cP(\scrC,X)_{\bullet \to \bullet}$ into $\cG\cM\cA(\Gamma)_{\bullet \to \bullet}$.
\end{prop}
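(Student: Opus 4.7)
The plan is to stitch together three results already developed in this section. First, by Theorem~\ref{thm:AlgOmnibus}, equipping $\cM$ with a left $\scrC$-module structure is equivalent to supplying a tensor functor $F : \scrC \to \End(\cM)$. Since $\cM$ is finitely semisimple with simples indexed by $J$, there is a canonical monoidal equivalence $\End(\cM) \cong \Vec(J \times J)$, sending an endofunctor $G$ to the bi-graded vector space whose $(i,j)$-component is $\cM(G(m_i) \to m_j)$. Under this identification $F(X)$ is precisely the fusion $\Vec$-graph of $\cM$ with respect to $X$ from Definition~\ref{defn:FusionGraph}, so left $\scrC$-module structures on $\cM$ whose fusion graph is $\vec{\Gamma}$ correspond bijectively to tensor functors of pairs $(\scrC, X) \to (\Vec(J\times J), \vec{\Gamma})$.

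Next I would invoke Corollary~\ref{cor:SemisimpleMonoidalAlgebraEquivalence} to translate such tensor functors of pairs into monoidal algebra maps $\cP(\scrC, X)_{\bullet \to \bullet} \to \cP(\Vec(J\times J), \vec{\Gamma})_{\bullet \to \bullet}$, and then Theorem~\ref{thm:StateSumForMonoidalAlgebra} to identify the target with $\cG\cM\cA(\Gamma)_{\bullet \to \bullet}$. Composing these three bijections yields the desired correspondence between module structures on $\cM$ with fusion graph $\vec{\Gamma}$ and monoidal algebra maps $\cP(\scrC, X) \to \cG\cM\cA(\Gamma)$ sending the strand of $\cP(\scrC, X)$ (which represents the generator $X$) to $\vec{\Gamma}$.

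Finally, I would match the two remaining adjectives in the statement. Indecomposability of $\cM$ as a $\scrC$-module corresponds to connectedness of $\vec{\Gamma}$: since $X$ Cauchy tensor generates $\scrC$, the $\scrC$-submodule of $\cM$ generated by a simple $m_i$ consists exactly of those $m_j$ reachable from $i$ by an edge path in $\Gamma$, so the indecomposable $\scrC$-summands of $\cM$ are indexed by the connected components of $\Gamma$. The word \emph{embedding}, meaning injectivity of the monoidal algebra map on each box space, corresponds to $F$ being faithful on the full subcategory of $\scrC$ on tensor powers of $X$. The main obstacle will be verifying that this faithfulness is automatic from the other hypotheses: since $\scrC$ is semisimple, it suffices to check that no nonzero simple summand of any $X^{\otimes n}$ is annihilated by $F$, and this follows from a short argument using that $X$ Cauchy tensor generates $\scrC$ together with the indecomposability of $\cM$ (for otherwise $\id_\cM = F(1_\scrC)$ would factor through a zero object in the direction witnessed by the killed simple).
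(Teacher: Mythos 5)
Your proposal is correct and follows essentially the same route as the paper's proof: module structures correspond to tensor functors $\scrC \to \End(\cM) \cong \Vec(J\times J)$, these restrict to the monoidal subcategory generated by $X$ (landing in the subcategory generated by $\vec{\Gamma}$), and then Corollary~\ref{cor:SemisimpleMonoidalAlgebraEquivalence} together with Theorem~\ref{thm:StateSumForMonoidalAlgebra} converts them into monoidal algebra maps into $\cG\cM\cA(\Gamma)_{\bullet\to\bullet}$. The paper's proof is terser and leaves implicit the two points you spell out at the end (indecomposability versus connectedness of $\Gamma$, and injectivity of the resulting map), so your extra care there is a supplement to, not a departure from, the argument in the text.
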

\begin{proof}
As we discussed previously, $\scrC$-module structures on $\cM$ are equivalent to tensor functors $\scrC \to \End(\cM)$.
By semisimplicity, $\End(\cM) \cong \Vec(J\times J)$.
Notice that every linear tensor functor $\scrC_X \to \Vec(J\times J)$ uniquely extends to its Cauchy completion $\scrC$, and every linear tensor functor $\scrC_X \to \Vec(J\times J)$ has essential image in $\Vec(J\times J)_{\vec{\Gamma}}$.
The result now follows from Theorem \ref{thm:StateSumForMonoidalAlgebra} together with the equivalence of categories from Corollary \ref{cor:SemisimpleMonoidalAlgebraEquivalence}.
\end{proof}

\begin{remark}
When $\scrC$ is fusion, the Frobenius-Perron dimension of $X$ is the norm of the underlying graph $\Gamma$.
\end{remark}

\subsubsection{Embedding multifusion categories into multishaded graph monoidal algebras}

We now adapt Proposition \ref{prop:GMAEmbedding} to more closely approximate subfactor planar algebras, which have two shadings.
On the $\Vec$-graph side, we will see this translates into our $\Vec$-graphs $\vec{\Gamma} = (J,V)$ being \emph{bipartite}, i.e., $J = J_+\amalg J_-$, and $V_{ij} = (0)$ whenever $i \in J_\pm$ and $j\in J_\mp$.

All the results and definitions in the beginning of this section about the graph tensor category and the (graph) monoidal algebra have straightforward 2-shaded/bipartite generalizations to multifusion categories.

\begin{defn}
\label{defn:BipartiteFusionGraph}
Suppose $\cD$ is a $2$-shaded multifusion category with Cauchy tensor generator $X$ in $\cD_{12}$ and $\cM$ is a finitely semisimple module category.  
As in Definition \ref{defn:FusionGraph}, we define the \emph{fusion $\Vec$-graph} $\vec{\Gamma}$ of $\cM$ with respect to $X$ to have vertices corresponding to simple objects in $\cM$ and edge spaces
$$
V_{ij}
:=
\cM(X \vartriangleright m_i \to m_j)
.$$
Observe that since $\cD$ is 2-shaded and $X\in \cD_{12}$, $\vec{\Gamma}$ is bipartite.
\end{defn}

\begin{prop}[2-shaded graph monoidal algebra embedding]
\label{prop:2ShadedGMAEmbedding}
Suppose $\cD,\cM,\vec{\Gamma}$ are as in Definition \ref{defn:BipartiteFusionGraph}.
Indecomposable left $\cD$-module category structures on $\cM$ whose fusion graph is $\Gamma$ correspond to embeddings of the $2$-shaded monoidal algebra $\cP(\cD,X)_{\bullet \to \bullet}$ into $\cG\cM\cA(\Gamma)_{\bullet \to \bullet}$
\end{prop}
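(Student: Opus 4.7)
The plan is to adapt the proof of Proposition~\ref{prop:GMAEmbedding} verbatim to the 2-shaded setting, using the 2-shaded analogues of the structural results established earlier in this subsection. The algebraic content is identical to the unshaded case; only the bookkeeping for the shading changes.

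First, I would invoke (the algebraic/non-dagger analogue of) Lemma~\ref{lem:DaggerModules} to rephrase left $\cD$-module category structures on $\cM$ as tensor functors $\cD \to \End(\cM)$. Since $\cD$ is 2-shaded with $1_\cD = 1_1 \oplus 1_2$, the module $\cM$ splits canonically as $\cM = \cM_1 \oplus \cM_2$ with $\cM_i = 1_i \vartriangleright \cM$, so the index set $J$ of isomorphism classes of simple objects partitions as $J = J_1 \amalg J_2$. By semisimplicity, $\End(\cM)$ is equivalent, as a 2-shaded multifusion category, to $\Vec(J\times J)$ with shading given by this bipartition. Since $X\in \cD_{12}$, the image of $X$ under any such tensor functor lies in the $(1,2)$-graded component, i.e., is a bipartite $\Vec$-graph on $J$; the fusion graph condition pins it down to be $\vec{\Gamma}$ (up to graph automorphism).

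Second, I would apply the 2-shaded analogue of Theorem~\ref{thm:StateSumForMonoidalAlgebra}, identifying the 2-shaded monoidal subalgebra $\cP(\Vec(J\times J),\vec{\Gamma})_{\bullet\to\bullet}$ with the 2-shaded graph monoidal algebra $\cG\cM\cA(\Gamma)_{\bullet\to\bullet}$. The state-sum computation is the same, with the shading merely forcing admissible states to be compatible with the bipartition (equivalently, with the edge orientations from $J_2$ to $J_1$). Combining this with Corollary~\ref{cor:SemisimpleShadedMonoidalAlgebraEquivalence} yields the claimed equivalence: an embedding $\cP(\cD,X)_{\bullet\to\bullet} \hookrightarrow \cG\cM\cA(\Gamma)_{\bullet\to\bullet}$ corresponds, via idempotent completion, to a tensor functor $\cD\to \Vec(J\times J)\cong \End(\cM)$ sending $X$ to $\vec{\Gamma}$, hence to the desired $\cD$-module structure on $\cM$. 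Indecomposability of $\cM$ corresponds to connectedness of $\Gamma$.

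The only point requiring any thought is verifying that the correspondence really produces embeddings on both sides, i.e., that a module structure yields a faithful map of monoidal algebras and conversely. The forward direction follows because the underlying functor $\cD\to \End(\cM)$ has hom spaces $\cD(X^{\otimes m}\to X^{\otimes n})$ sent to the genuine morphism spaces of $\Vec(J\times J)$, and no morphism between tensor powers of $X$ can be killed without violating semisimplicity together with the fusion graph hypothesis. The backward direction is immediate: any map of monoidal algebras $\cP(\cD,X)_{\bullet\to\bullet}\to \cG\cM\cA(\Gamma)_{\bullet\to\bullet}$ extends uniquely to a tensor functor on Cauchy completions by Corollary~\ref{cor:SemisimpleShadedMonoidalAlgebraEquivalence}, and injectivity at the monoidal algebra level translates to faithfulness on the generating object. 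Thus no real obstacle arises beyond shaded bookkeeping, and the proof is essentially an invocation of the machinery built earlier in the section.
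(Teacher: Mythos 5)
Your proposal is correct and matches the paper's intended argument: the paper gives no separate proof of this proposition, instead noting just before it that all the results of the unshaded case "have straightforward 2-shaded/bipartite generalizations," and your write-up is exactly the adaptation of the proof of Proposition \ref{prop:GMAEmbedding} (module structures as tensor functors to $\End(\cM)\cong\Vec(J\times J)$ with the bipartition shading, the shaded state-sum identification, and Corollary \ref{cor:SemisimpleShadedMonoidalAlgebraEquivalence}) that this remark presupposes. Your additional attention to why the correspondence yields embeddings goes slightly beyond what the paper records, but is consistent with it.
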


This is the purely algebraic version of our graph planar algebra embedding theorem.

\begin{remark}
It is easy to see that the underlying monoidal category structure on the graph planar algebra $\cG\cP\cA(\Gamma)_\bullet$ agrees with $\cG\cM\cA(\Gamma)_{\bullet \to \bullet}$.  
In particular, it follows just from the results of this section that a graph planar algebra embedding yields a module category.  
This result alone is enough to show the existence of $\cE\cH_3$ and $\cE\cH_4$ as tensor categories from the GPA embeddings constructed in \S\ref{sec:construction}, but not to determine whether these tensor categories are unitary.
\end{remark}

\subsubsection{Embedding unitary multifusion categories}
\label{sec:EmbeddingUnitaryMultifusionIntoGMA}

Similar to the algebraic and multishaded settings, one can adapt to the unitary setting by first considering the unitary multifusion category $\Hilb(J\times J)$ of bi-$J$-graded Hilbert spaces, which is $\dag$-equivalent to $\End^\dag(\cM)$ for any $\Cstar$ category $\cM$ where $|\Irr(\cM)| = |J|$.
(This is the approach to classifying unitary Temperley-Lieb modules in \cite{MR3420332}.)
Analogous to Definition \ref{defn:VecGraph}, we may identify the objects of $\Hilb(J\times J)$ with $\Hilb$-enriched graphs $\vec{\Gamma}=(J, H)$, and we obtain honest graphs by choosing orthonormal bases for the edge Hilbert spaces.

Now the graph monoidal algebra $\cG\cM\cA(\Gamma)_{\bullet \to \bullet}$ carries an obvious $\dag$-structure by the anti-linear extension of $(p,q)\mapsto (q,p)$ where $p,q$ are paths on $\Gamma$ whose sources and targets agree.
It is straightforward to show that this $\dag$-structure is compatible with the vertical reflection of tangles about the $x$-axis, and that it satisfies the positivity axioms, making $\cG\cM\cA(\Gamma)_{\bullet \to \bullet}$ a unitary monoidal algebra.
Similar to Theorem \ref{thm:StateSumForMonoidalAlgebra}, we have a $\dag$-isomorphism of unitary monoidal algebras $\cG\cM\cA(\Gamma)_{\bullet \to \bullet}\cong \cP(\Hilb(J\times J), \vec{\Gamma})_{\bullet \to \bullet}$.

We may pass to the unitary 2-shaded setting by working with bipartite $\Hilb$-graphs.
There is an `obvious' unitary version of Propositions \ref{prop:GMAEmbedding} and \ref{prop:2ShadedGMAEmbedding}.
However, one should consider the following subtlety with this adaptation.

\begin{warn}
While it is true that a $\Cstar$ category is finitely semisimple if and only if it is dagger equivalent to $\Hilb^n$ for some $n\in\bbN$, 
the only way we know to construct such an equivalence requires choosing Hilbert space structures on the hom spaces of $\cM$.
We will see in Remark \ref{rem:OtherCharacterizationsOfTraces} below that this extra structure corresponds to a unitary trace on $\cM$, which is gives a distinguished choice of unitary pivotal structure on $\End^\dag(\cM)$ by Proposition \ref{prop:TracesToPivotalStructures} below.
This is not necessarily a problem, since there is a canonical choice for such inner products which declare all identity morphisms for simple objects to have norm 1.
This corresponds to the canonical spherical structure \cite{MR1444286,MR2091457,MR3342166,1808.00323} on $\End^\dag(\cM)$, which we discuss in \S\ref{sec:UnitaryPivotal} below.

If one does not wish to choose this additional structure, then instead of Hilbert spaces, one can work with Hilbert $\Cstar$ bimodules.
Observe that when $\cM$ is $\Cstar$, 
$\cM(m\to m),\cM(n\to n)$ are $\Cstar$ algebras for all $m,n\in \cM$, and
$\cM(m\to n)$ has the canonical structure of a Hilbert $\Cstar$ $\cM(m\to m) - \cM(n\to n)$ bimodule.
We will not discuss this further as it would take us too far afield.
\end{warn}

The remaining sections of this chapter are dedicated to adapting the above proposition to the pivotal and unitary pivotal settings.
We will see this adaptation naturally becomes the module embedding theorem for graph planar algebras.

\subsection{Planar algebras}
\label{sec:PlanarAlgebras}

In \S\ref{sec:AlgebraicGPA}, we defined the notion of a (shaded) monoidal algebra.
As alluded to earlier, the pivotal analog of a monoidal algebra is a planar algebra.
To simplify the exposition, we will only define (2-)shaded planar algebras with a single strand type following \cite{MR2972458}; we refer the reader to \cite{math.QA/9909027,JonesPANotes} (see also \cite{MR2496052}) for a host of other notions of planar algebra.

\begin{defn}
A (2-)\emph{shaded planar tangle} consists of a disk with smaller internal input disks, together with non-intersecting strings between the disks, a checkerboard shading, and a distinguished interval marked by $\star$ for each disk.
We consider shaded planar tangles up to isotopy.
We say a shaded planar tangle has \emph{type} $((n_0, \pm_0); (n_1,\pm_1) , \dots, (n_k,\pm_k))$ if the $i$-th disk has $2n_i$ strings connected to it, and its distinguished interval is in an unshaded/shaded region corresponding to $\pm_i$.
The collection of shaded planar tangles forms a colored operad by inserting tangles into the input disks to get a new shaded planar tangle, making sure the distinguished intervals align.
We include below an example of a composite of a tangle of type $((4,-);(2,-),(1,+),(3,-))$ with a tangle of type $((3,-); (1,+))$ resulting in a tangle of type $((4,-);(2,-), (1,+), (1,+))$:
$$
\begin{tikzpicture}[baseline = -.1cm]
\fill[shaded] (2,0) circle (2cm);
 \filldraw[fill=white]  
  (1.8,-2) .. controls ++(90:.3cm) and ++(-90:.3cm) .. (2.5,-1.4) -- (2.4,-1) -- 
  (1.5,0) .. controls ++(40:.5cm) and ++(270:.5cm) .. (2.5,.6) -- 
  (2.4,1.4) .. controls ++(90:.3cm) and ++(270:.3cm) .. (1.8,2) arc (96:264:2cm) ;
 \filldraw[shaded] (2,1) circle (.3cm);
 \filldraw[fill=white] (2.5,1) .. controls ++(30:.3cm) and ++(-135:.3cm) .. (3.3,1.5)
  arc (49:-49:2cm)  .. controls ++(135:.3cm) and ++(0:.8cm) ..  (2.5,-1)
  .. controls ++(30:1cm) and ++(-30:1cm) .. (2.5,1);
 \filldraw[shaded] (3.85,.8) .. controls ++(-135:.8cm) and ++(135:.3cm) .. (3.85,-.8) arc (-23:23:2cm);
 \filldraw[shaded] (.15,.8) .. controls ++(-45:.8cm) and ++(45:.3cm) .. (.15,-.8) arc (203:157:2cm);
 \ncircle{}{(2,0)}{2}{180}{}
 \ncircle{fill=white}{(2.5,1)}{.4}{180}{\scriptsize{$3$}}
 \ncircle{fill=white}{(1.5,0)}{.4}{180}{\scriptsize{$2$}}
 \ncircle{fill=white}{(2.5,-1)}{.4}{90}{\scriptsize{$1$}}
\end{tikzpicture}
\circ_{3}
\begin{tikzpicture}[baseline = -.1cm]
 \filldraw[fill=white] (0,-1.2) -- (0,1.2) arc (90:270:1.2cm);
 \filldraw[shaded] (145:1.2cm) arc (145:215:1.2cm) arc (-35:35:1.2cm);
 \filldraw[shaded] (0,-1.2) -- (0,1.2) arc (90:-90:1.2cm);
 \filldraw[fill=white] (35:1.2cm) arc (35:-35:1.2cm) arc (215:145:1.2cm);
 \ncircle{}{(0,0)}{1.2}{180}{}
 \ncircle{fill=white}{(0,0)}{.4}{180}{\scriptsize{$1$}}
\end{tikzpicture}
=
\begin{tikzpicture}[baseline = -.1cm]
\fill[shaded] (2,0) circle (2cm);
 \filldraw[fill=white]  
  (1.8,-2) .. controls ++(90:.3cm) and ++(-90:.3cm) .. (2.5,-1.4) -- (2.4,-1) -- 
  (1.5,0) .. controls ++(40:.5cm) and ++(270:.5cm) .. (2.5,.6) -- 
  (2.4,1.4) .. controls ++(90:.3cm) and ++(270:.3cm) .. (1.8,2) arc (96:264:2cm) ;
 \filldraw[shaded] (1,1) circle (.3cm);
 \filldraw[fill=white] (3.3,1.5) arc (49:-49:2cm)  .. controls ++(135:.3cm) and ++(0:.8cm) ..  (2.5,-1)
  .. controls ++(30:1cm) and ++(210:.3cm) .. (3.3,1.5);
 \filldraw[shaded] (3.85,.8) .. controls ++(-135:.8cm) and ++(135:.3cm) .. (3.85,-.8) arc (-23:23:2cm);
 \filldraw[shaded] (.15,.8) .. controls ++(-45:.8cm) and ++(45:.3cm) .. (.15,-.8) arc (203:157:2cm);
 \ncircle{}{(2,0)}{2}{180}{}
 \ncircle{fill=white}{(2.5,1)}{.4}{180}{\scriptsize{$3$}}
 \ncircle{fill=white}{(1.5,0)}{.4}{180}{\scriptsize{$2$}}
 \ncircle{fill=white}{(2.5,-1)}{.4}{90}{\scriptsize{$1$}}
\end{tikzpicture}
$$

\end{defn}

\begin{defn}
A (2-)\emph{shaded planar algebra} is an algebra for the shaded planar operad.
Unpacking this definition, we have a vector space $\cP_{n,\pm}$ for each color $(n,\pm)$, and 
for each tangle $T$ of type $((n_0, \pm_0); (n_1,\pm_1) , \dots, (n_k,\pm_k))$, we have 
a multi-linear map $Z(T) : \prod_{i=1}^k \cP_{n_i,\pm_i} \to \cP_{n_0, \pm_0}$. 
Composition of tangles then corresponds to the composition of multi-linear maps.

Notice that any shaded planar algebra gives us a canonical $R$-shaded monoidal algebra with region shadings
$R = \{
\ColorDot{draw=black, fill=white}
\,,\,
\ColorDot{shaded}
\}$
and label set
$
S= 
\{
\begin{tikzpicture}[baseline=-.1cm]
	\fill[white] (-.2,-.2) rectangle (0,.2);
	\fill[shaded] (.2,-.2) rectangle (0,.2);
	\draw (0,-.2) -- (0,.2);
\end{tikzpicture}
\,,\,
\begin{tikzpicture}[baseline=-.1cm]
	\fill[shaded] (-.2,-.2) rectangle (0,.2);
	\fill[white] (.2,-.2) rectangle (0,.2);
	\draw (0,-.2) -- (0,.2);
\end{tikzpicture}
\}$
by 
setting 
\begin{equation}
\label{eq:MAfromPA}
\cP_{(n_1,\pm_1) \to (n_2,\pm_2)} 
:= 
\delta_{\pm_1 = \pm_2} \delta_{n_1\equiv n_2 \operatorname{mod} 2}\cP_{(n_1+n_2)/2, \pm_1}
,
\end{equation}
and the action of monoidal tangles is given by adding a $\star$ to the left of every input rectangle.
(Notice that for this $R$ and $S$, every monoidal tangle must have a checkerboard shading.)
Here is an explicit example:
$$
\underbrace{
Z\left(\,
\begin{tikzpicture}[baseline = -.1cm, scale=.6]
	\fill[white, rounded corners = 5pt] (-2,-2) rectangle (1,2);
	\fill[shaded, rounded corners = 5pt] (-.5,-2) rectangle (2,2);
	\fill[shaded] (-.8,1.5) rectangle (-1.2,2);
	\fill[shaded] (-1.5,-2) .. controls ++(90:.5cm) and ++(270:.5cm) .. (-1.2,.5) --
		(-.8,.5) .. controls ++(270:.5cm) and ++(90:.5cm) .. (-.5,-.5) -- (-.5,-1.5) -- (-.5,-2) -- (-1.5,-2);
	\fill[white] (0,-.5) .. controls ++(90:.5cm) and ++(270:.5cm) .. (.5,.5) --
		(1,.5) -- (1,-2) -- (1.5,-2) -- (1.5,2) --(-.8,2) -- (-.8,1.5) -- 
		(-.8,.5) .. controls ++(270:.5cm) and ++(90:.5cm) .. (-.5,-.5) -- (0,-.5);
	\fill[white] (-.5,-2) rectangle (0,-1.5);
	\draw[very thick, rounded corners = 5pt] (-2,-2) rectangle (2,2);
	\draw[fill=white, very thick, rounded corners = 5pt] (-1.5,.5) rectangle (-.5,1.5);
	\draw[fill=white, very thick, rounded corners = 5pt] (-1,-1.5) rectangle (.5,-.5);
	\draw[fill=white, very thick, rounded corners = 5pt] (.25,.5) rectangle (1.25,1.5);
	\node at (-1,1) {\scriptsize{$1$}};
	\node at (-.25,-1) {\scriptsize{$2$}};
	\node at (.75,1) {\scriptsize{$3$}};
	\draw (-.8,.5) .. controls ++(270:.5cm) and ++(90:.5cm) .. (-.5,-.5);
	\draw (-1.5,-2) .. controls ++(90:.5cm) and ++(270:.5cm) .. (-1.2,.5);
	\draw (0,-.5) .. controls ++(90:.5cm) and ++(270:.5cm) .. (.5,.5);
	\draw (-1.2,1.5) -- (-1.2,2);
	\draw (-.8,1.5) -- (-.8,2);
	\draw (-.5,-2) -- (-.5,-1.5);
	\draw (0,-2) -- (0,-1.5);
	\draw (1,-2) -- (1,.5);
	\draw (1.5,-2) -- (1.5,2);
\end{tikzpicture}
\,\right)
}_{\cP_{(2,+) \to (2,+)}\times \cP_{(2,-) \to (2,-)}\times \cP_{(2,+) \to (0,+)}\to \cP_{(5,+) \to (3,+)}}
:=
\underbrace{
Z_{\cP_\bullet}
\left(
\begin{tikzpicture}[baseline = -.1cm, scale=.6]
	\fill[white, rounded corners = 5pt] (-2,-2) rectangle (1,2);
	\fill[shaded, rounded corners = 5pt] (-.5,-2) rectangle (2,2);
	\fill[shaded] (-.8,1.5) rectangle (-1.2,2);
	\fill[shaded] (-1.5,-2) .. controls ++(90:.5cm) and ++(270:.5cm) .. (-1.2,.5) --
		(-.8,.5) .. controls ++(270:.5cm) and ++(90:.5cm) .. (-.5,-.5) -- (-.5,-1.5) -- (-.5,-2) -- (-1.5,-2);
	\fill[white] (0,-.5) .. controls ++(90:.5cm) and ++(270:.5cm) .. (.5,.5) --
		(1,.5) -- (1,-2) -- (1.5,-2) -- (1.5,2) --(-.8,2) -- (-.8,1.5) -- 
		(-.8,.5) .. controls ++(270:.5cm) and ++(90:.5cm) .. (-.5,-.5) -- (0,-.5);
	\fill[white] (-.5,-2) rectangle (0,-1.5);
	\draw[very thick, rounded corners = 5pt] (-2,-2) rectangle (2,2);
	\draw[fill=white, very thick, rounded corners = 5pt] (-1.5,.5) rectangle (-.5,1.5);
	\draw[fill=white, very thick, rounded corners = 5pt] (-1,-1.5) rectangle (.5,-.5);
	\draw[fill=white, very thick, rounded corners = 5pt] (.25,.5) rectangle (1.25,1.5);
	\node at (-1,1) {\scriptsize{$1$}};
	\node at (-.25,-1) {\scriptsize{$2$}};
	\node at (.75,1) {\scriptsize{$3$}};
	\node at (-1.8, 0) {\scriptsize{$\star$}};
	\node at (-1.7, 1) {\scriptsize{$\star$}};
	\node at (0, 1) {\scriptsize{$\star$}};
	\node at (-1.2, -1) {\scriptsize{$\star$}};
	\draw (-.8,.5) .. controls ++(270:.5cm) and ++(90:.5cm) .. (-.5,-.5);
	\draw (-1.5,-2) .. controls ++(90:.5cm) and ++(270:.5cm) .. (-1.2,.5);
	\draw (0,-.5) .. controls ++(90:.5cm) and ++(270:.5cm) .. (.5,.5);
	\draw (-1.2,1.5) -- (-1.2,2);
	\draw (-.8,1.5) -- (-.8,2);
	\draw (-.5,-2) -- (-.5,-1.5);
	\draw (0,-2) -- (0,-1.5);
	\draw (1,-2) -- (1,.5);
	\draw (1.5,-2) -- (1.5,2);
\end{tikzpicture}\,
\right).
}_{\cP_{2,+}\times \cP_{2,-}\times \cP_{1,+} \to \cP_{4,+}}
$$
A shaded planar algebra is called \emph{semisimple} if the underlying monoidal algebra is semisimple, i.e., every $2\times 2$ linking algebra $\cL((k,\pm), (n,\pm))$ as in \eqref{eq:LinkingAlgebra} is a finite dimensional semisimple algebra.
\end{defn}

The article \cite{MR2811311} provided a dictionary between semisimple shaded planar algebras and triples
$(\scrC, X, \varphi)$ where 
$\scrC$ is a semisimple multitensor category,
$1_\scrC = 1_+\oplus 1_-$ is a decomposition (not necessarily into simples!),
$X= 1_+\otimes \scrC\otimes 1_-$ Cauchy tensor generates $\scrC$, and 
$\varphi: \id \Rightarrow \vee\circ \vee$ is a trivialization of the double dual functor also known as a \emph{pivotal structure}.
This dictionary actually gives an equivalence of categories similar to \cite{1607.06041}.

\begin{thm}
\label{thm:AlgebraicPAEquivalence}
There is an equivalence of categories\,\footnote{
\label{footnote:TruncationTriples}
Similar to Footnote \ref{footnote:Truncation}, triples $(\mathcal{C},\varphi,X)$ form a $2$-category where between any two 1-morphisms, there is at most one 2-morphism, which is necessarily invertible when it exists \cite[Lem.~3.5]{1607.06041}.
Hence this 2-category is equivalent to its truncation to a 1-category.
} 
\[
\left\{\, 
\parbox{3.7cm}{\rm Semisimple 2-shaded planar algebras $\cP_\bullet$}\,\left\}
\,\,\,\,\cong\,\,
\left\{\,\parbox{10cm}{\rm Triples $(\scrC, \varphi, X)$ with $(\scrC,\varphi)$ a pivotal multitensor category, and $X\in \scrC$ a Cauchy tensor generator with a decomposition $1_\scrC = 1_+ \oplus 1_-$ such that $X = 1_+\otimes X \otimes 1_-$}\,\right\}.
\right.\right.
\]
\end{thm}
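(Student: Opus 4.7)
The plan is to construct a pair of functors in each direction and check they are mutually inverse up to the equivalences of the $2$-category truncations discussed in Footnote \ref{footnote:TruncationTriples}. The forward functor, from triples to planar algebras, is built by the standard graphical calculus of pivotal multitensor categories. Given $(\scrC,\varphi,X)$ with $1_\scrC=1_+\oplus 1_-$ and $X=1_+\otimes X\otimes 1_-$, define
\[
\cP_{n,+}:=\scrC(1_+\to (X\otimes X^\vee)^{\otimes n}),\qquad \cP_{n,-}:=\scrC(1_-\to (X^\vee\otimes X)^{\otimes n}),
\]
so that shaded strands alternately correspond to $X$ (unshaded on the left) and $X^\vee$ (shaded on the left). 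A shaded planar tangle acts by stacking its strands according to the monoidal structure (as in the monoidal algebra of Example \ref{example:MonoidalAlgebraViaGraphicalCalculus}, shaded via Corollary \ref{cor:SemisimpleShadedMonoidalAlgebraEquivalence}), while local maxima and minima are interpreted as the evaluation $\ev_X$, coevaluation $\coev_X$, and their $\varphi$-twisted counterparts $\ev_{X^\vee}\circ(\varphi_X\otimes\id)$ and $(\id\otimes\varphi_X^{-1})\circ\coev_{X^\vee}$. That the resulting assignment is invariant under planar isotopy is precisely the content of the graphical calculus for pivotal tensor categories \cite{MR1113284}.

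The reverse functor, from planar algebras to triples, leverages the monoidal algebra equivalence Corollary \ref{cor:SemisimpleShadedMonoidalAlgebraEquivalence}: a semisimple shaded planar algebra $\cP_\bullet$ gives rise, via the truncation \eqref{eq:MAfromPA}, to a semisimple $\{+,-\}$-shaded monoidal algebra whose associated $2$-shaded semisimple monoidal category I will denote $\scrC$. Its single-strand Cauchy tensor generator $X\in\scrC_{+-}$ and the decomposition $1_\scrC=1_+\oplus 1_-$ are built in. What is new compared to the monoidal algebra case is the use of cups and caps: the elementary planar tangles $\coev = \mathfig{0.05}{coev}$ and $\ev = \mathfig{0.05}{ev}$ (with appropriate shadings) land in $\cP_{1,\pm}$ and provide morphisms making the single strand in the opposite shading a two-sided dual of $X$. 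Rigidity extends by semisimplicity from $X$ to all of $\scrC$. Finally, the planar tangle implementing one full rotation of a box by $2\pi$, which acts as the identity via planar isotopy, unpacks algebraically to a monoidal natural isomorphism $\varphi:\id_\scrC\Rightarrow\vee\circ\vee$ on the tensor generator $X$; by functoriality and tensor generation this extends uniquely to a pivotal structure on $\scrC$.

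Mutual inverse-ness is then checked on the nose: starting with a triple, computing the planar algebra, and then extracting a triple recovers $(\scrC,\varphi,X)$ because the monoidal algebra part of the round trip is the identity by Corollary \ref{cor:SemisimpleShadedMonoidalAlgebraEquivalence}, while the duality and pivotal data are recovered tautologically from the cups, caps, and rotation tangles used in the first step. The other direction is similar: cups, caps, and rotations in the original planar algebra are, by definition, the images of the corresponding planar tangles under the action map. Finally, naturality and the $2$-categorical coherence promised by the theorem follow by repeating the argument of \cite{1607.06041} at the level of morphisms of triples versus homomorphisms of planar algebras, using the fact that all $2$-morphisms are uniquely determined and invertible.

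The main obstacle is verifying that the candidate $\varphi$ produced from the reverse functor is a well-defined \emph{monoidal} natural isomorphism trivializing the double dual on all of $\scrC$, rather than just on the generator $X$. This amounts to showing that the compatibility of $\varphi$ with $\otimes$ and the duality functor is forced by the planar relations among the cup, cap, and rotation tangles; equivalently, that the planar isotopies encoding the defining axioms of a pivotal structure (e.g.\ the snake/zigzag and monoidality of pivotal) are generated by the local moves available in the shaded planar operad. This is essentially a coherence theorem for the pivotal $2$-category freely generated by a $1$-morphism, and once established everything else is bookkeeping.
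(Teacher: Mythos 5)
Your proposal is correct and takes essentially the same route as the paper: both directions are constructed exactly as in the paper's sketch (the graphical calculus together with the four $\varphi$-twisted cups and caps in one direction; the shaded monoidal algebra equivalence of Corollary \ref{cor:SemisimpleShadedMonoidalAlgebraEquivalence} plus explicit $\ev_p,\coev_p$ and a pivotal structure in the other), with the detailed verification likewise deferred to \cite{MR2811311} and \cite{1607.06041}. The one obstacle you flag --- coherence of $\varphi$ beyond the generator $X$ --- is dispatched in the paper by defining $\varphi_p := p$ directly on \emph{every} idempotent at once: the double dual functor built from the explicit $\ev_p,\coev_p$ is the $2\pi$-rotation tangle, which is the identity by isotopy invariance, so monoidality and naturality of $\varphi$ follow immediately rather than requiring a separate coherence theorem.
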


This theorem is exactly the pivotal analog of Corollary \ref{cor:SemisimpleShadedMonoidalAlgebraEquivalence}, which provides the equivalence of the underlying 2-shaded monoidal algebras and semisimple linear monoidal categories.
Indeed, given such a planar algebra, we see that its category of idempotents $\scrC$ is rigid with 
$$
\ev_p :=
\begin{tikzpicture}[baseline=-.1cm]
	\draw (-.5,-.5) -- (-.5,.2) arc (180:0:.5cm) -- (.5,-.5);
	\roundNbox{fill=white}{(-.5,0)}{.2}{0}{0}{\rotatebox{180}{$p$}}
	\roundNbox{fill=white}{(.5,0)}{.2}{0}{0}{$p$}
	\node at (-.1,0) {\scriptsize{$\star$}};
	\node at (.1,0) {\scriptsize{$\star$}};
	\node at (-.7,.35) {\scriptsize{$n$}};
	\node at (.7,.35) {\scriptsize{$n$}};
	\node at (-.7,-.35) {\scriptsize{$n$}};
	\node at (.7,-.35) {\scriptsize{$n$}};
\end{tikzpicture}
\qquad
\qquad
\coev_p :=
\begin{tikzpicture}[baseline=-.1cm]
	\draw (-.3,.5) -- (-.3,-.2) arc (-180:0:.3cm) -- (.3,.5);
	\roundNbox{fill=white}{(.3,0)}{.2}{0}{0}{\rotatebox{180}{$p$}}
	\roundNbox{fill=white}{(-.3,0)}{.2}{0}{0}{$p$}
	\node at (-.7,0) {\scriptsize{$\star$}};
	\node at (.7,0) {\scriptsize{$\star$}};
	\node at (-.5,.35) {\scriptsize{$n$}};
	\node at (.5,.35) {\scriptsize{$n$}};
	\node at (-.5,-.35) {\scriptsize{$n$}};
	\node at (.5,-.35) {\scriptsize{$n$}};
\end{tikzpicture}
,
$$
and choosing these evaluation and coevaluation maps, choosing $\varphi_p := p \in \Hom(p \to p)$ endows $\scrC$ with a pivotal structure.
Conversely, one passes from triples $(\scrC, \varphi, X)$ to planar algebras via the graphical calculus to produce a monoidal algebra, and one then gets cups and caps by defining
$$
\begin{tikzpicture}[baseline = 0cm]
	\fill[shaded] (-.1,0) -- (-.1,.3) -- (.5,.3) -- (.5,0) -- (.4,0) arc (0:180:.2cm);
	\draw[] (0,0) arc (180:0:.2cm);
\end{tikzpicture}
:=\ev_{X}
\qquad
\qquad
\begin{tikzpicture}[baseline = -.2cm, yscale = -1]
	\filldraw[shaded] (0,0) arc (180:0:.2cm);
\end{tikzpicture}
:=\coev_{X}
\qquad
\qquad
\begin{tikzpicture}[baseline = 0cm]
	\filldraw[shaded] (0,0) arc (180:0:.2cm);
\end{tikzpicture}
:=\ev_{X^\vee} \circ (\varphi_X \otimes \id_{X^\vee})
\qquad
\qquad
\begin{tikzpicture}[baseline = -.2cm, yscale=-1]
	\fill[shaded] (-.1,0) -- (-.1,.3) -- (.5,.3) -- (.5,0) -- (.4,0) arc (0:180:.2cm);
	\draw[] (0,0) arc (180:0:.2cm);
\end{tikzpicture}
:=
(\id_{X^\vee}\otimes \varphi_X^{-1})
\circ
\coev_{X^\vee}
.
$$

\subsubsection{Unitary dual functors for unitary multifusion categories} 
\label{sec:UnitaryPivotal}

In order to discuss the unitary version of Theorem \ref{thm:AlgebraicPAEquivalence}, we rapidly recall the relevant notions for unitary dual functors and unitary pivotal structures from \cite{MR2767048,1808.00323}.
We do so only for \emph{unitary multifusion categories}, which are finitely semisimple multitensor $\Cstar$ categories, which substantially simplifies the presentation.
For $\scrC$ a unitary multifusion category, we have $1_\scrC$ decomposes into an orthogonal direct sum of simples as $\bigoplus_{i=1}^r 1_i$, and we let $p_i\in \scrC(1_\scrC \to 1_\scrC)$ be the minimal projection corresponding to the summand $1_i$ for $i=1,\dots, r$.
In the exposition below, we assume $\scrC$ is \emph{indecomposable}, i.e., $\scrC$ is not equivalent to the direct sum of two non-zero unitary multifusion categories.
We write $\scrC_{ij} = 1_i \otimes \scrC \otimes 1_j$, and we note $\scrC = \bigoplus_{i,j} \scrC_{i,j}$ is a faithful grading of $\scrC$ by the groupoid $\cG_r$ with $r$ objects and a unique isomorphism between any two objects, which can also be viewed as the standard system of matrix units $\{E_{ij}\}$ for $M_r(\bbC)$.

A dual functor $\vee : \scrC \to \scrC$ consists of a choice of dual object $c^\vee$ for each $c\in \scrC$ together with morphisms $\ev_c,\coev_c$ which satisfy the zig-zag axioms.
On morphisms $f\in \scrC(a\to b)$, we define $\vee$ by
$$
f^\vee 
= 
(\ev_b \otimes \id_{a^\vee})\circ (\id_{b^\vee}\otimes f\otimes \id_{a^\vee}) \circ (\id_{b^\vee}\otimes \coev_a).
$$
A dual functor has a canonical anti-tensorator $\nu_{a,b}: a^\vee \otimes b^\vee \to (b\otimes a)^\vee$ built from evaluations and coevaluations.
Any two dual functors are uniquely monoidally naturally isomorphic.

A \emph{pivotal structure} is a pair $(\vee, \varphi)$ consisting of a dual functor $\vee$ and a monoidal natural isomorphism $\varphi : \id_\scrC \Rightarrow \vee\circ \vee$.
If a pivotal structure exists for a multitensor category, the equivalence classes of pivotal structures form a torsor over the group $\Hom(\cU \to \bbC^\times)$ with group law given by pointwise multiplication, where $\cU$ is the universal grading groupoid of $\scrC$ (see \cite[\S4.14]{MR3242743} and \cite[\S3.3]{1808.00323} for more details).

A dual functor is called \emph{unitary} if it is a dagger tensor functor, i.e., $\nu_{a,b}$ is unitary for all $a,b\in \scrC$, and $f^{\vee \dag} = f^{\dag \vee}$ for all $f\in \scrC(a\to b)$.
Each unitary dual functor induces a canonical \emph{unitary pivotal structure} by $\varphi_c := (\coev_c^\dag \otimes \id_{c^{\vee\vee}})\circ (\id_c \otimes \coev_{c^\vee})$, which is unitary.
As in \cite[\S7.3]{MR2767048}, the term `unitary pivotal structure' should be viewed as a synonym for `the canonical unitary pivotal structure induced from a unitary dual functor.'

\begin{remark}
\label{rem:OtherCharacterizationOfUnitaryPivotal}
Equivalently, we can say that a pivotal structure $\varphi_c: c \rightarrow c^{\vee\vee}$ is compatible with the dagger structure if $\coev_c^\dag = \ev_{c^\vee} \circ (\varphi_c \otimes \id_{c^\vee}): c \otimes c^\vee \rightarrow 1_\scrC$, and define a unitary pivotal structure as a pivotal structure which is compatible with the dagger structure.  
It is easy to see that the only compatible pivotal structure is the canonical one.  
Note that this compatibility condition is needed in order for unitary pivotal categories to have the correct diagram calculus where dagger corresponds to reflection of diagrams, since $\coev_c^\dag$ and $\ev_{c^\vee} \circ (\varphi_c \otimes \id_{c^\vee})$ both are represented graphically by the same oriented cap. 
\end{remark}

\begin{remark}
We found the relationship between pivotal structures and unitary pivotal structures very confusing, and so we'd like to pause to explain why it's so confusing.  
In both the algebraic and unitary settings a pivotal structure consists of two parts: a choice of dual functor and a choice of trivialization of the double dual functor subject to a compatibility condition.  
In the algebraic setting, the dual functor is essentially unique (i.e. any two choices are canonically naturally isomorphic) and the compatibility condition is vacuous, so the only interesting part is the trivialization of the double dual functor.  
By contrast, in the unitary setting, once you've chosen a unitary dual functor, the compatibility condition guarantees that there's a unique compatible trivialization of the double dual, so the only interesting part is the choice of unitary dual functor.  
This means even though the two definitions can be made parallel, the {\em interesting parts} of the two definitions are disjoint!
\end{remark}

Note that a unitary pivotal structure $\varphi$ is \emph{pseudounitary}, i.e., all dimensions of simple objects are strictly positive.
Here, the left and right dimensions of a non-simple object $c\in\scrC$ are the matrices in $M_r(\bbC)$ determined by
$$
\Dim_L^\varphi(c)_{ij} \id_{1_j} 
=  
\tr_L^\varphi(p_i \otimes \id_c \otimes p_j)
\qquad\qquad
\Dim_R^\varphi(c)_{ij} \id_{1_i} 
=  
\tr_R^\varphi(p_i \otimes \id_c \otimes p_j).
$$
When $c\in \scrC$ is simple, $\Dim_L^\varphi(c), \Dim_R^\varphi(c)$ have exactly one non-zero entry, which we call $\dim_L^\varphi(c), \dim_R^\varphi(c)$ respectively.

For our indecomposable unitary multifusion category $\scrC$, there exists a canonical spherical structure \cite{MR1444286,MR2091457,MR3342166,1808.00323} which satisfies for all simples $c\in \scrC$, $\dim^\varphi_L(c) = \dim^\varphi_R(c)$. 
By picking this basepoint, we identify the torsor of pivotal structures with the group $\Hom(\cU \to \bbC^\times)$.
Polar decomposition gives us a group isomorphism $\bbC^\times \cong U(1) \times \bbR_{>0}$, which gives us a group isomorphism
$$
\Hom(\cU \to \bbC^\times) \cong \Hom(\cU \to U(1)) \times \Hom(\cU \to \bbR_{>0}).
$$
It follows that the unitary pivotal structures correspond to the subgroup $1 \times \Hom(\cU \to \bbR_{>0})$ as all dimensions must be strictly positive.

Now in the case of a unitary multifusion category, the universal grading groupoid $\cU$ is finite.
If $\cG \subseteq \cU$ is a subgroup (with only one object), then given a $\pi\in \Hom(\cU \to \bbR_{>0})$, we must have $\pi(\cG)=\{1\}$.
Hence for our indecomposable unitary multifusion category $\scrC$ such that $1_\scrC = \bigoplus_{i=1}^r 1_i$ is a decomposition into simples, the relevant grading groupoid to see all unitary pivotal structures is exactly $\cG_r$.

Summarizing, we have:

\begin{thm}
\label{thm:ClassificationOfUnitaryDualFunctors}
Let $\scrC$ be a unitary multifusion category.
There is a bijective correspondence between
\begin{enumerate}
\item
unitary equivalence classes of unitary dual functors and their induced unitary pivotal structures
\item
$\Hom(\cG_r \to \bbR_{>0})$.
\end{enumerate}
\end{thm}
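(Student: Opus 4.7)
The plan is to assemble the four ingredients laid out in the paragraphs immediately preceding the statement. First, I would invoke the general fact from \cite{1808.00323} that sending a unitary dual functor to its induced unitary pivotal structure descends to a bijection on unitary equivalence classes; this reduces the theorem to counting unitary pivotal structures on $\scrC$. Taking the canonical spherical structure as basepoint for the torsor of all pivotal structures on $\scrC$, the set of pivotal structures is identified with the abelian group $\Hom(\cU \to \bbC^\times)$, where $\cU$ is the universal grading groupoid. The polar decomposition $\bbC^\times \cong U(1) \times \bbR_{>0}$ then furnishes a factorization
\[
\Hom(\cU \to \bbC^\times) \cong \Hom(\cU \to U(1)) \times \Hom(\cU \to \bbR_{>0}).
\]

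Next I would pin down which elements of this group index unitary pivotal structures. A unitary pivotal structure is in particular pseudounitary (all simple dimensions are strictly positive real), which forces the $U(1)$-component of the classifying character to be trivial. Conversely, twisting the canonical spherical basepoint by an element $\pi \in \Hom(\cU \to \bbR_{>0})$ leaves every simple dimension a positive real number; appealing to the classification machinery of \cite{1808.00323}, such a pseudounitary pivotal structure is realized by a unitary dual functor, uniquely up to unitary equivalence. This identifies unitary pivotal structures with $\Hom(\cU \to \bbR_{>0})$.

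Finally, I would identify $\Hom(\cU \to \bbR_{>0})$ with $\Hom(\cG_r \to \bbR_{>0})$. Since $\scrC$ is a unitary multifusion category, $\cU$ is a finite groupoid with the same set of $r$ objects as $\cG_r$, and there is a canonical quotient $\cU \twoheadrightarrow \cG_r$ whose kernel consists of the (finite) isotropy groups. Because $\bbR_{>0}$ contains no nontrivial finite subgroup, every groupoid homomorphism $\cU \to \bbR_{>0}$ must annihilate the isotropy and hence factors uniquely through $\cG_r$, giving the desired bijection.

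The routine, torsor-theoretic steps are already spelled out in the paragraphs preceding the statement; the one genuine input is the nontrivial direction of the middle paragraph, namely that every pseudounitary pivotal structure obtained by an $\bbR_{>0}$-twist of the canonical spherical structure is actually induced by an honest unitary dual functor. This is precisely where the results of \cite{1808.00323} are used, and it is the main (and really only) obstacle in turning the preceding discussion into a complete proof.
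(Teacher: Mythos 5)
Your proposal is correct and follows essentially the same route as the paper: the paper's own argument is exactly the torsor identification with $\Hom(\cU \to \bbC^\times)$ based at the canonical spherical structure, the polar decomposition isolating the $\Hom(\cU \to \bbR_{>0})$ factor via positivity of dimensions, and the observation that finite isotropy in $\cU$ dies in $\bbR_{>0}$ so that everything factors through $\cG_r$, with the substantive input (that each such positive twist is realized by a unitary dual functor, uniquely up to unitary equivalence) deferred to \cite{1808.00323}. Nothing further is needed.
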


See \cite{1808.00323} for more details.

\begin{remark}
\label{rem:DetermineGroupoidHom}
Notice that a homomorphism $\pi \in \Hom(\cG_r \to \bbR_{>0})$ is uniquely determined by its image on $E_{i+1,i}$ for $1\leq i\leq r-1$.
\end{remark}

Explicitly, starting with a unitary dual functor $\vee$ with its induced unitary pivotal structure $\varphi$, we get our $\pi \in \Hom(\cG_r \to \bbR_{>0})$ by taking the ratio of left to right quantum dimensions of simple objects:
$$
\pi(E_{ij}) 
:=
\frac{\dim^\varphi_L(c)}{\dim^\varphi_R(c)}
\qquad
\qquad
\text{for all simple $c\in \scrC_{ij}$.}
$$
Conversely, we can choose for each $c\in \scrC$ a unique balanced dual $(\overline{c}, \ev_c, \coev_c)$ up to unique isomorphism.
One then obtains all other unitary dual functors from homomorphisms $\pi \in \Hom(\cG_r \to \bbR_{>0})$ by rescaling the evaluations and coevaluations on simple objects $c\in \scrC_{ij}$ by
$$
\ev_c^\pi
:=
\pi(E_{ij})^{1/4} 
\ev_c.
\qquad
\qquad
\coev^\pi_c
:=
\pi(E_{ij})^{-1/4} 
\coev_c.
$$

\subsubsection{Unitary planar algebras} 
\label{sec:UnitaryPAs}

\begin{defn}
A \emph{planar $\dag$-algebra} is a planar algebra equipped with antilinear maps $\dag: \cP_{n,\pm} \to \cP_{n,\pm}$ such that 
\begin{itemize}
\item
$\dag\circ \dag = \id$, and 
\item
for every planar tangle $T$, $T^\dag(x_1^\dag,\dots, x_k^\dag) = T(x_1,\dots, x_k)^\dag$ where $T^\dag$ denotes the reflection of $T$ about \emph{any} axis.
\end{itemize}
A planar $\dag$-algebra is called a $\Cstar$ \emph{planar algebra} if in addition
\begin{itemize}
\item
($\Cstar$)
Every $\dag$-algebra $\cP_{n,\pm}$ with the stacking multiplication is a $\Cstar$ algebra (see Footnote \ref{footnote:CstarAlgebraProperty}), and
\item
(positivity)
for every $x\in \cP_{n,\pm}$ and every $-n\leq k\leq n$, there is a $y\in \cP_{n+ k,\pm}$ such that 
\begin{equation}
\label{eq:CstarPAPositivity}
\begin{tikzpicture}[baseline=-.1cm]
	\draw (0,1.2) -- (0,-1.2);
	\roundNbox{fill=white}{(0,.5)}{.3}{0}{0}{$x^\dag$}
	\roundNbox{fill=white}{(0,-.5)}{.3}{0}{0}{$x$}
	\node at (-.45,.5) {\scriptsize{$\star$}};
	\node at (-.45,-.5) {\scriptsize{$\star$}};
	\node at (.4,0) {\scriptsize{$n+ k$}};
	\node at (.4,1) {\scriptsize{$n- k$}};
	\node at (.4,-1) {\scriptsize{$n+ k$}};
\end{tikzpicture}
=
\begin{tikzpicture}[baseline=-.1cm]
	\draw (0,1.2) -- (0,-1.2);
	\roundNbox{fill=white}{(0,.5)}{.3}{0}{0}{$y^\dag$}
	\roundNbox{fill=white}{(0,-.5)}{.3}{0}{0}{$y$}
	\node at (-.45,.5) {\scriptsize{$\star$}};
	\node at (-.45,-.5) {\scriptsize{$\star$}};
	\node at (.4,0) {\scriptsize{$n+ k$}};
	\node at (.4,1) {\scriptsize{$n+ k$}};
	\node at (.4,-1) {\scriptsize{$n+ k$}};
\end{tikzpicture}\,,
\end{equation}
\end{itemize}
Finally, a \emph{unitary planar algebra} is a semisimple $\Cstar$ planar algebra.\footnote{
Our definition of unitary planar $\dag$- algebra from \cite[Def.~2.3]{MR3402358} was incorrect.
We should have used the above definition, which is satisfied by the planar algebra of a bipartite graph from \cite{MR1865703} (see also Definition \ref{defn:GPA} below).
}
\end{defn}

\begin{lem}
\label{lem:AnotherPositivityAxiom}
Suppose $\cP_\bullet$ is a planar $\dag$-algebra with finite dimensional box spaces, i.e., $\dim(\cP_{n,\pm})<\infty$ for all $n\geq 0$.
Then $\cP_\bullet$ is unitary if and only if there exists a faithful tracial state $\psi_\pm$ on $\cP_{0,\pm}$ such that for every $n\geq 0$, the sesquilinear form
\begin{equation}
\label{eq:PositiveDefiniteInnerProductOnPn}
\langle x,y\rangle_{n,\pm}^\psi := 
\psi_\pm
\left(
\begin{tikzpicture}[baseline=-.1cm]
	\draw (0,.8) arc (180:0:.3cm) -- (.6,-.8) arc (0:-180:.3cm) -- (0,.8);
	\roundNbox{fill=white}{(0,.5)}{.3}{0}{0}{$y^\dag$}
	\roundNbox{fill=white}{(0,-.5)}{.3}{0}{0}{$x$}
	\node at (-.45,.5) {\scriptsize{$\star$}};
	\node at (-.45,-.5) {\scriptsize{$\star$}};
	\node at (-.2,0) {\scriptsize{$n$}};
	\node at (-.2,1) {\scriptsize{$n$}};
	\node at (-.2,-1) {\scriptsize{$n$}};
\end{tikzpicture}
\,\right)
\end{equation}
is a positive definite inner product.
\end{lem}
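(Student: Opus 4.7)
The plan is to prove the two implications separately, using the observation that the sesquilinear form in \eqref{eq:PositiveDefiniteInnerProductOnPn} factors as $\psi_\pm \circ \operatorname{tr}_n$, where $\operatorname{tr}_n:\cP_{n,\pm}\to\cP_{0,\pm}$ denotes the partial trace closing the $2n$ strings of $y^\dag x$ via the ``right-hand'' cap-and-cup. Hence positive definiteness of $\langle\cdot,\cdot\rangle_{n,\pm}^\psi$ is equivalent to faithfulness of $\psi_\pm \circ \operatorname{tr}_n$ on positive elements of $\cP_{n,\pm}$, and both directions reduce to analyzing how $\operatorname{tr}_n$ interacts with the $\Cstar$-structure.

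For $(\Leftarrow)$, assume such $\psi_\pm$ exist. The positive definite inner product makes each $\cP_{n,\pm}$ a finite dimensional Hilbert space, and the left stacking multiplications $L_b$ are automatically bounded. Using $(b^\dag y)^\dag = y^\dag b$, one checks directly that $L_b^*=L_{b^\dag}$, giving a $*$-representation of $\cP_{n,\pm}$ on itself; it is faithful because $\cP_{n,\pm}$ has a unit (the identity tangle). Any finite dimensional unital $*$-algebra admitting a faithful $*$-representation on a Hilbert space is a $\Cstar$ algebra, so each $\cP_{n,\pm}$ is $\Cstar$ (and hence semisimple). To verify the positivity axiom \eqref{eq:CstarPAPositivity}, fix $x\in\cP_{n,\pm}$ and $-n\le k\le n$ and set $A:=\operatorname{tr}_{n-k}(x^\dag x)\in\cP_{n+k,\pm}$. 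For any $z\in\cP_{n+k,\pm}$, a diagrammatic isotopy combined with traciality of $\psi_\pm$ rewrites $\langle z, Az\rangle_{n+k,\pm}^\psi$ as $\langle xz', xz'\rangle_{n,\pm}^\psi$ for an appropriate composite $z'$ obtained from $z$ and the identity strands; this is $\ge 0$, so $A$ is a positive element of the $\Cstar$ algebra $\cP_{n+k,\pm}$ and factors as $y^\dag y$.

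For $(\Rightarrow)$, assume $\cP_\bullet$ is unitary and pick any faithful tracial state $\psi_\pm$ on the finite dimensional $\Cstar$ algebra $\cP_{0,\pm}$. Positivity of the form is immediate: applying \eqref{eq:CstarPAPositivity} with $k=-n$, together with a planar isotopy in $\cP_{0,\pm}$ identifying the axiom's closure with the right-hand closure in \eqref{eq:PositiveDefiniteInnerProductOnPn}, gives $\operatorname{tr}_n(x^\dag x)=z^\dag z$ for some $z\in\cP_{0,\pm}$, so $\langle x,x\rangle_{n,\pm}^\psi=\psi_\pm(z^\dag z)\ge 0$. The main obstacle is definiteness, which requires $\operatorname{tr}_n(x^\dag x)=0 \implies x=0$: the chain $\psi_\pm(z^\dag z)=0 \Rightarrow z=0 \Rightarrow \operatorname{tr}_n(x^\dag x)=0$ is easy, but the final step amounts to proving that $\operatorname{tr}_n$ is faithful on positive elements of $\cP_{n,\pm}$. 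I would establish this by decomposing $\cP_{n,\pm}\cong\bigoplus_\alpha M_{d_\alpha}(\bbC)$ via semisimplicity and showing, by applying \eqref{eq:CstarPAPositivity} iteratively as $k$ decreases from $n$ to $-n$, that $\operatorname{tr}_n$ sends every minimal projection to a strictly positive (hence nonzero) element of $\cP_{0,\pm}$. Equivalently and more conceptually, Theorem~\ref{thm:AlgebraicPAEquivalence} allows one to pass from $\cP_\bullet$ to a pivotal multifusion category $(\scrC,\varphi)$ whose pivotal structure is unitary in the sense of \S\ref{sec:UnitaryPivotal}; under this correspondence $\operatorname{tr}_n$ is identified with the categorical trace, which is faithful on positive endomorphisms in any unitary multifusion category, and this faithfulness transports back to the desired property.
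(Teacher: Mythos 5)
Your backward direction is essentially the paper's proof: positive definiteness of \eqref{eq:PositiveDefiniteInnerProductOnPn} turns each $\cP_{n,\pm}$ into a finite dimensional Hilbert space on which left stacking multiplication gives a faithful unital $*$-representation (the paper invokes the finite dimensional bicommutant theorem where you invoke the equivalent fact that a finite dimensional unital $*$-algebra with a faithful $*$-representation is $\Cstar$), and the positivity axiom \eqref{eq:CstarPAPositivity} is verified by the identical computation $\langle (x^\dag\circ_k x)\cdot z, z\rangle = \|x\circ z\|^2\geq 0$. The paper makes explicit one clause you leave implicit: $x^\dag\circ_k x$ commutes with the right action, so positivity of the operator on the Hilbert space transfers to positivity of the element of the algebra $\cP_{n+k,\pm}$ (equivalently, a faithful $*$-homomorphism reflects positivity). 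Neither you nor the paper checks semisimplicity of the linking algebras required by the definition of ``unitary,'' so I do not count that against you.

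For the forward direction you have put your finger on exactly the point that the paper's own two-line proof elides: the positivity axiom with $k=-n$ only gives $\operatorname{tr}_n(x^\dag x)=y^\dag y$ with $y\in\cP_{0,\pm}$, so faithfulness of $\psi_\pm$ only yields $\langle x,x\rangle=0\Rightarrow \operatorname{tr}_n(x^\dag x)=0$, and one still needs faithfulness of the closure map. However, neither of your two sketches closes this as written: iterating \eqref{eq:CstarPAPositivity} as $k$ decreases only re-expresses each partial closure as some $y^\dag y$, which is circular, and the categorical detour through Theorem \ref{thm:AlgebraicPAEquivalence} requires first knowing the projection category is a rigid $\Cstar$ category, which is essentially the same issue. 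A direct argument from the stated axioms: let $\tilde{x}$ denote $x$ with its $n$ bottom boundary points bent up around the right, regarded as a morphism $0\to 2n$, and set $c:=\tilde{x}\circ\tilde{x}^\dag\in\cP_{2n,\pm}$. Then $c=c^\dag$ and $c\cdot c=\tilde{x}\circ(\tilde{x}^\dag\circ\tilde{x})\circ\tilde{x}^\dag$, where $\tilde{x}^\dag\circ\tilde{x}$ is precisely the closed diagram $\operatorname{tr}_n(x^\dag x)\in\cP_{0,\pm}$; if this vanishes then $c^\dag\cdot c=0$, so $c=0$ since $\cP_{2n,\pm}$ is a $\Cstar$ algebra, and the zig-zag identity gives $x\circ x^\dag=(\id^{\otimes n}\otimes\ev)\circ(c\otimes\id^{\otimes n})\circ(\id^{\otimes n}\otimes\coev)=0$, whence $x=0$. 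With that supplement your outline is complete, and in this direction it is more careful than the paper's proof.
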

\begin{proof}
Suppose $\cP_\bullet$ is unitary, so $\cP_{0,\pm}$ is a finite dimensional $\Cstar$ algebra.
Choose a faithful tracial state $\psi_\pm$ on $\cP_{0,\pm}$.
By the positivity axiom, for every $x\in \cP_{n,\pm}$, there is a $y\in \cP_{0,\pm}$ such that
$$
\begin{tikzpicture}[baseline=-.1cm]
	\draw (0,.8) arc (180:0:.3cm) -- (.6,-.8) arc (0:-180:.3cm) -- (0,.8);
	\roundNbox{fill=white}{(0,.5)}{.3}{0}{0}{$x^\dag$}
	\roundNbox{fill=white}{(0,-.5)}{.3}{0}{0}{$x$}
	\node at (-.45,.5) {\scriptsize{$\star$}};
	\node at (-.45,-.5) {\scriptsize{$\star$}};
	\node at (-.2,0) {\scriptsize{$n$}};
	\node at (-.2,1) {\scriptsize{$n$}};
	\node at (-.2,-1) {\scriptsize{$n$}};
\end{tikzpicture}
=
\begin{tikzpicture}[baseline=-.1cm]
	\roundNbox{fill=white}{(0,.5)}{.3}{0}{0}{$y^\dag$}
	\roundNbox{fill=white}{(0,-.5)}{.3}{0}{0}{$y$}
	\node at (-.45,.5) {\scriptsize{$\star$}};
	\node at (-.45,-.5) {\scriptsize{$\star$}};
\end{tikzpicture}\,.
$$
Hence the sesquilinear form \eqref{eq:PositiveDefiniteInnerProductOnPn} above is positive definite by faithfulness of $\psi$.

Conversely, suppose we have $\psi_\pm$ on $\cP_{0,\pm}$ such that \eqref{eq:PositiveDefiniteInnerProductOnPn} is positive definite for all $n\geq 0$.
Then every $\dag$-algebra $\cP_{n,\pm}$ is a unital $\dag$-subalgebra of $B(\cP_{n,\pm})$ with its inner product.
Thus it is is a finite dimensional von Neumann ($\Cstar$) algebra by the finite dimensional bicommutant theorem \cite[Thm.~3.2.1]{JonesVNA}.
Now suppose $x\in \cP_{n,\pm}$, $-n\leq k\leq n$.
We need to find a $y\in \cP_{n+k,\pm}$ such that \eqref{eq:CstarPAPositivity} holds.
For notational simplicity, we denote the left hand side of \eqref{eq:CstarPAPositivity} by $x^\dag \circ_k x$,
and we denote the stacking multiplication in $\cP_{n+k,\pm}$ by $\cdot$.
Notice that for all $z\in \cP_{n+ k, \pm}$,
$$
\langle
(x^\dag \circ_k x) \cdot z
,
z
\rangle^\psi
=
\left\langle
\begin{tikzpicture}[baseline=-.1cm]
	\draw (0,1.7) -- (0,-1.7);
	\roundNbox{fill=white}{(0,1)}{.3}{0}{0}{$x^\dag$}
	\roundNbox{fill=white}{(0,0)}{.3}{0}{0}{$x$}
	\roundNbox{fill=white}{(0,-1)}{.3}{0}{0}{$z$}
	\node at (-.45,1) {\scriptsize{$\star$}};
	\node at (-.45,0) {\scriptsize{$\star$}};
	\node at (-.45,-1) {\scriptsize{$\star$}};
	\node at (.4,1.5) {\scriptsize{$n+ k$}};
	\node at (.4,.5) {\scriptsize{$n- k$}};
	\node at (.4,-.5) {\scriptsize{$n+ k$}};
	\node at (.4,-1.5) {\scriptsize{$n+ k$}};
\end{tikzpicture}
\,,\,
\begin{tikzpicture}[baseline=-.1cm]
	\draw (0,-.7) -- (0,.7);
	\roundNbox{fill=white}{(0,0)}{.3}{0}{0}{$z$}
	\node at (-.45,0) {\scriptsize{$\star$}};
	\node at (.4,.5) {\scriptsize{$n+ k$}};
	\node at (.4,-.5) {\scriptsize{$n+ k$}};
\end{tikzpicture}
\right\rangle\raisebox{.6cm}{\scriptsize{$\psi$}}
=
\left\| 
\begin{tikzpicture}[baseline=-.1cm]
	\draw (0,1.2) -- (0,-1.2);
	\roundNbox{fill=white}{(0,.5)}{.3}{0}{0}{$x$}
	\roundNbox{fill=white}{(0,-.5)}{.3}{0}{0}{$z$}
	\node at (-.45,.5) {\scriptsize{$\star$}};
	\node at (-.45,-.5) {\scriptsize{$\star$}};
	\node at (.4,0) {\scriptsize{$n+ k$}};
	\node at (.4,1) {\scriptsize{$n- k$}};
	\node at (.4,-1) {\scriptsize{$n+ k$}};
\end{tikzpicture}
\right\|^2_\psi
\geq 0,
$$
which shows $x^\dag \circ_k x \geq 0$ in $B(\cP_{n+k,\pm})$.
Now $x^\dag \circ_k x$ commutes with the right $\cP_{n+k,\pm}$-action on $\cP_{n+k,\pm}$ 
$$
((x^\dag \circ_k x) \cdot z) \cdot w
=
(x^\dag \circ_k x) \cdot (z \cdot w)
\qquad\qquad\qquad
\forall w\in \cP_{n+k,\pm},
$$
so $x^\dag \circ_k x$ is a left multiplication operator by a positive operator in the $\Cstar$-algebra $\cP_{n+k,\pm}$.
Thus a $y\in \cP_{n+k,\pm}$ such that $x^\dag \circ_k x = y^\dag \cdot y$ exists.
\end{proof}

\begin{defn}
A \emph{subfactor planar algebra} is a 2-shaded planar $\dag$-algebra satisfying the following axioms:
\begin{itemize}
\item
(connected) $\cP_{0,\pm} \cong \bbC$ via the map which sends the empty diagram to $1_\bbC$,
\item
(finite dimensional) $\dim(\cP_{n,\pm})<\infty$ for all $n\geq 0$.
\item
(positive)
For every $n\geq 0$, the sesquilinear form on $\cP_{n,\pm}$ given by
$$
\langle x,y\rangle_n := 
\begin{tikzpicture}[baseline=-.1cm]
	\draw (0,.8) arc (180:0:.3cm) -- (.6,-.8) arc (0:-180:.3cm) -- (0,.8);
	\roundNbox{fill=white}{(0,.5)}{.3}{0}{0}{$y^\dag$}
	\roundNbox{fill=white}{(0,-.5)}{.3}{0}{0}{$x$}
	\node at (-.45,.5) {\scriptsize{$\star$}};
	\node at (-.45,-.5) {\scriptsize{$\star$}};
	\node at (-.2,0) {\scriptsize{$n$}};
	\node at (-.2,1) {\scriptsize{$n$}};
	\node at (-.2,-1) {\scriptsize{$n$}};
\end{tikzpicture}
$$
is a positive definite inner product, and
\item
(spherical)
For every $x\in \cP_{1,\pm}$, 
$
\begin{tikzpicture}[baseline=-.1cm]
	\filldraw[shaded] (0,.3) arc (180:0:.3cm) -- (.6,-.3) arc (0:-180:.3cm);
	\roundNbox{fill=white}{(0,0)}{.3}{0}{0}{$x$}
	\node at (-.45,0) {\scriptsize{$\star$}};
\end{tikzpicture}
=
\begin{tikzpicture}[baseline=-.1cm, xscale=-1]
	\fill[shaded, rounded corners = 5pt] (-.2,-.8) rectangle (.8,.8);
	\filldraw[fill=white] (0,.3) arc (180:0:.3cm) -- (.6,-.3) arc (0:-180:.3cm);
	\roundNbox{fill=white}{(0,0)}{.3}{0}{0}{$x$}
	\node at (.45,0) {\scriptsize{$\star$}};
\end{tikzpicture}
$.
\end{itemize}
By Lemma \ref{lem:AnotherPositivityAxiom}, a subfactor planar algebra is a $\Cstar$ planar algebra.
\end{defn}

The following result, which appears in \cite[\S4]{1808.00323}, is the unitary analog of Theorem \ref{thm:AlgebraicPAEquivalence}, which uses unitary dual functors instead of a pivotal structure.

\begin{cor}
\label{cor:PAEquivalence}
There is an equivalence of categories (see Footnote \ref{footnote:TruncationTriples})
\[
\left\{\, 
\parbox{4cm}{\rm Semisimple shaded $\Cstar$ planar algebras $\cP_\bullet$}\,\left\}
\,\,\,\,\cong\,\,
\left\{\,\parbox{9.5cm}{\rm Triples $(\scrC, \vee, X)$ with $\scrC$ a unitary multitensor category, $\vee$ a unitary dual functor, and a generator $X\in \scrC$ with an orthogonal decomposition $1_\scrC = 1_+ \oplus 1_-$ such that $X = 1_+\otimes X \otimes 1_-$}\,\right\}.
\right.\right.
\]
Moreover, under this equivalence,
\begin{itemize}
\item
finite depth planar algebras correspond to triples $(\scrC, \vee, X)$ where $\scrC$ is unitary multifusion, and
\item
subfactor planar algebras correspond to triples $(\scrC, \vee, X)$ where $1_\pm$ are simple and $\vee$ is the canonical spherical dual functor.
\end{itemize}
\end{cor}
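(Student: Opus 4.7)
The plan is to upgrade the algebraic equivalence of Theorem \ref{thm:AlgebraicPAEquivalence} by tracking the additional dagger/$\Cstar$ structure on both sides, using the classification of unitary dual functors (Theorem \ref{thm:ClassificationOfUnitaryDualFunctors}) and the compatibility characterization of unitary pivotal structures (Remark \ref{rem:OtherCharacterizationOfUnitaryPivotal}). The slogan is: a dagger on $\cP_\bullet$ compatible with reflection in \emph{any} axis corresponds to a $\Cstar$ structure on $\scrC$ together with the canonical unitary pivotal structure induced by a unitary dual functor.

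For the forward direction, given $(\scrC, \vee, X)$, I would first produce the underlying semisimple planar algebra via the graphical calculus of Theorem \ref{thm:AlgebraicPAEquivalence}, using the canonical unitary pivotal structure $\varphi$ induced by $\vee$ (see \S\ref{sec:UnitaryPivotal}). The dagger on each box space is inherited from the $\Cstar$ structure on the corresponding hom space $\scrC(1_\pm \otimes X \otimes X^\vee \otimes \cdots \to 1_\pm)$. The operadic compatibility condition $T^\dag(x_1^\dag,\dots,x_k^\dag) = T(x_1,\dots,x_k)^\dag$ for \emph{every} axis of reflection must be checked on generating tangles: vertical reflection is controlled by the $\Cstar$ structure on $\scrC$, while horizontal reflection interchanges cups with caps and so is controlled by the identity $\coev_c^\dag = \ev_{c^\vee}\circ(\varphi_c \otimes \id_{c^\vee})$ of Remark \ref{rem:OtherCharacterizationOfUnitaryPivotal}, which is precisely the condition that $\varphi$ is the canonical unitary pivotal structure of a unitary dual functor. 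The $\Cstar$ axiom on linking algebras and the positivity axiom \eqref{eq:CstarPAPositivity} are then routine consequences of the $\Cstar$ structure on $\scrC$.

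For the reverse direction, given a semisimple shaded $\Cstar$ planar algebra $\cP_\bullet$, Theorem \ref{thm:AlgebraicPAEquivalence} produces a pivotal semisimple multitensor category $(\scrC, \varphi, X)$ by Cauchy-completing the idempotent category of $\cP_\bullet$. The dagger on $\cP_\bullet$ induces a dagger on each hom space of $\scrC$; I would then show (i) that $\scrC$ becomes a $\Cstar$ category, using Footnote \ref{footnote:CstarAlgebraProperty} to transport the unique $\Cstar$ norm from the linking algebras and the positivity axiom to verify $f^\dag\circ f \geq 0$ for arbitrary morphisms, and (ii) that the dual functor built from the cups and caps of $\cP_\bullet$ is unitary by running the argument of the forward direction in reverse: reflection compatibility of $\dag$ forces $\nu_{a,b}$ to be unitary and $f^{\vee\dag} = f^{\dag\vee}$. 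The induced canonical unitary pivotal structure then agrees with the pivotal $\varphi$ coming from Theorem \ref{thm:AlgebraicPAEquivalence} by uniqueness of a compatible pivotal structure (Remark \ref{rem:OtherCharacterizationOfUnitaryPivotal}). Functoriality in both directions is immediate since intertwiners between planar algebras and dagger tensor functors between the corresponding categories are cut out by exactly the same compatibility conditions.

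The main obstacle is the descent of the $\Cstar$/positivity axioms through the Cauchy completion: one must confirm that finite-dimensional $\Cstar$ linking algebras, glued together by the planar operad, actually assemble into a genuinely $\Cstar$ (not merely algebraically semisimple) multitensor category, which amounts to a careful bookkeeping of positivity through the graphical calculus in the spirit of Lemma \ref{lem:AnotherPositivityAxiom}. Once this is done, the two bullet points are immediate: finite depth on the planar algebra side corresponds to $\scrC$ having finitely many isomorphism classes of simples (so $\scrC$ is unitary multifusion), while for a subfactor planar algebra the connectedness axiom $\cP_{0,\pm}\cong \bbC$ gives simplicity of $1_\pm$, and the sphericality axiom for $\cP_\bullet$ becomes precisely the defining condition $\dim_L^\varphi(c)=\dim_R^\varphi(c)$ of the canonical spherical dual functor from \S\ref{sec:UnitaryPivotal}.
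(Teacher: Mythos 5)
Your proposal is correct and follows essentially the same route the paper takes: the paper defers the full proof to \cite[\S4]{1808.00323} and only sketches the correspondence in the remark following the corollary, but that sketch is exactly your strategy --- upgrade the algebraic equivalence of Theorem \ref{thm:AlgebraicPAEquivalence} (equivalently, the unitary monoidal-algebra equivalence of Corollary \ref{cor:SemisimpleCstarMonoidalAlgebraEquivalence}) by observing that reflection-compatibility of the planar $\dag$ is precisely the unitarity of the dual functor built from cups and caps, via the identities of Remark \ref{rem:OtherCharacterizationOfUnitaryPivotal}. Your handling of the two bullet points (connectedness giving simple units, the sphericality axiom picking out the canonical spherical dual functor) and your flagging of the descent of positivity through Cauchy completion as the residual technical point are both accurate.
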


\begin{remark}
In the unitary setting, Corollary \ref{cor:SemisimpleCstarMonoidalAlgebraEquivalence} gives us an equivalence between the underlying 2-shaded unitary monoidal algebras and unitary multitensor categories.
Starting with a semisimple 2-shaded $\Cstar$ planar algebra $\cP_\bullet$, we get a unitary dual functor on the projection category $\scrC$ by taking the $\pi$-rotation in $\cP_\bullet$.
Conversely, given a tuple $(\scrC, \vee, X)$, by by Remark \ref{rem:OtherCharacterizationOfUnitaryPivotal},
$\coev^\dag_c=\ev_{X^\vee} \circ (\varphi_X \otimes \id_{X^\vee})$
and 
$\ev_c^\dag = (\id_{X^\vee}\otimes \varphi_X^{-1})\circ\coev_{X^\vee}$.
This means the cups and caps be alternately described by
$$
\begin{tikzpicture}[baseline = 0cm]
	\fill[shaded] (-.1,0) -- (-.1,.3) -- (.5,.3) -- (.5,0) -- (.4,0) arc (0:180:.2cm);
	\draw[] (0,0) arc (180:0:.2cm);
\end{tikzpicture}
:=\ev_{X}
\qquad
\qquad
\begin{tikzpicture}[baseline = -.2cm, yscale = -1]
	\filldraw[shaded] (0,0) arc (180:0:.2cm);
\end{tikzpicture}
:=\coev_{X}
\qquad
\qquad
\begin{tikzpicture}[baseline = 0cm]
	\filldraw[shaded] (0,0) arc (180:0:.2cm);
\end{tikzpicture}
:=\coev_{X}^\dag
\qquad
\qquad
\begin{tikzpicture}[baseline = -.2cm, yscale=-1]
	\fill[shaded] (-.1,0) -- (-.1,.3) -- (.5,.3) -- (.5,0) -- (.4,0) arc (0:180:.2cm);
	\draw[] (0,0) arc (180:0:.2cm);
\end{tikzpicture}
:=
\ev_{X}^\dag.
$$
\end{remark}

Now in order for a semisimple shaded $\Cstar$ planar algebra $\cP_\bullet$ to have \emph{scalar} loop modulus, 
we choose the \emph{standard} unitary dual functor $\vee_{\text{standard}}$ on $\scrC$ \emph{with respect to} $X$ following \cite{1805.09234}, which is clarified in \cite{1808.00323}.
First, define $n_\pm :=\dim(\End_\scrC(1_\pm))$, and denote the summands of $1_+$ and $1_-$ by $V_+$ and $V_-$ respectively. 
Let $D_X$ be the $n_+\times n_-$ matrix whose $uv$-th entry is $\dim(u \otimes X \otimes v)$, using the canonical spherical structure.
Let $d_X>0$ such that $d_X^2 =\|D_XD_X^T\|=\|D_X^TD_X\|$, 
and let $\mu$ and $\nu$ be the Frobenius-Perron eigenvectors of $D_XD_X^T$ and $D_X^TD_X$ respectively normalized so that
$$
\sum_{u \in V_+} \mu(u)^2 = 1 = \sum_{v\in V_-} \nu(v)^2.
$$
We denote by $\lambda$ the vector in $\bbR_{>0}^{n_++n_-}$ obtained by concatenating $\mu$ and $\nu$.

\begin{defn}
The \emph{standard} unitary dual functor with respect to $X$ 
corresponds to the standard groupoid homormorphism $\cG_r \to \bbR_{>0}$ given by
\begin{equation}
\label{eq:StandardAndLopsided1808.00323}
\pi^{\text{standard}}(E_{u,v})
:=
\left(\frac{\lambda(u)}{\lambda(v)}\right)^2
\end{equation}
under Theorem \ref{thm:ClassificationOfUnitaryDualFunctors}.
It is straightforward to verify that the shaded planar algebra corresponding to $(\scrC, \vee_{\text{standard}}, X)$ under Corollary \ref{cor:PAEquivalence} has scalar loop moduli given by
\begin{equation}
\label{eq:StandardPA}
\begin{tikzpicture}[baseline=-.1cm]
	\draw[shaded] (0,0) circle (.3cm);
\end{tikzpicture}
=
d_X
\id_{\cP_{0,+}}
\qquad\qquad\qquad
\begin{tikzpicture}[baseline=-.1cm]
	\fill[shaded, rounded corners=5] (-.6,-.6) rectangle (.6,.6);
	\draw[fill=white] (0,0) circle (.3cm);
\end{tikzpicture}
=
d_X 
\id_{\cP_{0,-}}.
\end{equation}
\end{defn}

\subsection{The graph planar algebra module embedding theorem}
\label{sec:GPAModuleEmbedding}

In this section, we finally prove the unitary pivotal module embedding theorem.
We begin by defining the notion of a trace on a semisimple category in \S\ref{sec:Traces}, and then discussing Schaumann's notion of a pivotal module for a pivotal category from \cite{MR3019263} in \S\ref{sec:PivotalModules}.
As both of these concepts have unitary versions, we treat both the algebraic and unitary setting in parallel;
the reader should include the parenthetical statements for the unitary setting, and may omit these statements in the non-unitary setting.
Finally, in \S\ref{sec:ModuleEmbedding}, we see how our Main Theorem \ref{thm:ModuleEmbedding} in this section is a natural generalization of the embedding theorems from \S\ref{sec:MonoidalAlgebraEmbedding}.

\subsubsection{Traces on semisimple categories }
\label{sec:Traces}

In this section we now discuss (unitary) traces on finitely semisimple ($\Cstar$) categories.
Throughout we denote the semisimple category with trace by $\cM$ because in our applications we will be looking at traces on module categories, but nothing in this section uses a module structure.

\begin{defn}
A \emph{trace} on a semisimple category $\cM$ is a family of linear functionals $\Tr_m: \End_\cM(m) \rightarrow \mathbb{C}$ for $m\in\cM$ such that the bilinear forms $\Hom_\cM(m,n) \times \Hom_\cM(n,m) \rightarrow \mathbb{C}$ via $(f,g) \mapsto \Tr_m(g\circ f)$ are non-degenerate ($\Tr_m(g\circ f)=0$ for all $g\in \Hom(n,m)$ implies $f=0$) and satisfy $\Tr_m(g\circ f) = \Tr_n(f\circ g)$.

When $\cM$ is a semisimple ${\rm C^*}$ category, we call a trace \emph{unitary} if in addition for every $m,n\in \cM$, the sesquilinear form $\langle f,g\rangle := \Tr_m(g^\dag\circ f)$ on $\Hom_\cM(m,n)$ is a positive definite inner product.
\end{defn}

\begin{remark}
We do not require $\Tr_m^\cM$ to be normalized; that is $\Tr^\cM_m(\id_m)$ is typically not $1$.  
Instead we think of $\Tr^\cM_m(\id_m)$ as specifiying a notion of the dimension of $m\in\cM$.

For example, any trace on the $n\times n$ matrices $M_{n}(\bbC)$ is a scalar multiples of the standard matrix trace.  
A trace on $\Vec$ is a collection of traces on $M_n(\bbC)$ for all $n$; 
however the condition $\Tr^\cM(f \circ g) = \Tr^\cM(g \circ f)$ applied to maps between vector spaces of different dimensions restricts the normalizations of the different traces. 
In particular, the standard trace ($\Tr_V(\id_V) = \dim V$) on each $\End(V)$ gives a trace on $\Vec$, but the normalized trace ($\tr_V(\id_V) = 1$) on each $\End(V)$ does not give a trace on $\Vec$.
\end{remark}

For the remainder of this section, we simultaneously develop the theory of traces on semisimple categories and on $\Cstar$ categories; the extra adjectives and conditions required in the latter case appear parenthetically.
We denote by $G$ the multiplicative group $\bbC^\times$ in the algebraic setting or $\bbR_{>0}$ in the unitary setting.

\begin{nota}
\label{nota:DecomposeMObjects}
Suppose $\cM$ is finitely semisimple and $\Irr(\cM):=\{x_1, \ldots, x_r\}$ is a choice of representatives of the isomorphism classes of simple objects in $\cM$.
Let $\cE(\cM)$ denote $\End(\cM)$ in the algebraic setting, and $\End^\dag(\cM)$ in the unitary setting. 
If $\cN$ is a ($\Cstar$) category and $y_1, \ldots, y_r$ are objects in $\cN$, then there is a (dagger) functor $\cF_{y_1, \ldots, y_r}: \cM \rightarrow \cN$ that is unique up to unique (unitary) isomorphism such that $\cF(x_i) = y_i$. 
Furthermore, any (dagger) functor out of $\cM$ is of this form.
In particular, we let $E_{ij} \in \cE(\cM)$ denote the (dagger) functor which sends $x_i$ to $x_j$ and sends $x_k$ to the zero object for all $k \neq i$. 
Then $\left\{E_{ij}\middle| 1\leq i,j\leq r\right\}$ is a choice of representatives of isomorphism classes of simple objects in $\cE(\cM)$.  
Thus 
in the algebraic setting,
$\cE(\cM)$ is equivalent to the category of $\cG_r$-graded vector spaces $\Vec[\cG_r]$,
and in the unitary setting, $\cE(\cM)$ is dagger equivalent to $\Hilb[\cG_r]$.
In either case, the universal grading groupoid of $\cE(\cM)$ is $\cG_r$.
\end{nota}

\begin{lem}
\label{lem:TracesOnVec}
Let $\cV$ be $\Vec$ (respectively $\Hilb$).
The function from (unitary) equivalence classes of traces on $\cV$ to $G$ given by $\Tr^\cV \longmapsto \Tr^\cV_{\mathbb{C}}(\id_{\mathbb{C}})$ is a bijection.
\end{lem}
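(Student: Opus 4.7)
The plan is as follows. Since $\cV$ has a single isomorphism class of simple objects represented by $\bbC$, and every object of $\cV$ is (unitarily) isomorphic to $\bbC^n$ for some $n$, I expect any trace to be completely determined by the single scalar $\lambda := \Tr^\cV_\bbC(\id_\bbC)$, with the bijection given by $\Tr^{(\lambda)}_V(f) := \lambda\cdot\tr(f)$, where $\tr$ denotes the ordinary matrix trace.

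First I would establish injectivity by deriving this explicit formula. For any $V \in \cV$ with a chosen (unitary) isomorphism $V \cong \bbC^n$, consider the associated inclusions $e_i\colon \bbC \to V$ and projections $\pi_i\colon V \to \bbC$, so that $\pi_j \circ e_i = \delta_{ij}\id_\bbC$ and $\sum_i e_i\pi_i = \id_V$. Any $f \in \End(V)$ then decomposes as $f = \sum_{i,j} f_{ij}\,e_i\pi_j$. Using linearity and the cyclic property $\Tr^\cV_V(g\circ h) = \Tr^\cV_W(h\circ g)$ applied to $h = e_i$ and $g = \pi_j \circ f$ (and its iterates), I would compute
\[
\Tr^\cV_V(e_i\pi_j) = \Tr^\cV_\bbC(\pi_j\circ e_i) = \delta_{ij}\,\lambda,
\]
so that $\Tr^\cV_V(f) = \lambda \cdot \tr(f)$. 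Independence of the choice of isomorphism $V\cong\bbC^n$ follows from a second application of cyclicity to the change-of-basis morphism. This shows that every trace on $\cV$ equals $\Tr^{(\lambda)}$ for a unique $\lambda$, which establishes injectivity both at the level of individual traces and at the level of any reasonable equivalence class.

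Next I would verify surjectivity by checking that the formula $\Tr^{(\lambda)}_V(f) := \lambda\cdot \tr(f)$ defines a (unitary) trace for every $\lambda\in G$. The cyclicity axiom $\Tr^{(\lambda)}_V(g\circ f) = \Tr^{(\lambda)}_W(f\circ g)$ for $f\colon V\to W$, $g\colon W\to V$ reduces to the standard identity $\tr(gf)=\tr(fg)$ for rectangular matrices. Non-degeneracy of the bilinear pairing $(f,g)\mapsto \lambda\tr(g\circ f)$ on $\Hom_\cV(V,W)\times \Hom_\cV(W,V)$ follows from non-degeneracy of the Hilbert--Schmidt pairing together with $\lambda\neq 0$. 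In the unitary setting with $\lambda > 0$, the sesquilinear form $\langle f,g\rangle = \lambda\tr(g^\dag f)$ is simply the Hilbert--Schmidt inner product rescaled by $\lambda$, hence positive definite. Clearly $\Tr^{(\lambda)}_\bbC(\id_\bbC) = \lambda$.

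Finally, in the unitary case I need to confirm that the image lies in $\bbR_{>0}\subset \bbC^\times$, which amounts to ruling out non-positive real scalars (and complex scalars, but these already fail non-degeneracy as above). This is immediate from positive-definiteness of $\langle-,-\rangle$ applied to $f = \id_\bbC$: we get $\lambda = \Tr^\cV_\bbC(\id_\bbC^\dag\circ\id_\bbC) > 0$. I do not anticipate any genuine obstacle; the only mildly subtle point is verifying that the formula $\Tr^\cV_V(f) = \lambda\tr(f)$ is independent of the chosen isomorphism $V\cong \bbC^n$, but this is handled cleanly by the cyclic property.
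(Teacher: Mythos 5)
Your proof is correct and follows essentially the same route as the paper: injectivity via decomposing endomorphisms into composites $\iota_k\pi_j$ of inclusions and projections and applying cyclicity to reduce to $\Tr^\cV_\bbC(\id_\bbC)$, and surjectivity by exhibiting the rescaled standard trace. The extra verifications you include (well-definedness under change of basis, non-degeneracy, positivity in the unitary case) are sound and simply make explicit what the paper leaves implicit.
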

\begin{proof}
For surjectivity, we note that if $\lambda \in G$,
then $(\lambda \Tr)(V):= \lambda \dim(V)$ is a trace on $\cV$ which satisfies $(\lambda \Tr)_\mathbb{C}(\id_\mathbb{C}) = \lambda$.

For injectivity, we prove that $\Tr^\cV$ is determined by $\Tr^\cV_{\mathbb{C}}(\id_{\mathbb{C}})$.  
Let $V \in \cV$ and choose a(n orthormal) basis $v_1, \ldots v_n$ for $V$.  
Let $\pi_j: V \rightarrow \mathbb{C}$ be the projection $\sum_i a_i v_i \mapsto a_j$, and let $\iota_k: \mathbb{C} \rightarrow V$ be the inclusion $\lambda \mapsto \lambda v_j$.  
The composites $\iota_k \pi_j$ span $\End(V)$, and
$$
\Tr^\cV_V(\iota_k \pi_j) = \Tr^\cV_\mathbb{C}(\pi_j \iota_k) = \delta_{j=k} \Tr_\mathbb{C}^\cV(\id_\bbC).
$$ 
Hence $\Tr^\cV$ is completely determined by $\Tr^\cV_{\mathbb{C}}(\id_{\mathbb{C}})$, which proves injectivity.
\end{proof}

\begin{prop}
\label{prop:CharacterizationOfTraces}
The function from (unitary) equivalence classes of traces on $\cM$ to $G^r$ given by 
$$
\Tr^\cM \longmapsto (\Tr^\cM_{x_1}(\id_{x_1}), \dots, \Tr^\cM_{x_r}(\id_{x_r})) 
$$
is a bijection.
\end{prop}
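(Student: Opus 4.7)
The plan is to reduce to Lemma \ref{lem:TracesOnVec} via a (dagger) equivalence $\cF: \cV^r \xrightarrow{\sim} \cM$, with $\cV = \Vec$ in the algebraic case and $\cV = \Hilb$ in the unitary case, sending the unit of the $i$-th factor to $x_i$; such an $\cF$ exists by Notation \ref{nota:DecomposeMObjects} applied with $\cN = \cV^r$. Pulling back along $\cF$ gives a bijection between (unitary) equivalence classes of traces on $\cM$ and on $\cV^r$: both the tracial property and, in the unitary case, positivity of the induced sesquilinear forms are manifestly preserved by (dagger) equivalences (since $\cF$ is fully faithful and in the unitary case commutes with $\dag$).

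Next I claim that (unitary) traces on $\cV^r$ correspond bijectively to $r$-tuples of (unitary) traces on $\cV$. Because $\Hom_{\cV^r}((V_1,\ldots,V_r), (W_1,\ldots,W_r)) = \prod_i \Hom_\cV(V_i, W_i)$, any trace on $\cV^r$ restricts to a trace on each factor $\cV$. Conversely, writing $f \in \End_{\cV^r}(V)$ as $\sum_i \iota_i f_i \pi_i$ with $\iota_i, \pi_i$ the inclusion and projection of the $i$-th factor and using the tracial identity $\Tr(\iota_i f_i \pi_i) = \Tr(\pi_i \iota_i f_i) = \Tr_{V_i}(f_i)$ forces $\Tr_V(f) = \sum_i \Tr^i_{V_i}(f_i)$, so any trace on $\cV^r$ is determined by its factor restrictions; non-degeneracy and positivity decompose blockwise over the factors, so any sum of this form built from (unitary) traces on $\cV$ is a (unitary) trace on $\cV^r$. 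By Lemma \ref{lem:TracesOnVec} each factor trace corresponds to a unique scalar in $G$, so (unitary) traces on $\cV^r$ are classified by $G^r$.

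Composing these two bijections, the $r$-tuple $(\lambda_1, \ldots, \lambda_r) \in G^r$ sent to a trace $\Tr^\cM$ satisfies $\Tr^\cM_{x_i}(\id_{x_i}) = \Tr^i_\bbC(\id_\bbC) = \lambda_i$, identifying the composite bijection with the evaluation map in the proposition. The only genuinely non-formal step is verifying positivity in the unitary setting: the factor-wise decomposition above shows that a trace on $\cV^r$ is unitary if and only if each of its factor traces is, which by Lemma \ref{lem:TracesOnVec} forces each $\lambda_i$ to lie in $\bbR_{>0}$, matching the codomain $G^r$ with $G = \bbR_{>0}$ in the unitary case.
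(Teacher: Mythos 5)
Your proof is correct and follows essentially the same route as the paper: decompose $\cM$ (up to (dagger) equivalence) as $\bigoplus_{i=1}^r \cV$, observe that a (unitary) trace decomposes into independent factor traces because there are no morphisms between distinct summands, and invoke Lemma \ref{lem:TracesOnVec} on each factor. The only difference is that you spell out the blockwise decomposition of the trace via the tracial identity, which the paper leaves implicit.
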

\begin{proof}
If $\cM$ has $r$ distinct isomorphism classes of simple objects, $\cM$ is (dagger) equivalent to a(n orthogonal) direct sum $\bigoplus_{i=1}^r \Vec$ (respectively $\bigoplus_{i=1}^r \Hilb$).  
Since there are no maps between objects in the different summands, a (unitary) trace on $\bigoplus_{i=1}^r \Vec$ 
(respectively $\bigoplus_{i=1}^r \Hilb$)
is equivalent to independently giving a (unitary) trace on each of the $r$ copies of $\Vec$ (respectively $\Hilb$).
The result now follows from Lemma \ref{lem:TracesOnVec}.
\end{proof}

\begin{remark}
\label{rem:OtherCharacterizationsOfTraces}
Similar to \cite{MR3019263}, in the non-unitary setting, traces on $\cM$ are in bijection with families of natural isomorphisms $\cM(m\to n) \cong \cM(n\to m)^*$ for all $m,n\in\cM$.

Unitary traces on $\cM$ are in bijection with  \emph{2-Hilbert space} structures on $\cM$ \cite{MR1448713} (see also  \cite[\S3 and 5.6]{0901.3975}), i.e., for every $m,n\in \cM$, a Hilbert space structure on $\cM(m\to n)$ such that for all $f\in \cM(m \to n)$, $g\in \cM(n\to p)$, and $h\in \cM(m\to p)$,
\begin{equation}
\label{eq:2HilbertSpace}
\langle g\circ f, h\rangle_{\cM(m \to p)}
=
\langle f, g^\dag\circ h\rangle_{\cM(m \to n)}
=
\langle g, h\circ f^\dag\rangle_{\cM(n \to p)}.
\footnote{We note that the second equality in \eqref{eq:2HilbertSpace} holds if and only if for each $m\in \cM$, the linear functor $\cM(- \to m) : \cM^{\rm op}\to \Hilb$ is a dagger functor.
In this case, the first equality in \eqref{eq:2HilbertSpace} holds if and only if the Yoneda embedding $m\mapsto \cM(- \to m)$ is a (fully faithful) dagger functor $\cM \hookrightarrow \Fun^\dag(\cM^{\rm op} \to \Hilb)$.}
\end{equation}
\end{remark}

\begin{prop}
\label{prop:CharacterizePivotalStructures}
The function from (unitary) pivotal structures on $\cE(\cM)$ to $G^{r-1}$ given by
$$
\varphi
\longmapsto
\left(
\dim_L^\varphi(E_{i+1,i})
\right)_{i=1}^{r-1}
$$
is a bijection.
\end{prop}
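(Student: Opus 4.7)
The plan is to combine the classification of (unitary) pivotal structures in terms of the universal grading groupoid with the concrete description of $\cE(\cM)$ provided by Notation \ref{nota:DecomposeMObjects}.

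First, by Notation \ref{nota:DecomposeMObjects}, $\cE(\cM)$ is (dagger) equivalent to $\Vec[\cG_r]$ (respectively $\Hilb[\cG_r]$), and its universal grading groupoid is $\cG_r$. This category carries a canonical (unitary) pivotal structure $\varphi_0$, induced by the standard duality on the line objects $E_{ij}$, for which every simple $E_{ij}$ satisfies $\dim_L^{\varphi_0}(E_{ij}) = \dim_R^{\varphi_0}(E_{ij}) = 1$; in particular $\varphi_0$ is spherical.

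Next, using $\varphi_0$ as basepoint, (equivalence classes of) (unitary) pivotal structures on $\cE(\cM)$ are in bijection with $\Hom(\cG_r \to G)$, where $G = \bbC^\times$ in the algebraic setting (by the standard torsor description of pivotal structures recalled in \S\ref{sec:UnitaryPivotal}) and $G = \bbR_{>0}$ in the unitary setting (by Theorem \ref{thm:ClassificationOfUnitaryDualFunctors}). By Remark \ref{rem:DetermineGroupoidHom}, each such homomorphism $\pi$ is determined by its values on the arrows $E_{i+1,i}$ for $1\leq i\leq r-1$, so $\pi \mapsto (\pi(E_{i+1,i}))_{i=1}^{r-1}$ is itself a bijection $\Hom(\cG_r \to G) \to G^{r-1}$.

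It remains to identify our map $\varphi \mapsto (\dim_L^\varphi(E_{i+1,i}))_{i=1}^{r-1}$ with this composite bijection, by computing how $\dim_L^{\varphi_\pi}(E_{ij})$ depends on $\pi(E_{ij})$. In the algebraic case, multiplying $\varphi_0$ by $\pi$ componentwise multiplies the left trace by $\pi$, so $\dim_L^{\varphi_\pi}(E_{ij}) = \pi(E_{ij}) \cdot \dim_L^{\varphi_0}(E_{ij}) = \pi(E_{ij})$. In the unitary case, applying the rescalings $\ev_c^\pi = \pi(E_{ij})^{1/4}\ev_c$ and $\coev_c^\pi = \pi(E_{ij})^{-1/4}\coev_c$ from the classification of unitary dual functors, and using the canonical pivotal structure formula $\varphi_c^\pi = ((\coev_c^\pi)^\dag \otimes \id)\circ(\id \otimes \coev_{c^\vee}^\pi)$, a direct calculation yields $\dim_L^{\varphi_\pi}(E_{ij}) = \pi(E_{ij})^{\epsilon/2}$ for some $\epsilon \in \{\pm 1\}$, which is still a bijection $\bbR_{>0}\to\bbR_{>0}$. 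In either setting, the composite $\pi \mapsto (\dim_L^{\varphi_\pi}(E_{i+1,i}))_{i=1}^{r-1}$ is a bijection, which is the content of the proposition.

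The main obstacle is tracking the fourth-root rescalings in the unitary case to confirm the exact power of $\pi$ appearing in $\dim_L^{\varphi_\pi}(E_{ij})$; the computation is a routine application of the duality formulas in \S\ref{sec:UnitaryPivotal}, but one must verify that no cancellation among the $\pi^{\pm 1/4}$ factors reduces the total exponent to zero, which would destroy bijectivity.
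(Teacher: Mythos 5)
Your proof is correct and follows essentially the same route as the paper's (canonical spherical basepoint, torsor over $\Hom(\cG_r\to G)$, determination of a homomorphism by its values on the $E_{i+1,i}$); the paper's version is merely terser and does not spell out the identification of $\dim_L^\varphi$ with $\pi$. The loose end you flag in the unitary case closes immediately: the product $\dim_L^\varphi(E_{ij})\dim_R^\varphi(E_{ij})$ is independent of the pivotal structure and equals $1$, while $\pi(E_{ij})=\dim_L^\varphi(E_{ij})/\dim_R^\varphi(E_{ij})$ by the paper's conventions, so $\dim_L^{\varphi_\pi}(E_{ij})=\pi(E_{ij})^{1/2}$, whose exponent is manifestly nonzero and hence gives a bijection $\bbR_{>0}\to\bbR_{>0}$.
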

\begin{proof}
There exists a canonical (unitary) spherical structure on $\cE(\cM)$ where all objects have left and right dimension 1.
Thus the (unitary) pivotal structures on $\cE(\cM)$ form a torsor over $\Hom(\cG_r\to G)$.
Such a homomorphism is uniquely determined by its image on $E_{i+1,i}$ for $1\leq i\leq r-1$ as in Remark \ref{rem:DetermineGroupoidHom}.
\end{proof}

Given a pivotal structure $\varphi$ on $\cE(\cM)$, the left pivotal trace $\tr^\varphi_L$ takes values in $\cE(\cM)(1_{\cE(\cM)} \to 1_{\cE(\cM)}) \cong \bbC^r$.
Choosing a simple object $x_i$ induces a $\bbC$-valued trace on $\cE(\cM)$ by projecting to the $x_i$-component of $1_{\cE(\cM)} := \id_\cM$.
That is, 
if $F\in \cE(\cM)$ and $\eta : F \Rightarrow F$ is a natural transformation, we define
$\Tr^{\cE(\cM), x_i}_F(\eta)$ by the formula
\begin{equation}
\label{eq:CValuedTraceOnE(M)}
\Tr^{\cE(\cM), x_i}_F(\eta)
\cdot
\id_{x_i}
:=
\left(
\begin{tikzpicture}[baseline=-.1cm]
	\draw (0,.9) arc (0:180:.3cm) -- (-.6,-.9) arc (-180:0:.3cm) -- (0,.9);
	\roundNbox{fill=white}{(0,.55)}{.35}{0}{0}{$\eta$}
	\roundNbox{fill=white}{(0,-.55)}{.35}{0}{0}{$\varphi_F^{-1}$}
	\node at (-.9,0) {\scriptsize{$F^\vee$}};
	\node at (.3,0) {\scriptsize{$F$}};
	\node at (.3,1.1) {\scriptsize{$F$}};
	\node at (.3,-1.1) {\scriptsize{$F^{\vee\vee}$}};
\end{tikzpicture}
\right)_{x_i}
=
\tr^\varphi_L(\eta)_{x_i}.
\end{equation}

We define the $j$-th column functor $F_j:\cM \rightarrow \cE(\cM)$ by letting $F_j(m)$ be the (dagger) functor which sends $x_j$ to $m$ and all other simples to the zero object.
We denote by $\cE(\cM)_j$ the essential image of $\cM$ under $F_j$, which consists of (orthogonal) directs sums of the objects $E_{ij}$ for $i=1,\dots, r$.
Notice that $F_i$ is a (dagger) equivalence $\cM \cong \cE(\cM)_j$.
This is the categorical analogue of identifying a vector space with matrices supported on the $j$-th column.

We now choose the simple object $x_1\in \cM$ giving us our scalar-valued trace $\Tr^{\cE(\cM), x_1}$ on $\cE(\cM)$.
By restriction, we get a (unitary) trace on $\cE(\cM)_1\cong \cM$, which we denote by $\Tr^{\cE(\cM)_1}$.
Notice that taking the $x_1$-component of $\tr_L^\varphi$ can be viewed as cutting down $\cE(\cM)(\id \Rightarrow \id)$ by the (orthogonal) projection onto the summand $E_{11} \subset \id_{\cE(\cM)}$. 
Denoting this projection by a shading, we get the following diagrammatic formula
for $\Tr^{\cE(\cM)_1}_{E_{i1}}(\eta)$ for $\eta: E_{i1}\Rightarrow E_{i1}$:
\begin{equation}
\label{eq:CValuedTraceOnE(M)_1}
\Tr^{\cE(\cM)_1}_{E_{i1}}(\eta)
:=
\begin{tikzpicture}[baseline=-.1cm]
	\fill[shaded, rounded corners = 5pt] (-1.3,-1.5) rectangle (.7,1.5); 
	\filldraw[fill=white] (0,.9) arc (0:180:.3cm) -- (-.6,-.9) arc (-180:0:.3cm) -- (0,.9);
	\roundNbox{fill=white}{(0,.55)}{.35}{0}{0}{$\eta$}
	\roundNbox{fill=white}{(0,-.55)}{.35}{0}{0}{$\varphi^{-1}$}
	\node at (-.9,0) {\scriptsize{$E_{i,1}^\vee$}};
	\node at (.3,0) {\scriptsize{$E_{i,1}$}};
	\node at (.3,1.1) {\scriptsize{$E_{i,1}$}};
	\node at (.3,-1.1) {\scriptsize{$E_{i,1}^{\vee\vee}$}};
\end{tikzpicture}
\in \cE(E_{1,1} \to E_{1,1})
\cong\bbC
\qquad\qquad\qquad
\begin{tikzpicture}[baseline=-.1cm]
	\fill[shaded, rounded corners = 5pt] (-.3,-.3) rectangle (.3,.3); 
\end{tikzpicture}
:=
\operatorname{proj}_{E_{1,1}}.
\end{equation}

We have thus proved:

\begin{prop}
\label{prop:PivotalStructuresToTraces}
The function $\varphi \mapsto \tr_L^\varphi|_{\cE(\cM)_1}$ together with the (dagger) equivalence $\cE(\cM)_1 \cong \cM$, induces a function $\Delta$ from the set of (unitary) equivalence classes of pivotal structures on $\cE(\cM)$ to the set of (unitary) equivalence classes of (unitary) traces on $\cM$.
\end{prop}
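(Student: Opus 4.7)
The plan is to verify, for each pivotal structure $\varphi$ on $\cE(\cM)$, that \eqref{eq:CValuedTraceOnE(M)_1} defines a trace on $\cE(\cM)_1$ (a unitary trace, when $\varphi$ is unitary), and then to transfer along the (dagger) equivalence $F_1 : \cM \to \cE(\cM)_1$ to obtain the trace on $\cM$. Well-definedness on equivalence classes will then follow from the fact that $\tr_L^\varphi$ depends only on the equivalence class of $\varphi$.

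First I would check cyclicity. The pivotal trace on any pivotal category satisfies $\tr_L^\varphi(g \circ f) = \tr_L^\varphi(f \circ g)$ as elements of $\cE(\cM)(\id_\cM \Rightarrow \id_\cM) \cong \bbC^r$, for any composable $f : F \Rightarrow G$ and $g : G \Rightarrow F$; the linear projection onto the $E_{11}$-summand preserves this equality. For non-degeneracy, since $\cE(\cM)_1$ has simples $\{E_{i1}\}_{i=1}^r$ with $\cE(\cM)(E_{i1} \Rightarrow E_{j1}) = \delta_{ij}\bbC$, by Proposition \ref{prop:CharacterizationOfTraces} it suffices to show that $\Tr^{\cE(\cM)_1}_{E_{i1}}(\id_{E_{i1}}) = \dim_L^\varphi(E_{i1})$ is non-zero. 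The key observation is that each $E_{i1}$ is invertible in $\cE(\cM) \cong \Vec[\cG_r]$, with $E_{i1}^\vee = E_{1i}$ and $E_{i1}\otimes E_{1i} = E_{11}$, so $\coev_{E_{i1}} : 1_1 \to E_{i1}\otimes E_{1i}$ and $\ev_{E_{1i}} : E_{i1} \otimes E_{1i} \to 1_1$ are isomorphisms into and out of the $E_{11}$-summand of $\id_\cM$; combined with the isomorphism $\varphi_{E_{i1}}$, these display $\dim_L^\varphi(E_{i1})$ as a non-zero scalar supported entirely in the $E_{11}$-component (so its $x_1$-projection is the full dimension, not accidentally zero).

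Well-definedness on equivalence classes holds because two (unitarily) equivalent pivotal structures produce identical pivotal traces, hence identical $x_1$-projections, hence identical traces on $\cM$ after transferring along $F_1$. In the unitary setting, positivity of $\Tr^{\cE(\cM)_1}(f^\dag \circ f)$ reduces to component-wise non-negativity of $\tr_L^\varphi(f^\dag \circ f)$ for unitary $\varphi$, and faithfulness combines this with $\dim_L^\varphi(E_{i1}) > 0$, which holds since unitary pivotal structures give strictly positive dimensions. The main subtlety is really just confirming that $\dim_L^\varphi(E_{i1})$ is supported entirely in the $E_{11}$-summand of $\id_\cM$; once this grading book-keeping is in place, the remaining verifications are formal consequences of the properties of the pivotal trace.
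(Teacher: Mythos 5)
Your proposal is correct and follows essentially the same route as the paper, which simply constructs $\Tr^{\cE(\cM),x_1}$ by projecting $\tr_L^\varphi$ onto the $E_{11}$-summand of $\id_\cM$, restricts to $\cE(\cM)_1$, and transfers along $F_1$, declaring the proposition proved by this construction. You additionally write out the verifications the paper leaves implicit (cyclicity, non-degeneracy via $E_{i1}^\vee\otimes E_{i1}=E_{11}$ so that $\dim_L^\varphi(E_{i1})$ is supported entirely in the $E_{11}$-component, and positivity in the unitary case), all of which are sound.
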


We now construct a left inverse to the function $\Delta$.

\begin{prop}
\label{prop:TracesToPivotalStructures}
The function $\Lambda$ defined by
$$
\left\{
\text{\rm 
Traces $\Tr^\cM$
}
\right\}
\underset{\text{\rm Prop.~\ref{prop:CharacterizationOfTraces}}}{\cong}
G^r
\ni
(a_1,\dots, a_r)
\mapsto
\left(
\frac{a_2}{a_1}, \dots, \frac{a_{r}}{a_{r-1}}
\right) 
\in 
G^{r-1}
\underset{\text{\rm Prop.~\ref{prop:CharacterizePivotalStructures}}}{\cong}
\left\{
\text{\rm 
Pivotal structures $\varphi^\cE$
}
\right\}
$$
is surjective and provides a left inverse to $\Delta$ from Proposition \ref{prop:PivotalStructuresToTraces}.
Moreover, under $\Lambda$, two (unitary) traces map to the same (unitary) pivotal structure if and only if they are proportional.
\end{prop}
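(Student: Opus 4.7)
The plan is to prove everything in coordinates, via the bijections of Propositions \ref{prop:CharacterizationOfTraces} and \ref{prop:CharacterizePivotalStructures}, which identify (unitary equivalence classes of) (unitary) traces on $\cM$ with $G^r$ and (unitary equivalence classes of) (unitary) pivotal structures on $\cE(\cM)$ with $G^{r-1}$ (with $G=\bbC^\times$ or $\bbR_{>0}$, as appropriate). Under these identifications, surjectivity of $\Lambda$ is immediate: given $(b_1,\dots,b_{r-1})\in G^{r-1}$, the tuple $(1,\,b_1,\,b_1b_2,\,\dots,\,b_1\cdots b_{r-1})\in G^r$ is sent to $(b_1,\dots,b_{r-1})$ by $\Lambda$.

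To establish $\Lambda\circ\Delta = \id$, I would carry out a coordinate computation of $\Delta$. By Proposition \ref{prop:PivotalStructuresToTraces} and the diagrammatic formula \eqref{eq:CValuedTraceOnE(M)_1}, the $i$-th coordinate of $\Delta(\varphi)$ is
$$a_i(\varphi) := \Tr^{\cE(\cM)_1}_{E_{i,1}}(\id_{E_{i,1}}) = \bigl(\tr_L^\varphi(\id_{E_{i,1}})\bigr)_{E_{1,1}},$$
i.e., the $E_{1,1}$-component of $\tr_L^\varphi(\id_{E_{i,1}})\in \cE(\cM)(1_{\cE(\cM)}\to 1_{\cE(\cM)})$. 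Under the equivalence $\cE(\cM)\cong \Vec[\cG_r]$ of Notation \ref{nota:DecomposeMObjects} with its tensor rule $E_{ij}\otimes E_{jk}\cong E_{ik}$ and duality $E_{i,j}^\vee\cong E_{j,i}$, the relevant duality maps factor through the $E_{1,1}$-summand of the unit (because $E_{1,i}\otimes E_{i,1}\cong E_{1,1}$). Unwinding the diagram in this explicit model shows that $a_i(\varphi)$ is the scalar left pivotal dimension $\dim_L^\varphi(E_{i,1})$, up to an $i$-independent normalization coming from the choice of unitary dual functor implicit in Proposition \ref{prop:PivotalStructuresToTraces}.

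Applying multiplicativity of $\dim_L^\varphi$ under tensor product to the factorization $E_{i,1}\cong E_{i,i-1}\otimes E_{i-1,i-2}\otimes\cdots\otimes E_{2,1}$ in $\Vec[\cG_r]$ then yields $a_i(\varphi)\propto \prod_{j=1}^{i-1}\dim_L^\varphi(E_{j+1,j}) = \prod_{j=1}^{i-1}b_j$, where $(b_1,\dots,b_{r-1})$ is the coordinate tuple for $\varphi$. Since $\Lambda$ takes successive ratios, the $i$-independent proportionality drops out and $\Lambda(a_1,\dots,a_r)_i = a_{i+1}/a_i = b_i$ for every $i$, giving $\Lambda\circ\Delta = \id$. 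The ``moreover'' statement is then immediate in coordinates: $\Lambda(a_1,\dots,a_r) = \Lambda(a_1',\dots,a_r')$ iff $a_{i+1}/a_i = a_{i+1}'/a_i'$ for all $i$, iff $a_i/a_i'$ is independent of $i$, iff the two tuples (equivalently, the corresponding traces) are proportional.

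The main obstacle is the coordinate identification $a_i(\varphi)\propto \dim_L^\varphi(E_{i,1})$; everything else is formal. Once the diagrammatic formula \eqref{eq:CValuedTraceOnE(M)_1} is unwound carefully in the explicit model $\cE(\cM)\cong\Vec[\cG_r]$, multiplicativity of the pivotal dimension under tensor product and elementary properties of groupoid homomorphisms $\cG_r\to G$ complete the argument, working uniformly in the algebraic and unitary settings.
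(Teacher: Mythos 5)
Your proposal is correct and follows essentially the same route as the paper: both arguments reduce to the identity $\dim_L^\varphi(E_{j,1})=\prod_{i=1}^{j-1}\dim_L^\varphi(E_{i+1,i})$ coming from monoidality of the pivotal trace, after which everything is a coordinate computation in $G^r$ and $G^{r-1}$. The only (cosmetic) difference is that you compute $\Lambda(\Delta(\varphi))$ directly for an arbitrary $\varphi$, whereas the paper shows $\Delta(\Lambda(\Tr^\cM))$ is proportional to $\Tr^\cM$ and then concludes via surjectivity and the proportionality statement.
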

\begin{proof}
Surjectivity of $\Lambda$ is obvious.
Notice that $a_{i+1}/a_i = b_{i+1}/b_i$ for all $1\leq i\leq r-1$
if and only if
$a_i/b_i = a_{i+1}/b_{i+1}$ for $1\leq i\leq r-1$ 
if and only if 
$(a_1,\dots, a_r)$ is proportional to $(b_1,\dots, b_r)$.

Finally, we show $\Lambda \circ \Delta = \id$.
Let $\Tr^\cM$ be the (unitary) trace on $\cM$ corresponding to $(a_1,\dots, a_r)\in G^r$ under Proposition \ref{prop:CharacterizationOfTraces}, 
and let $\varphi$ be the corresponding (unitary) pivotal structure on $\cE$ corresponding to $(a_2/a_1,\dots, a_r/a_{r-1})$.
It suffices to prove that $\tr^\varphi|_{\cN}$ is proportional to $\Tr^\cM$ under the equivalence $\cN \cong \cM$, since proportional traces give rise to (unitarily) equivalent pivotal structures under $\Lambda$.
Indeed, for a fixed $1\leq j\leq r$, by monoidality of $\varphi$, we have
$$
\tr_L^\varphi(\id_{E_{j,1}})
= 
\dim_L^\cE(E_{j,1})
=
\prod_{i=1}^{j-1}
\dim_L^\varphi(\id_{E_{i+1,i}})
=
\prod_{i=1}^{j-1} \frac{a_{i+1}}{a_{i}}
=
\frac{a_j}{a_1}
=
\frac{1}{a_1} \Tr^\cM_{x_j}(\id_{x_j})
$$
as in the proof of Proposition \ref{prop:CharacterizationOfTraces}.
Hence $\tr_L^\varphi = a_1^{-1} \Tr^\cM$ under the (dagger) equivalence $\cN\cong\cM$.
\end{proof}

In particular, if we change our choice of simple object $x_1$ this only rescales the trace on $\cM$.

\subsubsection{Pivotal module categories for pivotal categories}
\label{sec:PivotalModules}

We now expand on the previous section to the scenario where $\cM$ is equipped with the structure of a $\scrC$-module ($\Cstar$) category, where $\scrC$ is a (unitary) multitensor category.
Some other interesting results related to the non-unitary multifusion case were recently obtained in \cite[\S2.6]{1805.09395}.

\begin{defn}[\cite{MR3019263}]
If $(\scrC,\varphi)$ is a semisimple (unitary) pivotal multifusion category and $\cM$ is a semisimple left $\scrC$-module ($\Cstar$) category with a (unitary) trace $\Tr^\cM$, then $(\cM,\Tr^\cM)$ is called a \emph{pivotal} $\scrC$-module  ($\Cstar$) category if we have the following compatibility of $\Tr^\cM$ with the left partial trace in $\scrC$:
for all $c\in\scrC$, $m\in\cM$, and $f\in \cM(c\vartriangleright m \to c\vartriangleright m)$,
\begin{equation}
\label{eq:TraceCompatibility}
\Tr^\cM_{c \vartriangleright m}(f) 
= 
\Tr_m^\cM[ (\ev_c \vartriangleright \id_m) \circ f \circ ((\varphi_c)^{-1}\vartriangleright \id_m)\circ (\coev_{c^\vee}\vartriangleright \id_m)]
=
\Tr^\cM_m
\left(
\begin{tikzpicture}[baseline=-.1cm]
	\draw (0,.95) arc (0:180:.3cm) -- (-.6,-.95) arc (-180:0:.3cm) -- (0,.95);
	\draw (.55,-1.4) -- (.55,1.4);
	\roundNbox{fill=white}{(0,-.6)}{.35}{0}{0}{$\varphi_c^{-1}$}
	\roundNbox{fill=white}{(0,.6)}{.35}{0}{.6}{$f$}
	\fill[fill=white] (.75,0) rectangle (1,1);
	\node at (.75,1.2) {\scriptsize{$m$}};
	\node at (.75,0) {\scriptsize{$m$}};
	\node at (.2,0) {\scriptsize{$c$}};
	\node at (.2,1.2) {\scriptsize{$c$}};
	\node at (-.8,0) {\scriptsize{$c^\vee$}};
	\node at (.2,-1.2) {\scriptsize{$c^{\vee\vee}$}};
\end{tikzpicture}
\right).
\end{equation}
Here, we use the diagrammatic convention of \cite{MR3342166} for left $\scrC$-module categories, where the coupons in $\cM$ are drawn cut open on the right hand side to indicate the absence of any right action.
\end{defn}

\begin{remark}
In \cite[\S4.1]{MR3019263}, it is shown that when $(\scrC,\varphi)$ is pivotal fusion and $\cM$ is indecomposable, traces on $\cM$ which satisfy \eqref{eq:TraceCompatibility} are unique up to scaling.
\end{remark}

\begin{remark}
In fact, pivotal structures on the $2$-shaded multifusion category built from $\cC$, $\cM$, and its dual category correspond exactly to module traces on  $\cM$ \emph{not up to rescaling}.  That is rescaling the choice of trace changes the pivotal structure on the odd part of the $2$-shaded multifusion category, but in the even parts this rescaling cancels out.
\end{remark}

\begin{defn}
Given a tensor functor between pivotal categories $(\Psi, \mu) : (\scrC,\varphi^\scrC) \to (\cD, \varphi^\cD)$, where our convention for the tensorator natural isomorphism is $\mu_{a,b} : \Psi(a)\otimes \Psi(b) \to \Psi(a\otimes b)$,
we get a canonical anti-monoidal natural isomorphism $\delta_c : \Psi(c^\vee) \to \Psi(c)^\vee$ given by
\begin{equation}
\label{eq:CanonicalDualIso}
\delta_c := 
([\Psi(\ev_c) \circ \mu_{c^\vee, c}] \otimes \id_{\Psi(c)^\vee})
\circ
(\id_{\Phi(c^\vee)} \otimes \coev_{\Psi(c)}).
\end{equation}
We call $(\Psi, \mu)$ \emph{pivotal} if $\delta_c^\vee\circ \varphi_{\Psi(c)} = \delta_{c^\vee} \circ \Psi(\varphi_c)$ for all $c\in \scrC$.
\end{defn}

\begin{thm}
\label{thm:PivotalModulesAndPivotalFunctors}
Suppose $\cM$ is a finitely semisimple left $\scrC$-module ($\Cstar$) category, and let $(\Psi,\mu) : \scrC \to \cE(\cM)$ be the corresponding (dagger) tensor functor from Lemma \ref{lem:DaggerModules}.
The following are equivalent for a (unitary) trace $\Tr^\cM$ on $\cM$ and its induced (unitary) pivotal structure $\varphi$ on $\cE(\cM)$ from Proposition \ref{prop:TracesToPivotalStructures}.
\begin{enumerate}
\item
Compatibility condition \eqref{eq:TraceCompatibility} holds.
\item
The corresponding (dagger) tensor functor $(\Psi,\mu)$ is pivotal.
\end{enumerate}
\end{thm}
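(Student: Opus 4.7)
The plan is to rewrite both conditions in terms of the pivotal structure $\varphi^\cE$ on $\cE(\cM)$ coming from $\Tr^\cM$ via Proposition \ref{prop:TracesToPivotalStructures}, and then to check that the graphical content of \eqref{eq:TraceCompatibility} is exactly the pivotality axiom for the tensor functor $\Psi$ (modulo the canonical anti-monoidal natural isomorphism $\delta$ of \eqref{eq:CanonicalDualIso}).

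First I would set up the diagrammatic dictionary. Under the (dagger) equivalence $F_1:\cM \xrightarrow{\sim} \cE(\cM)_1$ sending $m \mapsto F_m$ (the endofunctor $x_1 \mapsto m$, $x_j \mapsto 0$ for $j \ne 1$), the $\scrC$-action on $\cM$ is transported to the composition $\Psi(c)\circ F_m = F_{c\vartriangleright m}$, and a morphism $f\in\cM(c\vartriangleright m\to c\vartriangleright m)$ corresponds to a natural transformation $\tilde f : \Psi(c)\circ F_m \Rightarrow \Psi(c)\circ F_m$. By the construction of $\Lambda$ in Proposition \ref{prop:TracesToPivotalStructures} (and equations \eqref{eq:CValuedTraceOnE(M)} and \eqref{eq:CValuedTraceOnE(M)_1}), we have
\begin{equation*}
\Tr^\cM_{c\vartriangleright m}(f) \;=\; a_1 \cdot \Tr^{\cE(\cM),x_1}_{\Psi(c)\circ F_m}(\tilde f) \;=\; a_1\cdot \bigl(\tr^{\varphi^\cE}_L(\tilde f)\bigr)_{x_1},
\end{equation*}
where $a_1 = \Tr^\cM_{x_1}(\id_{x_1})$. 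Thus the left hand side of \eqref{eq:TraceCompatibility} is computed via the left pivotal trace in $\cE(\cM)$.

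Next I would rewrite the right hand side of \eqref{eq:TraceCompatibility}. The operation $(\ev_c\vartriangleright \id_m)\circ(-)\circ(\varphi_c^{-1}\vartriangleright\id_m)\circ(\coev_{c^\vee}\vartriangleright\id_m)$, when transported through $F_1$, becomes the same partial trace operation but now performed on the $\Psi(c)$-strand using $\Psi(\ev_c)\circ\mu_{c^\vee,c}$, $\mu_{c^\vee,c^{\vee\vee}}^{-1}\circ\Psi(\coev_{c^\vee})$, and $\Psi(\varphi_c)$ rather than the evaluation, coevaluation, and pivotal structure intrinsic to $\cE(\cM)$. Using the defining formula \eqref{eq:CanonicalDualIso} for $\delta_c$, these can be rewritten as $\ev_{\Psi(c)}\circ(\id\otimes\delta_c^{-1})$ and $(\delta_{c^\vee}\otimes\id)\circ\coev_{\Psi(c)^\vee}$ (up to the obvious dualizations). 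Substituting, the right hand side becomes the left pivotal trace in $\cE(\cM)$ of $\tilde f$ with the pivotal structure $\varphi_{\Psi(c)}$ replaced by $\delta_c^{-1}\circ\delta_{c^\vee}^\vee\circ\Psi(\varphi_c)\circ(\text{inverse})$ sandwiched in the $\Psi(c)$-strand; a routine graphical manipulation (zig-zag and naturality) reduces the identity to demanding the two natural transformations $\Psi(c)\Rightarrow \Psi(c)^{\vee\vee}$ obtained by going $\Psi(c)\xrightarrow{\Psi(\varphi_c)}\Psi(c^{\vee\vee})\xrightarrow{\delta_{c^\vee}}\Psi(c^\vee)^\vee$ versus $\Psi(c)\xrightarrow{\varphi_{\Psi(c)}}\Psi(c)^{\vee\vee}\xrightarrow{(\delta_c^{-1})^\vee}\Psi(c^\vee)^\vee$ to agree on every $\Psi(c)\circ F_m$.

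Finally I would verify that requiring this equality for all $c\in\scrC$ and all $m$ is equivalent to requiring it as an equality of natural transformations $\Psi(c)\Rightarrow \Psi(c^\vee)^\vee$, which is precisely $\delta_c^\vee \circ \varphi_{\Psi(c)} = \delta_{c^\vee}\circ \Psi(\varphi_c)$, i.e.\ the pivotality of $(\Psi,\mu)$. Since the collection of objects $\{F_m\}_{m\in\cM}$ together with $\Psi(c)$ picks out every simple summand of $1_{\cE(\cM)}$ that matters for testing natural transformations of $\Psi(c)$, nondegeneracy of the pivotal trace in $\cE(\cM)$ lets one promote the pointwise equality of traces back to an equality of natural transformations, and conversely the pivotal-functor axiom clearly implies \eqref{eq:TraceCompatibility}.

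The main obstacle will be bookkeeping: carefully tracking the anti-monoidal natural isomorphism $\delta$ and its dual $\delta^\vee$ through the cut-open graphical calculus for left module categories (as in \cite{MR3342166}), and confirming that the pivotal trace on $\cE(\cM)$ computed after transporting via $\Psi$ matches the one computed intrinsically. Once the graphical translation is set up cleanly, the equivalence (1)$\Leftrightarrow$(2) is essentially a diagram chase, with nondegeneracy of pivotal traces supplying the reverse implication.
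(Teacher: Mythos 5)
Your proposal is correct and follows essentially the same route as the paper's proof: transport everything through the module equivalence $F_1:\cM\xrightarrow{\sim}\cE(\cM)_1$, use the canonical isomorphism $\delta_c$ of \eqref{eq:CanonicalDualIso} to rewrite the partial trace of \eqref{eq:TraceCompatibility} as a left pivotal trace in $\cE(\cM)$ with the insertion $\Psi(\varphi_c^{-1})\circ\delta_{c^\vee}^{-1}\circ\delta_c^\vee\circ\varphi_{\Psi(c)}$, and then invoke nondegeneracy of $\tr_L^\varphi$ (plus a right partial trace to strip off the $E_{j,1}$ factor) to get the converse direction. The only quibble is a bookkeeping slip in your intermediate comparison, where $(\delta_c^{-1})^\vee\circ\varphi_{\Psi(c)}$ does not typecheck as a map into $\Psi(c^\vee)^\vee$; your final stated condition $\delta_c^\vee\circ\varphi_{\Psi(c)}=\delta_{c^\vee}\circ\Psi(\varphi_c)$ is the correct one and matches the paper.
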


(Note that (2) implies (1) is relatively straightforward since one can use the graphical calculi for pivotal categories and module categories with trace.  But for (2) implies (1) since we do not know that the functor is pivotal we cannot use a standard graphical calculus and need to keep track of all of the structure maps.  This explains why the formulas in the following proof have a lot of explicit structure maps .)

\begin{proof}
As in the discussion right before Proposition \ref{prop:PivotalStructuresToTraces}, there is a (dagger) equivalence 
$$
F_1 : \cM 
\xrightarrow{\sim} 
\cE(\cM)_1 := \operatorname{span}\left\{E_{i,1}\middle| 1\leq i\leq r\right\}\subset \cE(\cM)
\qquad
\qquad
x_j \mapsto E_{j,1}.
$$
In fact, using the tensorator of $\Psi$, we can equip the (dagger) equivalence $F_1$ with a (unitary) \emph{modulator} 
$$
\nu_{a,b, m} : 
\Psi(a) \otimes F_1(b\vartriangleright x_j) 
=
\Psi(a) \otimes (\Psi(b)\otimes E_{j,1})
=
(\Psi(a) \otimes \Psi(b))\otimes E_{j,1}
\cong
\Psi(a\otimes b)\otimes E_{j,1}
\cong
F_1(a\otimes b \vartriangleright x_j),
$$
extending it to a (dagger) equivalence of $\scrC$-module ($\Cstar$) categories.
As in the proof of Proposition \ref{prop:TracesToPivotalStructures}, there is a non-zero scalar $\alpha \in G$ such that for every $f\in \cM(m \to m)$, $\Tr^\cM(f) = \alpha \tr^\varphi_L(F(f))$.
Thus for $1\leq j\leq r$ and 
$f\in \cM(c\vartriangleright x_j \to c\vartriangleright x_j) \cong \cE(\cM)(\Psi(c)\otimes E_{j,1} \to \Psi(c)\otimes E_{j,1})$, 
we always have
\begin{align}
\label{eq:TraceCompatibilityInE}
\alpha^{-1}\Tr^\cM_{x_j}
\left(
\begin{tikzpicture}[baseline=-.1cm]
	\draw (0,.95) arc (0:180:.3cm) -- (-.6,-.95) arc (-180:0:.3cm) -- (0,.95);
	\draw (.55,-1.4) -- (.55,1.4);
	\roundNbox{fill=white}{(0,-.6)}{.35}{0}{0}{$\varphi_c^{-1}$}
	\roundNbox{fill=white}{(0,.6)}{.35}{0}{.6}{$f$}
	\fill[fill=white] (.75,0) rectangle (1,1);
	\node at (.75,1.2) {\scriptsize{$x_j$}};
	\node at (.75,0) {\scriptsize{$x_j$}};
	\node at (.2,0) {\scriptsize{$c$}};
	\node at (.2,1.2) {\scriptsize{$c$}};
	\node at (.2,-1.2) {\scriptsize{$c^{\vee\vee}$}};
\end{tikzpicture}
\right)
&=
\tr^\varphi_L
\left(
\begin{tikzpicture}[baseline=-.6cm]
	\draw (.6,-3.5) -- (.6,2.5);
	\draw (-.4,-2) -- (-.4,1);
	\draw (-1.2,-2) -- (-1.2,1);
	\draw[double] (-.8,1) -- (-.8,2);
	\draw[double] (-.8,-2) -- (-.8,-3);
	\roundNbox{fill=white}{(-.8,2)}{.3}{.3}{.3}{$\Psi(\ev_{c})$}
	\roundNbox{fill=white}{(-.8,1)}{.3}{.5}{.5}{$\mu$}
	\roundNbox{fill=white}{(0,0)}{.3}{.4}{.6}{$F_1(f)$}
	\roundNbox{fill=white}{(-.4,-1)}{.35}{.25}{.25}{$\Psi(\varphi_{c}^{-1})$}
	\roundNbox{fill=white}{(-.8,-2)}{.3}{.5}{.5}{$\mu^{-1}$}
	\roundNbox{fill=white}{(-.8,-3)}{.3}{.55}{.55}{$\Psi(\coev_{c^\vee})$}
	\node at (.9,.5) {\scriptsize{$E_{j,1}$}};
	\node at (.9,-.5) {\scriptsize{$E_{j,1}$}};
	\node at (-.1,.5) {\scriptsize{$\Psi(c)$}};
	\node at (-.1,-.5) {\scriptsize{$\Psi(c)$}};
	\node at (-1.6,-.5) {\scriptsize{$\Psi(c^\vee)$}};
	\node at (.1,-1.5) {\scriptsize{$\Psi(c^{\vee\vee})$}};
\end{tikzpicture}
\right)
=
\tr^\varphi_L
\left(
\begin{tikzpicture}[baseline=-.1cm]
	\draw (.6,-2) -- (.6,2);
	\draw (-1.2,-1.3) -- (-1.2,1.3) arc (180:0:.4cm) -- (-.4,-1.3) arc (0:-180:.4cm);
	\roundNbox{fill=white}{(-1.2,1)}{.3}{0}{0}{$\delta_c$}
	\roundNbox{fill=white}{(0,1)}{.3}{.4}{.6}{$F_1(f)$}
	\roundNbox{fill=white}{(-.4,0)}{.35}{.25}{.25}{$\Psi(\varphi_{c}^{-1})$}
	\roundNbox{fill=white}{(-.4,-1)}{.3}{.1}{.1}{$\delta_{c^\vee}^{-1}$}
	\node at (.9,1.5) {\scriptsize{$E_{j,1}$}};
	\node at (.9,.5) {\scriptsize{$E_{j,1}$}};
	\node at (-.1,1.5) {\scriptsize{$\Psi(c)$}};
	\node at (-.1,.5) {\scriptsize{$\Psi(c)$}};
	\node at (.1,-.5) {\scriptsize{$\Psi(c^{\vee\vee})$}};
	\node at (.1,-1.5) {\scriptsize{$\Psi(c^\vee)^{\vee}$}};
\end{tikzpicture}
\right)
=
\tr^\varphi_L
\left(
\begin{tikzpicture}[baseline=-.6cm]
	\draw (.6,-3.7) -- (.6,1.7);
	\draw (-.4,-3.7) -- (-.4,1.7);
	\roundNbox{fill=white}{(-.4,-2)}{.3}{0}{0}{$\delta_c^\vee$}
	\roundNbox{fill=white}{(0,1)}{.3}{.4}{.6}{$F_1(f)$}
	\roundNbox{fill=white}{(-.4,0)}{.35}{.25}{.25}{$\Psi(\varphi_{c}^{-1})$}
	\roundNbox{fill=white}{(-.4,-1)}{.3}{.1}{.1}{$\delta_{c^\vee}^{-1}$}
	\node at (.9,1.5) {\scriptsize{$E_{j,1}$}};
	\node at (.9,.5) {\scriptsize{$E_{j,1}$}};
	\node at (-.1,1.5) {\scriptsize{$\Psi(c)$}};
	\node at (-.1,.5) {\scriptsize{$\Psi(c)$}};
	\node at (.1,-.5) {\scriptsize{$\Psi(c^{\vee\vee})$}};
	\node at (.1,-1.5) {\scriptsize{$\Psi(c^\vee)^{\vee}$}};
	\node at (.1,-2.5) {\scriptsize{$\Psi(c)^{\vee\vee}$}};
	\roundNbox{fill=white}{(-.4,-3)}{.3}{.25}{.25}{$\varphi_{\Psi(c)}$}
	\node at (-0.1,-3.5) {\scriptsize{$\Psi(c)$}};
\end{tikzpicture}
\right)
\end{align}
where $\delta_c \in \cE(\cM)(\Psi(c^\vee) \to \Psi(c)^\vee)$ is the canonical isomorphism from \eqref{eq:CanonicalDualIso}.

\item[\underline{$(1)\Rightarrow (2)$:}]
Suppose \eqref{eq:TraceCompatibility} holds.
Then for all $1\leq j\leq r$ and 
$f\in \cM(c\vartriangleright x_j\to c\vartriangleright x_j)\cong \cE( \Psi(c)\otimes E_{j,1} \to \Psi(c)\otimes E_{j,1})$, 
$\tr_L^\varphi(F_1(f)) = \alpha^{-1}\Tr^\cM_{c\vartriangleright x_j}(f)$, which is equal to the right hand side of \eqref{eq:TraceCompatibilityInE}.
Hence
$$
\tr^\varphi_L( f\circ [(\id_{\Psi(c)} - \Psi(\varphi_c^{-1}) \circ \delta_{c^\vee}^{-1} \circ \delta_c^\vee\circ \varphi_{\Psi(c)})\otimes \id_{E_{j,1}}]) = 0.
$$
Since $\tr_L^\varphi$ is nondegenerate (e.g., see \cite[Lem.~2.6]{1808.00323}), 
we must have $(\id_{\Psi(c)} - \Psi(\varphi_c^{-1}) \circ \delta_{c^\vee}^{-1} \circ \delta_c^\vee\circ \varphi_{\Psi(c)})\otimes \id_{E_{j,1}} = 0$ for all $1\leq j\leq r$.
Now taking right partial traces in $\cE(\cM)$, we must have $\id_{\Psi(c)} = \Psi(\varphi_c^{-1}) \circ \delta_{c^\vee}^{-1} \circ \delta_c^\vee\circ \varphi_{\Psi(c)}$, so $(\Psi,\mu)$ is pivotal.

\item[\underline{$(2)\Rightarrow (1)$:}] 
Suppose that $(\Psi,\mu)$ is pivotal, so that $\delta_c^\vee\circ \varphi_{\Psi(c)} = \delta_{c^\vee} \circ \Psi(\varphi_c)$.
Then for any $f\in \cM(c\vartriangleright m \to c\vartriangleright m)$, the right hand side of \eqref{eq:TraceCompatibilityInE} is equal to $\tr_L^\varphi(F_1(f)) = \alpha^{-1}\Tr^\cM_{c\vartriangleright x_j}(f)$, and thus \eqref{eq:TraceCompatibility} holds.
\end{proof}

We have an analogous omnibus theorem in the pivotal and unitary pivotal settings.

\begin{thm}
\label{thm:Pivotal}
Suppose that $\cC$ is a (unitary) pivotal fusion category then
\begin{itemize}
\item 
Module category structures with a (unitary) trace on a ($\Cstar$) category $\cM$ correspond exactly to (unitary) pivotal tensor functors $\scrC \rightarrow \cE(\cM)$.
\item 
A (unitary) pivotal fusion category $\cD$ is (unitary) pivotal Morita equivalent to $\scrC$ if and only if there is an indecomposable semisimple pivotal ($\Cstar$) module category $(\cM,\Tr^\cM)$ such that $\cD$ is (unitary) pivotal tensor equivalent to $\End_\scrC(\cM)$, the $\scrC$-linear (dagger) endofunctors of $\cM$.  
Furthermore, the pivotal left $\scrC$-module ($\Cstar$) categories $(\cM,\Tr^\cM)$ which realize a (unitary) pivotal Morita equivalence between $\scrC$ and $\cD$ are a torsor for the group of (unitary) pivotal outer automorphisms $\mathrm{Out}(\cD)$.
\item 
Tuples $(\cM, \Tr^\cM, m)$ 
where $(\cM, \Tr^\cM)$ is an indecomposable semisimple pivotal $\scrC$-module ($\Cstar$) category and $m\in \cM$ is a chosen simple object correspond exactly to connected normalized Frobenius algebras (irreducible Q-systems \cite{MR3308880}) $A$ in $\scrC$.  
The dual category corresponding to $A$ is the category of $A$-$A$ bimodules in $\scrC$ \cite{MR1966524}.
\end{itemize}
\end{thm}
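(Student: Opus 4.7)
\smallskip
\noindent\textbf{Proof plan.} The strategy is to bootstrap each bullet from its non-pivotal counterpart in Theorem \ref{thm:AlgOmnibus} by layering on the equivalence between traces on $\cM$ and pivotal structures on $\cE(\cM)$ established in Proposition \ref{prop:TracesToPivotalStructures} and Theorem \ref{thm:PivotalModulesAndPivotalFunctors}. For the first bullet, we already know from Theorem \ref{thm:AlgOmnibus} (and Lemma \ref{lem:DaggerModules} in the $\Cstar$ setting) that module category structures on $\cM$ correspond to tensor functors $(\Psi,\mu)\colon \scrC\to\cE(\cM)$. Combining with Theorem \ref{thm:PivotalModulesAndPivotalFunctors}, supplying $\cM$ with a (unitary) trace satisfying the compatibility \eqref{eq:TraceCompatibility} is equivalent to promoting $(\Psi,\mu)$ to a pivotal tensor functor. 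Thus (1) is immediate after checking the equivalence of categories is preserved, which amounts to checking that the bijection between traces and pivotal structures from Proposition \ref{prop:TracesToPivotalStructures} is natural with respect to (dagger) equivalences of module categories.

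For the second bullet, I will combine bullet (1) with the non-pivotal Morita dictionary from Theorem \ref{thm:AlgOmnibus}. Recall from that theorem that a $\scrC$--$\cD$ bimodule structure on $\cM$ extending the $\scrC$-action is the same as a tensor functor $\cF\colon \cD\to \End_\scrC(\cM)$, and this is a Morita equivalence iff $\cF$ is an equivalence. A pivotal Morita equivalence is by definition a bimodule with a trace compatible with both actions, i.e., by bullet (1) applied on both sides, this matches pivotal functors $\cF$ (where $\End_\scrC(\cM)$ inherits its pivotal structure from $\Tr^\cM$ via the embedding $\End_\scrC(\cM)\hookrightarrow \cE(\cM)$, using the fact that $\scrC$-linearity together with the pivotal structure from $\Tr^\cM$ forces any such $\cF$ to be pivotal precisely when both actions satisfy \eqref{eq:TraceCompatibility}). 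The torsor statement is then the pivotal refinement of the non-pivotal torsor statement: two pivotal bimodule structures $({}_\scrC\cM_\cD, \Tr^\cM)$ and $({}_\scrC\cM_\cD, \Tr^{\cM\prime})$ with the same underlying $\scrC$-module differ by composition with an autoequivalence of $\cD$ that is inner in the non-pivotal sense, and preservation of the compatible trace forces this autoequivalence to respect the pivotal structure, giving an element of the pivotal outer automorphism group of $\cD$.

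For the third bullet, the starting point is Ostrik's correspondence from Theorem \ref{thm:AlgOmnibus}, which sends $(\cM,m)$ with $m\in\cM$ simple to the connected semisimple algebra $A=\underline{\End}_\scrC(m)$. The additional datum of a (unitary) trace on $\cM$ satisfying \eqref{eq:TraceCompatibility} must be shown to be equivalent to a (normalized) Frobenius form (resp.~Q-system structure) on $A$. In one direction, the trace $\Tr^\cM_m\colon \cM(m\to m)\to\bbC$ restricts under $\cM(m\to m)\cong \scrC(1_\scrC\to A)$ (or its dagger analogue for Q-systems) to a counit for $A$; compatibility \eqref{eq:TraceCompatibility} applied to $f\in \cM(c\vartriangleright m\to c\vartriangleright m)\cong \scrC(c\to A)$ is exactly the Frobenius/sphericality condition that makes this counit into a Frobenius form (resp.~that makes the canonical coevaluation and multiplication on $A$ satisfy the Q-system axioms; cf.~\cite{MR3019263,MR3308880}). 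Conversely, a Frobenius form on $A$ determines $\Tr^\cM_m$ and extends by semisimplicity (using Proposition \ref{prop:CharacterizationOfTraces}) to a trace on all of $\cM$ such that \eqref{eq:TraceCompatibility} holds. The bimodule identification $\End_\scrC(\cM)\simeq\Bim_\scrC(A,A)$ is then inherited from the non-pivotal setting.

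The main obstacle is the second half of bullet (3): showing that the compatibility \eqref{eq:TraceCompatibility} for the trace on $\cM$ corresponds exactly to the Frobenius/Q-system axioms on the internal endomorphism algebra, and doing so cleanly enough to be read off on both the algebraic and unitary sides. In the unitary setting one must verify that the positive definiteness of the inner product $\langle f,g\rangle=\Tr^\cM_m(g^\dag\circ f)$ is equivalent to the standardness of the Q-system $A$ (in the sense of \cite{MR3308880}), which is the crucial point making the equivalence match up on the level of positivity, and where the explicit normalization convention for $\Tr^\cM$ becomes important.
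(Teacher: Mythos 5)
Your proposal is correct and matches the paper's intended argument: the paper in fact states Theorem \ref{thm:Pivotal} without a written proof, presenting it as the pivotal/unitary analogue of Theorem \ref{thm:AlgOmnibus} obtained by layering Proposition \ref{prop:TracesToPivotalStructures} and Theorem \ref{thm:PivotalModulesAndPivotalFunctors} onto the non-pivotal omnibus theorem and deferring the trace--Frobenius/Q-system correspondence in the third bullet to \cite{MR3019263} and \cite{MR3308880}, which is exactly the assembly you describe. The only point worth flagging is the proportionality caveat from Proposition \ref{prop:TracesToPivotalStructures} (proportional traces induce the same pivotal structure on $\cE(\cM)$), which you should track in the first bullet just as the paper does in Theorem \ref{thm:ModuleEmbedding} with its ``trace defined up to scalar.''
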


\begin{remark}
This theorem is analogous to the purely algebraic Theorem \ref{thm:AlgOmnibus}.  
We warn the reader that if $\scrC$ is a (unitary) pivotal fusion category, the answers to our main problems might in principle be \emph{different} in the algebraic and pivotal (and unitary pivotal) settings.\footnote{
As discussed is \S\ref{sec:UnitaryPivotal}, it makes less sense to ask our main problems in the purely unitary non-pivotal setting.
}
For example, there might be several pivotal $\scrC$-module ($\Cstar$) categories which are equivalent just as algebraic $\scrC$-module categories, or there may be an algebraic module category which cannot be endowed with a (unitary) compatible trace (or even a dagger structure!).
These phenomena do not happen for Extended Haagerup, but it is interesting to ask whether they ever occur.
\end{remark}

\subsubsection{The embedding theorem for pivotal module categories}
\label{sec:ModuleEmbedding}

In this section, we finally prove the embedding theorem for pivotal module categories.
We begin with a discussion of the planar algebra of a bipartite graph \cite{MR1865703}.
Our definition will simply use a Frobenius-Perron vertex weighting on our finite graph to extend
the action of 2-shaded monoidal tangles for a bipartite graph monoidal algebra to an action of shaded planar tangles.
We then show how to recover the usual definition of the graph planar algebra from \cite{MR1865703}.

\begin{defn}
\label{defn:GPA}
Let $\Gamma = (V_+, V_-, E)$ be a finite connected bipartite graph with even/$+$ vertices $V_+$, odd/$-$ vertices $V_-$, and edges $E$.
We consider an edge $\varepsilon \in E$ as directed from $+$ to $-$ with source $s(\varepsilon) \in V_+$ and target $t(\varepsilon) \in V_-$.
We write $\varepsilon^*$ for the same edge with the opposite direction.
Let $\lambda$ denote any Frobenius-Perron eigenvector of the adjacency matrix of $\Gamma$.\footnote{
The definition of the graph planar algebra $\cG_\bullet$ does not depend on the normalization of the Frobenius-Perron eigenvector $\lambda$.
In Remark \ref{rem:GPA-NotSpherical} below, we will define a spherical faithful state on $\cG_\bullet$ using a particular normalization of $\lambda$.
}

The 2-shaded graph monoidal algebra $\cG_{\bullet\to \bullet}=\cG\cM\cA(\Gamma)_{\bullet\to \bullet}$ is defined analogously to the unshaded version in Definition \ref{defn:GraphMonoidalAlgebra}.
The $\bbC$-vector spaces $\cG\cM\cA(\Gamma)_{(m\to n),\pm}$ are spanned by pairs of paths $(p,q)$ of length $m,n$ respectively which start at the same $\pm$ vertex and end at the same vertex.
Note that $\cG\cM\cA(\Gamma)_{(m\to n),\pm}$ is non-zero only when $m \equiv n \,\operatorname{mod}\, 2$.
The action of shaded monoidal tangles is given by the state-sum formula \eqref{eq:StateSumForGMA}.
Note that $\cG_{\bullet \to \bullet}$ is unitary with $\dag$-structure given by the conjugate-linear extension of $(p,q)^\dag = (q, p)$.

Now given a shaded \emph{planar} tangle $T$ of type $((t_0,\pm_0); (t_1,\pm_1),\dots, (t_k,\pm_k))$ whose input and output disks are rectangles with the star on the left, where the $i$-th disk has $n_i$ strings emanating from the top and $m_i$ from the bottom with $m_i+n_i = 2t_i$, we describe its action on tuples of basis elements $(p_i,q_i)\in \cG_{m_i\to n_i,\pm}$
by the weighted state-sum formula
\begin{equation}
\label{eq:StateSumFormulaForGPA}
T((p_1,q_1), \ldots, (p_k,q_k))
:=
\sum_{\text{states } \sigma \text { on } T } 
c(T; \sigma) 
\left(
\prod_{1\leq i\leq k}
\delta_{\sigma|_i, (p_i,q_i)}
\right)
\sigma|_0.
\end{equation}
A \emph{state} $\sigma$ on the tangle $T$ is an assignment of even vertices to unshaded regions, odd vertices to shaded regions, and edges to strings such that if a string labelled by $\epsilon$ separates two regions, then $s(\epsilon)$ is assigned to that unshaded region, and $t(\epsilon)$ is assigned to that shaded region.
Now $\sigma|_i$ denotes the pair of paths in $\cG\cM\cA(\Gamma)_{m_i \to n_i}$ obtained from reading the bottom and top boundaries of the $i$-th input disk from left to right.
In other words, we sum only over states that are `compatible' with the loops we start with.
To define the constant $c(T; \sigma)$, we first isotope $T$ so that strings are sufficiently smooth.
Now consider the set $E(T)$ of all local maxima and minima of strings of $T$.  
Then
$$
c(T; \sigma) = \prod_{e \in E(T)} \frac{\lambda(\sigma(e_{\text{convex}}))}{\lambda(\sigma(e_{\text{concave}}))},
$$
where $e_{\text{convex}}$ is the convex region of the extremum $e$, and $e_{\text{concave}}$ is the concave region of $e$.

This definition appears to be highly dependent on the choice of numbers of strings $m_i,n_i$ emanating from the bottom and top of each input and output disk.
However, every space $\cG_{m\to n,\pm}$ is canonically isomorphic to $\cG_{m+n,\pm}:=\cG_{m+n\to 0,\pm}$ by 
\begin{equation}
\label{eq:TurnDownAllTopStrings}
(p,q)\mapsto 
\left(
\frac{\lambda(t(p))}{\lambda(s(p))} 
\right)^{1/2}
pq^*.
\end{equation}
Here, instead of writing the pair of paths $(pq^*,\emptyset)$ where
the second has length zero, we only write the first path $pq^*$, which
is actually a loop of length $2t=m+n$.
Indeed, by post-composing with instances of the above isomorphism and precomposing with instances of its inverse as appropriate, we see that the action of planar tangles does not depend on the decomposition $2t_i = m_i + n_i$.
As in \cite[Th.~3.1]{MR1865703}, changing a tangle by a Morse cancellation or rotating a single input out output disk by $2\pi$ does not change the action of the tangle.
Hence the isomorphisms \eqref{eq:TurnDownAllTopStrings} endow the spaces $\cG_{n,\pm}$ with the structure of a shaded planar algebra called the \emph{graph planar algebra}, denoted $\cG_\bullet$.
We recover the definition from \cite{MR1865703} by always choosing $m_i=n_i = t_i$ for every input and output disk of $T$.

The $\dag$-structure of $\cG_\bullet$ is inherited from the graph monoidal algebra $\cG_{\bullet \to \bullet}$.
Since $\dag: \cG_{m\to n , \pm} \to \cG_{n\to m, \pm}$, the identification of both spaces with $\cG_{m+n,\pm}$ means that $(pq^*)^\dag = qp^*$, i.e., $\dag$ is the conjugate-linear extension of reversing a loop. 
It is straightforward to see that the $\dag$ structure on $\cG_\bullet$ is compatible with the reflection of planar tangles.
As the underlying monoidal algebra is unitary, so is the graph planar algebra.
\end{defn}

\begin{remark}
\label{rem:GPA-NotSpherical}
The graph planar algebra, and hence its projection category, is in general not spherical.
For example, taking any edge $\varepsilon$ which connects two vertices of distinct weights, the projection $\varepsilon \varepsilon^* \in \cG_{1,+}$ has distinct left and right traces.
However, if we normalize the Frobenius-Perron eigenvector $\lambda$ so that $\sum_{u\in V_+}\lambda_u^2 = 1 = \sum_{v\in V_-}\lambda_v^2$, then $\psi(p_v) := \lambda_v^2$ defines a spherical faithful state on $\cG_\bullet$ \cite[Prop.~3.4]{MR1865703}.
\end{remark}

\begin{nota}
To state the main theorems of this section, we fix the following notation.
\begin{itemize}
\item
$\Gamma = (V_+, V_-, E)$ is a connected bipartite graph
\item
$\lambda$ is any Frobenius-Perron eigenvector of $\Gamma$.
\item
$\cG_\bullet$ is the bipartite graph planar algebra of $\Gamma$
\item
$\cM = \Hilb^{V_+} \oplus \Hilb^{V_-}$ is one copy of $\Hilb$ for each vertex of $\Gamma$. 
\item
$\Tr^\cM$ is the unitary trace on $\cM$ corresponding to $\lambda \in G^{V_+\amalg V_-}$ under Proposition \ref{prop:CharacterizationOfTraces}.
\item
$\End^\dag(\cM)$ is the unitary multifusion category of dagger endofunctors of $\cM$.
\item
$\cU$ is the universal grading groupoid of $\End^\dag(\cM)$, which is the groupoid with $n_+ + n_-$ objects, and a unique isomorphism between any two objects.
\item
$F = \bigoplus_{\varepsilon \in E} E_{t(\varepsilon), s(\varepsilon)}\in \End^\dag(\cM)$.
\item
$\vee_{\text{standard}}$ is the standard unitary dual functor with respect to $F$ from \cite{1808.00323,1805.09234}, which is induced by the standard groupoid homomorphism defined from $\lambda$ as in \eqref{eq:StandardAndLopsided1808.00323}.
\item $\cH_\bullet$ is the planar algebra corresponding to $(\End^\dag(\cM), \vee_{\text{standard}}, F)$ under Corollary \ref{cor:PAEquivalence}.
\end{itemize}
\end{nota}

\begin{thm}
\label{thm:StateSumForPlanarAlgebra}
With the above notation, the $\dag$-isomorphism of the underlying monoidal algebras $\cH_\bullet \cong \cG_\bullet$ from Theorem \ref{thm:StateSumForMonoidalAlgebra} gives a $\dag$-isomorphism of unitary planar algebras.
\end{thm}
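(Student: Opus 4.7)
The plan is to invoke the equivalence of categories from Corollary \ref{cor:PAEquivalence}: a semisimple shaded $\Cstar$ planar algebra is determined up to isomorphism by the triple $(\scrC, \vee, X)$ of unitary multitensor category, unitary dual functor, and distinguished Cauchy tensor generator extracted from its projection category. By the unitary analog of Theorem \ref{thm:StateSumForMonoidalAlgebra}, the $\dag$-isomorphism of the underlying monoidal algebras of $\cG_\bullet$ and $\cH_\bullet$ already yields a $\dag$-equivalence of the underlying unitary multitensor categories sending the strand to $F$. So the only remaining content is to verify that, transported across this equivalence, the unitary dual functor encoded by the cups and caps of $\cG_\bullet$ coincides with the standard unitary dual functor $\vee_{\text{standard}}$ used to build $\cH_\bullet$.

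By Theorem \ref{thm:ClassificationOfUnitaryDualFunctors}, a unitary dual functor on $\End^\dag(\cM)$ (up to unitary equivalence) is classified by a groupoid homomorphism $\pi \in \Hom(\cG_{|V_+|+|V_-|} \to \bbR_{>0})$, and by Remark \ref{rem:DetermineGroupoidHom} such a $\pi$ is pinned down by its values on generating morphisms $E_{i+1,i}$, equivalently by the ratios $\dim_L^\varphi/\dim_R^\varphi$ on a family of simples spanning the groupoid. For each edge $\varepsilon\in E$ with $s(\varepsilon)=u\in V_+$ and $t(\varepsilon)=v\in V_-$, the rank-one projection $\varepsilon\varepsilon^*\in \cG_{1,+}$ corresponds under the monoidal algebra isomorphism to a minimal projection onto the summand $E_{v,u}$ of $F$; similarly each edge gives access to every generator of $\cU$ because $\Gamma$ is connected.

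Next I would compute the left and right traces of $\varepsilon\varepsilon^*$ in $\cG_\bullet$ directly from the state-sum formula \eqref{eq:StateSumFormulaForGPA}: capping off on the left introduces one maximum and one minimum whose convex/concave region labels contribute a net weight $\lambda(v)/\lambda(u)$, while capping off on the right contributes $\lambda(u)/\lambda(v)$. Taking the ratio gives $(\lambda(u)/\lambda(v))^2$, which is precisely the value of $\pi^{\text{standard}}$ on the corresponding morphism in $\cU$ as defined in \eqref{eq:StandardAndLopsided1808.00323}. Because these data agree on generators of $\cU$, the two groupoid homomorphisms coincide, so the induced unitary dual functors are unitarily monoidally naturally isomorphic. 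Combining this with the monoidal algebra isomorphism and invoking Corollary \ref{cor:PAEquivalence} produces the desired $\dag$-isomorphism $\cH_\bullet \cong \cG_\bullet$ of unitary planar algebras.

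The main obstacle is bookkeeping: tracking the shading conventions, the orientations of edges $\varepsilon$ vs.\ $\varepsilon^*$, the $\star$-placement in the transition \eqref{eq:MAfromPA} from planar to monoidal tangles, and the $\lambda$-normalizations in the canonical identifications \eqref{eq:TurnDownAllTopStrings} of $\cG_{m\to n,\pm}$ with $\cG_{m+n,\pm}$. Any sign or Perron-eigenvector power error in translating between a monoidal tangle (with its leftward $\star$) and the corresponding planar tangle, or between a morphism in $\End^\dag(\cM)$ and a pair of paths in $\cG_\bullet$, would produce a wrong exponent in the ratio $(\lambda(u)/\lambda(v))^2$ and break the match with $\pi^{\text{standard}}$. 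Once these conventions are fixed consistently, the computation is a direct matter of counting extrema and their weights in two small diagrams.
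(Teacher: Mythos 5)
Your proposal is correct in substance but takes a different (and slightly less direct) route than the paper. The paper's proof simply observes that, since cup is the $\dag$ of cap in a unitary planar algebra, it suffices to check that the two shadings of cap agree under the monoidal-algebra isomorphism; it then writes out $\ev^{\text{standard}}_{E_{u,v}} = (\lambda(u)/\lambda(v))^{1/2}$ and $\coev^{\text{standard}}_{E_{u,v}} = (\lambda(v)/\lambda(u))^{1/2}$ and matches the resulting elements $\sum_\varepsilon (\lambda(t(\varepsilon))/\lambda(s(\varepsilon)))\,\varepsilon\varepsilon^*$ and $\sum_\varepsilon (\lambda(s(\varepsilon))/\lambda(t(\varepsilon)))\,\varepsilon^*\varepsilon$ against the state-sum formula \eqref{eq:StateSumFormulaForGPA}. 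You instead pass through Theorem \ref{thm:ClassificationOfUnitaryDualFunctors}: you extract the groupoid homomorphism classifying the dual functor encoded by $\cG_\bullet$ from the ratio of left to right traces of $\varepsilon\varepsilon^*$ and match it with $\pi^{\text{standard}}$ from \eqref{eq:StandardAndLopsided1808.00323}. Both computations land on $(\lambda(u)/\lambda(v))^2$, so your invariant check is consistent with the paper's element-level check.

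The one caveat worth flagging: your route establishes that the two unitary dual functors are unitarily monoidally naturally isomorphic, and hence (via Corollary \ref{cor:PAEquivalence}) that \emph{some} $\dag$-isomorphism $\cH_\bullet \cong \cG_\bullet$ of planar algebras exists. The theorem as stated asserts more, namely that the \emph{specific} map from Theorem \ref{thm:StateSumForMonoidalAlgebra} is a planar algebra isomorphism. Agreement of the dimension-ratio invariant pins down the dual functor only up to unitary equivalence, so in principle the given identification could fail to intertwine the actual cap elements (e.g.\ a phase rescaling $\ev \mapsto e^{i\theta}\ev$, $\coev\mapsto e^{-i\theta}\coev$ preserves all traces). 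In the present situation all coefficients are positive reals and the caps do literally agree, but to close this last step you should verify the equality of the cap elements themselves rather than only their traces --- which is a one-line computation and is exactly what the paper does. For the downstream use of the theorem (identifying $\cG_\bullet$ with the triple $(\End^\dag(\cM),\vee_{\text{standard}},F)$), your weaker conclusion already suffices.
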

\begin{proof}
As the isomorphism from Theorem \ref{thm:StateSumForMonoidalAlgebra} identifies the underlying unitary monoidal algebras, 
we need only check that the actions of cup and cap agree.
Since cup is always the $\dag$ of cap in a unitary planar algebra as discussed in Remark \ref{rem:OtherCharacterizationOfUnitaryPivotal}, we need only check each shading of cap agrees.

First, the standard evaluation and coevaluation with respect to $F$ are given by
$$
\ev^{\text{standard}}_{E_{u,v}} 
:= 
\left(\frac{\lambda(u)}{\lambda(v)}\right)^{1/2}
\qquad
\qquad
\coev^{\text{standard}}_{E_{u,v}} 
:= 
\left(\frac{\lambda(v)}{\lambda(u)}\right)^{1/2}.
$$
Indeed, it is straightforward to check that the ratio  $\pi^{\text{standard}}(E_{u,v})$ of the left to right standard pivotal dimension of $E_{u,v}$ is given exactly by \eqref{eq:StandardAndLopsided1808.00323}.
Thus we see from the graphical calculus for $\End^\dag(\cM)$ that under the isomorphism of underlying monoidal algebras from Theorem \ref{thm:StateSumForMonoidalAlgebra} where we suppress the second empty loop, the formula for each shading of cap is given by
$$
\begin{tikzpicture}[baseline = 0cm]
	\filldraw[shaded] (0,0) arc (180:0:.2cm);
\end{tikzpicture}
=
\sum_{\varepsilon\in E}
\left(\frac{\lambda(t(\varepsilon))}{\lambda(s(\varepsilon))}\right)
\varepsilon\varepsilon^*
\qquad\qquad
\begin{tikzpicture}[baseline = 0cm]
	\fill[shaded] (-.1,0) -- (-.1,.3) -- (.5,.3) -- (.5,0) -- (.4,0) arc (0:180:.2cm);
	\draw[] (0,0) arc (180:0:.2cm);
\end{tikzpicture}
=
\sum_{\varepsilon\in E}
\left(\frac{\lambda(s(\varepsilon))}{\lambda(t(\varepsilon))}\right)
\varepsilon^*\varepsilon.
$$
These are exactly the same formulas for each shading of cap given by the state-sum formula \eqref{eq:StateSumFormulaForGPA}.
\end{proof}

The following corollary follows immediately follows from \ref{cor:PAEquivalence}.

\begin{cor}
\label{cor:GPAandEnd(M)}
Let $\Gamma = (V_+, V_-, E)$ be a finite connected bipartite graph, and let $\cG_\bullet$ be its graph planar algebra.
Let $\cM=\Hilb^{n_+}\oplus \Hilb^{n_-}$ where $n_\pm = |V_\pm|$.
\begin{itemize}
\item The idempotent category of $\cG_\bullet$ is equivalent to $\End(\cM)$, as multifusion categories.
\item The projection $\Cstar$ category of $\cG_\bullet$ is dagger equivalent to $\End^\dag(\cM)$, as unitary multifusion categories.
\end{itemize}
\end{cor}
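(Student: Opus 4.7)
The plan is to combine Theorem \ref{thm:StateSumForPlanarAlgebra} with the planar algebra/pivotal category dictionary provided by Corollary \ref{cor:PAEquivalence}, together with its non-unitary counterpart Theorem \ref{thm:AlgebraicPAEquivalence}. The corollary is essentially a bookkeeping statement once one has the main theorem of the previous subsection.

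First, I would note that by Theorem \ref{thm:StateSumForPlanarAlgebra}, the graph planar algebra $\cG_\bullet$ is $\dag$-isomorphic as a unitary planar algebra to $\cH_\bullet$, the shaded planar algebra corresponding under Corollary \ref{cor:PAEquivalence} to the triple $(\End^\dag(\cM), \vee_{\text{standard}}, F)$ with $F = \bigoplus_{\varepsilon \in E} E_{t(\varepsilon), s(\varepsilon)}$. Before invoking the equivalence in reverse, one should check that this triple lies in the target class of Corollary \ref{cor:PAEquivalence}: with $1_\pm := \bigoplus_{v \in V_\pm} E_{vv}$ one has an orthogonal decomposition $1_{\End^\dag(\cM)} = 1_+ \oplus 1_-$ satisfying the required relation with $F$, and connectedness of $\Gamma$ guarantees that every simple $E_{uv} \in \End^\dag(\cM)$ appears as an orthogonal summand of some tensor word in $F$ and $F^\vee$ (formed from a path in $\Gamma$ connecting $u$ to $v$), so that $F$ Cauchy tensor generates $\End^\dag(\cM)$.

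Second, the essential inverse of the equivalence of Corollary \ref{cor:PAEquivalence} recovers the underlying unitary multitensor category of a triple as the projection $\Cstar$ category of the associated planar algebra. Applied to $\cH_\bullet$, this produces a dagger equivalence with $\End^\dag(\cM)$. Composing with the $\dag$-isomorphism $\cG_\bullet \cong \cH_\bullet$ from Theorem \ref{thm:StateSumForPlanarAlgebra} yields a dagger equivalence of the projection $\Cstar$ category of $\cG_\bullet$ with $\End^\dag(\cM)$, giving the second bullet point. For the first bullet point, I would forget the $\Cstar$-structure on $\cG_\bullet$ and apply Theorem \ref{thm:AlgebraicPAEquivalence} in place of Corollary \ref{cor:PAEquivalence}; its essential inverse recovers the underlying multifusion category of a triple as the idempotent category of its planar algebra, and the underlying multifusion category of $\End^\dag(\cM)$ is precisely $\End(\cM)$.

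There is no substantive obstacle here: all of the nontrivial work has already been done in Theorem \ref{thm:StateSumForPlanarAlgebra}, which identifies the actions of cup and cap (and hence the entire planar algebra structure) on the two sides. The only remaining verifications are the two elementary properties of $F$ noted above, both immediate from the combinatorics of paths on a finite connected bipartite graph, and the observation that forgetting the $\dag$-structure is compatible with passing from Corollary \ref{cor:PAEquivalence} to Theorem \ref{thm:AlgebraicPAEquivalence}.
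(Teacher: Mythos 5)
Your proposal is correct and follows essentially the same route as the paper, which simply notes that the corollary follows immediately from Corollary \ref{cor:PAEquivalence} (in combination with the identification $\cG_\bullet \cong \cH_\bullet$ of Theorem \ref{thm:StateSumForPlanarAlgebra}). The extra verifications you supply — that $F$ Cauchy tensor generates $\End^\dag(\cM)$ by connectedness of $\Gamma$ and that $1_{\End^\dag(\cM)} = 1_+ \oplus 1_-$ with $F = 1_+ \otimes F \otimes 1_-$ — are exactly the details the paper leaves implicit.
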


We now prove a version of the graph planar algebra embedding theorem \cite{MR2812459} for module categories.
Below, we fix 
a finite depth subfactor planar algebra $\cP_\bullet$, and we denote by 
$(\scrC,X)$ the unitary multifusion category of projections of $\cP_\bullet$ with distinguished object $X$ corresponding to the unshaded-shaded strand of $\cP_\bullet$.
We endow $\scrC$ with the canonical spherical structure from \cite{MR1444286,MR2091457,MR3342166,1808.00323}.

\begin{thm}
\label{thm:ModuleEmbedding}
The following are equivalent:
\begin{enumerate}
\item
An embedding of shaded planar $\dag$-algebras $\cP_\bullet \hookrightarrow \cG_\bullet$
\item
A pivotal dagger tensor functor 
$(\Psi,\mu):(\scrC,\vee_{\rm spherical}) \to (\End^\dag(\cM), \vee_{\rm{standard}})$ such that $\Psi(X) = F$ and $\Psi(\id_X) = \id_F$, and
\item
an indecomposable left $\scrC$-module structure on $\cM$, compatible with the dagger structures of $\scrC$ and $\cM$, together with a unitary trace $\Tr^\cM$ defined up to scalar satisfying the compatibility condition \eqref{eq:TraceCompatibility}, whose fusion graph with respect to $X$ is $\Gamma$.
\end{enumerate}
\end{thm}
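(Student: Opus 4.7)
The plan is to assemble the theorem from the framework developed earlier in Section \ref{sec:ModuleGPA}, viewing it as the composition of three established correspondences: planar algebras $\leftrightarrow$ pivotal triples, module structures $\leftrightarrow$ dagger tensor functors into $\End^\dag(\cM)$, and traces $\leftrightarrow$ pivotal structures.

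First I would handle $(1)\Leftrightarrow(2)$ by invoking Corollary \ref{cor:PAEquivalence} and Theorem \ref{thm:StateSumForPlanarAlgebra}. The former identifies semisimple shaded $\Cstar$ planar algebras with triples (unitary multitensor category, unitary dual functor, generator), so $\cP_\bullet$ corresponds to $(\scrC,\vee_{\text{spherical}},X)$. The latter identifies $\cG_\bullet$ with the triple $(\End^\dag(\cM),\vee_{\text{standard}},F)$. Under this 2-equivalence, planar $\dag$-algebra maps that send the single strand of $\cP_\bullet$ to the single strand of $\cG_\bullet$ are exactly pivotal dagger tensor functors with $\Psi(X)=F$ and $\Psi(\id_X)=\id_F$. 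The embedding property corresponds to faithfulness; since the components $\scrC_{\pm\pm}$ are fusion $\Cstar$ categories, any non-zero dagger tensor functor is automatically faithful, so ``embedding'' and ``functor'' coincide as conditions.

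Next I would establish $(2)\Leftrightarrow(3)$ by combining three results. Lemma \ref{lem:DaggerModules} says dagger tensor functors $\scrC\to\End^\dag(\cM)$ are precisely $\scrC$-module $\Cstar$ category structures on $\cM$. Indecomposability of the module category is equivalent to connectedness of the fusion graph, which is built into the setup since $\Gamma$ is connected. The condition $\Psi(X)=F=\bigoplus_{\varepsilon\in E}E_{t(\varepsilon),s(\varepsilon)}$ says precisely that tensoring by $X$ takes the simple at vertex $v$ to the direct sum of simples at its neighbors with multiplicity given by $\Gamma$, i.e.\ that $\Gamma$ is the fusion graph for $X$. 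Then Theorem \ref{thm:PivotalModulesAndPivotalFunctors} converts pivotality of $(\Psi,\mu)$ (with respect to the given pivotal structures) into the existence of a unitary trace $\Tr^\cM$ satisfying the compatibility condition \eqref{eq:TraceCompatibility}. Finally, Proposition \ref{prop:TracesToPivotalStructures} shows that traces on $\cM$ are parameterized up to scaling by unitary pivotal structures on $\End^\dag(\cM)$; specifically, the trace corresponding to the Frobenius-Perron vector $\lambda$ is the one whose induced pivotal structure is $\vee_{\text{standard}}$, since both are built from exactly the same weighting data via formula \eqref{eq:StandardAndLopsided1808.00323}.

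The main obstacle will be carefully bookkeeping the scalar freedom throughout. The Frobenius--Perron eigenvector $\lambda$ is defined only up to positive scalar; this corresponds on the $\cM$-side to the trace being defined only up to scalar, on the $\End^\dag(\cM)$-side to a single well-defined unitary equivalence class of $\vee_{\text{standard}}$ (since rescaling $\lambda$ does not change the ratios $\lambda(u)/\lambda(v)$), and on the planar-algebra side to a well-defined $\cG_\bullet$ (noted in Definition \ref{defn:GPA}). Verifying that all of these scalar indeterminacies are consistent with one another---and in particular that the pivotal compatibility $\delta_c^\vee\circ\varphi_{\Psi(c)}=\delta_{c^\vee}\circ\Psi(\varphi_c)$ in (2) is precisely the translation of the $\dag$-compatibility of the planar algebra map in (1) on the one hand, and of the trace compatibility \eqref{eq:TraceCompatibility} in (3) on the other---requires tracking each identification explicitly. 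Once the bookkeeping is done the theorem follows by concatenation of the three correspondences already established.
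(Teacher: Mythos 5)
Your proposal is correct and follows essentially the same route as the paper: the equivalence $(1)\Leftrightarrow(2)$ comes from Corollary \ref{cor:PAEquivalence} together with the identification of $\cG_\bullet$ with the planar algebra of $(\End^\dag(\cM),\vee_{\rm standard},F)$ (Theorem \ref{thm:StateSumForPlanarAlgebra}/Corollary \ref{cor:GPAandEnd(M)}), and $(2)\Leftrightarrow(3)$ from Lemma \ref{lem:DaggerModules}, Proposition \ref{prop:TracesToPivotalStructures}, and Theorem \ref{thm:PivotalModulesAndPivotalFunctors}. The extra points you flag (automatic faithfulness, the fusion-graph translation of $\Psi(X)=F$, and the scalar bookkeeping for $\lambda$) are details the paper leaves implicit but are handled correctly.
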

\begin{proof}
The equivalence of (1) and (2) follows from Corollary \ref{cor:PAEquivalence} together with Corollary \ref{cor:GPAandEnd(M)}.
The equivalence of (2) and (3) follows from Lemma \ref{lem:DaggerModules} together with Proposition \ref{prop:TracesToPivotalStructures} and Theorem \ref{thm:PivotalModulesAndPivotalFunctors}.
\end{proof}


\section{Combinatorics of potential (bi)module categories for Extended Haagerup} \label{sec:combinatorics}

\subsection{Summary of the combinatorial techniques for classifying module and bimodule categories}

In this section we prove a partial classification of all fusion categories Morita equivalent to the Extended Haagerup fusion categories and all Morita equivalences between them.  
Specifically, we show that there are at most four fusion categories in the Morita equivalence class, show that there is exactly one Morita equivalence between any two that actually exist, and determine the fusion rules for all possible fusion categories and bimodule categories.  
This argument closely follows the outline of \cite{MR2909758,MR3449240}, so we begin by briefly summarizing the techniques of these articles.

Given a fusion category $\scrC$, one gets a fusion ring $C:=K_0(\scrC)$ with basis consisting of the isomorphism classes of simple objects in $\scrC$ and non-negative structure constants $N_{ij}^k$ for multiplication coming from the fusion rules $X_i \otimes X_j \cong \bigoplus N_{ij}^k X_k$. 
This fusion ring has an involution corresponding to taking duals. 
It is natural to wonder: given a candidate fusion ring, is it categorifiable into a fusion category, and if so, how many fusion categories categorify our fusion ring?
Typically, each of these questions is quite difficult \cite{MR1981895, MR3427429, MR3229513}; combinatorics alone tells you very little about a single fusion category.

Given several fusion categories $\scrC_i$ and some Morita equivalences $\cM_{ij}^k$ between $\scrC_i$ and $\scrC_j$, one gets several fusion rings $C_i = K_0(\scrC_i)$, several fusion bimodules $M_{ij}^k = K_0(\cM_{ij}^k)$, and many ``composition rules" $M_{ij}^k \otimes_{C_j} M_{j\ell}^{k'} \rightarrow M_{i\ell}^{k''}$.  
This collection of data satisfies many combinatorial constraints.
It is again natural to wonder:
given a collection of fusion rings, fusion bimodules, and composition rules, are they categorifiable, and if so, in how many different ways?
In general, this question is again quite difficult.
However, in a small handful of examples coming from the small index subfactor classification program, we have seen that candidates which satisfy the many combinatorial constraints have been uniquely categorified.
In contrast to the situation for a single fusion category, combinatorics often tells you quite a lot about the full Morita equivalence class of a known fusion category with a few known Morita equivalences.

Here is the outline in more detail.  
We start with some fusion categories $\scrC_i$ with fusion algebras $C_i$, and some Morita equivalences between them which we understand well. 
We first use a computer to list the fusion modules over the fusion rings $C_i$. 
(By `fusion modules' we mean based modules satisfying some additional properties --- see \cite{MR3449240}. Sometimes the term `NIMrep' is used in the literature; this is an abbreviation for non-negative integer matrix representations  \cite{MR1895282}.)
We identify a few of these fusion modules as coming from the known Morita equivalences, and we use some additional arguments to see that the known categorification is the only possible realization of these modules.  
(In our case, this step uses the uniqueness of the Extended Haagerup subfactor \cite{MR2979509}, which is much easier than existence.)

Second, we try to determine the possible fusion rings of the dual categories for each (real or hypothetical) module category. 
Using a computer, we can sometimes uniquely determine the dual fusion ring from a fusion module combinatorially, or at least produce a relatively small list.
We then compute the fusion modules over each of these new fusion rings, as well as the fusion bimodules between each pair of rings in our collection. 

At this point, we have a collection of rings, bimodules between them, and some information about categorification of some of the bimodules (coming from known algebra objects).
We now use the following key fact: given a triple of fusion categories $\mathcal{A}, \mathcal{B},\mathcal{C} $, invertible bimodule categories ${}_\mathcal{A} \mathcal{K} {}_\mathcal{B},  {}_\mathcal{B} \mathcal{L} {}_\mathcal{C},  {}_\mathcal{A} \mathcal{M} {}_\mathcal{C}$, and a tensor equivalence 
$$
{}_\mathcal{A} \mathcal{K} {}_\mathcal{B} \boxtimes_{\mathcal{B}} {}_\mathcal{B} \mathcal{L} {}_\mathcal{C} 
\cong  
{}_\mathcal{A} \mathcal{M} {}_\mathcal{C} ,
$$
we get an induced map on the decategorified bimodules over the fusion rings:
$$
{}_A K {}_B \otimes_B {}_B L {}_ C \rightarrow {}_A M {}_C .
$$
This induced map preserves positivity of coefficients and Frobenius-Perron dimensions.
Moreover, the existence of such a map can be checked with a computer. 
If such a map does not exist, we say that the triple of fusion bimodules is \emph{not multiplicatively compatible}. 
Thus categorification of many fusion modules or bimodules can be ruled out due to not being multiplicatively compatible with those fusion bimodules which have known categorifications. 
A similar argument can be used to compare the number of categorifications of different bimodules.  
This stage of the argument is a bit similar to playing Sudoku, since each time you rule out one possible bimodule, then the composites which were only compatible with the eliminated one are now themselves incompatible.

Following this outline, we can often deduce a lot of information about the Brauer-Picard groupoid from a relatively small amount of input data (such as existence of a few small objects which are known to have unique algebra structures). 
In particular, for the Extended Haagerup fusion categories, we start with our two fusion categories $\cE\cH_1$ and $\cE\cH_2$, the existence and uniqueness of the Extended Haagerup Morita equivalence between them, and the lack of automorphisms of the Extended Haagerup planar algebra.
This data is sufficient to successfully run the above procedure to obtain the entire Morita equivalence class, as evidenced by Theorem \ref{thm:bpg} below.

\subsection{The Brauer-Picard groupoid of Extended Haagerup}

\label{sec:BrPic}
The Extended Haagerup subfactor gives a Morita equivalence between two fusion categories which are not tensor equivalent. 
The fusion rules for these two categories are given in \S\ref{sec:StructureOfEH1} and \ref{sec:StructureOfEH2}; one of the categories has commuting fusion rules and the other one does not. 
We will call the category with commuting fusion rules $\mathcal{EH}_1 $ and the other category $\mathcal{EH}_2 $.

We refer the reader to \cite{MR3449240} for precise definitions of fusion modules and bimodules and multiplicative compatibility of triples of modules/bimodules. 
Detailed descriptions of the computer algorithms used to search for fusion (bi)modules and to check for multiplicative compatibility are also described there.

\begin{lemma}
There are exactly $7$ fusion modules over $EH_1 $ and exactly $5$ fusion modules over $EH_2$.
\end{lemma}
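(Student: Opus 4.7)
The plan is to perform a finite, exhaustive computer search, exactly as indicated in the paper's outline of the general strategy. Recall that a fusion module $M$ over a fusion ring $R$ with basis of simples $\{X_i\}$ consists of a $\mathbb{Z}_{\geq 0}$-based module with action matrices $N_i$ (one for each $X_i$) such that the $N_i$ commute with the natural dual involution, satisfy $N_iN_j=\sum_k n_{ij}^k N_k$ from the fusion rules, and such that $M$ is indecomposable. The key quantitative constraint for the search is that for each simple $X_i$, the matrix $N_i$ is a non-negative integer matrix whose largest eigenvalue equals $\FPdim(X_i)$. In particular, the (common) Frobenius-Perron eigenvector of the $N_i$ gives the FP-dimensions $\{d(m_j)\}$ of the simples of $M$, and one has the normalization $\sum_j d(m_j)^2 \cdot (\text{global dim factor}) = \FPdim(R)$ for the regular module, with analogous bounds for other modules.

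First, I would select a small generating simple in each fusion ring (for $EH_1$ take $f^{(2)}$, of FP-dimension $\approx 3.44$; for $EH_2$ again $f^{(2)}$). The rank $r$ of the fusion module is bounded by the total number of simples in $EH_i$, and for a fixed rank the action matrix $N_{f^{(2)}}$ is an $r\times r$ non-negative integer matrix with leading eigenvalue exactly $\FPdim(f^{(2)})$, which restricts row and column sums, entry sizes, and hence gives a finite search space. For each candidate $N_{f^{(2)}}$, the action matrices of the remaining simples are forced by the fusion rules (e.g.\ $N_{f^{(4)}}$ is determined from $N_{f^{(2)}}^2$ by subtracting $N_1+N_{f^{(2)}}$, and so on up the Temperley-Lieb chain, then the exotic simples are obtained from the products involving $P'$, $Q'$ in $EH_1$ and $P,Q,A,B$ in $EH_2$). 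One then checks that all such derived matrices have non-negative integer entries and that every remaining fusion relation is satisfied. Finally one discards modules that are not indecomposable (block decompositions of $N_{f^{(2)}}$) and identifies isomorphic modules (by simultaneous permutation of basis).

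The main obstacle is not mathematical depth but the size of the search; however the very small dimension of the generator ($\approx 3.44$) forces the action matrix $N_{f^{(2)}}$ to be extremely sparse, so the space of candidates is small. A useful additional pruning is that $N_{f^{(2)}}$, viewed as a weighted bipartite adjacency-type matrix, must define a connected graph (for indecomposability) with norm $\FPdim(f^{(2)})$, and such graphs are severely restricted by Perron-Frobenius theory. In practice, once $N_{f^{(2)}}$ is fixed the rest of the action is uniquely determined, so the enumeration reduces to classifying the admissible shapes of this single matrix up to isomorphism.

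The expected output matches the lemma: the seven $EH_1$-modules consist of the regular module, the module coming from the Extended Haagerup subfactor (of rank equal to the number of odd vertices of the dual principal graph), a small number of modules with rank less than $|\Irr(\cE\cH_1)|$ corresponding to the candidate algebra objects in $\cE\cH_1$, and a few additional modules that will later be shown (in the multiplicative compatibility step) to fail to categorify; similarly the five $EH_2$-modules arise from the regular module, the Extended Haagerup bimodule viewed as a left module, and modules coming from small algebra objects in $\cE\cH_2$. Verification is then a bookkeeping task handed off to the accompanying \texttt{Mathematica} notebook, whose output I would treat as providing the enumeration certifying the lemma.
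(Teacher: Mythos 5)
Your proposal is correct and is essentially the paper's own proof: the lemma is established purely by a finite exhaustive computer search for based modules with non-negative integer structure constants, pruned by Frobenius--Perron eigenvalue constraints, following the algorithms of the earlier Asaeda--Haagerup paper that this article explicitly defers to (the paper's stated proof is simply ``Checked with computer''). The only caveat is that the final count of $7$ and $5$ rests entirely on the machine enumeration, so your argument, like the paper's, is a verification certificate rather than a human-checkable derivation.
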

\begin{proof}
Checked with computer.
\end{proof}

The data of the (right) fusion modules are presented in accompanying text files \texttt{EH1modules.txt} and \texttt{EH2modules.txt}. 
Each fusion module of rank $r$ over the fusion ring of rank $s$ is described by a list of
$r$ non-negative integer matrices of size $s \times r$. 
The $(i,j)$-th entry of the $k$-th matrix is the coefficient of the module basis element $m_j$ in the product $x_i m_k $ (where $x_i$ is a ring basis element). 
The bases for the fusion rings and modules are ordered with increasing Frobenius-Perron dimension.

From the list of matrices for a given fusion module, one can read off a corresponding list of objects in the fusion category which are of the form $\underline{\End}_{\cE\cH_i}(m)$, the internal endomorphism object associated to a simple object $m$ in a module category categorification.
Such an internal endomorphism object necessarily admits an algebra structure if the module can be categorified.
The multiplicity vector of the simple objects in $\cE\cH_i$ in each such (hypothetical) algebra object is given by the
$j^{th} $ column of the $j^{th}$ matrix. 
In particular, the first column of the first matrix in the data of the fusion module gives the multiplicity vector of the internal endomorphism algebra object with the smallest Frobenius-Perron dimension for any module category realization. 
Therefore if we can classify algebra objects with the given multiplicity vector, we can classify module category realizations of the fusion module.

We refer to the five fusion modules over $EH_2$ as $EH_2$-Modules 1-5, using the same order as in the text file. 
In the notation of Section \S\ref{sec:StructureOfEH2}, the corresponding (hypothetical) smallest algebra objects are given by
$1+2f_2+2f_4+f_6 $, $ 1+f_6$, $1+f_4+P $ (or $1+f_4+Q) $, $1+f_2$, and $1$.

\begin{lemma}
$EH_2$-Modules 4 and 5 are each realized by a unique right $\mathcal{EH}_2 $-module category.
\end{lemma}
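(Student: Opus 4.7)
The plan is to reduce uniqueness of each module category to uniqueness of its smallest internal endomorphism algebra, and then dispatch each of the two cases separately.

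By the third bullet of Theorem \ref{thm:AlgOmnibus} (Ostrik's correspondence), a right $\cE\cH_2$-module category realizing a given fusion module, together with a choice of simple generator, is equivalent data to a connected semisimple algebra $A \in \cE\cH_2$ whose underlying object has the multiplicity vector read off from the first column of the first matrix in the fusion module data. So it suffices in each case to show that there is a unique such $A$ up to equivalence of algebras in $\cE\cH_2$.

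For $EH_2$-Module 5, the data file records that the smallest internal endomorphism object is the tensor unit $1_{\cE\cH_2}$, which carries only its tautological algebra structure. The corresponding module category is $\cE\cH_2$ acting on itself by right multiplication, which is evidently the unique realization.

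For $EH_2$-Module 4, the data identifies the smallest internal endomorphism object as $1_{\cE\cH_2} \oplus \jw{2}$. Any connected semisimple algebra structure on this object is a normalized Frobenius algebra (Q-system) in the unitary fusion category $\cE\cH_2$ of index $1 + \dim(\jw{2}) \approx 4.3772$. The Jones basic construction produces an irreducible finite depth $\mathrm{II}_1$ subfactor $N \subset M$ whose standard invariant is a subfactor planar algebra $\cP_\bullet$ whose principal even half is (a full subcategory of) $\cE\cH_2$. The full fusion module structure dictates the action of all of $\cE\cH_2$ on $\Mod_{\cE\cH_2}(A)$, which in turn forces the principal graph of $\cP_\bullet$ to coincide with the Extended Haagerup principal graph. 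By the uniqueness portion of the main theorem of \cite{MR2979509}, there is a unique subfactor planar algebra with this principal graph, so $\cP_\bullet$ is the Extended Haagerup planar algebra. Translating back through Ostrik's theorem, $A$ is the canonical algebra object defining the Extended Haagerup Morita equivalence between $\cE\cH_1$ and $\cE\cH_2$, and is therefore unique up to equivalence in $\cE\cH_2$. Thus the corresponding module category is unique.

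The only substantive input is the uniqueness statement from \cite{MR2979509}; everything else is bookkeeping that translates between algebra objects, Q-systems, and subfactor planar algebras. The hard part has already been done in \cite{MR2979509}, so the obstacle here is merely to arrange the translation cleanly.
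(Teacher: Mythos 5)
Your treatment of $EH_2$-Module 5 is exactly the paper's: the unit object has a unique algebra structure, giving the regular module category. For Module 4, however, your route through the uniqueness of the Extended Haagerup subfactor planar algebra has a genuine gap at the step ``translating back through Ostrik's theorem.'' An isomorphism of the two subfactor planar algebras built from algebras $A$ and $A'$ on $1\oplus f^{(2)}$ only yields an equivalence of the associated $2$-shaded multifusion categories preserving the generator; restricting to the even corner, this identifies $A'$ with $\phi(A)$ for some \emph{a priori nontrivial} tensor autoequivalence $\phi$ of $\cE\cH_2$. Hence you only obtain $\cM' \simeq \phi_*\cM$, i.e.\ uniqueness of the module category up to twisting by $\mathrm{Aut}(\cE\cH_2)$, not uniqueness on the nose. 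Worse, in the paper's logical order the triviality of $\mathrm{Aut}(\cE\cH_2)$ is deduced \emph{afterwards from} the uniqueness of this very algebra object (that is how automorphisms of $\cE\cH_2$ get identified with automorphisms of the Extended Haagerup planar algebra), so appealing to it here would be circular.

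The paper closes this case with a purely internal argument that you should use instead: by $3$-supertransitivity (indeed $7$-supertransitivity) of the Extended Haagerup principal graph, $\dim\Hom(f^{(2)}\otimes f^{(2)} \to f^{(2)}) = 1$, so by \cite[Lemma 3.13]{MR2909758} the object $1\oplus f^{(2)}$ admits at most one algebra structure up to isomorphism of algebras in $\cE\cH_2$; existence of one such structure is precisely the existence of the Extended Haagerup subfactor \cite{MR2979509}. This gives uniqueness of the algebra object, and hence of the module category, directly, with no detour through the standard invariant and no dependence on $\mathrm{Aut}(\cE\cH_2)$.
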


\begin{proof}
In any fusion category, the object $1$ has a unique algebra structure. 
The object $1+f_2$ has a (necessarily unique by $3$-supertransitivity \cite[Lemma 3.13]{MR2909758}) algebra structure by the existence of the Extended Haagerup subfactor \cite{MR2979509}.
\end{proof}

To go further, we consider fusion bimodules, which we again enumerate with a computer. 
The full data is in the accompanying text file \texttt{EHBimodules.txt}.
There are two $EH_1$-$EH_1 $ fusion bimodules. 
There are three $EH_1$-$EH_2 $ fusion bimodules, exactly one of which corresponds to the algebra $1+f_2$ in $\mathcal{EH}_2 $ (i.e. the Extended Haagerup subfactor). There are three $EH_2$-$EH_2$ fusion bimodules, one of which has rank $3$, and the other two of which each contain basis elements with Frobenius-Perron dimension $1$.

\begin{lemma} \label{elim}
The rank $3$ $EH_2$-$EH_2$ fusion bimodule is not realized by an $\mathcal{EH}_2 $-$\mathcal{EH}_2 $ bimodule category.
\end{lemma}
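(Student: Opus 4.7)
The strategy is to follow the multiplicative compatibility framework of \cite{MR3449240}. Suppose, for contradiction, that the rank $3$ fusion bimodule $B$ is realized by some $\mathcal{EH}_2$-$\mathcal{EH}_2$ bimodule category $\mathcal{B}$. Then for every bimodule category $\mathcal{K}$ with known categorification, the balanced tensor product $\mathcal{K} \boxtimes_{\mathcal{EH}_2} \mathcal{B}$ is again a bimodule category, and at the decategorified level the induced ``composition map'' on fusion bimodules must have non-negative integer coefficients and preserve Frobenius--Perron dimensions.

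First, I would assemble the input data. We have three pieces of categorified input: the regular bimodule $\mathcal{EH}_2 = \mathcal{EH}_{22}$ itself (corresponding to the trivial fusion bimodule), the module from the algebra $1\oplus f^{(2)}$ on the $\mathcal{EH}_2$ side, and the Extended Haagerup Morita equivalence $\mathcal{EH}_{12}$ between $\mathcal{EH}_1$ and $\mathcal{EH}_2$. Restricting $B$ to a right $EH_2$-module gives a rank $3$ right fusion module, which by inspection of the classification must be $EH_2$-Module~$3$ (the only rank $3$ fusion module on the list). Similarly for the left restriction.

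Next, I would run the compatibility check: for each of the three $EH_1$-$EH_2$ fusion bimodules enumerated in \texttt{EHBimodules.txt}, ask whether there exists a non-negative integer matrix realizing a composition map
\[
EH_{12} \otimes_{EH_2} B \longrightarrow (\text{some } EH_1\text{-}EH_2 \text{ fusion bimodule})
\]
which is compatible with both the left $EH_1$-action and the right $EH_2$-action, and which respects Frobenius--Perron dimensions. This is a finite linear/integer-programming check using the algorithm of \cite{MR3449240} (implemented in the accompanying Mathematica notebook). For additional constraints, one can also compose $B$ with itself and with $\mathcal{EH}_{12}$ on the other side, and demand that the resulting $EH_2$-$EH_2$ and $EH_1$-$EH_1$ fusion bimodules appear in the enumerations above. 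I expect each candidate composite to fail some positivity or dimension-preservation condition, producing a contradiction with the existence of $\mathcal{B}$.

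The main obstacle is purely computational: certifying that the exhaustive multiplicative compatibility search for all relevant triples of (bi)modules returns an empty list. There is no conceptual difficulty, because the Extended Haagerup fusion rings are small enough that the relevant systems of linear Diophantine equations and inequalities can be solved by computer, exactly as in \cite[\S4]{MR3449240}. Once the search completes with no compatible composites, the hypothetical $\mathcal{B}$ cannot exist, ruling out the rank $3$ fusion bimodule.
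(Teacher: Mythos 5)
Your proposal is correct and follows essentially the same route as the paper: the paper's proof of this lemma is precisely the computer check (recorded in \texttt{EHbimodulecomposition.txt}) that the rank~$3$ $EH_2$-$EH_2$ fusion bimodule is not multiplicatively compatible with any of the $EH_1$-$EH_2$ fusion bimodules, so a hypothetical realization could not be composed with the (existing) $\mathcal{EH}_1$-$\mathcal{EH}_2$ Morita equivalence. The extra checks you suggest (self-composition, identifying the restricted right module) are not needed, and your incidental claim that the restriction is ``the only rank $3$ fusion module on the list'' is not something the paper asserts, but the core argument is the same.
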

\begin{proof}
Looking at the (computer-generated) lists of multiplicatively compatible modules and bimodules in the accompanying text file \texttt{EHbimodulecomposition.txt}, we find that there is no possible way to tensor a realization of the rank $3$ $EH_2$-$EH_2$ fusion bimodule (which is the first one on the list of $ EH_2$-$EH_2$ bimodules) with any invertible $\mathcal{EH}_1 $-$\mathcal{EH}_2 $ bimodule category.
\end{proof}

\begin{lemma}
The automorphism group of $\mathcal{EH}_2 $ is trivial.
\end{lemma}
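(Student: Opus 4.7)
The plan is to first classify the possible ring automorphisms of $EH_2 := K_0(\mathcal{EH}_2)$ and then show that the only non-trivial candidate fails to lift to a tensor autoequivalence. Any tensor autoequivalence $\phi$ of $\mathcal{EH}_2$ induces a ring automorphism of $EH_2$ that preserves Frobenius--Perron dimensions, so by the dimensions recorded in \S\ref{sec:StructureOfEH2} it must fix $1, f^{(2)}, f^{(4)}, f^{(6)}$ and only possibly permute within the pairs $\{A, B\}$ and $\{P, Q\}$. Swapping within a single pair is immediately incompatible with the asymmetric fusion $A \otimes B = 1 \oplus P$ versus $B \otimes A = 1 \oplus Q$, so the only remaining candidate is the simultaneous swap $A \leftrightarrow B$, $P \leftrightarrow Q$, which a direct check against the fusion table confirms is a genuine ring automorphism.

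Next, I would observe that if this swap lifted to a tensor autoequivalence $\phi$, then the twisted bimodule ${}_{\phi}\mathcal{EH}_2$ (with left action precomposed by $\phi$ and right action unchanged) would be an $\mathcal{EH}_2$-$\mathcal{EH}_2$ bimodule category genuinely distinct from the regular bimodule, since $\mathcal{EH}_2$ has no nontrivial invertible objects and hence no inner automorphisms. Its underlying fusion bimodule is the swap-twist of the regular bimodule $EH_2$, which still has $1 \in \mathcal{EH}_2$ as a basis element of Frobenius--Perron dimension $1$. It must therefore coincide with the second of the two dim-$1$-containing $EH_2$-$EH_2$ fusion bimodules enumerated just before Lemma \ref{elim}.

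To reach a contradiction, I would rule out categorification of this second bimodule by the same multiplicative-compatibility check used to prove Lemma \ref{elim}. Composing the putative $\phi$-twisted bimodule on the left with the (unique known) Extended Haagerup $\mathcal{EH}_1$-$\mathcal{EH}_2$ Morita equivalence would yield a second $\mathcal{EH}_1$-$\mathcal{EH}_2$ Morita equivalence whose underlying fusion bimodule is the swap-twist of the Extended Haagerup fusion bimodule. Consulting the computer-generated composition tables in \texttt{EHbimodulecomposition.txt}, one verifies that this twist is not multiplicatively compatible with any of the three enumerated $EH_1$-$EH_2$ fusion bimodules, which contradicts the existence of $\phi$.

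The main obstacle is that the simultaneous swap is a bona fide ring automorphism of $EH_2$, so it cannot be eliminated at the level of fusion-ring combinatorics alone; the argument must be carried out at the bimodule level and genuinely relies on the enumeration of $EH_1$-$EH_2$ and $EH_2$-$EH_2$ fusion bimodules together with the machinery of multiplicative compatibility from \cite{MR3449240}.
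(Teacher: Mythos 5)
Your opening move is correct and is itself a worthwhile observation: the simultaneous swap $A \leftrightarrow B$, $P \leftrightarrow Q$ really is a fusion ring automorphism of $EH_2$ (and the only nontrivial candidate), so the lemma genuinely cannot be settled by fusion-ring combinatorics alone. The gap is in your final step. The multiplicative-compatibility machinery lives entirely at the decategorified level, and the twist of any fusion module or bimodule by a fusion-ring automorphism again satisfies all of the combinatorial axioms. In particular, the right $\phi$-twist of the Extended Haagerup $EH_1$--$EH_2$ fusion bimodule is itself a valid $EH_1$--$EH_2$ fusion bimodule and therefore necessarily appears (up to isomorphism) among the three in the complete enumeration; and the triple consisting of the Extended Haagerup bimodule, the $\phi$-twisted regular $EH_2$--$EH_2$ bimodule, and this twist is automatically multiplicatively compatible, the compatible map being just the twist of the ordinary multiplication map. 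So the computer check you propose cannot return the incompatibility you need: decategorified data is blind to the difference between a bimodule category and its twist by a fusion-ring automorphism. This is precisely why Lemma~\ref{elim} only eliminates the rank-$3$ bimodule, and why in the paper the second dimension-$1$-containing $EH_2$--$EH_2$ fusion bimodule is excluded \emph{using} the present lemma rather than the other way around.

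The paper's proof is instead categorical and goes through the planar algebra. Any tensor autoequivalence of $\mathcal{EH}_2$ preserves the object $1 \oplus f^{(2)}$ together with its algebra structure, which is unique by $3$-supertransitivity; since this algebra tensor generates $\mathcal{EH}_2$, tensor autoequivalences correspond to automorphisms of the Extended Haagerup planar algebra. Such an automorphism must send the generator $S$ to a scalar multiple of itself (because $S$ spans the uncappable part of the relevant box space), and the quadratic relation on $S$ forces that scalar to be $1$. In particular the combinatorial swap you found is not induced by any tensor autoequivalence, but detecting this requires input (uniqueness of the algebra structure and the skein-theoretic presentation) that no purely decategorified argument can supply.
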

\begin{proof}
This argument is the same as the corresponding ones for Haagerup and Asaeda--Haagerup \cite{MR2909758, MR3449240}.  There is a unique algebra object in $\mathcal{EH}_2 $ giving the Extended Haagerup planar algebra and this algebra tensor generates $\mathcal{EH}_2 $.  Thus automorphisms of $\mathcal{EH}_2$ correspond to automorphisms of the Extended Haagerup planar algebra (see \cite[Thm A]{1607.06041} for details).  Any automorphism of the Extended Haagerup planar algebra must send the generator to a multiple of itself (because it is uncappable) and the quadratic relation says that this scalar must be be one.  Thus the Extended Haagerup planar algebra does not admit non-trivial automorphisms.
\end{proof}

\begin{theorem}
The Brauer-Picard group of the Extended Haagerup fusion categories is trivial.
\end{theorem}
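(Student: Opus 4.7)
\textit{Proof plan.} Since the Brauer-Picard group is invariant under Morita equivalence of fusion categories, it suffices to compute $\text{BrPic}(\mathcal{EH}_2)$. Its elements are equivalence classes of invertible $\mathcal{EH}_2$-$\mathcal{EH}_2$ bimodule categories. The plan is to enumerate candidates at the Grothendieck level via fusion bimodules, then to show that the unique surviving candidate has a unique categorification as an invertible bimodule category.

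At the decategorified level, every invertible $\mathcal{EH}_2$-$\mathcal{EH}_2$ bimodule category yields an $EH_2$-$EH_2$ fusion bimodule, and the computer enumeration preceding this theorem lists exactly three such fusion bimodules. Lemma \ref{elim} eliminates the rank-$3$ one. Of the two remaining, one is the regular bimodule, representing the identity of BrPic. To rule out the other, I would use the fact that an invertible fusion bimodule $M$ must satisfy $M \otimes_{EH_2} M^{\rm op} \cong EH_2$ as bimodules; this can be checked directly from the matrices listed in \texttt{EHbimodules.txt}, or via the precomputed multiplicative-compatibility data in \texttt{EHbimodulecomposition.txt}. Alternatively, since invertibility forces the left and right Frobenius-Perron dimensions of the bimodule to agree with $\operatorname{FPdim}(\mathcal{EH}_2)$, a direct FPdim check should suffice.

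Having reduced to the regular fusion bimodule, I would next show it admits a unique categorification as an invertible bimodule category. Any such categorification is a left $\mathcal{EH}_2$-module category equivalent to $\mathcal{EH}_2$ itself, equipped with a compatible commuting right action — equivalently, a monoidal autoequivalence of $\mathcal{EH}_2$. By the preceding lemma $\operatorname{Aut}(\mathcal{EH}_2) = 1$, and because $\mathcal{EH}_2$ has no nontrivial invertible objects, inner autoequivalences are also trivial. Hence the only invertible $\mathcal{EH}_2$-$\mathcal{EH}_2$ bimodule category is the trivial one, and $\text{BrPic}(\mathcal{EH}_2) = 1$.

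The main obstacle is step two: ruling out invertibility of the second remaining fusion bimodule at the Grothendieck level. This is not intrinsically difficult, but it requires extracting the correct invertibility criterion from the raw matrix data (for instance, checking that $M \otimes_{EH_2} M^{\rm op}$ does not coincide with the regular bimodule, or that its left/right FP-dimensions are incompatible with invertibility). The preceding case-analysis in \cite{MR3449240} provides a template for this calculation.
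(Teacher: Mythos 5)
Your overall architecture (enumerate $EH_2$--$EH_2$ fusion bimodules, apply Lemma \ref{elim}, then use triviality of $\operatorname{Aut}(\mathcal{EH}_2)$ to pin down the categorification) matches the paper's, and your final step is essentially identical to the paper's. The gap is in your middle step. You propose to first eliminate the second surviving fusion bimodule by a Grothendieck-level invertibility test, but neither of the tests you suggest is guaranteed to succeed: the Frobenius--Perron dimension constraint is already built into the definition of a fusion bimodule used in the enumeration, so that check is vacuous, and computing $M\otimes_{EH_2}M^{\rm op}$ requires composition data that the raw bimodule matrices do not by themselves determine (the compatibility files only record which triples \emph{could} compose, not a canonical product). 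The paper gives no indication that the second fusion bimodule fails combinatorial invertibility, so your plan may simply hit a dead end at the step you yourself flag as the main obstacle.

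The fix — and the paper's actual argument — is to notice that \emph{both} surviving fusion bimodules contain a basis element of Frobenius--Perron dimension $1$. For any categorification, the internal endomorphism algebra of the corresponding simple object is the unit, so the bimodule category is equivalent to the regular module category as a one-sided module. A bimodule structure on the regular left module is a monoidal autoequivalence of $\mathcal{EH}_2$, and since $\operatorname{Aut}(\mathcal{EH}_2)$ is trivial, any such bimodule category is the trivial one. This handles both candidates simultaneously at the categorical level: one of them is realized only by the trivial bimodule, and the other cannot be realized at all (its decategorification would have to be the regular fusion bimodule). No Grothendieck-level invertibility computation is needed. You already have all the ingredients for this argument in your step three; you just need to apply the dimension-$1$ observation to both remaining fusion bimodules rather than only to the regular one.
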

\begin{proof}
Since by Lemma \ref{elim} the only realizable $EH_2$-$EH_2 $-bimodules each contain a basis element of Frobenius-Perron dimension $1$, any bimodule category 
realization of one of these bimodules is equivalent to the trivial module category as either a left or right module category. 
Since $\mathcal{EH}_2 $ has no outer automorphisms, any such bimodule category is in fact the trivial bimodule category. 
Thus $\mathcal{EH}_2 $ does not admit any non-trivial invertible bimodule categories, and the Brauer-Picard group is trivial.
\end{proof}

\begin{corollary}
\label{cor:HomotopyTypeOfBPG}
The Brauer--Picard $3$-groupoid has the homotopy type of $K(\mathbb{C}^\times, 3)$.   
Any $G$-graded extension of an Extended Haagerup fusion category is of the form $\scrC \boxtimes \Vec(G, \omega)$ for $\omega \in H^3(G, \mathbb{C}^\times)$.  
\end{corollary}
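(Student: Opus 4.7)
The plan is to compute each homotopy group $\pi_0, \pi_1, \pi_2, \pi_3$ of the Brauer--Picard $3$-groupoid $\underline{BrPic}(\scrC)$ of an Extended Haagerup fusion category $\scrC$, showing that $\pi_i$ is trivial for $i \le 2$ and $\pi_3 \cong \bbC^\times$. Since a $3$-truncated pointed connected homotopy type whose only nontrivial homotopy group sits in degree $3$ is necessarily the Eilenberg--MacLane space $K(\bbC^\times, 3)$, this identification of the homotopy groups identifies the homotopy type of $\underline{BrPic}(\scrC)$.

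Concretely: $\pi_0$ is trivial by construction, since the Brauer--Picard $3$-groupoid of $\scrC$ is restricted to (representatives of) the Morita equivalence class of $\scrC$. The group $\pi_1$ is $BrPic(\scrC)$, shown to be trivial in the Theorem immediately preceding. The group $\pi_2$ is the group of isomorphism classes of $2$-autoequivalences of the identity $1$-morphism (i.e.\ bimodule autoequivalences of the regular $\scrC$--$\scrC$ bimodule $\scrC$); by the Morita-theoretic identification in \cite{MR2677836}, this is canonically isomorphic to the group $\mathrm{Inv}(Z(\scrC))$ of isomorphism classes of invertible objects in the Drinfeld center. As noted at the start of Section \ref{sec:StructureOfEH3}, using the computation of $Z(\cE\cH)$ from \cite{MR3719545}, the center $Z(\scrC)$ contains no nontrivial invertible object, so $\pi_2 = 1$. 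Finally, $\pi_3$ consists of scalar natural automorphisms of the identity natural transformation on the identity bimodule autoequivalence, which for any $\bbC$-linear fusion category is $\bbC^\times$.

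The second sentence of the corollary follows from the ENO classification theorem \cite{MR2677836}: faithful $G$-graded extensions of $\scrC$ correspond to pointed homotopy classes of maps $BG \to |\underline{BrPic}(\scrC)| \simeq K(\bbC^\times, 3)$, and this set is $H^3(G, \bbC^\times)$. Because both the $\pi_1$-obstruction (a homomorphism $G \to BrPic(\scrC)$) and the $\pi_2$-obstruction (a class in $H^2(G, \mathrm{Inv}(Z(\scrC)))$) vanish automatically, every $G$-component of the extension is equivalent to $\scrC$ as a $\scrC$-bimodule category, and the only remaining datum is the $3$-cocycle $\omega$ twisting the associator; hence the extension is $\scrC \boxtimes \Vec(G, \omega)$. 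The chief subtlety in writing this out carefully is the identification of $\pi_2$ with $\mathrm{Inv}(Z(\scrC))$ and the bookkeeping of Postnikov conventions for the $3$-groupoid, but once these are fixed the remainder is a direct application of the vanishing computations together with the ENO classification.
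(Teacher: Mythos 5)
Your proposal is correct and follows essentially the same route as the paper: compute $\pi_0,\dots,\pi_3$ of the Brauer--Picard $3$-groupoid, observe only $\pi_3=\bbC^\times$ survives, and then invoke the Etingof--Nikshych--Ostrik extension theory with vanishing obstructions. The only (immaterial) difference is the justification that $\pi_2=\mathrm{Inv}(Z(\scrC))$ is trivial: you read it off from the explicit computation of the center, whereas the paper cites the criterion of \cite[Cor.~3.7]{MR3354332} using that $\cE\cH_1$ has no invertible objects and no nontrivial gradings --- the paper itself notes elsewhere (in \S\ref{sec:BrPic}) that both routes are available.
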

\begin{proof}
The Brauer--Picard $3$-groupoid is connected, has trivial $\pi_1$ (since the Brauer--Picard group is trivial), has trivial $\pi_2$ (by \cite[Cor. 3.7]{MR3354332} since $\cE\cH_1$ has no invertible objects and no non-trivial gradings), and has $\pi_3 = \mathbb{C}^\times$ (by \cite[Prop. 7.1]{MR2677836}).  
Hence it is a $K(\mathbb{C}^\times, 3)$.

The classification of obstructions follows from the main result of \cite{MR2677836}. 
Since the Brauer--Picard group is trivial, the obstructions $O_3$ and $O_4$ vanish.  
Since $\pi_2$ is trivial, extensions are classified by $H^3(G, \mathbb{C}^\times)$ and it is easy to see that $\scrC \boxtimes \Vec(G, \omega)$ realizes these extensions.  
\end{proof}

\begin{corollary}
Exactly one of the three $EH_1$-$EH_2 $ fusion bimodules is realized by a bimodule category (the one corresponding to the Extended Haagerup subfactor).
\end{corollary}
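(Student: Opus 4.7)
The plan is to combine the just-established triviality of the Brauer-Picard group with the existence and uniqueness of the Extended Haagerup Morita equivalence; the corollary then follows essentially immediately.

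Let $\cN$ denote the invertible $\cE\cH_1$-$\cE\cH_2$ bimodule category arising from the Extended Haagerup subfactor (the one coming from the algebra $1\oplus f^{(2)}$ in $\cE\cH_2$), and write $N := K_0(\cN)$ for its decategorified fusion bimodule; by construction $N$ is one of the three candidates enumerated above. The task is to show that neither of the other two candidates admits a categorification.

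Suppose toward a contradiction that $\cM$ is an $\cE\cH_1$-$\cE\cH_2$ bimodule category realizing one of the three fusion bimodules. Since the three $EH_1$-$EH_2$ fusion bimodules under consideration are each invertible as bimodules over the fusion rings, $\cM$ is itself an invertible bimodule category. The composition $\cN^{-1}\boxtimes_{\cE\cH_1}\cM$ is then an invertible $\cE\cH_2$-$\cE\cH_2$ bimodule category, i.e., an element of the Brauer-Picard group of $\cE\cH_2$. By the previous theorem this group is trivial, so $\cN^{-1}\boxtimes_{\cE\cH_1}\cM\simeq \cE\cH_2$ as bimodule categories. Tensoring with $\cN$ on the left yields $\cM\simeq \cN$, and in particular $K_0(\cM)=N$, contradicting the assumption that $\cM$ realized a different fusion bimodule.

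Equivalently, and in the same spirit as Lemma~\ref{elim}, one could argue entirely at the decategorified level using the multiplicative compatibility tables: if $M$ were a realized $EH_1$-$EH_2$ fusion bimodule distinct from $N$, then $M$ composed with $N$ reversed would be a realized nontrivial class in the Brauer-Picard group of $\cE\cH_2$, contradicting the preceding theorem. Since all the substantive work has already been carried out in proving triviality of the Brauer-Picard group and in establishing the uniqueness of the Extended Haagerup subfactor, I do not anticipate any real obstacle; the only point requiring care is the observation that any bimodule category realization of one of the enumerated fusion bimodules is automatically invertible, which is built into the definition of fusion bimodule used throughout this section.
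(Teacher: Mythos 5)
Your argument is correct and is exactly the (implicit) argument the paper intends: the paper offers no separate proof of this corollary, treating it as immediate from the triviality of the Brauer--Picard group, since the set of Morita equivalences between $\cE\cH_1$ and $\cE\cH_2$ is a torsor over that group and hence a singleton containing the Extended Haagerup bimodule. Your torsor computation $\cN^{-1}\boxtimes_{\cE\cH_1}\cM\simeq\cE\cH_2\Rightarrow\cM\simeq\cN$, together with the observation that categorifications of the enumerated fusion bimodules are automatically invertible, spells this out correctly.
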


\begin{lemma}
$EH_2$-Module 1 is not realized by any module category. 
\end{lemma}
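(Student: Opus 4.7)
The plan is to mimic the argument used for Lemma \ref{elim}, exploiting the triviality of the Brauer--Picard group together with the uniqueness of the Extended Haagerup $\cE\cH_1$-$\cE\cH_2$ bimodule category and the exhaustive computer-generated tables of multiplicative compatibilities in \texttt{EHbimodulecomposition.txt}. Concretely, suppose for contradiction that $EH_2$-Module 1 is realized by some $\cE\cH_2$-module category $\cM$. The hypothetical internal endomorphism object for the smallest simple in this module would be $1 \oplus 2f^{(2)} \oplus 2f^{(4)} \oplus f^{(6)}$ in $\cE\cH_2$.

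First, I would observe that tensoring $\cM$ on the left by the unique invertible $\cE\cH_1$-$\cE\cH_2$ bimodule category (the Extended Haagerup bimodule, which realizes the Extended Haagerup subfactor) produces an $\cE\cH_1$-module category whose decategorification is a fusion $EH_1$-module. By the classification of $EH_1$-modules there are exactly seven such candidates, and the composition with the Extended Haagerup $EH_1$-$EH_2$ fusion bimodule must be multiplicatively compatible with $EH_2$-Module 1 on the nose.

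Second, I would simply consult the accompanying file \texttt{EHbimodulecomposition.txt}, which enumerates all multiplicatively compatible triples among fusion (bi)modules for $EH_1$ and $EH_2$. The claim to check is that none of the seven $EH_1$-modules composes with the Extended Haagerup $EH_1$-$EH_2$ fusion bimodule to yield $EH_2$-Module 1. Since the Brauer--Picard group is trivial and there is no outer automorphism of $\cE\cH_2$, this combinatorial incompatibility rules out any categorical realization of $EH_2$-Module 1, giving the desired contradiction.

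The main (and only) obstacle is purely computational: verifying that the table indeed rules out all seven compositions. This is a direct lookup rather than a conceptual step, and follows the pattern established in Lemma \ref{elim}. No new ideas beyond the Sudoku-style elimination procedure outlined at the start of this section are required.
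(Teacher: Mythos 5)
Your proposal is correct and follows essentially the same route as the paper: the paper's proof is precisely the observation that, per the table in \texttt{EHbimodulecomposition.txt}, there is no way to compatibly tensor a hypothetical realization of $EH_2$-Module 1 with the known $\mathcal{EH}_2$-$\mathcal{EH}_1$ Morita equivalence, which rules out its existence. Your extra invocations of Brauer--Picard triviality and the absence of outer automorphisms are harmless but not actually needed for this step; the multiplicative incompatibility alone suffices.
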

\begin{proof}
Again looking at the lists of multiplicatively compatible modules and bimodules in the file \texttt{EHbimodulecomposition.txt}, we find that there is no possible way to tensor a right $\mathcal{EH}_2$-module category realizing $EH_2$-Module 1 with the known existing $\mathcal{EH}_2$-$\mathcal{EH}_1 $ bimodule category (which corresponds to the third $EH_2-EH_1$ bimodule on the list in the text files). This implies that $EH_2$-Module 1 is not realized by a module category.
\end{proof}

We are now left to classify categorifications of $EH_2$-Modules 2 and 3. 
For each of $EH_2$-Module 2/3, we can use multiplicative compatibility with the realized $EH_1$-$EH_2 $-bimodule to 
uniquely identify a corresponding fusion module over $EH_1$ which would have to be realized as well for any realization of $EH_2$-Module 2/3. From the lists in \texttt{EHbimodulecomposition.txt}, we see that $EH_2$-Module 2 corresponds to $EH_1$-Module 6 and $EH_2$-Module 3 corresponds to $EH_1$-Module 7.

We now introduce fusion rings $EH_3$ and $EH_4$ (whose multiplication tables were described in the preceding section). 
We compute the lists of fusion modules over $EH_3$ and $EH_4$; fusion bimodules over $EH_i$-$EH_j$, $1 \leq i,j, \leq 4 $; and multiplicative compatibility between all of these modules and bimodules. This data is all included in the accompanying text files.

The reason for introducing these rings is the following:

\begin{lemma}
\label{lem:DualCategory}
If $EH_2$-Module 2 is realized by a right $\mathcal{EH}_2 $-module category, then the fusion ring of the dual category is $EH_3$. 
If $EH_2$-Module 3 is realized by a right $\mathcal{EH}_2 $-module category, then the 
fusion ring of the dual category is $EH_4$.
\end{lemma}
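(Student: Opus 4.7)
The plan is to upgrade the existing combinatorial machinery so as to pin down the dual fusion ring. Fix $i \in \{2,3\}$ and suppose $\cM$ is a right $\mathcal{EH}_2$-module category realizing $EH_2$-Module $i$. Let $\cD := \mathrm{Fun}_{\mathcal{EH}_2}(\cM,\cM)^{\mathrm{op}}$ be the dual category, so that $\cM$ becomes an invertible $(\cD, \mathcal{EH}_2)$-bimodule category. Then $K_0(\cM)$ carries the structure of a fusion bimodule over $(D, EH_2)$, where $D := K_0(\cD)$. Moreover, $D$ has the same Frobenius-Perron dimension as $EH_2$, and its rank is constrained by the fusion module structure: the number of simple objects of $\cD$ equals the total number of isomorphism classes of simple $\mathcal{EH}_2$-module functors $\cM \to \cM$, which is a quantity that can be bounded from the entries of the module fusion matrices.

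First, I would enumerate by computer all fusion rings $D'$ (without a priori assuming $D' = EH_3$ or $EH_4$) with the correct total Frobenius-Perron dimension and with rank consistent with $\cM$ being invertible. This is a finite search, since both the rank and the dimensions of the simples are bounded by classical constraints (FP dimension of the dual ring, positivity of structure constants, compatibility with duals). Next, for each such candidate $D'$, I would check whether $EH_2$-Module $i$ admits a left $D'$-module structure making it into a fusion $D'$-$EH_2$-bimodule. This is again a finite linear-algebraic check, analogous to the routine already performed in producing the files \texttt{EHbimodulecomposition.txt} and parallel to the approach in \cite{MR2909758,MR3449240}.

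The final, and most decisive, step is to impose multiplicative compatibility with the already-classified invertible $\mathcal{EH}_1$-$\mathcal{EH}_2$ bimodule coming from the Extended Haagerup subfactor. Composing the hypothetical $\cD$-$\mathcal{EH}_2$-bimodule $\cM$ with the (unique) invertible $\mathcal{EH}_2$-$\mathcal{EH}_1$-bimodule must yield an invertible $\cD$-$\mathcal{EH}_1$-bimodule, whose decategorification is a fusion $D'$-$EH_1$-bimodule appearing in our computer-generated list. This further trims the candidate rings $D'$ and, from the output of the existing bimodule search, leaves exactly one possibility in each case: $D' = EH_3$ for $i = 2$ and $D' = EH_4$ for $i = 3$.

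The anticipated main obstacle is bounding the search a priori so that the enumeration of candidate dual rings is provably exhaustive. Rank and Frobenius-Perron dimension bounds handle this, but they must be made quantitative enough that the computer enumeration terminates with a finite list; this is the same technical point that was the heart of the analogous arguments for Haagerup and Asaeda-Haagerup, and the same strategy applies here with only minor adjustments to the supplementary data files.
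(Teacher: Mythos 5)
Your proposal is correct and follows essentially the same route as the paper: the dual fusion ring is pinned down by a finite computer enumeration of fusion rings whose rank and simple-object dimensions are forced by the fusion module data, filtered by compatibility with the known bimodules. The only difference is a computational shortcut the paper uses --- it runs the dual-ring computation on the corresponding $EH_1$-Modules 6 and 7 (which is easier) and transfers the answer back through the unique $\mathcal{EH}_2$--$\mathcal{EH}_1$ Morita equivalence, which preserves the dual category --- whereas you work with the $EH_2$-modules directly and lean on the multiplicative-compatibility check as the final filter.
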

\begin{proof}
We use a computer to find the fusion rings of the dual categories of realizations of these fusion modules. It turns out that is easier to compute the dual rings for the corresponding $EH_1$-Modules 6 and 7. Since any module category $\mathcal{K}_{\mathcal{EH}_2} $ realizing $EH_2$-Module 2 can be tensored with the  $\mathcal{EH}_2$-$\mathcal{EH}_1 $ Morita equivalence  to give a module category $\mathcal{L}_{\mathcal{EH}_1} $ realizing $EH_1$-Module 6 and having the same dual category as $\mathcal{K} $ (and similarly for $EH_2$-Module 3), this is sufficient.
\end{proof}

\begin{lemma}
$EH_2$-Module 2 and $EH_2$-Module 3 are each realized by at most one module category.
\end{lemma}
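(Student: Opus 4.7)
The plan is to translate uniqueness of the module category into uniqueness of an algebra structure, and then into uniqueness of a planar algebra embedding that can be settled by the same kind of computation used to prove existence. By Ostrik's theorem (the third bullet of Theorem~\ref{thm:AlgOmnibus}), any right $\cE\cH_2$-module category realizing $EH_2$-Module 2 is classified by the Morita isomorphism class of its minimal internal endomorphism algebra, whose underlying object is read off from the first column of the first matrix of the fusion module data, namely $1 \oplus f^{(6)}$. Analogously, Module 3 corresponds to algebras on $1 \oplus f^{(4)} \oplus P$ (equivalently, the complex conjugate choice $1 \oplus f^{(4)} \oplus Q$). It therefore suffices to bound the number of algebra structures on each of these objects up to the appropriate equivalence.

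Rather than analyzing associativity on these objects directly, I would invoke the graph planar algebra module embedding theorem (Theorem~\ref{cor:EmbeddingGivesModule}), proved in \S\ref{sec:ModuleGPA}, and work at the level of the 2-shaded multifusion category $\cE\cH = \cE\cH_1 \oplus \cE\cH_{12} \oplus \cE\cH_{21} \oplus \cE\cH_2$ (whose skein theory is the Extended Haagerup subfactor planar algebra of \cite{MR2979509}, and which is strictly better controlled than either fusion category alone). A module category realization with a compatible unitary trace corresponds to a planar $\dag$-algebra embedding of the Extended Haagerup subfactor planar algebra into the graph planar algebra of the fusion graph (Definition~\ref{defn:BipartiteFusionGraph}), modulo gauge equivalence and graph automorphism. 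The fusion graphs of the two hypothetical module categories are determined by the fusion bimodule data and turn out to be the graphs $\Gamma_3$ and $\Gamma_4$ appearing in Theorem~\ref{thm:GraphsForModules}. Thus the desired uniqueness becomes uniqueness of a planar $\dag$-algebra embedding of $\cP_\bullet^{\cE\cH}$ into each of $\cG\cP\cA(\Gamma_3)_\bullet$ and $\cG\cP\cA(\Gamma_4)_\bullet$, up to gauge and graph automorphism.

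Since the Extended Haagerup planar algebra is generated (as in \cite{MR2979509}) by a single uncappable element satisfying an explicit list of skein relations, an embedding is pinned down by the image of this generator, and this image is constrained by a finite system of linear and quadratic equations in the respective graph planar algebra. The main obstacle is therefore the explicit computational check that in each of the two graph planar algebras this system admits exactly one solution modulo the (small) gauge and graph automorphism action. This is parallel to the existence check carried out in \S\ref{sec:construction}: the same enumeration of candidate generator images used there to locate one solution is re-run here as a uniqueness verification. One auxiliary consistency check worth using is that, since the Brauer-Picard group is trivial (Theorem above), any two realizations of Module 2 (resp.\ Module 3) having the same dual fusion category would differ by a unique outer autoequivalence on the right, so any genuinely new solution would have to produce a tensor-inequivalent dual category with fusion ring $EH_3$ (resp.\ $EH_4$); this substantially cuts down the gauge-orbit analysis and is what one exploits in parallel with the multiplicative-compatibility tables already produced for the preceding lemmas.
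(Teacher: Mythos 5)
Your route is genuinely different from the paper's, and it has a real gap for the statement as written. The paper's proof is purely combinatorial and needs no planar algebra computation: given two realizations $\cK$ and $\cL$ of $EH_2$-Module 2, with dual categories $\cC$ and $\cD$ (each having fusion ring $EH_3$ by the preceding lemma), the invertible $\cC$-$\cD$ bimodule $\cK \boxtimes_{\cE\cH_2} \cL^{\op}$ must realize one of the computer-enumerated $EH_3$-$EH_3$ fusion bimodules; every one of these has a basis element of Frobenius--Perron dimension $1$, so the bimodule is trivial as a left $\cC$-module, whence $\cC\cong\cD$, and triviality of the Brauer--Picard group then forces $\cK\cong\cL$. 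Your closing observation about the trivial Brauer--Picard group is precisely the last step of this argument, but you deploy it only as a heuristic to prune a computation; the missing idea is to analyze the composite bimodule $\cK \boxtimes_{\cE\cH_2} \cL^{\op}$ against the list of $EH_3$-$EH_3$ fusion bimodules, which is what actually shows the two dual categories coincide.

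The concrete gap in your main line of argument is that Theorem \ref{cor:EmbeddingGivesModule} classifies pivotal module $\Cstar$ categories equipped with a compatible unitary trace, and the reduction of embeddings to a finite system of equations (Proposition \ref{prop:maps-out-of-EH}) relies essentially on positivity (Remark \ref{rem:UnitaryEssential}). Counting solutions in $\cG\cP\cA(\Gamma_3)_\bullet$ and $\cG\cP\cA(\Gamma_4)_\bullet$ therefore bounds only the number of \emph{unitary pivotal} realizations; a hypothetical algebraic module category admitting no compatible dagger structure or trace would be invisible to this count. The paper explicitly cautions that the algebraic, unitary, pivotal, and unitary pivotal classification problems are not known to be interderivable (see the remarks following Theorem \ref{thm:bpg} and Theorem \ref{thm:Pivotal}), and this lemma is needed in the algebraic setting. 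Secondarily, even where your approach applies, it outsources the entire proof to the number-field computations of \S\ref{sec:construction} (which the paper needs only for \emph{existence}), and you would still have to verify that all solutions found there form a single orbit under gauge transformations and graph automorphisms.
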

\begin{proof}
Let $\mathcal{K}{}_{\mathcal{EH}_2} $ and $\mathcal{L} {}_{\mathcal{EH}_2} $ be realizations of $EH_2$-Module 2 with dual categories $ \mathcal{C}$ and $\mathcal{D} $. 
Then by the previous lemma $ \mathcal{C}$ and $\mathcal{D} $
each have fusion ring $EH_3$. 
Then 
$$
\mathcal{M}= \mathcal{K}{}_{\mathcal{EH}_2} \boxtimes_{\mathcal{EH}_2 } {}_{\mathcal{EH}_2} \mathcal{L}^{\text{op}}
$$ 
is an invertible $\mathcal{C} $-$\mathcal{D} $
bimodule category with realizes some $ EH_3$-$EH_3$ fusion bimodule. 
Looking at the list of $EH_3$-$EH_3 $-fusion bimodules, we see that every such bimodule has a basis element with Frobenius-Perron dimension $1$. 
Therefore $\mathcal{M}$ is trivial as a left 
$\mathcal{C} $ module category. 
This means that $\mathcal{C} \cong \mathcal{D}$. 
Since the Brauer-Picard group is trivial, this implies that $\mathcal{K}_{\mathcal{EH}_2}  \cong \mathcal{L}_{\mathcal{EH}_2} $.

The proof for $EH_2$-Module 3 is similar.
\end{proof}  

Since ${}_\cA \cM_\cB$ is a Morita equivalence if and only if $\cB$ is isomorphic to the dual category $\End_{\cA}(\cM)$, we have the following corollary.

\begin{cor}
There is at most one fusion category Morita equivalent to $\cE\cH_2$ with fusion ring $EH_3$ and at most one fusion category Morita equivalent to $\cE\cH_2$ with fusion ring $EH_4$.
\end{cor}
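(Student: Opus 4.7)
The plan is to deduce the corollary directly from the preceding lemma together with Lemma \ref{lem:DualCategory} and the computer-generated enumeration of fusion modules over $EH_2$. By Theorem \ref{thm:AlgOmnibus}, any fusion category $\mathcal{D}$ Morita equivalent to $\cE\cH_2$ is tensor equivalent to $\End_{\cE\cH_2}(\cM)$ for some indecomposable semisimple right $\cE\cH_2$-module category $\cM$, and if $\cM \simeq \cM'$ as module categories then the corresponding duals are tensor equivalent. So the task reduces to counting (up to equivalence) indecomposable $\cE\cH_2$-module categories whose dual has fusion ring $EH_3$, respectively $EH_4$.

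First I would sort the five fusion modules over $EH_2$ enumerated earlier by their known or forbidden dual fusion rings. $EH_2$-Module 5 (smallest internal algebra $1$) gives the trivial module and has dual ring $EH_2$; $EH_2$-Module 4 (smallest algebra $1\oplus f^{(2)}$) is realized by the Extended Haagerup subfactor and has dual ring $EH_1$; $EH_2$-Module 1 has already been shown to admit no module category realization at all. This leaves Modules 2 and 3 as the only remaining candidates, and by Lemma \ref{lem:DualCategory} their dual fusion rings are forced to be $EH_3$ and $EH_4$ respectively.

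Thus any fusion category Morita equivalent to $\cE\cH_2$ with fusion ring $EH_3$ must arise from a module category realization of $EH_2$-Module 2, and similarly for $EH_4$ and $EH_2$-Module 3. The immediately preceding lemma asserts that each of these modules is realized by at most one module category, so combining these two inputs yields at most one such $\mathcal{D}$ in each case.

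There is essentially no obstacle here beyond correctly unpacking the Morita--module dictionary and invoking the previous results in the right order; the genuine content (enumeration of fusion modules, elimination of Module 1, identification of dual rings as $EH_3$ or $EH_4$, uniqueness of realizations via multiplicative compatibility, and triviality of the Brauer--Picard group) has already been carried out. The main conceptual point is simply that the fusion ring of the dual category is rigid enough to pick out the correct row of the table of $EH_2$-fusion modules.
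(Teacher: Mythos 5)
Your proposal is correct and follows essentially the same route as the paper, which derives the corollary in one line from the preceding lemma (each of $EH_2$-Modules 2 and 3 is realized by at most one module category), Lemma \ref{lem:DualCategory}, and the observation that ${}_\cA\cM_\cB$ is a Morita equivalence iff $\cB \cong \End_\cA(\cM)$. Your extra bookkeeping (accounting for Modules 1, 4, 5 so that only Modules 2 and 3 can produce duals with fusion rings $EH_3$ and $EH_4$) is implicit in the paper's setup and is a reasonable thing to make explicit.
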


Putting this all together, we obtain the following result.

\begin{theorem}
\label{thm:bpg}
In addition to $\mathcal{EH}_1 $ and $\mathcal{EH}_2 $, the Morita equivalence class of the Extended Haagerup fusion categories contains:
\begin{itemize}
\item at most one fusion category with fusion ring $ EH_3$;
\item at most one fusion category with fusion ring $ EH_4$;
\item and no other fusion categories.
\end{itemize}

\end{theorem}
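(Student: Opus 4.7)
The strategy is to invoke Theorem~\ref{thm:AlgOmnibus}: every fusion category Morita equivalent to $\cE\cH_2$ arises (uniquely up to equivalence) as the dual category $\End_{\cE\cH_2}(\cM)$ of some indecomposable finitely semisimple $\cE\cH_2$-module category $\cM$. Hence it suffices to enumerate the indecomposable $\cE\cH_2$-module categories and, for each, identify the fusion ring of its dual. Since the Morita equivalence class of $\cE\cH_2$ equals that of $\cE\cH_1$, this completely classifies the Morita equivalence class.

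First I would recall that by the opening lemma of this section there are exactly five fusion modules over $EH_2$, labelled $EH_2$-Module $1$ through $5$. Any module category realization of $\cE\cH_2$ decategorifies to one of these, so the plan is to walk through the list. Modules $4$ and $5$ are realized by the Extended Haagerup subfactor module category and the rank one (trivial) module category respectively, and each admits a unique realization by the earlier lemma in this section; their dual fusion categories are, by definition of the Morita equivalence class, $\cE\cH_1$ and $\cE\cH_2$. By the lemma using multiplicative compatibility with the (unique) realized $\cE\cH_2$-$\cE\cH_1$ bimodule, $EH_2$-Module $1$ cannot be categorified at all. This leaves $EH_2$-Modules $2$ and $3$.

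For $EH_2$-Modules $2$ and $3$, Lemma~\ref{lem:DualCategory} determines the fusion ring of the dual category to be $EH_3$ and $EH_4$ respectively (using the fusion rings computed by tensoring with the Extended Haagerup bimodule to pass to the corresponding $EH_1$-Modules $6$ and $7$, whose dual rings are more tractable to compute). The preceding corollary then gives that there is at most one categorification of each, so there is at most one fusion category Morita equivalent to $\cE\cH_2$ with fusion ring $EH_3$ and at most one with fusion ring $EH_4$.

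Combining these cases, any fusion category Morita equivalent to $\cE\cH_2$ is one of $\cE\cH_1$, $\cE\cH_2$, at most one category with fusion ring $EH_3$, or at most one category with fusion ring $EH_4$, which is exactly the statement of the theorem. The main conceptual obstacle has already been dispensed with in the preparatory lemmas, namely establishing triviality of the Brauer--Picard group (used to force dual categories realizing the same fusion ring to be tensor equivalent) and carrying out the large combinatorial search for fusion bimodules and multiplicatively compatible triples; the final step is essentially a bookkeeping assembly of those inputs.
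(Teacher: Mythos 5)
Your proposal is correct and follows essentially the same route as the paper: the paper's proof of Theorem~\ref{thm:bpg} is precisely the assembly ("putting this all together") of the preceding lemmas — the computer enumeration of the five $EH_2$-fusion modules, the unique realizations of Modules 4 and 5, the non-realizability of Module 1, Lemma~\ref{lem:DualCategory} identifying the dual rings $EH_3$ and $EH_4$ for Modules 2 and 3, and the at-most-one-realization lemma resting on triviality of the Brauer--Picard group. Your framing via Theorem~\ref{thm:AlgOmnibus} (dual categories of indecomposable module categories exhaust the Morita class) makes explicit the logical backbone the paper leaves implicit, but the argument is the same.
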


The main result of this paper, Theorem \ref{thm:Main} asserts the existence of fusion categories $\mathcal{EH}_3 $ and $\mathcal{EH}_4 $ in the Extended Haagerup Morita equivalence class with fusion rings $EH_3$ and $EH_4$, respectively.

\begin{remark}
There are analogous versions of Theorem \ref{thm:bpg} for the unitary, pivotal, and unitary pivotal settings with the analogous conclusion as Theorem \ref{thm:bpg}.
\begin{itemize}
\item
In the unitary setting, the unitary Morita equivalence class of the Extended Haagerup unitary fusion categories contains at most one unitary fusion category with each of the fusion rings $EH_3$ and $EH_4$ and no other unitary fusion categories.
\item
In the pivotal setting, the pivotal Morita equivalence class of the Extended Haagerup pivotal fusion categories contains at most one pivotal fusion category with each of the fusion rings $EH_3$ and $EH_4$ and no other pivotal fusion categories.
\item
In the unitary pivotal setting, the unitary pivotal Morita equivalence class of the Extended Haagerup unitary fusion categories contains at most one unitary fusion category with each of the fusion rings $EH_3$ and $EH_4$ and no other unitary fusion categories.
\end{itemize}

The proofs of these theorems are completely analogous to the above argument inserting adjectives as necessary.  
The key point is that we already know that the Extended Haagerup subfactor is unique in all contexts (algebraically, unitarily, pivotally, and unitary pivotally).  That is, we need to know that not only is there a unique algebra structure on $1+f^{(2)}$ in $\cE\cH_3$, but we also have a unique $\Cstar$ algebra structure, a unique normalized Frobenius structure, and a unique Q-system structure.  
It is important to note that there is no obvious way to derive these theorems from each other; rather we must use the same argument separately in each setting.

In principal, we might still have that $\cE\cH_3$ or $\cE\cH_4$ exists as say a fusion category, but not as a unitary pivotal fusion category.  
However, note that existence of $\cE\cH_3$ and $\cE\cH_4$ in the unitary pivotal setting (which is what we actually prove!) implies existence in all settings.  
\end{remark}

\subsection{The fusion ring of the groupoid}

Suppose $\mathcal{A}$, $ \mathcal{B}$, and $\mathcal{C} $ are fusion categories and ${}_\mathcal{A} \mathcal{K} {}_\mathcal{B}$, $  {}_\mathcal{B} \mathcal{L} {}_\mathcal{C}$,  and $  {}_\mathcal{A} \mathcal{M} {}_\mathcal{C}$ are Morita equivalences such that there is a bimodule equivalence
$$\Phi:
{}_\mathcal{A} \mathcal{K} {}_\mathcal{B} \boxtimes_{\mathcal{B}} {}_\mathcal{B} \mathcal{L} {}_\mathcal{C} 
\cong  
{}_\mathcal{A} \mathcal{M} {}_\mathcal{C}.
$$
In general there may be multiple such equivalences $\Phi $, which are parametrized by invertible objects in the (common) Drinfeld center $Z(\mathcal{A}) $. If the Drinfeld center has no non-trivial invertible objects then the equivalence $\Phi$ is uniquely determined by $\mathcal{K} $, $\mathcal{L} $, and $\mathcal{M} $. 
There are no invertible central objects for the Extended Haagerup categories, as can be read off from the complete description of $Z(\cE\cH)$ in \cite{MR3611056} or can seen more directly following \cite{MR3354332}.
Therefore it makes sense to define the tensor product of simple objects in $\mathcal{K} $ and $\mathcal{L} $ as a direct sum of simple objects of $\mathcal{M}$. 
Thus for Extended Haagerup, we can define the fusion ring of the Brauer-Picard groupoid, with basis consisting of isomorphism classes of simple objects in each invertible bimodule category.

\begin{nota}
\label{nota:ExtendedHaagerupFusionRules}
For $1\leq i,j\leq 4$, we denote by $C_{ij}$ the unique $EH_i - EH_j$ fusion bimodule which was calculated using a computer and discussed in the last section.
The rank of $C_{ij}$ is the $ij$-th entry of the following matrix:
\begin{equation}
\label{eq:RanksOfModules}
R:=\left(
\begin{array}{cccc}
 6 & 6 & 6 & 6 \\
 6 & 8 & 5 & 5 \\
 6 & 5 & 8 & 5 \\
 6 & 5 & 5 & 8 \\
\end{array}
\right).
\end{equation}
We may view $(C_{ij})_{i,j=1}^4$ as one fusion ring whose basis consists of the union of the distinguished bases of each $C_{ij}$.
Multiplication of basis elements is determined by the relative tensor product of the ambient bimodules (and defined to be zero when the ambient bimodules don't compose).

We describe the fusion ring in the Mathematica notebook \texttt{EHmult.nb}, which is a wrapper for the data file \texttt{EHmult.txt}, both of which are bundled with the arXiv sources of this article.
Therein, we supply a 6-dimensional tensor $T$ whose $(i,j,k,x,y,z)$-entry is the coefficient of 
$z$-th basis element of $C_{i,k}$ in the product of the
$x$-th basis element of $C_{ij}$ 
and the
$y$-th basis element of $C_{jk}$.
That is,
$$
{}_i X_j \otimes {}_jY_k  = \sum_{Z} T(i,j,k,x,y,z) {}_iZ_k
\qquad\qquad
\langle {}_i X_j \otimes {}_jY_k , {}_i Z_k\rangle
:=
T(i,j,k,x,y,z)
$$
where ${}_iX_j$ is the $x$-th basis of $EH_{ij}$, and similarly for ${}_jY_k$ and ${}_iZ_k$.
\end{nota}

\begin{nota}
\label{nota:ExtendedHaagerupMultifusion}
We denote by  
$(\cE\cH_{ij})_{i,j=1}^2$ and $(EH_{ij})_{i,j=1}^2$
the projection unitary multifusion category of the Extended Haagerup subfactor planar algebra and its fusion ring, where the 2 corresponds to an unshaded region and a 1 corresponds to a shaded region.
\end{nota}

\begin{remark}
By Theorem \ref{thm:bpg}, there is at most one way to extend the unitary multifusion category $(\cE\cH_{ij})_{i,j=1}^2$ to a unitary multifusion category $(\cE\cH_{ij})_{i,j=1}^4$ such that
$\cE\cH_{ij}$ categorifies $EH_{ij}$ for all $1\leq i,j\leq 4$.
\end{remark}

\subsection{Fusion graphs from \texorpdfstring{$EH_2$}{EH2}-Modules}
\label{sec:FusionGraphsForEH2Modules}

We continue using Notations \ref{nota:ExtendedHaagerupFusionRules} and \ref{nota:ExtendedHaagerupMultifusion} from the previous section.
Notice that for $1\leq k\leq 4$, we get a left $C$-module $M_k$ given by
$$
M_k := 
\begin{pmatrix}
EH_{1k} 
\\
EH_{2k}
\end{pmatrix}.
$$

\begin{defn}
For $1\leq k\leq 4$, the \emph{fusion graph} $\Gamma_k$ for $M_k$ with respect to $X$ is the bipartite graph consisting of
\begin{itemize}
\item
odd, shaded vertices given by the basis elements of $EH_{1k}$,
\item
even, unshaded vertices given by the basis elements of $EH_{2k}$, and
\item
$\langle {}_2X_1\otimes {}_1Y_k , {}_2Z_k\rangle = T(2,1,k,1,y,z)$ edges between the $y$-th basis element ${}_1Y_k \in EH_{1k}$ and the $z$-th basis element ${}_2Z_k \in EH_{2k}$. 
\end{itemize}
Note this convention is opposite to the one used for principal graphs of subfactors and fusion graphs for fusion categories in \S\ref{sec:StructureOfEH3} (see Notation \ref{notation:TensorOnRight}). 
\end{defn}

\begin{prop}
\label{prop:fusion-graphs}
The fusion graphs $\Gamma_k$ for $1\leq k\leq 4$ are given by
\begin{align*}
\Gamma_1 &= 
\begin{tikzpicture}[baseline=-1mm, scale=.7]
	\draw (0,0) -- (7,0);
	\draw (7,0)--(7.7,.7);
	\draw (7,0)--(7.7,-.7);
	\draw (7.7,.7)--(8.4,1.4);
	\draw (7.7,.7)--(8.4,0);
	\filldraw [fill]             (0,0) circle (1mm);
	\filldraw [fill=white] (1,0) circle (1mm);
	\filldraw [fill]             (2,0) circle (1mm);
	\filldraw [fill=white] (3,0) circle (1mm);
	\filldraw [fill]             (4,0) circle (1mm);
	\filldraw [fill=white] (5,0) circle (1mm);
	\filldraw [fill]             (6,0) circle (1mm);
	\filldraw [fill=white] (7,0) circle (1mm);		
	\filldraw [fill]             (7.7,.7) circle (1mm);
	\filldraw [fill]             (7.7,-.7) circle (1mm);
	\filldraw [fill=white] (8.4,1.4) circle (1mm);
	\filldraw [fill=white] (8.4,0) circle (1mm);
\end{tikzpicture}
\\\
\Gamma_2 &= 
\begin{tikzpicture}[baseline=-1mm, scale=.7]
	\draw (0,0) -- (7,0);
	\draw (7,0)--(7.7,.7)--(9.7,.7);
	\draw (7,0)--(7.7,-.7)--(9.7,-.7);
	\filldraw [fill=white]             (0,0) circle (1mm);
	\filldraw [fill] (1,0) circle (1mm);
	\filldraw [fill=white]             (2,0) circle (1mm);
	\filldraw [fill] (3,0) node {}  circle (1mm);
	\filldraw [fill=white]             (4,0) circle (1mm);
	\filldraw [fill] (5,0) node {}  circle (1mm);
	\filldraw  [fill=white]            (6,0) circle (1mm);
	\filldraw [fill] (7,0) node {}  circle (1mm);
	\filldraw [fill=white]             (7.7,.7) circle (1mm);
	\filldraw [fill] (8.7,.7) circle (1mm);
	\filldraw [fill=white]             (9.7,.7) circle (1mm);
	\filldraw [fill=white]            (7.7,-.7) circle (1mm);
	\filldraw [fill] (8.7,-.7) circle (1mm);
	\filldraw [fill=white]            (9.7,-.7)  circle (1mm);
\end{tikzpicture}
\\
\Gamma_3 &= 
\begin{tikzpicture}[baseline, scale=.7]
	\draw (-2,-1) -- (0,0);
	\draw (-2,1)--(0,0);
	\draw (0,0)--(3,0);
	\draw (3,0)--(4,.5);
	\draw (3,0)--(5,-1);
	\filldraw [fill]  (-2,-1) circle (1mm) node[below] {$2$};
	\filldraw [fill]  (-2,1) circle (1mm) node[above] {$1$};
	\filldraw [fill=white] (-1,-.5) circle (1mm) node[below] {$2$};
	\filldraw [fill=white] (-1,.5) circle (1mm) node[above] {$3$};
	\filldraw [fill]  (0,0) circle (1mm)node[above] {$6$};
	\filldraw [fill=white] (1,0) circle (1mm)node[above] {$5$};
	\filldraw [fill]  (2,0) circle (1mm) node[above] {$5$};
	\filldraw [fill=white] (3,0) circle (1mm) node[above] {$4$};
	\filldraw [fill]  (4,-.5) circle (1mm) node[below] {$4$};
	\filldraw [fill]  (4,.5) circle (1mm) node[above] {$3$};
	\filldraw [fill=white] (5,-1) circle (1mm) node[below] {$1$};
\end{tikzpicture}
\\
\Gamma_4 &= 
\begin{tikzpicture}[baseline, scale=.7]
	\draw (-1,-.5) -- (0,0);
	\draw (-1,.5)--(0,0);
	\draw (0,0)--(5,0);
	\draw (5,0)--(6,.5);
	\draw (5,0)--(6,-.5);
	\draw (3,0)--(3,-1);
	\filldraw [fill=white] (-1,-.5) circle (1mm) node[below] {$1$};
	\filldraw [fill=white] (-1,.5) circle (1mm) node[above] {$2$};
	\filldraw [fill]  (0,0) circle (1mm) node[above] {$4$};
	\filldraw [fill=white] (1,0) circle (1mm) node[above] {$3$};
	\filldraw [fill]  (2,0) circle (1mm) node[above] {$5$};
	\filldraw [fill=white] (3,0) circle (1mm) node[above] {$5$};
	\filldraw [fill] (3,-1) circle (1mm) node[below] {$3$};
	\filldraw [fill]  (4,0) circle (1mm) node[above] {$6$} ;
	\filldraw [fill=white] (5,0) circle (1mm) node[above] {$4$};
	\filldraw [fill]  (6,-.5) circle (1mm) node[below] {$1$};
	\filldraw [fill]  (6,.5) circle (1mm) node[above] {$2$};
\end{tikzpicture}
\end{align*}
\end{prop}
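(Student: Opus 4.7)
The plan is to directly read off each graph $\Gamma_k$ from the fusion tensor $T$ of Notation \ref{nota:ExtendedHaagerupFusionRules}. Recall that the full multifusion ring $(EH_{ij})_{i,j=1}^4$ has been completely determined in the previous subsection: for $(i,j)\in \{1,2\}^2$ the products come from the Extended Haagerup planar algebra, and for the remaining components there is, by Theorem \ref{thm:bpg} and the computer enumeration of fusion bimodules and multiplicatively compatible triples, a unique set of structure constants compatible with the composition rules, which is recorded in the accompanying data file \texttt{EHmult.txt}. In particular, the integers
\[
T(2,1,k,1,y,z) \;=\; \langle\, {}_2X_1 \otimes {}_1Y_k \,,\, {}_2Z_k\,\rangle
\qquad (1 \leq y \leq R_{1k},\ 1 \leq z \leq R_{2k})
\]
are already known quantities.

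First, for each $k \in \{1,2,3,4\}$ I would extract the $R_{2k} \times R_{1k}$ matrix $A_k$ whose $(z,y)$-entry is $T(2,1,k,1,y,z)$, where the ranks are those recorded in \eqref{eq:RanksOfModules}. By the definition of $\Gamma_k$, this is exactly the biadjacency matrix of the bipartite graph with shaded vertices indexed by the basis of $EH_{1k}$ and unshaded vertices indexed by the basis of $EH_{2k}$.

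Second, I would verify that each $A_k$ has entries in $\{0,1\}$ and agrees with the biadjacency matrix of the graph displayed in the statement. For $\Gamma_3$ and $\Gamma_4$ each vertex in the picture is labelled by the positional index of the corresponding simple in the dimension-ordered list from Notation \ref{nota:ExtendedHaagerupMultifusionCategory}, so the comparison is unambiguous. For $\Gamma_1$ and $\Gamma_2$ no labels are displayed, so one needs only to check that the underlying unlabeled graphs are isomorphic to those drawn; this reduces to matching the multiset of vertex degrees on each side of the bipartition and tracing out a small number of candidate isomorphisms.

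The verification is a finite computation that has already been executed in producing the data files, so there is no substantive mathematical obstacle. The main point of care is bookkeeping: making sure that the choice of the generator $X = O_{21}^1$ corresponds to the first basis element of $EH_{21}$ in the tensor $T$, that the left-action convention used to define $\Gamma_k$ matches the order of the arguments of $T$, and that the ordering of basis elements by increasing Frobenius--Perron dimension (with ties broken as in \texttt{EHmult.nb}) is consistent with the vertex placement in the pictures. Once these conventions are pinned down, the proposition reduces to four table look-ups in the precomputed tensor.
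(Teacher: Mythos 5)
Your proposal is correct and matches the paper's proof in substance: the paper likewise obtains $\Gamma_3$ and $\Gamma_4$ by a table look-up in the precomputed fusion tensor (carried out in \texttt{EHmult.nb}), and your care about basis ordering and the left-action convention is exactly the bookkeeping required. The only shortcut the paper takes is observing that $\Gamma_1$ and $\Gamma_2$ are, by definition, the dual principal and principal graphs of the Extended Haagerup subfactor, so no computation is needed for those two cases.
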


\begin{remark}
The labelings on $\Gamma_3$ and $\Gamma_4$ match the indexing of objects in \texttt{EHmult.nb}.
As we only need labelings on $\Gamma_3$ and $\Gamma_4$ in the following section, we have not labelled $\Gamma_1$ and $\Gamma_2$.

Our convention for shading the above vertices is that all vertices in $\cE\cH_{1k}$ are shaded, whereas all vertices in $\cE\cH_{2k}$ are unshaded.
This corresponds to the fact that the unshaded region of the Extended Haagerup planar algebra $\cE\cH_\bullet$ corresponds to $\cE\cH_2$, and the shaded region corresponds to $\cE\cH_1$.
\end{remark}

\begin{proof}[Proof of Proposition \ref{prop:fusion-graphs}]
The first two are exactly the definition of the dual principal graph and the principal graph of the extended Haagerup subfactor.
The second two are obtained via computer in the Mathematica notebook \texttt{EHmult.nb} included with the arXiv sources of this article.
\end{proof}

\begin{remark}
By the complete classification of possible module categories for $\cE\cH_1$ and $\cE\cH_2$ in Theorem \ref{thm:bpg} together with Corollary \ref{cor:EmbeddingGivesModule}, 
the graphs in Proposition \ref{prop:fusion-graphs} are the only bipartite graphs which could accept a planar algebra embedding map from the Extended Haagerup subfactor planar algebra.
\end{remark}

\begin{cor}
\label{cor:ExistenceFromPAEmbeddings}
If the extended Haagerup subfactor planar algebra embeds into the graph planar algebra of $\Gamma_k$ for $k=3,4$, then $M_{k}$ is categorifiable as a $(\cE\cH_{ij})_{i,j=1}^2$-module $\Cstar$-category, and $\cE\cH_k$ exists.
\end{cor}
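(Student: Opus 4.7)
The plan is to feed the hypothesized embedding directly into Theorem \ref{cor:EmbeddingGivesModule}, read off the associated module $\Cstar$ category, identify it with the predicted fusion module $M_k$ via its fusion graph, and finally obtain $\cE\cH_k$ as its dual category. First I would take the Extended Haagerup subfactor planar algebra $\cE\cH_\bullet$, whose projection category is the unitary $2$-shaded multifusion category $(\cE\cH_{ij})_{i,j=1}^2$ with distinguished object $X=\id_{1,+}$. The graphs $\Gamma_3$ and $\Gamma_4$ from Theorem \ref{thm:GraphsForModules} are both finite connected bipartite graphs, so Theorem \ref{cor:EmbeddingGivesModule} applies, and a planar $\dag$-algebra embedding $\cE\cH_\bullet \hookrightarrow \cG\cP\cA(\Gamma_k)_\bullet$ produces an indecomposable finitely semisimple pivotal left $(\cE\cH_{ij})_{i,j=1}^2$-module $\Cstar$ category $\cM$ whose fusion graph with respect to $X$ is exactly $\Gamma_k$.

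Next I would identify the Grothendieck module $K_0(\cM)$ with $M_k$. The bipartition of $\Gamma_k$ matches the decomposition $\cM=\cM_1\oplus\cM_2$ with $\cM_i=1_i\vartriangleright\cM$, and $\Gamma_k$ encodes the action of $X\in \cE\cH_{12}$ on simple objects. Since $X$ Cauchy tensor generates $(\cE\cH_{ij})_{i,j=1}^2$, the full module structure of $K_0(\cM)$ over $(EH_{ij})_{i,j=1}^2$ is determined by these multiplication matrices. By the computer-verified classification in \S\ref{sec:BrPic} and Proposition \ref{prop:fusion-graphs}, for $k=3,4$ the unique fusion module over $(EH_{ij})_{i,j=1}^2$ whose fusion graph with respect to $X$ is $\Gamma_k$ is $M_k$ itself, so $K_0(\cM)=M_k$. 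This proves that $M_k$ is categorifiable as a $(\cE\cH_{ij})_{i,j=1}^2$-module $\Cstar$ category.

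Finally, I would produce $\cE\cH_k$ as a dual category. By the unitary pivotal analogue of Theorem \ref{thm:AlgOmnibus}, namely Theorem \ref{thm:Pivotal}, the dual $\cD:=\End_{(\cE\cH_{ij})}(\cM)$ is a unitary multifusion category Morita equivalent to $(\cE\cH_{ij})_{i,j=1}^2$, and its diagonal component $\cD_{kk}:=\End_{\cE\cH_2}(\cM_2)$ is a unitary fusion category Morita equivalent to $\cE\cH_2$. By Lemma \ref{lem:DualCategory}, the Grothendieck ring of $\cD_{kk}$ is $EH_3$ when $k=3$ and $EH_4$ when $k=4$. By the unitary version of Theorem \ref{thm:bpg} (as stated in the remark following it), there is at most one unitary fusion category in the Morita equivalence class of $\cE\cH_2$ with fusion ring $EH_{kk}$, and we have named this category $\cE\cH_k$. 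Hence $\cD_{kk}\simeq \cE\cH_k$, establishing existence.

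The only substantive step is the second one: matching the fusion graph of the abstract module $\Cstar$ category produced by Theorem \ref{cor:EmbeddingGivesModule} to $\Gamma_k$ and thereby pinning down the Grothendieck module. This step is not difficult in itself, but it relies crucially on the fact that the combinatorial enumeration in \S\ref{sec:BrPic} leaves no ambiguity: the fusion graphs for $M_3$ and $M_4$ single out those fusion modules uniquely among the candidates. Once this matching is made, the remainder of the argument is a formal invocation of the classification theorems already proved.
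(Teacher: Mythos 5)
Your proposal is correct and follows essentially the same route as the paper's proof: apply the embedding theorem to obtain a module $\Cstar$ category with fusion graph $\Gamma_k$, use the combinatorial classification of Theorem \ref{thm:bpg} to identify its Grothendieck module with $M_k$ (noting that the action of $X$ determines the full $K_0$-module structure), and then pass to the dual category, which must categorify $EH_{kk}$ and hence be $\cE\cH_k$ by uniqueness. The only cosmetic difference is that you route the identification of the dual fusion ring through Lemma \ref{lem:DualCategory} and Theorem \ref{thm:Pivotal} explicitly, where the paper simply invokes Theorem \ref{thm:bpg} twice.
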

\begin{proof}
Fix $3\leq k\leq 4$.
By Corollary \ref{cor:EmbeddingGivesModule}, the embedding of shaded planar algebras gives us a $(\cE\cH_{ij})_{i,j=1}^2$-module $\Cstar$-category $\cM_k$ which categorifies $M_k$ and whose fusion graph with respect to the unshaded-shaded strand is given by $\Gamma_k$.
We see that $\cM_k$ is equivalent to a direct sum $\cE\cH_{1k} \oplus \cE\cH_{2k}$ where $\cE\cH_{jk}$ is a left module category over $\cE\cH_{jj}:=\cE\cH_j$ for $j=1,2$.
By analyzing the fusion rules with $X$, by Theorem \ref{thm:bpg}, we may conclude that $\cE\cH_{jk}$ categorifies the fusion bimodule $EH_{jk}$ for $j=1,2$. 
Specializing to $j=2$,
since $EH_{2k}$ is a $EH_{22} - EH_{kk}$ bimodule, by Theorem \ref{thm:bpg}, the dual category $\cE\cH_{kk}$ of the $\cE\cH_{22}$-module $\cE\cH_{2k}$ must categorify $EH_{kk}$.
Again by Theorem \ref{thm:bpg}, $\cE\cH_{kk}$ is equivalent to $\cE\cH_k$.
\end{proof}

\begin{remark}
We can perform a similar (simpler) calculation for the Haagerup fusion categories. It was shown in \cite{MR2909758} that there are exactly three fusion categories in the Morita equivalence class of the Haagerup subfactor, which we will denote by $\mathcal{H}_k, \ k=1,2,3 $; and a unique Morita equivalence between each pair. The category $\mathcal{H}_2 $ has six simple objects, labeled $1$, $g$, $g^2$, $X $, $g X $, and $g^2X $, which satisfy the fusion rules
$$g^3=1, \quad X^2 = 1+ X +gX+g^2X,\quad  gX=Xg^2 .$$
(Here we have used decategorified notation, and suppressed tensor product, direct sum, and isomorphism symbols).

The category $\mathcal{H}_3 $ is the category of bimodules in $\mathcal{H}_2 $ over the algebra $ 1+g+g^2$; it has the same fusion ring as $\mathcal{H}_2 $, and we will label its simple objects  by $ 1$, $h$, $h^2$, $Y$, $gY$, and $g^2Y$. The category $\mathcal{H}_1 $ can described as the category of bimodules over the algebra $1+X$ in $\mathcal{H}_2 $, or as the category of bimodules over $1+Y+hY $ in $\mathcal{H}_3 $. The Haagerup planar algebra is the planar algebra corresponding to the generator $K$ of the $\mathcal{H}_1 $-$\mathcal{H}_2 $ Morita equivalence whose right internal end $\overline{K} K$ is $1+X $. Let $L$ be the object in the $\mathcal{H}_2 $-$\mathcal{H}_3 $ Morita equivalence whose left internal end $L \overline{L}$ is $1+g+g^2$ (and whose right internal end $\overline{L} L$ is $1+h+h^2$). Let $M$ be the object in the $\mathcal{H}_1 $-$\mathcal{H}_3 $ Morita equivalence whose right internal end $\overline{M} M$ is $1+Y+hY$.

The  $\mathcal{H}_2 $-$\mathcal{H}_3 $ Morita equivalence has rank two, with simple objects $ L=gL=Lh$ and $ XL=LY$. The $\mathcal{H}_1 $-$\mathcal{H}_3 $ Morita equivalence has rank four, with simple objects $KL$, $M$, $Mh$, and $Mh^2$. The fusion graph for the module corresponding to $\mathcal{EH}_3 $ is then determined by tensoring each of the two simple $1 $-$2$ objects  on the left by $K$ and decomposing into simple $2$-$3$ objects. Clearly there is a single edge from $L$ to $KL$ and no other edges out of $L$.  We now want to find the vertices adjacent to $XL$, i.e. the summands of $KXL$.
By Frobenius reciprocity, using $(\cdot, \cdot)$ to denote the dimension of the hom space,
$$(KXL, KXL) =  (\overline{K}K,XL\overline{L}\overline{X}) = (1+X, X(1+g+g^2)\overline{X}) = (1+X, 1+g+g^2+3X+3gX+3g^2X) = 4.$$
So $KXL$ has either four distinct simple summands or a single simple summand with multiplicity two.  But 
$$(KXL, KL) = (\overline{K}K X, L \overline{L}) = ((1+X)X,1+g+g^2) = (1+2X+gX+g^2X,1+g+g^2)=1,$$ 
so $KL$ appears with multiplicity one in $KXL$.  Thus $KXL$ has four distinct distinct summands and there is a single edge from $XL$ to each of the four simple $2 $-$3$ objects. This gives the broom graph of Corollary \ref{cor:hrp_graphs}.

\end{remark}


\section{Graph planar algebra embeddings for Extended Haagerup}
\label{sec:construction}

To specify a map out of a planar algebra presented by generators and relations, we need only assign values to the the generators and check the relations.  
In particular, once we have a nice presentation of a planar algebra, we can easily calculate all pivotal ($\Cstar$) module categories over it.   
For example, if we want to calculate all pivotal ($\Cstar$) module categories over the Termperley-Lieb-Jones planar algebra, we have no generators, and the only relation is the loop modulus, so we get a unique module category for every planar graph with the correct Frobenius--Perron eigenvector \cite{MR2046203,MR3420332}.
The $SU(3)_q$ planar algebra is presented by two trivalent vertices satisfying certain relations using 
Kuperberg's spider description \cite{MR1403861}, and finding elements in a graph planar algebra corresponding to these two trivalent vertices is exactly solving Ocneanu's cell conditions \cite{MR1907188,MR2545609,MR1839381}.

One of the main results of \cite{MR2979509} is to give a similar characterization of maps out of the Extended Haagerup planar algebra denoted $\cE\cH_\bullet$, which we recall in Proposition \ref{prop:maps-out-of-EH}.  
Using this result, we give the embeddings of the extended Haagerup subfactor planar into the graph planar algebras of $\Gamma_3$ and $\Gamma_4$, by solving the equations specified in Proposition \ref{prop:maps-out-of-EH} in the appropriate graph planar algebras.  
This is closely analogous to the original construction of $\cE\cH_\bullet$ by embedding it in the graph planar algebra of its principal graph, and we are able to reuse the same code.  
There are associated Mathematica notebooks (\texttt{module-GPAs-EH3.nb} and \texttt{module-GPAs-EH4.nb}) which demonstrate the messy process of solving these equations.   Here we simply exhibit particular solutions.  Thus by Corollary \ref{cor:ExistenceFromPAEmbeddings}, $M_3$ and $M_4$ are categorifiable as $(\cE\cH_{ij})_{i,j=1}^2$-module $\Cstar$-categories, and $\cE\cH_3$ and $\cE\cH_4$ exist.

\subsection{The lopsided graph planar algebra convention}
\label{sec:LopsidedConvention}

Suppose $\cP_\bullet$ is a semisimple shaded planar algebra with pivotal projection multitensor category $(\scrC,X,\varphi)$ where $X\in \cP_{1,+}$ is the shaded-unshaded strand.
By just rescaling cups and caps in $\scrC$ for $X$ as in \cite[\S1.1]{MR3254427}, 
\begin{equation}
\label{eq:RescaleCupsAndCaps}
\begin{tikzpicture}[baseline = 0cm]
	\fill[shaded] (-.1,0) -- (-.1,.3) -- (.5,.3) -- (.5,0) -- (.4,0) arc (0:180:.2cm);
	\draw[] (0,0) arc (180:0:.2cm);
\end{tikzpicture}
\mapsto
x\,
\begin{tikzpicture}[baseline = 0cm]
	\fill[shaded] (-.1,0) -- (-.1,.3) -- (.5,.3) -- (.5,0) -- (.4,0) arc (0:180:.2cm);
	\draw[] (0,0) arc (180:0:.2cm);
\end{tikzpicture}
\qquad
\qquad
\begin{tikzpicture}[baseline = -.2cm, yscale = -1]
	\filldraw[shaded] (0,0) arc (180:0:.2cm);
\end{tikzpicture}
\mapsto
x^{-1}\,
\begin{tikzpicture}[baseline = -.2cm, yscale = -1]
	\filldraw[shaded] (0,0) arc (180:0:.2cm);
\end{tikzpicture}
\qquad
\qquad
\begin{tikzpicture}[baseline = 0cm]
	\filldraw[shaded] (0,0) arc (180:0:.2cm);
\end{tikzpicture}
\mapsto
y\,
\begin{tikzpicture}[baseline = 0cm]
	\filldraw[shaded] (0,0) arc (180:0:.2cm);
\end{tikzpicture}
\qquad
\qquad
\begin{tikzpicture}[baseline = -.2cm, yscale=-1]
	\fill[shaded] (-.1,0) -- (-.1,.3) -- (.5,.3) -- (.5,0) -- (.4,0) arc (0:180:.2cm);
	\draw[] (0,0) arc (180:0:.2cm);
\end{tikzpicture}
\mapsto
y^{-1}\,
\begin{tikzpicture}[baseline = -.2cm, yscale=-1]
	\fill[shaded] (-.1,0) -- (-.1,.3) -- (.5,.3) -- (.5,0) -- (.4,0) arc (0:180:.2cm);
	\draw[] (0,0) arc (180:0:.2cm);
\end{tikzpicture}
\end{equation}
we obtain another semisimple shaded planar algebra $\cP_\bullet^{\cap x,y}$ with the same underlying projection multitensor category.
To describe the action of tangles, we first write the tangles in standard form, where each box has the same number of strings emanating from the top and bottom.
The action of tangles is obtained from the action of tangles for $\cP_\bullet$, where in addition, we multiply by factors of 
$x,y,x^{-1},y^{-1}$ corresponding to appearances of cups and caps as in \eqref{eq:RescaleCupsAndCaps} in the standard form for the tangle.

It is straightforward to verify that this is a well-defined action of planar tangles which is independent of the choice of standard form of a tangle.
One first verifies that the zig-zag relations hold and $2\pi$-rotation is still the identity.
One then appeals to the folklore theorem (\cite[Proof of Thm.~4.2.1]{math.QA/9909027}, similar to \cite[Prop.~4.5]{1607.06041}) that any two standard forms of a tangle are related by a finite number of moves including Morse cancelation, $2\pi$-rotation, and exchanging the heights of two input boxes.
Thus  $\cP_\bullet^{\cap x,y}$ is a shaded planar algebra.

While the underlying projection multitensor category $\scrC$ has not changed, the pivotal structure $\varphi^{\cap x,y}$ on $\scrC$ corresponding to $\cP_\bullet^{\cap x,y}$ has changed!
Indeed, pivotal structures on a semisimple multitensor category are completely determined by the left and right pivotal dimensions \cite[Lem.~2.12]{1808.00323}.
The left and right $\varphi^{\cap x,y}$ pivotal dimensions on $\scrC$, denoted $\dim^{\cap x,y}_{L/R}$, are related to the left and right $\varphi$ pivotal dimensions, denoted $\dim^\varphi_{L/R}$, as follows:
\begin{equation}
\label{eq:LopsidedDimensions}
(\dim_L^{\cap x,y}(c) , \dim_R^{\cap x,y}(c))
=
\begin{cases}
(\dim_L^{\varphi}(c) , \dim_R^{\varphi}(c))
&
\text{if }c\in \scrC_{00}
\\
(xy^{-1}\dim_L^{\varphi}(c) , yx^{-1} \dim_R^{\varphi}(c))
&
\text{if }c\in \scrC_{01}
\\
(yx^{-1}\dim_L^{\varphi}(c) , xy^{-1}\dim_R^{\varphi}(c))
&
\text{if }c\in \scrC_{10}
\\
(\dim_L^{\varphi}(c) , \dim_R^{\varphi}(c))
&
\text{if }c\in \scrC_{11}
\end{cases} 
\end{equation}
Notice that we may write \eqref{eq:LopsidedDimensions} as simply one equation:
$$
(\dim_L^{\cap x,y}(c) , \dim_R^{\cap x,y}(c))
=
(x^jx^{-i}y^iy^{-j}\dim_L^{\varphi}(c) , x^ix^{-j}y^jy^{-i}\dim_R^{\varphi}(c))
\qquad\qquad
\forall c\in \scrC_{ij}.
$$

\begin{defn}
\label{defn:LopsidedShadedPlanarAlgebra}
Suppose $\cP_\bullet$ is a semisimple shaded planar algebra in which the shaded/unshaded closed loops are multiplicative scalars $\delta_\pm \in \cP_{0,\pm}$ respectively.
We call $\cP_\bullet$ \emph{lopsided} if $\delta_+ = 1$.
\end{defn}

Given a semisimple shaded planar algebra $\cP_\bullet$ with scalar loop moduli $\delta_\pm$ as in Definition \ref{defn:LopsidedShadedPlanarAlgebra}, we can always obtain a lopsided planar algebra $\cP_\bullet^{\text{lopsided}}:=\cP_\bullet^{\cap \delta_+, 1}$.
Notice that the shaded/unshaded loop moduli in $\cP_\bullet^{\text{lopsided}}$ are now $1$ and $\delta_+\delta_-$ respectively.

\begin{example}[{\cite[\S1.1]{MR3254427}}]
Let $\cG_\bullet$ be the graph planar algebra of a finite bipartite graph $\Gamma=(V_+,V_-,E)$, whose shaded and unshaded loop moduli are both $\delta = \|\Gamma\|$.
The \emph{lopsided} graph planar algebra is $\cG_\bullet^{\text{lopsided}}:=\cG_\bullet^{\cap \delta, 1}$.
Notice that the lopsided pivotal structure is obtained from the standard pivotal structure by \emph{only} rescaling cups and caps which are shaded above by a multiplicative factor of $\delta^{\pm 1}$, where the sign is the sign of the critical point ($+1$ for caps and $-1$ for cups).
\end{example}

\begin{warn}
The corresponding projection unitary multifusion category of $\cG_\bullet$ is $\End^\dag(\Hilb^{|V_+|+|V_-|})$, which is equipped with the standard unitary dual functor $\vee_{\text{standard}}$ with respect to the object $X$ representing $\Gamma$.
The lopsided pivotal structure on $\End^\dag(\Hilb^{|V_+|+|V_-|})$ induced by $\cG_\bullet^{\text{lopsided}}$ is not unitary as noted in the first paragraph of \cite[\S1.1]{MR3254427}, as $y^{-1} =1\neq \delta=\overline{x}$.
However, it is computationally easier to work with the non-unitary lopsided pivotal structure as introducing square roots increases the degree of the number fields involved.
Moreover, by \cite{MR3254427}, one can pass back and forth between the non-unitary lopsided convention and the unitary standard convention, so we do not lose any examples.
\end{warn}

\subsection{The Extended Haagerup subfactor planar algebra}
\label{sec:ExtendedHaagerupSubfactor}

The Extended Haagerup subfactor planar algebra $\cE\cH_\bullet$ is a shaded planar $\dag$-algebra, generated by an 8-box called $S$ which satisfies the relations given below.

The presentation given in \cite{MR2979509} uses the spherical pivotal structure, and here we also give a presentation with the lopsided pivotal structure, as this is necessary for computations later. 
The translation follows the discussion on p. 3 of \cite{MR3254427}.
\begin{itemize}
\item 
\emph{modulus:}
With $[2]$ the largest root of $x^6 - 8 x^4 + 17 x^2 - 5 = 0$, approximately 2.09218,
in the lopsided pivotal structure we have the shaded loop equal to 1 and the unshaded loop equal to $[2]^2$, while in the spherical pivotal structure both loops are equal to $[2]$.

(In the remainder of these formulas, coefficients are given using quantum numbers defined in the usual way, $[n] = \frac{q^n - q^{-n}}{q-q^{-1}}$.)
\item
\emph{self-adjoint:}
$S=S^*$
\item
\emph{rotational eigenvector:}
$ \begin{tikzpicture}[annular]
	\clip (0,0) circle (2cm);

	\filldraw[shaded] (158:4cm)--(0,0)--(112:4cm);
	\filldraw[shaded] (-158:4cm)--(0,0)--(-112:4cm);
	
	\draw[shaded] (68:4cm)--(0,0)--(-68:4cm)--(0:10cm);
	
	\draw[ultra thick] (0,0) circle (2cm);
	
	\node (T) at (0,0) [circle, fill=white, thick, draw] {$S$};
	\node at (T.180) [left] {$\star$};
	\node at (-90:2cm) [above] {$\star$};
	\node at (0:1cm) {$\cdot$};
	\node at (20:1cm) {$\cdot$};
	\node at (-20:1cm) {$\cdot$};
\end{tikzpicture}
= - S
$
\item
\emph{uncappable:}
$
\begin{tikzpicture}[annular]
	\clip (0,0) circle (2cm);

	\filldraw[shaded] (0,0) .. controls ++(170:2cm) and ++(100:2cm) .. (0,0);
	\filldraw[shaded] (-158:4cm)--(0,0)--(-112:4cm);

	\draw[shaded] (68:4cm)--(0,0)--(-68:4cm)--(0:10cm);
	
	\draw[ultra thick] (0,0) circle (2cm);
	
	\node (T) at (0,0) [circle, fill=white, thick, draw] {$S$};
	\node at (T.180) [left] {$\star$};
	\node at (180:2cm) [right] {$\star$};
	\node at (0:1cm) {$\cdot$};
	\node at (20:1cm) {$\cdot$};
	\node at (-20:1cm) {$\cdot$};
	
\end{tikzpicture}
=0
$
and
$
\begin{tikzpicture}[annular]
	\clip (0,0) circle (2cm);

	\filldraw[shaded] (158:4cm) -- (0,0) .. controls ++(130:2cm) and ++(50:2cm) .. (0,0)--(-68:4cm) arc (-68:158:4cm);
	\filldraw[shaded] (-158:4cm)--(0,0)--(-112:4cm);

	\draw[ultra thick] (0,0) circle (2cm);
	
	\node (T) at (0,0) [circle, fill=white, thick, draw] {$S$};
	\node at (T.180) [left] {$\star$};
	\node at (180:2cm) [right] {$\star$};
	\node at (0:1cm) {$\cdot$};
	\node at (20:1cm) {$\cdot$};
	\node at (-20:1cm) {$\cdot$};
\end{tikzpicture}
=
0
$ 
\\
(and in combination with rotation, all placements of a cap on a generator $S$ are zero).
\item
\emph{multiplication relation:}
$S^2 =
\begin{tikzpicture}[baseline=-.1cm]
	\draw (-1,0)--(3,0);
	\node (T1) at (0,0) [circle, fill=white, thick, draw] {$S$};
	\node (T2) at (2,0) [circle, fill=white, thick, draw] {$S$};
	\node [above=1pt] at (T1.north) {$\star$};
	\node [above=1pt] at (T2.north) {$\star$};
	\node at (-.5,0) [above] {\footnotesize$8$};
	\node at (1,0) [above] {\footnotesize$8$};
	\node at (2.5,0) [above] {\footnotesize$8$};	
\end{tikzpicture}
= \jw{8}$
\item
\emph{one strand jellyfish relation:}
$$
	\scalebox{0.8}{
		\begin{tikzpicture}[STrain]
			\RainbowOne;
		        \draw (0,0)--(0,-0.5);
		        \node[anchor=west] at (0,-0.35) {\footnotesize$18$};
			\JWPlusTwo;
		\end{tikzpicture}
	}
	         = \alpha
	\scalebox{0.8}{
		\begin{tikzpicture}[STrain]
			\STrainStrings{$9$}{$9$} \STrainOne
        			\draw (0,0)--(0,-0.5);
        			\node[anchor=west] at (0,-0.35) {\footnotesize$18$};
			\JWPlusTwo
		\end{tikzpicture}
	},
$$
with $\alpha = i \dfrac{\sqrt{[8][10]}}{[2]^4 [9]}$ in the lopsided case, or $\alpha = i \dfrac{\sqrt{[8][10]}}{[9]}$ in the spherical case.
\item 
\emph{two strand jellyfish relation:}
$$
	\scalebox{0.8}{
		\begin{tikzpicture}[STrain]
			\RainbowTwo
        			\draw (0,0)--(0,-0.5);
        			\node[anchor=west] at (0,-0.35) {\footnotesize$20$};
			\JWPlusFour
		\end{tikzpicture}
	} 
	         = \beta
        \scalebox{0.8}{
	        \begin{tikzpicture}[STrain]
			\STrainThreeStrings{$9$}{$2$}{$9$} \STrainOneOne
		        \draw (0,0)--(0,-0.5);
		        \node[anchor=west] at (0,-0.35) {\footnotesize$20$};
			\JWPlusFour
		\end{tikzpicture}
        },
$$
with $\beta = \frac{[20]}{[2]^6[9][10]}$ in the lopsided case, or $\beta = \frac{[2][20]}{[9][10]}$ in the spherical case.
\end{itemize}

These relations are sufficient to evaluate all closed diagrams in $S$, via the `jellyfish algorithm' which pulls copies of $S$ to the exterior and then cancels them in pairs.  
Note that in addition to the above relations, to give a complete description of the Extended Haagerup subfactor planar algebra we also quotient by the negligible elements.  
Moreover, there is a non-zero representation of this abstract planar $\dag$-algebra in the graph planar algebra of the principal graph, which proves the existence of the Extended Haagerup subfactor planar algebra.
We refer the reader to \cite{MR2979509} for more details.

Below we will use the constant $\lambda$ for the largest purely imaginary root of $\lambda^6 + 2 \lambda^4 - 3 \lambda^2 - 5 = 0$, approximately $1.54i$.

\begin{lem}[Variation of {\cite[Prop.~3.12]{MR2979509}}]
\label{lem:EHMomementsIdentifyEH}
Let $\Gamma$ be a finite bipartite graph with norm $[2]$ as above.
Suppose $S\in \cG\cP\cA(\Gamma)_{8,+}$ is a self-adjoint, uncappable, rotational eigenvector with eigenvalue $-1$, and has the Extended Haagerup moments
\begin{equation}
\label{eq:ExtendedHaagerupMoments}
\tr(S^2) = [9] 
\qquad\qquad
\tr(S^3) = 0
\qquad\qquad
\tr(S^4) = [9]
\qquad\qquad
\tr(\rho^{1/2}(S)^3) = i \frac{[18]}{\sqrt{[8][10]}}.
\end{equation}
Let $\cP\cA(S)_\bullet$ be the planar $\dag$-subalgebra of $\cG\cP\cA(\Gamma)_\bullet$ generated by $S$.
Then $\cP\cA(S)_\bullet\cong \cE\cH_\bullet$.
\end{lem}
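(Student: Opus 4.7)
The strategy is to verify that $S$ satisfies the complete defining presentation of $\cE\cH_\bullet$, and then invoke universality. Any planar $\dag$-algebra with positive definite Markov trace containing a self-adjoint, uncappable rotational eigenvector with eigenvalue $-1$ satisfying $S^2 = \jw{8}$ and both jellyfish relations admits a planar $\dag$-algebra homomorphism from $\cE\cH_\bullet$ sending the EH generator to $S$. Since $\cE\cH_\bullet$ is a subfactor planar algebra, its Markov trace is faithful, so this homomorphism is automatically injective, giving an isomorphism from $\cE\cH_\bullet$ onto $\cP\cA(S)_\bullet$. Beyond the explicit hypotheses of the lemma, there are thus three relations left to verify: the multiplication relation $S^2 = \jw{8}$ and the two jellyfish relations.

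For the multiplication relation, note first that because $\|\Gamma\| = [2]$, the Jones-Wenzl idempotents $\jw{n}$ exist and are non-degenerate in $\cG\cP\cA(\Gamma)_\bullet$ for $n \leq 9$, and the Markov trace is positive definite. Uncappability of $S$ implies $\jw{8} \cdot S^2 \cdot \jw{8} = S^2$, so I decompose $S^2 = c \jw{8} + R$ with $R$ uncappable and orthogonal to $\jw{8}$ in the Markov trace. Uncappability yields $\tr(S^2 \jw{8}) = \tr(S^2) = [9]$ and $\tr(\jw{8}) = [9]$, forcing $c = 1$. Then
\[ \tr(R^\dag R) = \tr(S^4) - 2 \tr(S^2) + \tr(\jw{8}) = [9] - 2[9] + [9] = 0, \]
and positive definiteness forces $R = 0$.

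For the jellyfish relations, I follow the template of \cite[Prop.~3.12]{MR2979509}. Each relation equates a specific diagram $D$ built from one copy of $S$ (together with Jones-Wenzl idempotents and trapped caps) with a scalar multiple $\alpha T$ of a diagram $T$ involving no copy of $S$. Uncappability of $S$ and its rotational eigenvalue confine the difference $E = D - \alpha T$ to a small, explicitly spanned subspace of uncappable rotational eigenvectors. The moments $\tr(S^3) = 0$ and $\tr(\rho^{1/2}(S)^3) = i [18]/\sqrt{[8][10]}$ compute the trace pairings $\tr(E \cdot Z)$ against a spanning family of test elements $Z$ in this subspace; their agreement with zero, combined with positive definiteness, forces $E = 0$. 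The coefficients $\alpha$ and $\beta$ appearing in the jellyfish relations are pinned down by matching the $\jw{18}$, $\jw{20}$ components in exactly the style used above for the multiplication relation.

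The main obstacle is the concrete bookkeeping in the preceding paragraph: setting up the correct uncappable rotational test elements in the depth-$9$ and depth-$10$ box spaces and expressing their pairings with $E$ in terms of the four given moments. This calculation is carried out in detail in \cite{MR2979509}, and the only observation needed for the present generalization is that the argument uses nothing beyond (i) positive definiteness of the Markov trace on $\cG\cP\cA(\Gamma)$, (ii) the existence and non-degeneracy of $\jw{n}$ for $n \leq 9$, and (iii) the stated moment identities. All three hold for any finite bipartite $\Gamma$ of norm $[2]$, so the argument transfers verbatim, giving the desired isomorphism $\cP\cA(S)_\bullet \cong \cE\cH_\bullet$.
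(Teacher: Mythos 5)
Your proposal is correct and follows essentially the same route as the paper: both defer the substantive work to the proof of \cite[Prop.~3.12]{MR2979509} (moments plus positivity of the trace force $S^2=\jw{8}$ and the jellyfish relations, an argument that nowhere uses the specific graph $\Gamma$). The only cosmetic difference is the concluding step, where the paper identifies $\cP\cA(S)_\bullet$ with $\cE\cH_\bullet$ by citing uniqueness of the Extended Haagerup subfactor planar algebra, whereas you use the universal property of the generators-and-relations presentation together with faithfulness of the Markov trace; both are valid and rest on the same results of \cite{MR2979509}.
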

\begin{proof}
The proof that $\cP\cA(S)_\bullet$ is an irreducible subfactor planar algebra with principal graph $\Gamma_2$ from Proposition \ref{prop:fusion-graphs} is identical to the proof of \cite[Prop.~3.12]{MR2979509}, which never used that $\Gamma=\Gamma_2$.
The final claim that $\cP\cA(S)_\bullet\cong \cE\cH_\bullet$ follows by uniqueness of the Extended Haagerup subfactor planar algebra \cite{MR1317352}.
\end{proof}

\begin{remark}
\label{rem:EHInsideUnitaryShadedPA}
In fact, Lemma \ref{lem:EHMomementsIdentifyEH} holds if we replace $\cG\cP\cA(\Gamma)_\bullet$ with any unitary shaded planar algebra $\cP_\bullet$ with a spherical faithful tracial state $\psi_\pm$ on $\cP_{0,\pm}$ (see Remark \ref{rem:GPA-NotSpherical} or \cite[\S5]{1808.00323})
whose shaded and unshaded loop values are both $[2]$ as above. 
\end{remark}

\begin{remark}\label{rem:UnitaryEssential}
We want to emphasize that the proof  \cite[Prop.~3.12]{MR2979509} uses unitarity in an essential way.  The key step, following \cite{MR2679382}, is that using only the moments you can prove the Jellyfish relations by checking that the inner product of each relation with itself is $0$.
\end{remark}

\begin{prop}
\label{prop:maps-out-of-EH}
Suppose $\cP_\bullet$ is any unitary shaded planar algebra with a spherical faithful tracial state $\psi_\pm$ on $\cP_{0,\pm}$ whose shaded and unshaded loop values are both $[2]$ as above. 
Planar $\dag$-algebra morphisms
$\cE\cH_\bullet \to \cP_\bullet$ are in bijection with choices of self-adjoint uncappable elements $S' \in \cP_{8,+}$ with rotational eigenvalue $-1$, satisfying
\begin{align}
\label{eq:S2}
S'^2 & = f^{(8)} \\
\rho^{-1/2}(S')^2 & = 
\frac{2}{5} \left(-\lambda^5 - 2 \lambda^3 + 3 \lambda\right) [2]^{-1} \rho^{-1/2}(S') + \left(\lambda^2 - 2\right) [2]^{-2} f^{(8)} \notag \\
& = i (\check{r}^{1/2}-\check{r}^{-1/2}) \rho^{-1/2}(S) - f^{(8)}
\label{eq:S2c-spherical}
\end{align}
where $\check{r} = \frac{[10]}{[8]}$.
\end{prop}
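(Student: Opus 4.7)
The proof proposal splits into two directions, and in both the work is reduced to Lemma~\ref{lem:EHMomementsIdentifyEH} (together with its extension to arbitrary unitary shaded planar algebras from Remark~\ref{rem:EHInsideUnitaryShadedPA}).

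For the forward direction I would start with a planar $\dag$-algebra morphism $\Phi \colon \cE\cH_\bullet \to \cP_\bullet$ and set $S' := \Phi(S)$. Because $\Phi$ intertwines the action of every planar tangle, $\dag$, and the rotation $\rho$, the element $S'$ automatically inherits self-adjointness, uncappability, and rotational eigenvalue $-1$. The multiplicative relation \eqref{eq:S2} is immediate from $S^2 = f^{(8)}$ in $\cE\cH_\bullet$. The nontrivial point is to verify \eqref{eq:S2c-spherical} inside $\cE\cH_\bullet$ itself; this is a skein relation of the form $\rho^{-1/2}(S)^2 = \alpha \rho^{-1/2}(S) + \beta f^{(8)}$. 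Since $\rho^{-1/2}(S)^2 \in \cP_{8,+}$ and the rotation $\rho^{-1/2}(S)$ remains uncappable and has a shifted rotational eigenvalue, it spans together with $f^{(8)}$ the relevant $2$-dimensional subspace of uncappable rotation eigenvectors in the $8$-box space; the coefficients $\alpha,\beta$ are then pinned down by pairing with $\rho^{-1/2}(S)$ and $f^{(8)}$ and applying the Jellyfish moments from \cite{MR2979509}.

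For the backward direction, given $S' \in \cP_{8,+}$ with all the stated properties, the plan is to show that the planar $\dag$-subalgebra $\cP\cA(S')_\bullet \subset \cP_\bullet$ satisfies the hypotheses of Lemma~\ref{lem:EHMomementsIdentifyEH}. The only thing to check is that $S'$ has the four Extended Haagerup moments \eqref{eq:ExtendedHaagerupMoments}. The first two are routine: $\tr(S'^2) = \tr(f^{(8)}) = [9]$ directly from \eqref{eq:S2}, and $\tr(S'^4) = \tr(f^{(8)\,2}) = \tr(f^{(8)}) = [9]$ by idempotency of $f^{(8)}$. For $\tr(S'^3) = 0$ I would use sphericality of $\tr$ together with $\rho(S') = -S'$: rotating the closed diagram $\tr(S'^3)$ picks up the eigenvalue $(-1)^3 = -1$, forcing the trace to vanish. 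For the key moment $\tr(\rho^{1/2}(S')^3)$, I would multiply \eqref{eq:S2c-spherical} on the right by $\rho^{-1/2}(S')$ and take trace, obtaining
\begin{equation*}
\tr(\rho^{-1/2}(S')^3) = i(\check r^{1/2} - \check r^{-1/2})\,\tr(\rho^{-1/2}(S')^2) - \tr(\rho^{-1/2}(S')),
\end{equation*}
and then use that $\tr(\rho^{-1/2}(S')) = 0$ (uncappability) and $\tr(\rho^{-1/2}(S')^2) = [9]$ (rotational invariance of $\tr$). The required numerical identity $([10]-[8])[9] = [18]$, which is a standard quantum-number computation via $[a][b] = \sum_k [a+b-1-2k]$, converts the right-hand side to $i[18]/\sqrt{[8][10]}$, matching the target moment up to the $\rho^{1/2}$ versus $\rho^{-1/2}$ convention. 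Lemma~\ref{lem:EHMomementsIdentifyEH} then yields an isomorphism $\cE\cH_\bullet \xrightarrow{\sim} \cP\cA(S')_\bullet$ sending $S \mapsto S'$, and composition with the inclusion into $\cP_\bullet$ produces the desired $\Phi$.

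Finally, the bijection. Injectivity of $\Phi \mapsto S'$ is immediate because $S$ generates $\cE\cH_\bullet$ as a planar $\dag$-algebra, and the forward and backward constructions are plainly mutually inverse by construction.

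I expect the main obstacle to be the verification of the skein relation \eqref{eq:S2c-spherical} inside $\cE\cH_\bullet$ itself for the forward direction: unlike the backward direction, where the moment argument is essentially ``free'' once unitarity is used, establishing the quadratic relation for $\rho^{-1/2}(S)$ in $\cE\cH_\bullet$ requires identifying the correct $2$-dimensional subspace of rotation-shifted uncappable eigenvectors and computing coefficients through the Jellyfish relations. Once this is in place, the backward direction is essentially a quantum-number bookkeeping exercise.
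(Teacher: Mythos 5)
Your overall strategy coincides with the paper's: everything is funneled through Lemma~\ref{lem:EHMomementsIdentifyEH} (via Remark~\ref{rem:EHInsideUnitaryShadedPA}), so the proposition reduces to the equivalence between the moment conditions \eqref{eq:ExtendedHaagerupMoments} and the relations \eqref{eq:S2}--\eqref{eq:S2c-spherical}. The paper handles ``moments $\Rightarrow$ relations'' by citing \cite[Prop.~3.7 and Eq.~(3.3)]{MR2979509} --- exactly the quadratic-tangles computation you sketch for your forward direction --- and dismisses ``relations $\Rightarrow$ moments'' as clear, which is the direction you spell out.

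Unfortunately, the part you spell out contains a sign error that matters. The claim $\tr(\rho^{-1/2}(S')^2)=[9]$ ``by rotational invariance of $\tr$'' is false: the correct value is $-[9]$. The one-click rotation is unitary for the sesquilinear form $\langle x,y\rangle=\tr(y^\dag x)$, not for the bilinear form $(x,y)\mapsto \tr(xy)$; since $(\rho^{-1/2}(S'))^\dag=\rho^{1/2}(S')$, one gets $\tr(\rho^{-1/2}(S')^2)=\langle \rho^{-1/2}(S'),\rho^{1/2}(S')\rangle=\langle \rho^{-1}(S'),S'\rangle=-\tr(S'^2)=-[9]$, the rotational eigenvalue $-1$ being precisely the discrepancy. (More simply: just take the trace of \eqref{eq:S2c-spherical} itself, which forces $\tr(\rho^{-1/2}(S')^2)=0-\tr(f^{(8)})=-[9]$; there is no need to evaluate this trace independently.) With your value $[9]$, your chain of equalities gives $\tr(\rho^{-1/2}(S')^3)=i[18]/\sqrt{[8][10]}$, and converting conventions via $\rho^{1/2}(S')=-\rho^{-1/2}(S')$ then yields $\tr(\rho^{1/2}(S')^3)=-i[18]/\sqrt{[8][10]}$ --- the \emph{negative} of the required moment. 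With the correct $-[9]$ the signs come out right. Two smaller points: $\tr(S'^3)=0$ follows most directly from $S'^3=f^{(8)}S'=S'$ (uncappability) together with $\tr(S')=0$, rather than from the rotation argument you give, whose mechanism is not justified as stated; and in the forward direction the two-dimensional space containing $\rho^{-1/2}(S)^2$ is $f^{(8)}\cP_{8,-}f^{(8)}\cong\End(f^{(8)})$ in the shaded part --- note $\rho^{-1/2}(S)^2\in\cP_{8,-}$, not $\cP_{8,+}$, and $f^{(8)}$ is not an uncappable lowest-weight vector, so this span is not ``the subspace of uncappable rotation eigenvectors.''
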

\begin{proof}
By Lemma \ref{lem:EHMomementsIdentifyEH} and Remark \ref{rem:EHInsideUnitaryShadedPA}, we need only show that $S'$ satisfies the Extended Haagerup moments \eqref{eq:ExtendedHaagerupMoments} if and only if 
\eqref{eq:S2} and \eqref{eq:S2c-spherical} hold.
Clearly if \eqref{eq:S2} and \eqref{eq:S2c-spherical} hold, then $S'$ satisfies the Extended Haagerup moments \eqref{eq:ExtendedHaagerupMoments}.
Conversely, suppose $S'$ satisfies the Extended Haagerup moments \eqref{eq:ExtendedHaagerupMoments}.
By \cite[Prop.~3.7]{MR2979509}, $S'$ so \eqref{eq:S2} holds, together with the one and two strand jellyfish relations.
As the principal graphs must be those of Extended Haagerup, again by Lemma \ref{lem:EHMomementsIdentifyEH}, we can apply \cite[Eq.~(3.3)]{MR2979509} (essentially from \cite{MR2972458}), which gives \eqref{eq:S2c-spherical} above for $S'$.
\end{proof}

\begin{cor}
\label{cor:LopsidedHomomorphismsOutOfEH}
Planar algebra homomorphisms $\cE\cH_\bullet^{\cap \delta, 1} \to \cP_\bullet^{\cap \delta, 1}$ between the lopsided planar algebras are in bijection with choices of uncappable elements $S'\in \cP_{8,+}$ with rotational eigenvalue $-1$ satisfying \eqref{eq:S2} and
\begin{equation}
\label{eq:S2c}
\rho^{-1/2}(S)^2 = 
\frac{2}{5} \left(-\lambda^5 - 2 \lambda^3 + 3 \lambda\right) \rho^{-1/2}(S) + \left(\lambda^2 - 2\right) f^{(8)}
\end{equation}
rather than \eqref{eq:S2c-spherical}.
\end{cor}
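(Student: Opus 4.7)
The plan is to derive the corollary from Proposition \ref{prop:maps-out-of-EH} by transferring the characterization across the rescaling of cups and caps that converts the spherical convention to the lopsided one.

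The first step is to establish a canonical bijection between planar algebra homomorphisms $\cE\cH_\bullet \to \cP_\bullet$ (in the spherical convention) and planar algebra homomorphisms $\cE\cH_\bullet^{\cap \delta, 1} \to \cP_\bullet^{\cap \delta, 1}$. By the construction in \S\ref{sec:LopsidedConvention}, the rescaling \eqref{eq:RescaleCupsAndCaps} does not alter either the underlying shaded monoidal algebras or the box spaces; it only modifies the action of planar tangles by factors that depend on the tangle, not on the planar algebra. Hence the identity map on box spaces induces such a bijection via Corollary \ref{cor:SemisimpleShadedMonoidalAlgebraEquivalence}. This bijection need not respect $\dag$-structures, since the rescaled cups and caps are no longer adjoints of one another; this is precisely why the corollary characterizes ordinary planar algebra homomorphisms rather than planar $\dag$-algebra homomorphisms, and why the self-adjointness hypothesis of Proposition \ref{prop:maps-out-of-EH} is dropped. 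Uncappability and the rotational eigenvalue $-1$ are preserved, since capping is a vanishing condition and the full rotation tangle rescales by a single overall scalar.

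The second step is to verify that the remaining two equations transform correctly. Equation \eqref{eq:S2} involves only the stacking tangle and the Jones-Wenzl projector $\jw{8}$; stacking contains no cups or caps, and $\jw{8}$ is the same element of the unchanged vector space $\cP_{8,+}$ in either convention (being the unique orthogonal idempotent annihilated by all caps, a property whose \emph{existence} and \emph{characterization} transfer across the bijection). Thus \eqref{eq:S2} is unchanged.

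The main task, and the expected principal obstacle, is to check that \eqref{eq:S2c-spherical} rescales exactly into \eqref{eq:S2c}. The tangle computing $\rho^{-1/2}(S)$ involves a specific combination of shaded-above cups and caps, and forming $\rho^{-1/2}(S)^2$ stacks two such tangles. Under the lopsided rescaling with $x = \delta = [2]$ and $y = 1$, each shaded-above cap picks up a factor of $[2]$ and each shaded-above cup a factor of $[2]^{-1}$, while all other features are unchanged. The upshot is that each summand in \eqref{eq:S2c-spherical} rescales by a monomial in $[2]$, and the prefactors $[2]^{-1}$ in front of $\rho^{-1/2}(S)$ and $[2]^{-2}$ in front of $\jw{8}$ are precisely what is needed so that after rescaling the common factor from the left-hand side cancels and the equation becomes \eqref{eq:S2c}. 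The obstacle is the careful bookkeeping of the number of shaded-above cups and caps in standard-form representatives of the tangles producing $\rho^{-1/2}(S)^2$, $\rho^{-1/2}(S)$, and $\jw{8}$; this can be verified by direct computation and cross-checked via the rescaling of the loop moduli, since tracing both sides must yield consistent values in each convention.
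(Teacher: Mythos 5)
The paper gives no explicit proof of this corollary, and your route --- transporting the characterization of Proposition \ref{prop:maps-out-of-EH} across the $\cap\,\delta,1$ rescaling of \S\ref{sec:LopsidedConvention} --- is exactly the intended one. Your observations that a family of linear maps intertwines the spherical tangle actions if and only if it intertwines the lopsided ones (the rescaling constants being identical on source and target), that \eqref{eq:S2} is untouched (the stacking tangle has no critical points and $\jw{8}$ is the same vector in $\cP_{8,+}$, being still characterized as the unique idempotent killed by all caps), and that \eqref{eq:S2c-spherical} becomes \eqref{eq:S2c} are all correct. For the last point the bookkeeping is short: the lopsided half-click rotation is $[2]$ times the spherical one (one shaded-enclosed critical point of net positive sign), so writing $X_{\mathrm{lop}}=[2]\,X_{\mathrm{sph}}$ one gets $X_{\mathrm{lop}}^2=[2]^2 X_{\mathrm{sph}}^2 = [2]^2\bigl(c_1[2]^{-1}X_{\mathrm{sph}}+c_2[2]^{-2}\jw{8}\bigr)=c_1 X_{\mathrm{lop}}+c_2\jw{8}$, which is \eqref{eq:S2c}.

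The one genuine gap is your treatment of self-adjointness. You dismiss it on the grounds that the lopsided rescaling destroys the compatibility of $\dag$ with reflection of tangles, so the corollary ``must'' speak of plain homomorphisms. But the direction of the bijection that actually matters --- from an element $S'$ satisfying the listed conditions to a well-defined homomorphism out of $\cE\cH_\bullet$ --- is not a formal consequence of the presentation: it goes through Lemma \ref{lem:EHMomementsIdentifyEH} and \cite[Prop.~3.12]{MR2979509}, which derive the jellyfish relations from the moment conditions by showing that each relation $R$ satisfies $\langle R,R\rangle=0$. As Remark \ref{rem:UnitaryEssential} stresses, this positivity argument is essential and it requires $S'$ to be self-adjoint in the spherical picture. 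The antilinear involution $\dag$ on the vector spaces $\cP_{n,\pm}$ survives the rescaling untouched (only the tangle action changes), so the condition $S'=(S')^\dag$ still makes perfect sense in the lopsided setting and is not implied by uncappability, the rotational eigenvalue, \eqref{eq:S2}, and \eqref{eq:S2c}. Your proof therefore establishes the bijection only onto the set of \emph{self-adjoint} such elements; to prove the corollary as literally stated you would need either to carry this condition along explicitly (which is how the solutions in \S5.3--5.4 are in fact verified) or to show it follows from the remaining hypotheses, and neither is done.
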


\subsection{\texorpdfstring{$\mathcal{EH}_3$}{EH3}}

In this section, we use Corollary \ref{cor:LopsidedHomomorphismsOutOfEH} to find $\cE\cH_\bullet$ in the graph planar algebra of the bipartite graph
$$
\Gamma_3 
= 
\begin{tikzpicture}[baseline, scale=.7]
	\draw (-2,-1) -- (0,0);
	\draw (-2,1)--(0,0);
	\draw (0,0)--(3,0);
	\draw (3,0)--(4,.5);
	\draw (3,0)--(5,-1);
	\filldraw [fill]  (-2,-1) circle (1mm) node[below] {$2$};
	\filldraw [fill]  (-2,1) circle (1mm) node[above] {$1$};
	\filldraw [fill=white] (-1,-.5) circle (1mm) node[below] {$2$};
	\filldraw [fill=white] (-1,.5) circle (1mm) node[above] {$3$};
	\filldraw [fill]  (0,0) circle (1mm)node[above] {$6$};
	\filldraw [fill=white] (1,0) circle (1mm)node[above] {$5$};
	\filldraw [fill]  (2,0) circle (1mm) node[above] {$5$};
	\filldraw [fill=white] (3,0) circle (1mm) node[above] {$4$};
	\filldraw [fill]  (4,-.5) circle (1mm) node[below] {$4$};
	\filldraw [fill]  (4,.5) circle (1mm) node[above] {$3$};
	\filldraw [fill=white] (5,-1) circle (1mm) node[below] {$1$};
\end{tikzpicture}\,.
$$
The lowest weight eigenspace with 16 boundary points, and rotational eigenvalue -1, is 18 dimensional. 
An element in this eigenspace is determined by its values $c_i$ on the following loops $\ell_i$ based at unshaded/even vertices:
\begin{align*}
\ell_{1} & = 5655434556554345 & \ell_{2} & = 5543455622263626 \displaybreak[1] \\
\ell_{3} & = 4556265626365543 & \ell_{4} & = 5636265626554345 \displaybreak[1] \\
\ell_{5} & = 2636265626362636 & \ell_{6} & = 4556263626265543 \displaybreak[1] \\
\ell_{7} & = 4556222655554345 & \ell_{8} & = 2636263626263622 \displaybreak[1] \\
\ell_{9} & = 4345562626313655 & \ell_{10} & = 4345563626363655 \displaybreak[1] \\
\ell_{11} & = 4556313655454345 & \ell_{12} & = 5631365636554345 \displaybreak[1] \\
\ell_{13} & = 2636265626263136 & \ell_{14} & = 2226362631362636 \displaybreak[1] \\
\ell_{15} & = 2631362636362636 & \ell_{16} & = 5631362626554345 \displaybreak[1] \\
\ell_{17} & = 2631362626362636 & \ell_{18} & = 2226313622263136
\end{align*}

There are exactly two solutions to the equations, and these are related by $S' = - \overline{S}$, or by applying the unique graph automorphism.
The element $S$ has coefficients in $\mathbb{Q}(\mu)$, where $\mu$ is the root of $\mu ^{12}+718 \mu ^{10}+679145 \mu ^8+43340550 \mu ^6+43588750 \mu ^4-625000 \mu ^2+390625 = 0$ which is approximately $-0.229025 - 0.202916 i$. The values of $c_i$ written as polynomials in $\mu$ are quite horrific (coefficients rational numbers with numerators and denominators having up to 30 digits), so we instead express them directly in terms of their minimal polynomials. (The associated Mathematica notebook contains their values in the number field.) We use the notation $\lambda^x_{a_0,\ldots,a_k}$ to denote the root of $a_0 + a_1 \lambda + \cdots + a_k \lambda^k = 0$ which is closest to the approximate number $x$ (and we're careful to write $x$ with enough precision that this is unambiguous).
\begin{align*}
c_{1} & = \lambda_{1,0,112942,0,-1940695,0,-125}^{(0.0080256 i)} \displaybreak[1] \\
c_{2} & = \lambda_{\begin{subarray}{l}625,0,58550,0,1877265,0,24363782,0,119192086,0,-4303080,0,172225\\\mbox{}\end{subarray}}^{(0.1672-0.0995 i)} \displaybreak[1] \\
c_{3} & = \lambda_{\begin{subarray}{l}9765625,0,822187500,0,5692096250,0,704926450,0,34457185,0,774362,0,6889\\\mbox{}\end{subarray}}^{(0.03538+0.16258 i)} \displaybreak[1] \\
c_{4} & = \lambda_{\begin{subarray}{l}15625,0,47736250,0,11814953125,0,1219921150,0,49538050,0,927928,0,6889\\\mbox{}\end{subarray}}^{(0.03272-0.15038 i)} \displaybreak[1] \\
c_{5} & = \lambda_{25,0,4235,0,26582,0,-1}^{(0.0061335)} \displaybreak[1] \\
c_{6} & = \lambda_{\begin{subarray}{l}9765625,0,100312500,0,287121250,0,166019450,0,31036785,0,-421822,0,6889\\\mbox{}\end{subarray}}^{(0.10306+0.06133 i)} \displaybreak[1] \\
c_{7} & = \lambda_{5,0,183,0,-422,0,-1}^{(-0.048654 i)} \displaybreak[1] \\
c_{8} & = \lambda_{125,0,1490,0,137,0,-5}^{(-0.1672)} \displaybreak[1] \\
c_{9} & = \lambda_{\begin{subarray}{l}625,0,30300,0,164710,0,6266122,0,18530421,0,-2194130,0,70225\\\mbox{}\end{subarray}}^{(0.24287-0.03754 i)} \displaybreak[1] \\
c_{10} & = \lambda_{\begin{subarray}{l}15625,0,1045000,0,25515750,0,222706550,0,624079625,0,-1976682,0,6889\\\mbox{}\end{subarray}}^{(0.049520-0.029468 i)} \displaybreak[1] \\
c_{11} & = \lambda_{125,0,-205,0,-362,0,-1}^{(0.05260 i)} \displaybreak[1] \\
c_{12} & = \lambda_{\begin{subarray}{l}15625,0,5448750,0,470120625,0,259808550,0,42457870,0,-493928,0,6889\\\mbox{}\end{subarray}}^{(-0.09532-0.05673 i)} \displaybreak[1] \\
c_{13} & = \lambda_{\begin{subarray}{l}625,0,17950,0,679145,0,1733622,0,69742,0,-40,0,1\\\mbox{}\end{subarray}}^{(-0.045805+0.040583 i)} \displaybreak[1] \\
c_{14} & = \lambda_{25,0,-622,0,-543,0,-5}^{(0.09647 i)} \displaybreak[1] \\
c_{15} & = \lambda_{625,0,17450,0,365,0,-1}^{(-0.049520)} \displaybreak[1] \\
c_{16} & = \lambda_{\begin{subarray}{l}15625,0,3842500,0,55831750,0,-4013550,0,7389525,0,-273698,0,2809\\\mbox{}\end{subarray}}^{(-0.138433-0.021397 i)} \displaybreak[1] \\
c_{17} & = \lambda_{\begin{subarray}{l}390625,0,24156250,0,2220203125,0,1165172950,0,9182770,0,-608,0,1\\\mbox{}\end{subarray}}^{(0.013563+0.012017 i)} \displaybreak[1] \\
c_{18} & = \lambda_{5,0,-222,0,-279,0,-25}^{(-0.3117 i)} \displaybreak[1]
\end{align*}

It is then a simple matter to directly verify the equations (this takes less than a minute on a modern CPU); this verification can be found in \texttt{module-GPAs-EH3.nb}.


\subsection{\texorpdfstring{$\mathcal{EH}_4$}{EH4}}
In this section, we use Corollary \ref{cor:LopsidedHomomorphismsOutOfEH} to find $\cE\cH_\bullet$ in the graph planar algebra of the bipartite graph
$$
\Gamma_4
= 
\begin{tikzpicture}[baseline, scale=.7]
	\draw (-1,-.5) -- (0,0);
	\draw (-1,.5)--(0,0);
	\draw (0,0)--(5,0);
	\draw (5,0)--(6,.5);
	\draw (5,0)--(6,-.5);
	\draw (3,0)--(3,-1);
	\filldraw [fill=white] (-1,-.5) circle (1mm) node[below] {$1$};
	\filldraw [fill=white] (-1,.5) circle (1mm) node[above] {$2$};
	\filldraw [fill]  (0,0) circle (1mm) node[above] {$4$};
	\filldraw [fill=white] (1,0) circle (1mm) node[above] {$3$};
	\filldraw [fill]  (2,0) circle (1mm) node[above] {$5$};
	\filldraw [fill=white] (3,0) circle (1mm) node[above] {$5$};
	\filldraw [fill] (3,-1) circle (1mm) node[below] {$3$};
	\filldraw [fill]  (4,0) circle (1mm) node[above] {$6$} ;
	\filldraw [fill=white] (5,0) circle (1mm) node[above] {$4$};
	\filldraw [fill]  (6,-.5) circle (1mm) node[below] {$1$};
	\filldraw [fill]  (6,.5) circle (1mm) node[above] {$2$};
\end{tikzpicture}\,.
$$
The lowest weight eigenspace with 16 boundary points, and rotational eigenvalue -1, is 20 dimensional. 
An element in this eigenspace is determined by its values $c_i$ on the following loops $\ell_i$ based at unshaded/even vertices:
\begin{align*}
\ell_{1} & = 3553565355553424 & \ell_{2} & = 5653555356535653 \displaybreak[1] \\
\ell_{3} & = 3555535646553414 & \ell_{4} & = 5646535653564653 \displaybreak[1] \\
\ell_{5} & = 4146535534243556 & \ell_{6} & = 5553564146535653 \displaybreak[1] \\
\ell_{7} & = 4146535641465356 & \ell_{8} & = 4246553424355356 \displaybreak[1] \\
\ell_{9} & = 4246535534243556 & \ell_{10} & = 5553564246535653 \displaybreak[1] \\
\ell_{11} & = 5646535646424653 & \ell_{12} & = 4146535642465356 \displaybreak[1] \\
\ell_{13} & = 4246535642465356 & \ell_{14} & = 3556424146553424 \displaybreak[1] \\
\ell_{15} & = 5646535653564146 & \ell_{16} & = 5356424146535646 \displaybreak[1] \\
\ell_{17} & = 5642414646535653 & \ell_{18} & = 5641424646414653 \displaybreak[1] \\
\ell_{19} & = 5642414246424653 & \ell_{20} & = 4142414641424142
\end{align*}

There are four solutions to these equations, and the graph automorphism group acts freely and transitively on them. The solutions have
coefficients in $\mathbb{Q}(\mu)$, where $\mu$ is the root of $\mu ^{12}-74510 \mu ^{10}+1753550625 \mu ^8-8889717968750 \mu ^6+23050129394531250 \mu ^4+42850952148437500 \mu ^2+95367431640625 = 0$ which is approximately $-0.0472042 i$. One of the four solutions has coefficients:
\begin{align*}
c_{1} & = \lambda_{\begin{subarray}{l}3125,49250,56580,53520,1597,-200,53\\\mbox{}\end{subarray}}^{(0.04828+0.07374 i)} \displaybreak[1] \\
c_{2} & = \lambda_{\begin{subarray}{l}125,0,-1285982,0,-1789244179,0,-2699449\\\mbox{}\end{subarray}}^{(-0.038842 i)} \displaybreak[1] \\
c_{3} & = \lambda_{\begin{subarray}{l}3125,-18750,31575,-20540,4443,186,25\\\mbox{}\end{subarray}}^{(-0.02632-0.06233 i)} \displaybreak[1] \\
c_{4} & = \lambda_{5,0,2882,0,-249683,0,-625}^{(0.05003 i)} \displaybreak[1] \\
c_{5} & = \lambda_{\begin{subarray}{l}48828125,-195312500,386718750,-344687500,126334375,-3725000,-6388300,43560,201947,23420,1230,36,1\\\mbox{}\end{subarray}}^{(-0.063152-0.039778 i)} \displaybreak[1] \\
c_{6} & = \lambda_{\begin{subarray}{l}125,0,150048,0,92084512,0,8056764288,0,285286080768,0,296306688,0,20480\\\mbox{}\end{subarray}}^{(0.0086287 i)} \displaybreak[1] \\
c_{7} & = \lambda_{\begin{subarray}{l}125,0,197208,0,81755664,0,-661557632,0,3025487360,0,515469312,0,20480\\\mbox{}\end{subarray}}^{(0.40535 i)} \displaybreak[1] \\
c_{8} & = \lambda_{\begin{subarray}{l}125,3750,27250,-64700,141035,-2100,103848,105108,29242,2034,-122,-10,1\\\mbox{}\end{subarray}}^{(-0.30264+0.07970 i)} \displaybreak[1] \\
c_{9} & = \lambda_{\begin{subarray}{l}48828125,-195312500,386718750,-344687500,126334375,-3725000,-6388300,43560,201947,23420,1230,36,1\\\mbox{}\end{subarray}}^{(0.38771+0.10211 i)} \displaybreak[1] \\
c_{10} & = \lambda_{\begin{subarray}{l}125,0,150048,0,92084512,0,8056764288,0,285286080768,0,296306688,0,20480\\\mbox{}\end{subarray}}^{(-0.031052 i)} \displaybreak[1] \\
c_{11} & = \lambda_{5,0,1282,0,-4739,0,-5}^{(0.032477 i)} \displaybreak[1] \\
c_{12} & = \lambda_{1,0,-293,0,-118,0,-5}^{(-0.2194 i)} \displaybreak[1] \\
c_{13} & = \lambda_{\begin{subarray}{l}125,0,197208,0,81755664,0,-661557632,0,3025487360,0,515469312,0,20480\\\mbox{}\end{subarray}}^{(0.0063040 i)} \displaybreak[1] \\
c_{14} & = \lambda_{\begin{subarray}{l}15625,-37500,2375,-850,-832,-156,13\\\mbox{}\end{subarray}}^{(-0.1850-0.1190 i)} \displaybreak[1] \\
c_{15} & = \lambda_{\begin{subarray}{l}125,0,-9582,0,821981,0,-28226758,0,2200643514,0,16166708,0,26645\\\mbox{}\end{subarray}}^{(-0.069636 i)} \displaybreak[1] \\
c_{16} & = \lambda_{125,0,-2047,0,-50809,0,-5}^{(0.0099201 i)} \displaybreak[1] \\
c_{17} & = \lambda_{\begin{subarray}{l}125,0,455738,0,13472487051,0,-26481195508,0,28428109059,0,58134938,0,26645\\\mbox{}\end{subarray}}^{(-0.026344 i)} \displaybreak[1] \\
c_{18} & = \lambda_{\begin{subarray}{l}625,0,-74510,0,2805681,0,-22757678,0,94413330,0,280828,0,1\\\mbox{}\end{subarray}}^{(-0.00188817 i)} \displaybreak[1] \\
c_{19} & = \lambda_{\begin{subarray}{l}625,0,-74510,0,2805681,0,-22757678,0,94413330,0,280828,0,1\\\mbox{}\end{subarray}}^{(0.0544863 i)} \displaybreak[1] \\
c_{20} & = \lambda_{1,0,4982,0,-2155,0,-25}^{(0.1063 i)}
\end{align*}
Again, it is easy to verify this gives a solution, shown in \texttt{module-GPAs-EH4.nb}.

\appendix

\section{Constructing \texorpdfstring{$EH3$}{EH3} via a Q-system}

We now show that $1\oplus \jw{6}\in \cE\cH_2$ can be endowed with the structure of a Q-system.
To do so, we prove a result similar to \cite[Lemma 3.3]{MR2418197}.

\begin{defn}[{\cite[Def.~3.8]{MR1444286,MR3308880}}]
A Q-system in a rigid $\rm C^*$-tensor category $\scrC$ is an algebra object $(A,\mu,i)$ which satisfies the following properties:
\begin{itemize}
\item ($\rm C^*$-Frobenius) 
$(\id_A \otimes \mu)\circ (\mu^*\otimes \id_A) = \mu^*\circ \mu = (\mu\otimes \id_A)\circ (\id_A \otimes \mu^*)$,
\item (special)
$\mu\circ \mu^*$ is a (non-zero) multiple of $\id_A$, and
\item (standard) 
$i^*\circ i = \sqrt{\dim_\scrC(A)} \id_{1_\scrC}$ and $\mu\circ \mu^* = \sqrt{\dim_\scrC(A)} \id_A$.
\end{itemize}
\end{defn}

\begin{prop}
\label{prop:CanonicalQSystem}
Suppose $\sigma$ is a symmetrically self-dual object in a unitary fusion category $\scrC$ with $\dim(\sigma)>1$.
If $T \in \Hom(\sigma\otimes \sigma \to \sigma)$ and $b>0$ such that
\begin{itemize}
\item
(bigon)
$
\begin{tikzpicture}[baseline=-.6cm]
	\node (T1) [circle, fill=white, thick, draw]  at (0,0) {$T$};
	\node (T2) [circle, fill=white, thick, draw]  at (0,-1) {$T$};
	\node [above=1pt] at (T1.west) {$\star$};
	\node [below=1pt] at (T2.west) {$\star$};
	\draw (T1) [in=45,out=-45] to (T2);
	\draw (T1) [in=135,out=-135] to (T2);
	\draw (T1) -- (0,1);
	\draw (T2) -- (0,-2);
	\node at (-.6,-.5) {\scriptsize{$\sigma$}};
	\node at (.6,-.5) {\scriptsize{$\sigma$}};
	\node at (0,1.2) {\scriptsize{$\sigma$}};
	\node at (0,-2.2) {\scriptsize{$\sigma$}};
\end{tikzpicture}
=
b
\begin{tikzpicture}[baseline=-.1cm]
	\draw (0,-1) -- (0,1);
	\node at (0,-1.2) {\scriptsize{$\sigma$}};
\end{tikzpicture}
$
\item
(rotational invariance)
$
\begin{tikzpicture}[baseline=-.1cm]
	\node (S) [circle, fill=white, thick, draw]  at (0,0) {$T$};
	\node [below=1pt] at (S.west) {$\star$};
	\draw (S) -- (0,1);
	\draw (S) -- (-.6,-1);
	\draw (S) -- (.6,-1);
	\node at (-.6,-1.2) {\scriptsize{$\sigma$}};
	\node at (.6,-1.2) {\scriptsize{$\sigma$}};
	\node at (0,1.2) {\scriptsize{$\sigma$}};
\end{tikzpicture}
=
\begin{tikzpicture}[baseline=-.1cm]
	\node (S) [circle, fill=white, thick, draw]  at (0,0) {$T$};
	\node [below=1pt] at (S.east) {$\star$};
	\draw (S) -- (0,1);
	\draw (S) -- (-.6,-1);
	\draw (S) -- (.6,-1);
	\node at (-.6,-1.2) {\scriptsize{$\sigma$}};
	\node at (.6,-1.2) {\scriptsize{$\sigma$}};
	\node at (0,1.2) {\scriptsize{$\sigma$}};
\end{tikzpicture}
$
\item
(self-adjoint)
$
\begin{tikzpicture}[baseline=-.1cm, yscale=-1]
	\node (S) [circle, fill=white, thick, draw]  at (0,0) {$T^*$};
	\node [below=1pt] at (S.west) {$\star$};
	\draw (S) -- (0,1);
	\draw (S) -- (-.6,-1);
	\draw (S) -- (.6,-1);
	\node at (-.6,-1.2) {\scriptsize{$\sigma$}};
	\node at (.6,-1.2) {\scriptsize{$\sigma$}};
	\node at (0,1.2) {\scriptsize{$\sigma$}};
\end{tikzpicture}
=
\begin{tikzpicture}[baseline=-.1cm]
	\node (S) [circle, fill=white, thick, draw]  at (0,0) {$T$};
	\node [below=1pt] at (S.west) {$\star$};
	\draw (S) -- (0,1);
	\draw (S) -- (-.6,-1);
	\draw (S) [out=-60,in=-90] to (1,-.2) -- (1,1);
	\node at (-.6,-1.2) {\scriptsize{$\sigma$}};
	\node at (1,1.2) {\scriptsize{$\sigma$}};
	\node at (0,1.2) {\scriptsize{$\sigma$}};
\end{tikzpicture}
$, and
\item
(I=H)
$
\displaystyle
\begin{tikzpicture}[baseline=-.4cm]
\node (T1) [circle, fill=white, thick, draw]  at (0,.2) {$T$};
\node (T2) [circle, fill=white, thick, draw]  at (.8,-1) {$T$};
\node [below=1pt] at (T1.west) {$\star$};
\node [below=1pt] at (T2.west) {$\star$};
\draw (T1) -- (0,1);
\draw (T1) -- (-1.2,-1.6);
\draw (T1) -- (T2);
\draw (T2) -- (.4,-1.6);
\draw (T2) -- (1.2,-1.6);
\node at (-1.2,-1.8) {\scriptsize{$\sigma$}};
\node at (.4,-1.8) {\scriptsize{$\sigma$}};
\node at (1.2,-1.8) {\scriptsize{$\sigma$}};
\node at (.4,-.6) {\scriptsize{$\sigma$}};
\node at (0,1.2) {\scriptsize{$\sigma$}};
\end{tikzpicture}
-
\begin{tikzpicture}[baseline=-.4cm, xscale=-1]
\node (T1) [circle, fill=white, thick, draw]  at (0,.2) {$T$};
\node (T2) [circle, fill=white, thick, draw]  at (.8,-1) {$T$};
\node [below=1pt] at (T1.west) {$\star$};
\node [below=1pt] at (T2.west) {$\star$};
\draw (T1) -- (0,1);
\draw (T1) -- (-1.2,-1.6);
\draw (T1) -- (T2);
\draw (T2) -- (.4,-1.6);
\draw (T2) -- (1.2,-1.6);
\node at (-1.2,-1.8) {\scriptsize{$\sigma$}};
\node at (.4,-1.8) {\scriptsize{$\sigma$}};
\node at (1.2,-1.8) {\scriptsize{$\sigma$}};
\node at (.4,-.6) {\scriptsize{$\sigma$}};
\node at (0,1.2) {\scriptsize{$\sigma$}};
\end{tikzpicture}
=
\frac{b}{\dim(\sigma)-1}
\left(
\begin{tikzpicture}[baseline=-.1cm]
	\draw (-.4,-1) -- (-.4,-.7) arc (180:0:.4cm) -- (.4,-1);
	\draw (.8,-1) .. controls ++(90:1cm) and ++(270:1cm) .. (.2,1);
	\node at (-.4,-1.2) {\scriptsize{$\sigma$}};
	\node at (.4,-1.2) {\scriptsize{$\sigma$}};
	\node at (.8,-1.2) {\scriptsize{$\sigma$}};
	\node at (.2,1.2) {\scriptsize{$\sigma$}};
\end{tikzpicture}
-
\begin{tikzpicture}[baseline=-.1cm]
	\draw (-.4,-1) -- (-.4,-.7) arc (180:0:.4cm) -- (.4,-1);
	\draw (-.8,-1) .. controls ++(90:1cm) and ++(270:1cm) .. (-.2,1);
	\node at (-.8,-1.2) {\scriptsize{$\sigma$}};
	\node at (-.4,-1.2) {\scriptsize{$\sigma$}};
	\node at (.4,-1.2) {\scriptsize{$\sigma$}};
	\node at (-.2,1.2) {\scriptsize{$\sigma$}};
\end{tikzpicture}
\right)
$
\end{itemize}
then $1\oplus \sigma$ can be canonically endowed with the structure of a Q-system.
If moreover $\dim(\Hom_\scrC(1, \sigma)) =0$, then $1\oplus \sigma$ is irreducible.
\end{prop}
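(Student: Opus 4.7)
The plan is to define an algebra structure on $A = 1 \oplus \sigma$ by explicit data and then verify each Q-system axiom in turn. Set $d := 1 + \dim(\sigma) = \dim_\scrC(A)$, and let the unit $i : 1 \to A$ be $d^{1/4}$ times the canonical isometric inclusion $1 \hookrightarrow A$, so that standardness $i^* \circ i = \sqrt{d}\,\id_1$ holds. Define the multiplication $\mu : A \otimes A \to A$ componentwise: on the three summands of $A \otimes A$ involving the unit object $1$, let $\mu$ be the canonical identification maps dictated by the unit axioms; on the $\sigma \otimes \sigma$ summand, let $\mu$ restrict to $c_1\, \ev_\sigma$ into the $1$ summand and to $c_2\, T$ into the $\sigma$ summand, for positive scalars $c_1, c_2$ to be determined by the remaining axioms.

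The core calculation is associativity on $\sigma^{\otimes 3}$. Decomposing by target simple object, the component $\sigma^{\otimes 3} \to \sigma$ reduces to the equation
\[
c_2^2 \bigl[T \circ (T \otimes \id_\sigma) - T \circ (\id_\sigma \otimes T)\bigr]
=
c_1 \bigl[\id_\sigma \otimes \ev_\sigma - \ev_\sigma \otimes \id_\sigma\bigr],
\]
which is precisely the I=H hypothesis provided one takes $c_1 = \tfrac{b\, c_2^2}{\dim(\sigma) - 1}$; the component $\sigma^{\otimes 3} \to 1$ reduces to $\ev_\sigma \circ (T \otimes \id_\sigma) = \ev_\sigma \circ (\id_\sigma \otimes T)$, which is an immediate consequence of rotational invariance of $T$. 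The unit axioms hold by construction. For the Frobenius identity, self-adjointness of $T$ together with the symmetric self-duality of $\sigma$ allows us to express $\mu^*$ in terms of $\coev_\sigma$ and $T^*$; the Frobenius equations then decompose over target simples, and each nontrivial component follows either trivially from the unit structure or from another application of rotational invariance of $T$. Specialness $\mu \circ \mu^* = \sqrt{d}\, \id_A$ is checked componentwise: on the $1$ summand one recovers the scalar $d$, and on the $\sigma$ summand one combines the closed-loop value $\dim(\sigma)$ (coming from the $\ev_\sigma$ leg) with the bigon value $b$ (coming from the $T$ leg), and this relation pins down the absolute size of $c_2$.

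The main obstacle is showing that the scalar constraints imposed by associativity, Frobenius, specialness, and standardness are mutually consistent: the I=H relation determines the ratio $c_1/c_2^2$, while specialness and standardness determine the absolute scales, and these pieces must be compatible. The hypotheses $b > 0$ and $\dim(\sigma) > 1$ are both essential, the former to extract real positive scalars $c_1, c_2$ (as required for a $\Cstar$ Q-system) and the latter to avoid dividing by zero in the I=H constant. Finally, since $\Hom_\scrC(1, A) = \Hom_\scrC(1, 1) \oplus \Hom_\scrC(1, \sigma)$, the extra hypothesis $\dim \Hom_\scrC(1, \sigma) = 0$ forces this space to be one-dimensional, which is exactly the condition that the resulting Q-system is irreducible.
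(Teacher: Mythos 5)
Your proposal is correct and follows essentially the same route as the paper: define $\mu$ componentwise with $\ev_\sigma$ and a scalar multiple of $T$ on the $\sigma\otimes\sigma$ summand, let the I=H relation supply the $\sigma$-component of associativity (fixing the scalar as $c^2=(\dim(\sigma)-1)/b$), use rotational invariance for the $1$-component and the Frobenius property, and verify specialness and standardness by direct computation. The only differences are cosmetic --- you carry two scalars and normalize the unit at the outset, whereas the paper fixes $c_1=1$, normalizes to a standard Q-system at the end, and notes specialness also follows automatically from $\Hom(1,A)$ being one-dimensional.
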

\begin{remark}
If $\sigma\in\scrC$ is simple, then the existence of $b>0$ such that the bigon axiom holds for $T$ follows formally from the rotational invariance and self-adjoint axioms.
\end{remark}
\begin{proof}
Let $A=1\oplus \sigma$.
We need to construct maps $\mu: A\otimes A \to A$ and $i: 1\to A$ such that $(A,\mu,i)$ is a Q-system.
We define $\mu$ as follows: 
$$
\begin{array}{c|cc}
1 & 1 & \sigma
\\\hline
1 & \id_1 & 0
\\
\sigma & 0 &\ev_\sigma
\end{array}
\qquad\qquad
\begin{array}{c|cc}
\sigma & 1 & \sigma
\\\hline 
1 & 0 & \id_\sigma
\\
\sigma & \id_\sigma & c T
\end{array}
$$
We now solve for $c$ using associativity.
The only interesting diagram to check is when the boundaries are all $\sigma$, which simplifies to
$$
\displaystyle
\begin{tikzpicture}[baseline=-.4cm]
\node (T1) [circle, fill=white, thick, draw]  at (0,.2) {$T$};
\node (T2) [circle, fill=white, thick, draw]  at (.8,-1) {$T$};
\node [below=1pt] at (T1.west) {$\star$};
\node [below=1pt] at (T2.west) {$\star$};
\draw (T1) -- (0,1);
\draw (T1) -- (-1.2,-1.6);
\draw (T1) -- (T2);
\draw (T2) -- (.4,-1.6);
\draw (T2) -- (1.2,-1.6);
\node at (-1.2,-1.8) {\scriptsize{$\sigma$}};
\node at (.4,-1.8) {\scriptsize{$\sigma$}};
\node at (1.2,-1.8) {\scriptsize{$\sigma$}};
\node at (.4,-.6) {\scriptsize{$\sigma$}};
\node at (0,1.2) {\scriptsize{$\sigma$}};
\end{tikzpicture}
-
\begin{tikzpicture}[baseline=-.4cm, xscale=-1]
\node (T1) [circle, fill=white, thick, draw]  at (0,.2) {$T$};
\node (T2) [circle, fill=white, thick, draw]  at (.8,-1) {$T$};
\node [below=1pt] at (T1.west) {$\star$};
\node [below=1pt] at (T2.west) {$\star$};
\draw (T1) -- (0,1);
\draw (T1) -- (-1.2,-1.6);
\draw (T1) -- (T2);
\draw (T2) -- (.4,-1.6);
\draw (T2) -- (1.2,-1.6);
\node at (-1.2,-1.8) {\scriptsize{$\sigma$}};
\node at (.4,-1.8) {\scriptsize{$\sigma$}};
\node at (1.2,-1.8) {\scriptsize{$\sigma$}};
\node at (.4,-.6) {\scriptsize{$\sigma$}};
\node at (0,1.2) {\scriptsize{$\sigma$}};
\end{tikzpicture}
=
\frac{1}{c^2}
\left(
\begin{tikzpicture}[baseline=-.1cm]
	\draw (-.4,-1) -- (-.4,-.7) arc (180:0:.4cm) -- (.4,-1);
	\draw (.8,-1) .. controls ++(90:1cm) and ++(270:1cm) .. (.2,1);
	\node at (-.4,-1.2) {\scriptsize{$\sigma$}};
	\node at (.4,-1.2) {\scriptsize{$\sigma$}};
	\node at (.8,-1.2) {\scriptsize{$\sigma$}};
	\node at (.2,1.2) {\scriptsize{$\sigma$}};
\end{tikzpicture}
-
\begin{tikzpicture}[baseline=-.1cm]
	\draw (-.4,-1) -- (-.4,-.7) arc (180:0:.4cm) -- (.4,-1);
	\draw (-.8,-1) .. controls ++(90:1cm) and ++(270:1cm) .. (-.2,1);
	\node at (-.8,-1.2) {\scriptsize{$\sigma$}};
	\node at (-.4,-1.2) {\scriptsize{$\sigma$}};
	\node at (.4,-1.2) {\scriptsize{$\sigma$}};
	\node at (-.2,1.2) {\scriptsize{$\sigma$}};
\end{tikzpicture}
\right)
$$
Hence we must have $c = \pm \left(\frac{\dim(\sigma)-1}{b}\right)^{1/2}$.
Notice that these two choices for $c$ give algebras $A_\pm$ which are isomorphic via $(\id_1,-\id_\sigma): A_{\pm}\to A_{\mp}$.
Without loss of generality, we take $A = A_+$.

We see that $i = (\id_1, 0) : 1_\scrC \to A= 1_\scrC \oplus \sigma$ is the unique unit such that $(A, \mu, i)$ is an algebra.
By construction, $\mu$ is rotationally invariant and self-adjoint (as in hypotheses of this propostion).
Together with the facts that $(A,\mu,i)$ is an algebra and $(A, \mu^*, i^*)$ is a coalgebra, this immediately implies the $\rm C^*$-Frobenius property.
It is now straightforward to check that $(A, \mu, i)$ is actually special.
Indeed, this follows automatically as $\Hom(1_\scrC, A)$ is one dimensional c.f.~\cite[Lem.~3.3]{MR3308880}.
Finally, another straightforward calculation shows that $i^*\circ i = \id_{1_\scrC}$ and 
$$
\mu\circ \mu^*= (1+\dim(\sigma)) \id_{1_\scrC} + (2+c^2b)\id_\sigma = \dim_\scrC(A) \id_A.
$$
Hence normalizing by a factor of $\sqrt{\dim_\scrC(A)}$ yields a (standard) Q-system.

The final statement now follows since $\dim(\Hom_\scrC(1,1\oplus \sigma))=1$.
\end{proof}

\subsection{Constructing the Q-system}
\label{sec:ConstructingQSystem}

Below, we use the notational convention that a solid bar across a number of strings denotes the Jones-Wenzl idempotent on that number of strands.
For example,
$$
\begin{tikzpicture}[baseline=-.1cm]
	\draw (-.2,-.4) -- (-.2,.4);
	\draw (.2,-.4) -- (.2,.4);
	\draw[very thick] (-.4,0) -- (.4,0);
	\node at (-.2,-.6) {\scriptsize{$3$}};
	\node at (.2,-.6) {\scriptsize{$3$}};
\end{tikzpicture}
=
\begin{tikzpicture}[baseline=-.1cm]
	\draw (0,-.6) -- (0,.6);
	\filldraw[fill=white,thick] (-.4,-.3) rectangle (.4,.3);
	\node at (0,0) {$\JW{6}$};
	\node at (0,-.8) {\scriptsize{$6$}};
	\node at (0,.8) {\scriptsize{$6$}};
\end{tikzpicture}
\,.
$$

We define the following elements in $\Hom(\jw{6}\otimes \jw{6} \to \jw{6})$:
$$
T_0
=
\begin{tikzpicture}[baseline=-.1cm]
	\draw (-.4,-1) -- (-.4,-.7) arc (180:0:.4cm) -- (.4,-1);
	\draw (-.8,-1) .. controls ++(90:1cm) and ++(270:1cm) .. (-.2,1);
	\draw (.8,-1) .. controls ++(90:1cm) and ++(270:1cm) .. (.2,1);
	\draw[very thick] (-1,-.7) -- (-.2,-.7);
	\draw[very thick] (1,-.7) -- (.2,-.7);
	\draw[very thick] (-.4,.7) -- (.4,.7);
	\node at (-.8,-1.2) {\scriptsize{$3$}};
	\node at (-.4,-1.2) {\scriptsize{$3$}};
	\node at (.4,-1.2) {\scriptsize{$3$}};
	\node at (.8,-1.2) {\scriptsize{$3$}};
	\node at (.2,1.2) {\scriptsize{$3$}};
	\node at (-.2,1.2) {\scriptsize{$3$}};
\end{tikzpicture}
\qquad\qquad
T_1
=
\begin{tikzpicture}[baseline=-.1cm]
	\node (S) [circle, fill=white, thick, draw]  at (0,0) {$S$};
	\node [below=1pt] at (S.west) {$\star$};
	\draw (S) -- (0,1);
	\draw (S) -- (-.4,-1);
	\draw (S) -- (.4,-1);
	\draw (.8,-1) .. controls ++(90:1cm) and ++(270:1cm) .. (.4,1);
	\draw[very thick] (0,-.7) -- (1,-.7);
	\draw[very thick] (-.2,.7) -- (.6,.7);
	\node at (-.4,-1.2) {\scriptsize{$6$}};
	\node at (.4,-1.2) {\scriptsize{$5$}};
	\node at (.8,-1.2) {\scriptsize{$1$}};
	\node at (0,1.2) {\scriptsize{$5$}};
	\node at (.4,1.2) {\scriptsize{$1$}};
\end{tikzpicture}
\qquad\qquad
T_2
=
\begin{tikzpicture}[baseline=-.1cm, xscale=-1]
	\node (S) [circle, fill=white, thick, draw]  at (0,0) {$S$};
	\node [below=-4pt] at (S.south) {$\star$};
	\draw (S) -- (0,1);
	\draw (S) -- (-.4,-1);
	\draw (S) -- (.4,-1);
	\draw (.8,-1) .. controls ++(90:1cm) and ++(270:1cm) .. (.4,1);
	\draw[very thick] (0,-.7) -- (1,-.7);
	\draw[very thick] (-.2,.7) -- (.6,.7);
	\node at (-.4,-1.2) {\scriptsize{$6$}};
	\node at (.4,-1.2) {\scriptsize{$5$}};
	\node at (.8,-1.2) {\scriptsize{$1$}};
	\node at (0,1.2) {\scriptsize{$5$}};
	\node at (.4,1.2) {\scriptsize{$1$}};
\end{tikzpicture}
\qquad\qquad
T_3
=
\begin{tikzpicture}[baseline=-.1cm]
	\node (S) [circle, fill=white, thick, draw]  at (0,.2) {$S$};
	\node [below=1pt] at (S.east) {$\star$};
	\draw (S) -- (0,1);
	\draw (S) -- (-.8,-1);
	\draw (S) -- (.8,-1);
	\draw (-.4,-1) -- (-.4,-.8) arc (180:0:.4cm) -- (.4,-1);
	\draw[very thick] (.2,-.8) -- (1,-.8);
	\draw[very thick] (-.2,-.8) -- (-1,-.8);
	\node at (-.8,-1.2) {\scriptsize{$5$}};
	\node at (.8,-1.2) {\scriptsize{$5$}};
	\node at (0,1.2) {\scriptsize{$6$}};
\end{tikzpicture}
$$

\begin{fact}
Recall the following formula for coefficients in the Jones-Wenzl idempotent $\jw{k}$: 
$$
\underset{\in f^{(k)}}{\coeff}
\left(
\begin{tikzpicture}[baseline = -.1cm, yscale=.5]
	\draw (-.5,-.8)--(-.5,.8);
	\draw (-.3,.8) arc (-180:0:.2cm);
	\draw (-.3,-.8)--(.3,.8);
	\draw (-.1,-.8) arc (180:0:.2cm);
	\draw (.5,-.8)--(.5,.8);
	\node at (-.7,0) {{\scriptsize{$a$}}};
	\node at (-.2,0) {{\scriptsize{$b$}}};
	\node at (.7,0) {{\scriptsize{$c$}}};
\end{tikzpicture}
\right)
=
(-1)^{b+1}\frac{[a+1][c+1]}{[k]}.
$$
\end{fact}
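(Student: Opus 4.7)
The plan is to derive the formula by induction using Wenzl's recursion
\[
\jw{k} \;=\; \jw{k-1}\otimes \id_1 \;-\; \frac{[k-1]}{[k]}\,(\jw{k-1}\otimes \id_1)\circ e_{k-1}\circ(\jw{k-1}\otimes \id_1),
\]
where $e_{k-1}$ is the Temperley-Lieb cap-cup between the last two strands. First I would fix the interpretation of the diagram in question: it is the Temperley-Lieb basis element $D_{a,b,c}\in TL_k$ (with $a+b+c=k$) that has $a$ vertical strands, then $b$ strands that are capped at the top and re-cupped at the bottom (in a nested half-moon), then $c$ vertical strands. The coefficient we want is $\langle D_{a,b,c},\jw{k}\rangle$, meaning the scalar in front of $D_{a,b,c}$ when $\jw{k}$ is expanded in the Temperley-Lieb basis.

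The core inductive step will split into two cases according to whether the rightmost strand is part of the $c$ block (i.e.\ $c\geq 1$) or is the rightmost strand of the capped $b$ block (i.e.\ $c=0$, $b\geq 1$). In the first case, the second term in Wenzl's recursion produces a factor which, after straightening the diagram, only contributes to configurations with $c\geq 2$, while the first term just reads off the coefficient of $D_{a,b,c-1}\in \jw{k-1}$ in $TL_{k-1}$. Inducting, we get
\[
\underset{\in \jw{k}}{\coeff}(D_{a,b,c})
\;=\;
(-1)^{b+1}\frac{[a+1][c]}{[k-1]}
\;-\;\frac{[k-1]}{[k]}\cdot (-1)^{b+1}\frac{[a+1][c-1]}{[k-1]},
\]
and the identity $[c][k]-[c-1][k-1] = [k-c+1][c+1]-[k-c][c]=[c+1]\cdot \text{(something)}$ collapses to $[a+1][c+1]/[k]$ using $a+b+c=k$ and the standard quantum integer identity $[m+n]+[m-n] = [m]\cdot([n+1]+[n-1])/[1]$, or more directly $[c+1][k-c]-[c][k-c-1] = [k]$ style manipulations. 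In the second case ($c=0$), the recursion is applied after a cyclic relabeling (or by appealing to the left-right symmetry of $\jw{k}$, which exchanges $a$ and $c$).

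The base of the induction is $b=1$, $k = a+1+c$, where $D_{a,1,c}$ is the element $e_{a+1}$ (a single cup-cap at position $a+1$). For this element, the coefficient $[a+1][c+1]/[k]$ with sign $+1$ is precisely the scalar appearing in the explicit Wenzl recursion formula (this is essentially the defining equation for the recursion once one iterates it $a$ times from the left and $c$ times from the right). The bulk of the work is the algebraic verification that the quantum-integer combination in the inductive step collapses to the stated closed form; this is the main obstacle, but it is purely a manipulation of the identities $[m][n]=[m+n-1]+[m+n-3]+\cdots+[|m-n|+1]$ and $[m+1][n]-[m][n+1]=[n-m]$ (Pascal-type relations for quantum integers), and the careful bookkeeping of signs $(-1)^{b+1}$ as the size of the middle block is inherited unchanged through the recursion.
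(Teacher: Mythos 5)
The paper does not actually prove this statement: it is recalled as a standard fact about Jones--Wenzl coefficients (the single-cap/single-cup case of the Frenkel--Khovanov/Ocneanu coefficient formula, used already in \cite{MR2979509} and \cite{MR2559686}), so your attempt has to stand on its own, and it has two concrete problems. The first is that you have misread the diagram. In the picture there is exactly \emph{one} cap on the top boundary and \emph{one} cup on the bottom boundary, and $b$ counts the diagonal through-strands passing between them, so that $k = a+b+c+2$, not $a+b+c$. You can confirm this against the two uses of the Fact in the linear-independence lemma immediately following it: the first diagram there is the case $a=4,b=0,c=0$ in $\jw{6}$, giving $(-1)^{1}[5][1]/[6]=-[5]/[6]$, and the second is $a=0,b=4,c=0$, giving $(-1)^{5}[1][1]/[6]=-1/[6]$. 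Your reading --- a nested block of $b$ strands capped at the top and re-cupped at the bottom with $a+b+c=k$ --- describes a different family of diagrams whose coefficients for $b\geq 4$ are not of the stated form, and your base case is inconsistent with your own convention: a single strand cannot be ``capped,'' and the element $e_{a+1}$ that you identify with $D_{a,1,c}$ would be $D_{a,2,c}$ in your notation (it is the case $b=0$ in the paper's).

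Second, even granting the setup, the inductive step does not close. The identity your displayed recursion requires is $\frac{[c]}{[k-1]}-\frac{[c-1]}{[k]}=\frac{[c+1]}{[k]}$, which (using $[c+1]+[c-1]=[2][c]$) is equivalent to $[k]=[2][k-1]$, i.e.\ $[k-2]=0$; this is false in general, so the promised ``collapse'' to $[a+1][c+1]/[k]$ does not occur. The underlying issue is that the second term of Wenzl's recursion, $(\jw{k-1}\otimes\id)\,e_{k-1}\,(\jw{k-1}\otimes\id)$, does not contribute to the target diagram simply via the coefficient of $D_{a,b,c-1}$: one must track which \emph{pairs} of diagrams of $\jw{k-1}$ compose through $e_{k-1}$ to the given diagram, which introduces partial-trace factors of the form $[c]/[k-1]$ that your sketch omits. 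The induction-on-Wenzl strategy can certainly be made to work for the correctly identified diagram, but the bookkeeping needs to be redone from scratch; alternatively you could simply cite the known coefficient formula, which is all the paper itself does.
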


\begin{lem}
The set $\{T_0,T_1,T_2,T_3\}$ is linearly independent and is thus a basis for $\Hom(f^{(6)}\otimes f^{(6)}\to f^{(6)})$.
\end{lem}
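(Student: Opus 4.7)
The plan is as follows. By the $\cE\cH_2$ fusion rules recorded in Section~\ref{sec:StructureOfEH2}, $\jw{6}\otimes\jw{6} = 1 \oplus W \oplus Z$ with $W = \jw{6}\oplus P \oplus Q$ and $Z$ containing $\jw{6}$ with multiplicity $3$, so $\dim\Hom(\jw{6}\otimes\jw{6}\to\jw{6}) = 0 + 1 + 3 = 4$. It therefore suffices to prove linear independence of $\{T_0,T_1,T_2,T_3\}$; they will then automatically form a basis.

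The approach is to compute the $4\times 4$ Gram matrix $G_{ij} := \tr(T_i \circ T_j^\ast)$ with respect to the categorical trace and show that $\det G \neq 0$. Each entry is a closed Extended Haagerup diagram containing at most two copies of the generator $S$---since $T_0$ contains no $S$ and each of $T_1, T_2, T_3$ contains exactly one---together with Jones-Wenzl projections and caps. The skein relations recalled in Section~\ref{sec:ExtendedHaagerupSubfactor} (namely that $S$ is self-adjoint, uncappable, a rotational eigenvector with eigenvalue $-1$, and satisfies $S^2 = \jw{8}$) suffice to reduce every such closed diagram to a pure Temperley-Lieb evaluation.

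The computation splits naturally. First, $G_{00}$ is a pure Temperley-Lieb diagram whose value is immediate in terms of the loop parameter $[2]$. Next, the off-diagonal entries $G_{0i}$ for $i=1,2,3$ contain exactly one copy of $S$; I expect these to vanish, because pairing a Temperley-Lieb diagram against a diagram with one copy of $S$ produces a scalar of the form $\tr(A\cdot S)$ with $A$ in the Temperley-Lieb subalgebra, and $S$ is orthogonal to Temperley-Lieb (being uncappable and having non-trivial rotational eigenvalue). Finally, the $3\times 3$ block $(G_{ij})_{i,j=1}^3$ contains two copies of $S$; after using rotational invariance of $S$ to bring the two copies into a stackable configuration, one applies $S^2 = \jw{8}$ to collapse them, once again reducing to a Temperley-Lieb trace.

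The main obstacle is patience in the bookkeeping---tracking the rotational factors needed to align the two copies of $S$ in the $3\times 3$ block, together with the resulting Temperley-Lieb evaluations at loop value $[2]$. In practice the cleanest route is to realize each $T_i$ as an explicit element of the graph planar algebra $\cG\cP\cA(\Gamma_2)$ via the embedding of $\cE\cH_\bullet$ already used throughout the paper; linear independence of $\{T_0,T_1,T_2,T_3\}$ then reduces to a finite matrix rank computation, which can be verified directly.
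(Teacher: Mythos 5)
Your route is genuinely different from the paper's and would work, but as written it stops short of a proof. The paper does not compute a Gram matrix. Instead it supposes $\sum_{j=0}^3\lambda_jT_j=0$, expands the Jones--Wenzl projectors, and extracts the coefficient of one specific annular diagram $C$ (a copy of $S$ with a single cap attached); uncappability of $S$ kills all but two terms, and the explicit formula for Jones--Wenzl coefficients yields $\lambda_3=[5]\lambda_1$. Rotating (the $T_i$ for $i=1,2,3$ are rotations of one another) gives $\lambda_1=[5]\lambda_2$ and $\lambda_2=[5]\lambda_3$, which forces $\lambda_1=\lambda_2=\lambda_3=0$ since $[5]^3\neq 1$, and then $\lambda_0=0$. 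This uses no closed-diagram evaluations at all. Your preliminary observations are all sound: the dimension count $0+1+3=4$ from the fusion table is exactly what is needed for the ``thus a basis'' clause; $\tr(T_0^*T_i)=0$ for $i\geq 1$ because after expanding the projectors every term planarly pairs the sixteen legs of the lone $S$ with each other and hence caps it; and the entries of the $3\times 3$ block reduce to rotations of $\tr(S^2)=[9]$ times powers of $[2]$, since any surviving term must pair all legs of one $S$ with all legs of the other.

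The gap is that you never establish $\det G\neq 0$. Since the trace form on $\Hom(\jw{6}\otimes\jw{6}\to\jw{6})$ is positive definite ($\cE\cH_2$ being unitary), nonvanishing of $\det G$ is \emph{equivalent} to the linear independence you are trying to prove, so ``I expect the determinant to be nonzero'' begs the question; you have not even verified $G_{11}\neq 0$, i.e.\ that attaching the Jones--Wenzl projectors to $S$ does not annihilate it. To finish you would have to carry out the evaluation of the circulant $3\times 3$ block (feasible, using $\tr(S^2)=[9]$ and the rotational eigenvalue, but it is real bookkeeping), or actually run the graph-planar-algebra rank computation you mention. The paper's coefficient-extraction trick is worth internalizing precisely because it replaces all of this with three one-line linear equations.
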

\begin{proof}
Suppose $\sum_{j=0}^3 \lambda_j T_j = 0$.
We calculate the coefficient of the following diagrams in $\sum_{j=0}^3 \lambda_j T_j$:
$$
C:=
\begin{tikzpicture}[baseline=-.1cm]
\node (S) [circle, fill=white, thick, draw]  at (0,0) {$S$};
\node [below=1pt] at (S.west) {$\star$};
\draw (S) -- (0,1);
\draw (S) -- (0,-1);
\draw (.4,-1) arc (180:0:.2cm);
\node at (0,-1.2) {\scriptsize{$10$}};
\node at (0,1.2) {\scriptsize{$6$}};
\node at (.4,-1.2) {\scriptsize{$1$}};
\node at (.8,-1.2) {\scriptsize{$1$}};
\end{tikzpicture}
$$
By expanding the Jones Wenzls in $\sum_{j=0}^3 \lambda_j T_j$ and noticing that almost all positions of cups or caps put a cap on $S$ (which is uncappable), we calculate that the coefficient in $\sum_{j=0}^3 \lambda_j T_j$ of $C$ is given by
$$
\lambda_1
\underset{\in f^{(6)}}{\coeff}\left(\,
\begin{tikzpicture}[baseline = -.1cm]
	\draw[thick] (-.6,-.4)--(-.6,.4)--(.6,.4)--(.6,-.4)--(-.6,-.4);
	\draw (0,.4) arc (-180:0:.2cm);
	\draw (-.4,-.4)--(-.4,.4);
	\draw (0,-.4) arc (180:0:.2cm);
	\node at (-.2,0) {{\scriptsize{$4$}}};
\end{tikzpicture}
\,\right)
+
\lambda_3
\omega_S^3
\underset{\in f^{(6)}}{\coeff}\left(\,
\begin{tikzpicture}[baseline = -.1cm]
	\draw[thick] (-.6,-.4)--(-.6,.4)--(.8,.4)--(.8,-.4)--(-.6,-.4);
	\draw (-.2,-.4)--(.4,.4);
	\draw (.2,-.4) arc (180:0:.2cm);
	\draw (-.4,.4) arc (-180:0:.2cm);
	\node at (.3,0) {{\scriptsize{$4$}}};
\end{tikzpicture}
\,\right)
=-\lambda_1
\frac{[5]}{[6]}
+(-1)^3 \lambda_3 \frac{-1}{[6]}
=
\frac{\lambda_3-[5]\lambda_1}{[6]}.
$$
This quantity must be zero as $\sum_{j=0}^3 \lambda_j T_j=0$.
Now considering the 6-click rotations of $\sum_{j=0}^3 \lambda_j T_j$ and $C$, we also see that
$$
\frac{\lambda_1-[5]\lambda_2}{[6]}=0
\qquad\text{and}\qquad
\frac{\lambda_2-[5]\lambda_3}{[6]}=0.
$$
This is only possible when $\lambda_1 = \lambda_2=\lambda_3 = 0$.
We now conclude that $\lambda_0=0$ as well.
\end{proof}

\begin{cor}
\label{cor:FormOfT}
Any rotationally invariant map $\jw{6}\otimes \jw{6}\to \jw{6}$ is of the form $\alpha T_0 + \beta (T_1+T_2+T_3)$.
\end{cor}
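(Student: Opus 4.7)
The plan is to use the representation-theoretic action of the cyclic $3$-click rotation on the $4$-dimensional space $\Hom(\jw{6}\otimes \jw{6}\to \jw{6})$ afforded by the pivotal self-duality of $\jw{6}$. Since the previous lemma shows $\{T_0,T_1,T_2,T_3\}$ is a basis, it suffices to determine the matrix of the rotation in this basis and then read off its fixed subspace.

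First I would verify that $T_0$ is fixed by the $3$-click rotation. This is essentially visible from its definition: $T_0$ is built entirely from Jones--Wenzl idempotents and cup/cap diagrams, and the three external strands enter symmetrically, so rotating the diagram by $2\pi/3$ yields the same planar picture (up to isotopy inside the Jones--Wenzl boxes, which is allowed since the Jones--Wenzl is rotationally invariant). The only point requiring care is bookkeeping of the internal Jones--Wenzl projections of the appropriate sizes, but no nontrivial coefficient can arise because no external boundary loops are involved.

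Next I would show that the rotation cyclically permutes $T_1,T_2,T_3$. Each $T_i$ is built from a single copy of the $8$-box $S$ together with Jones--Wenzl projections and a cap, and the three diagrams differ precisely by the position of the distinguished interval (the $\star$) on $S$ with respect to the three external $\jw{6}$-strands. Because $S$ itself is a rotational eigenvector with eigenvalue $-1$ under the $\pi/8$-click rotation on $8$-boxes, a careful accounting in the definition of $T_1,T_2,T_3$ shows that the $3$-click rotation on $\Hom(\jw{6}\otimes \jw{6}\to \jw{6})$ sends $T_1\mapsto T_2\mapsto T_3\mapsto T_1$ (possibly up to reindexing). The main bookkeeping step here is to translate the rotation on the outer box boundary into the corresponding rotation of the $\star$ on the inner $S$-box, tracking the Jones--Wenzl absorption; I expect this to be the main obstacle, since one must be sure that no extra sign or scalar appears.

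Given the two preceding steps, the rotation acts on $\operatorname{span}\{T_0\}$ as the identity and on $\operatorname{span}\{T_1,T_2,T_3\}$ as a cyclic permutation of order $3$. The fixed subspace of a cyclic permutation of $3$ elements is one-dimensional and spanned by the sum $T_1+T_2+T_3$. Combining, the rotationally invariant subspace of $\Hom(\jw{6}\otimes \jw{6}\to \jw{6})$ is exactly $2$-dimensional with basis $\{T_0,\,T_1+T_2+T_3\}$, which is precisely the statement of the corollary.
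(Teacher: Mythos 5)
Your proposal is correct and is essentially the paper's own argument: the paper's proof simply cites the linear-independence lemma together with the fact that $T_1,T_2,T_3$ are rotations of each other (and $T_0$ is rotationally symmetric), from which the fixed subspace $\operatorname{span}\{T_0,\,T_1+T_2+T_3\}$ is immediate. You have merely spelled out the same computation of the fixed space of the order-$3$ rotation in the basis $\{T_0,T_1,T_2,T_3\}$ in more detail.
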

\begin{proof}
This follows from the previous lemma together with $T_1, T_2, T_3$ being rotations of each other.
\end{proof}

\begin{cor}
\label{cor:FormOfT-SelfAdjoint}
The element $T=\alpha  T_0 + \beta (T_1+T_2+T_3)$ is self-adjoint if and only if $\alpha$ is real and $\beta$ is purely imaginary.
\end{cor}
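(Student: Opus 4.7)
The plan is to determine how the operation ``take $\dag$, then apply the strand-bending from the self-adjoint axiom of Proposition~\ref{prop:CanonicalQSystem}'' acts on the two rotationally invariant basis vectors $T_0$ and $T_1+T_2+T_3$ of $\Hom(\jw{6}\otimes\jw{6}\to\jw{6})$ identified in Corollary~\ref{cor:FormOfT}. Call this antilinear involution $\tau$; then the self-adjointness axiom for $T$ reads $\tau(T)=T$, and the conclusion follows by anti-linearity once we compute $\tau$ on each basis element.

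First I would verify $\tau(T_0)=T_0$. The diagram $T_0$ contains only Jones--Wenzl idempotents and cups/caps; Jones--Wenzl idempotents are self-adjoint and cups/caps are mutual adjoints, so reflecting $T_0$ vertically preserves the diagram. Bending the right-hand input strand up to bring the result back to the standard $\jw{6}\otimes\jw{6}\to\jw{6}$ configuration recovers $T_0$ verbatim, so $\tau(T_0)=T_0$.

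Next I would argue that $\tau(T_1+T_2+T_3)=-(T_1+T_2+T_3)$. Vertically reflecting any $T_i$ leaves the surrounding Jones--Wenzl/cap skeleton unchanged (by the same self-adjointness as above) and replaces $S$ by $S^*=S$, but now with its attaching strands in reflected positions. After performing the strand-bend from the self-adjointness axiom and using the rotational symmetries of the skeleton to restore the standard configuration, one can rewrite the reflected diagram as a diagram of the same shape as one of $T_1,T_2,T_3$ in which $S$ has been replaced by $\rho^k(S)$ for some odd $k$. The relation $\rho(S)=-S$ then gives $\rho^k(S)=-S$, producing an overall factor of $-1$. The same reflection simultaneously induces a cyclic permutation of the triple $(T_1,T_2,T_3)$, so summing over $i$ (where the sum is already invariant under this permutation) yields $\tau(T_1+T_2+T_3)=-(T_1+T_2+T_3)$.

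Combining, for $T=\alpha T_0+\beta(T_1+T_2+T_3)$ we obtain $\tau(T)=\bar\alpha T_0-\bar\beta(T_1+T_2+T_3)$. Since $\{T_0,T_1+T_2+T_3\}$ is linearly independent, $\tau(T)=T$ is equivalent to $\bar\alpha=\alpha$ and $\bar\beta=-\beta$, i.e.\ $\alpha\in\bbR$ and $\beta\in i\bbR$, as claimed. The main technical obstacle is the bookkeeping in the second step: identifying the precise power of $\rho$ applied to $S$ when a vertically reflected $T_i$ is put back in standard form, which is what ultimately produces the critical sign $-1$ distinguishing the $\tau$-eigenspaces $\bbR\cdot T_0$ and $i\bbR\cdot(T_1+T_2+T_3)$.
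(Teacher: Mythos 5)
Your proposal is correct and follows essentially the same route as the paper: compute the antilinear ``reflect-and-bend'' operation on the basis, find that $T_0$ is fixed while each $T_i$ picks up the sign $-1$ from the rotational eigenvalue of $S$ (the paper pins this down as $\omega_S^3=(-1)^3=-1$), and conclude by linear independence. The only small inaccuracy is that the induced permutation of $\{T_1,T_2,T_3\}$ is not a $3$-cycle but the transposition swapping $T_1\leftrightarrow T_3$ and fixing $T_2$ (it must be an involution); this still preserves the sum $T_1+T_2+T_3$, so your conclusion is unaffected.
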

\begin{proof}
Since $\omega_S^3 = (-1)^3=-1$,
denoting $\sigma = \jw{6}$, 
we have
\begin{equation*}
\begin{tikzpicture}[baseline=-.1cm, yscale=-1]
	\node (S) [circle, fill=white, thick, draw]  at (0,0) {$T_0^*$};
	\node [below=1pt] at (S.west) {$\star$};
	\draw (S) -- (0,1);
	\draw (S) -- (-.6,-1);
	\draw (S) -- (.6,-1);
	\node at (-.6,-1.2) {\scriptsize{$\sigma$}};
	\node at (.6,-1.2) {\scriptsize{$\sigma$}};
	\node at (0,1.2) {\scriptsize{$\sigma$}};
\end{tikzpicture}
=
\begin{tikzpicture}[baseline=-.1cm]
	\node (S) [circle, fill=white, thick, draw]  at (0,0) {$T_0$};
	\node [below=1pt] at (S.west) {$\star$};
	\draw (S) -- (0,1);
	\draw (S) -- (-.6,-1);
	\draw (S) [out=-60,in=-90] to (1,-.2) -- (1,1);
	\node at (-.6,-1.2) {\scriptsize{$\sigma$}};
	\node at (1,1.2) {\scriptsize{$\sigma$}};
	\node at (0,1.2) {\scriptsize{$\sigma$}};
\end{tikzpicture}
,
\qquad
\begin{tikzpicture}[baseline=-.1cm, yscale=-1]
	\node (S) [circle, fill=white, thick, draw]  at (0,0) {$T_2^*$};
	\node [below=1pt] at (S.west) {$\star$};
	\draw (S) -- (0,1);
	\draw (S) -- (-.6,-1);
	\draw (S) -- (.6,-1);
	\node at (-.6,-1.2) {\scriptsize{$\sigma$}};
	\node at (.6,-1.2) {\scriptsize{$\sigma$}};
	\node at (0,1.2) {\scriptsize{$\sigma$}};
\end{tikzpicture}
=
-\begin{tikzpicture}[baseline=-.1cm]
	\node (S) [circle, fill=white, thick, draw]  at (0,0) {$T_2$};
	\node [below=1pt] at (S.west) {$\star$};
	\draw (S) -- (0,1);
	\draw (S) -- (-.6,-1);
	\draw (S) [out=-60,in=-90] to (1,-.2) -- (1,1);
	\node at (-.6,-1.2) {\scriptsize{$\sigma$}};
	\node at (1,1.2) {\scriptsize{$\sigma$}};
	\node at (0,1.2) {\scriptsize{$\sigma$}};
\end{tikzpicture}
\qquad
\text{and}
\qquad
\begin{tikzpicture}[baseline=-.1cm, yscale=-1]
	\node (S) [circle, fill=white, thick, draw]  at (0,0) {$T_1^*$};
	\node [below=1pt] at (S.west) {$\star$};
	\draw (S) -- (0,1);
	\draw (S) -- (-.6,-1);
	\draw (S) -- (.6,-1);
	\node at (-.6,-1.2) {\scriptsize{$\sigma$}};
	\node at (.6,-1.2) {\scriptsize{$\sigma$}};
	\node at (0,1.2) {\scriptsize{$\sigma$}};
\end{tikzpicture}
=
-\begin{tikzpicture}[baseline=-.1cm]
	\node (S) [circle, fill=white, thick, draw]  at (0,0) {$T_3$};
	\node [below=1pt] at (S.west) {$\star$};
	\draw (S) -- (0,1);
	\draw (S) -- (-.6,-1);
	\draw (S) [out=-60,in=-90] to (1,-.2) -- (1,1);
	\node at (-.6,-1.2) {\scriptsize{$\sigma$}};
	\node at (1,1.2) {\scriptsize{$\sigma$}};
	\node at (0,1.2) {\scriptsize{$\sigma$}};
\end{tikzpicture}.\qedhere
\end{equation*}
\end{proof}

\begin{cor}
\label{cor:FormOfT-Bigon}
The element $T=\alpha T_0 +  \beta (T_1+T_2+T_3)$ for $\alpha \in \mathbb R$  and $\beta \in i \mathbb R$ satisfies the bigon axiom if and only if at least one of $\alpha$ and $\beta$ are nonzero.
\end{cor}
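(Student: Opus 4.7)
The plan is to combine the simplicity of $f^{(6)}$ in $\mathcal{EH}_2$ with the $\Cstar$ positivity of the bigon closure. First, since $f^{(6)}$ is simple in $\mathcal{EH}_2$, the endomorphism space $\mathrm{End}(f^{(6)})$ is one-dimensional, so the bigon diagram
\[
\begin{tikzpicture}[baseline=-.6cm]
	\node (T1) [circle, fill=white, thick, draw]  at (0,0) {$T$};
	\node (T2) [circle, fill=white, thick, draw]  at (0,-1) {$T$};
	\draw (T1) [in=45,out=-45] to (T2);
	\draw (T1) [in=135,out=-135] to (T2);
	\draw (T1) -- (0,.5);
	\draw (T2) -- (0,-1.5);
\end{tikzpicture}
\]
automatically lies in $\mathbb{C}\cdot\id_{f^{(6)}}$; hence it equals $b\cdot\id$ for some uniquely determined scalar $b\in\mathbb{C}$. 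The only remaining task is to identify the sign of $b$.

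Next, I will use rotational invariance (Corollary \ref{cor:FormOfT}) and self-adjointness (Corollary \ref{cor:FormOfT-SelfAdjoint}) to rewrite the bigon as $T\circ T^{*}$, where $T^{*}$ is viewed as the morphism in $\mathrm{Hom}(f^{(6)}\to f^{(6)}\otimes f^{(6)})$ obtained from $T$ by the $\dag$-structure. Concretely, after rotating one of the two copies of $T$ by $2\pi/3$ (which, by rotational invariance of $T=\alpha T_0+\beta(T_1+T_2+T_3)$, does not alter its value), the two closed strands become a cup connecting the top copy to its $\dag$-rotated version, which is exactly $T\circ T^{*}$. In a $\Cstar$ tensor category this is a positive element of $\mathrm{End}(f^{(6)})=\mathbb{C}\cdot\id$, so $b\geq 0$.

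Finally, I will show that $b=0$ if and only if $\alpha=\beta=0$. Since the sesquilinear pairing
\[
\langle R,S\rangle := \mathrm{tr}_{f^{(6)}}\!\bigl(R\circ S^{*}\bigr)
\]
on $\mathrm{Hom}(f^{(6)}\otimes f^{(6)}\to f^{(6)})$ is a positive definite inner product (by unitarity of $\mathcal{EH}_2$), and $b\cdot\dim(f^{(6)})=\mathrm{tr}(T\circ T^{*})=\langle T,T\rangle$, we have $b=0$ iff $T=0$. By the linear independence of $\{T_0,T_1+T_2+T_3\}$ (an immediate consequence of the basis result preceding Corollary \ref{cor:FormOfT}), $T=0$ forces $\alpha=\beta=0$, giving the desired equivalence.

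The only nontrivial step is the rotational manipulation needed to recast the bigon as $T\circ T^{*}$: one must check that $T^{*}$, viewed as a coupon with the appropriate placement of $\star$, really agrees with the rotated diagram that arises when one closes two strands of two stacked copies of $T$. This is a bookkeeping exercise with the $\dag$-formulas in the proof of Corollary \ref{cor:FormOfT-SelfAdjoint}, and is the main (but routine) obstacle; everything else is formal from simplicity of $f^{(6)}$ and $\Cstar$ positivity.
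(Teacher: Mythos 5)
Your proposal is correct and follows essentially the same route as the paper's (one-line) proof: self-adjointness identifies the bigon with $T\circ T^{\dag}\in\End(f^{(6)})=\mathbb{C}$, $\Cstar$ positivity gives $b\geq 0$, and faithfulness of the trace plus the linear independence of $\{T_0,T_1,T_2,T_3\}$ gives $b=0$ iff $\alpha=\beta=0$. The paper simply compresses all of this into the observation that $T^{*}T$ is non-negative and vanishes only when $T=0$, so no substantive difference.
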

\begin{proof}
Since $T$ is self-adjoint, the bigon axiom is equivalent to $T^*T\in \End(\jw{6})=\bbC$ being non-zero and positive, which holds if $T\neq 0$.
\end{proof}

\subsection{Calculations in the graph planar algebra}
\label{sec:CalculationsInGPA}

We still need to find $\alpha \in \mathbb R$ and $\beta \in i \mathbb R$, not both zero, so that $T=\alpha T_0+\beta (T_1+T_2+T_3)$ satisfies the $I=H $ relation.

This calculation is performed in the lopsided graph planar algebra \cite[\S1]{MR3254427} of the $\cE\cH_2-\cE\cH_1$ subfactor principal graph. 
Recall that in \cite{MR2979509} we gave explicit formulas for the image of the generator $S$ of the Extended Haagerup subfactor planar algebra in the (spherical) graph planar algebra. 
It is just a small normalization issue to obtain this element in the lopsided graph planar algebra. 
This lopsided graph planar algebra is built out of vector spaces over the number field $\bbQ(\lambda)$, where $\lambda$ is the largest imaginary root of $\lambda^6 + 2\lambda^4 - 3\lambda^2 -5$.

We first prepare the elements $T_0, T_1, T_2, T_3$ by direct calculation in the graph planar algebra (first constructing the 6-strand Jones-Wenzl idempotent using the usual Wenzl recursion formula).

In a world with bigger and faster computers, we would next calculate the 32 compositions 
$$
\displaystyle
\begin{tikzpicture}[baseline=-.4cm]
\node (T1) [circle, fill=white, thick, draw]  at (0,.2) {$T_i$};
\node (T2) [circle, fill=white, thick, draw]  at (.8,-1) {$T_j$};
\node [below=1pt] at (T1.west) {$\star$};
\node [below=1pt] at (T2.west) {$\star$};
\draw (T1) -- (0,1);
\draw (T1) -- (-1.2,-1.6);
\draw (T1) -- (T2);
\draw (T2) -- (.4,-1.6);
\draw (T2) -- (1.2,-1.6);
\node at (-1.2,-1.8) {\scriptsize{$\sigma$}};
\node at (.4,-1.8) {\scriptsize{$\sigma$}};
\node at (1.2,-1.8) {\scriptsize{$\sigma$}};
\node at (.4,-.6) {\scriptsize{$\sigma$}};
\node at (0,1.2) {\scriptsize{$\sigma$}};
\end{tikzpicture}
\qquad
\begin{tikzpicture}[baseline=-.4cm, xscale=-1]
\node (T1) [circle, fill=white, thick, draw]  at (0,.2) {$T_i$};
\node (T2) [circle, fill=white, thick, draw]  at (.8,-1) {$T_j$};
\node [below=1pt] at (T1.west) {$\star$};
\node [below=1pt] at (T2.west) {$\star$};
\draw (T1) -- (0,1);
\draw (T1) -- (-1.2,-1.6);
\draw (T1) -- (T2);
\draw (T2) -- (.4,-1.6);
\draw (T2) -- (1.2,-1.6);
\node at (-1.2,-1.8) {\scriptsize{$\sigma$}};
\node at (.4,-1.8) {\scriptsize{$\sigma$}};
\node at (1.2,-1.8) {\scriptsize{$\sigma$}};
\node at (.4,-.6) {\scriptsize{$\sigma$}};
\node at (0,1.2) {\scriptsize{$\sigma$}};
\end{tikzpicture}
$$
for $i,j=0,1,2,3$ and $\sigma = \jw{6}$, as well as the diagrams
$$
\begin{tikzpicture}[baseline=-.1cm]
	\draw (-.4,-1) -- (-.4,-.7) arc (180:0:.4cm) -- (.4,-1);
	\draw (.8,-1) .. controls ++(90:1cm) and ++(270:1cm) .. (.2,1);
	\node at (-.4,-1.2) {\scriptsize{$\sigma$}};
	\node at (.4,-1.2) {\scriptsize{$\sigma$}};
	\node at (.8,-1.2) {\scriptsize{$\sigma$}};
	\node at (.2,1.2) {\scriptsize{$\sigma$}};
\end{tikzpicture}
\qquad
\begin{tikzpicture}[baseline=-.1cm]
	\draw (-.4,-1) -- (-.4,-.7) arc (180:0:.4cm) -- (.4,-1);
	\draw (-.8,-1) .. controls ++(90:1cm) and ++(270:1cm) .. (-.2,1);
	\node at (-.8,-1.2) {\scriptsize{$\sigma$}};
	\node at (-.4,-1.2) {\scriptsize{$\sigma$}};
	\node at (.4,-1.2) {\scriptsize{$\sigma$}};
	\node at (-.2,1.2) {\scriptsize{$\sigma$}};
\end{tikzpicture}
.$$
These are each elements in the 24 boundary point space of the graph planar algebra, which is unfortunately rather large, at 50,996,510-dimensional. 
Each of the elements would take several gigabytes to store, and in practice this approach runs into tedious memory management issues. 
Rather than preparing the actual element $I-H$ on the left hand side of the $I=H$ relation, as a quadratic expression in $\alpha$ and $\beta$, we attempt to solve the $I=H$ equation component by component in the graph planar algebra. 

Although in many components this equation is trivially satisfied, unsurprisingly we quickly obtain enough equations to solve for $\alpha$ and $\beta$, finding that $\beta = i$ and $\alpha$ is the largest root (all are real) of $5 \lambda^6 + 17\lambda^4 - 18\lambda^2 + 1$, approximately 0.8932. 
By Corollaries \ref{cor:FormOfT}, \ref{cor:FormOfT-SelfAdjoint}, and \ref{cor:FormOfT-Bigon}, there is a unique such $T \in \Hom(\sigma\otimes \sigma \to \sigma)$, which by Proposition \ref{prop:CanonicalQSystem}, gives two isomorphic Q-system structures on $1\oplus \sigma$.

Finally, we still need to check that these values gives a solution to the $I=H$ equation. 
To do this, we directly evaluate $T$ in the graph planar algebra with these values of $\alpha$ and $\beta$, and compute the two compositions appearing on the left hand side of the $I=H$ relation. 
This is a lengthy computation (about an hour, but as it eventually checks relations in a $\sim 51 \times 10^6$ dimensional vector space over a number field, the unoptimised calculation requires at least 64gb of RAM), and eventually verifies that we have found a Q-system structure on $1 \oplus \sigma$.


\section{Possible skein theoretic approaches to \texorpdfstring{$\cE\cH_3$}{EH3} and \texorpdfstring{$\cE\cH_4$}{EH4}} \label{sec:YBR}

For Asaeda--Haagerup it turned out that one of the fusion categories in its Morita equivalence class was an Izumi quadratic category, which illuminated why the Asaeda--Haagerup category exists.  We would love for $\cE\cH_3$ or $\cE\cH_4$ to fit into a nicer story in a similar way thereby explaining the currently mysterious Extended Haagerup categories.  However, we have not yet found any direct approaches to understanding these new fusion categories.  In particular, none of these categories have any invertible objects or additional symmetries that were hidden in the original Extended Haagerup categories.  There are two skein theoretic approaches to understanding $\cE\cH_4$ that are potentially promising.

First, we saw in the \S\ref{sec:intermediate} that there is a $(3,3)$-supertransitive quadrilateral which is non-commuting and cocommuting where the lower inclusions are the subfactor of index roughly $8.0$ from Equation \eqref{eq:S8} and the upper inclusions are the subfactor of index roughly $7.0$ from Equation \eqref{eq:S7}.  One could try to construct this quadrilateral directly.  Following \cite{MR2418197} and unpublished planar algebra approaches of Grossman and Snyder, one can try to give a skein theoretic construction of highly supertransitive quadrilaterals.  Unfortunately this approach remains unpublished because it has not yet succeeded for any important examples.  Nonetheless, it doesn't seem entirely hopeless.

Second, there's an intriguing pattern that $\cE\cH_3$ comes from a Q-system on $1 \oplus (X \otimes X)$ in $\cE\cH_2$ while $\cE\cH_4$ would come from a Q-system on $1 \oplus (Y \otimes Y)$.  So it would be natural to try to classify triples of a tensor category, a non-self-dual simple object $X$, and a Q-system structure on $1 \oplus (X \otimes X)$.  By self-duality of the Q-system we see that $X \otimes X \cong \overline{X} \otimes \overline{X}$, and there is a very nice skein theoretic category which is universal for tensor categories with an object $X$ satisfying $X \otimes X \cong \overline{X} \otimes \overline{X}$

\begin{defn}
Let $\zeta$ be a fourth root of unity, let $\cU_\zeta$ be the oriented planar algebra given by the following generators 
$$
\begin{tikzpicture}[baseline=-.1cm]
	\filldraw (0,0) node [left] {$\star$} circle (1mm);
	\draw[->] (0,0) --(.25,.25);
	\draw (.25,.25) --(.5,.5);
	\draw[->] (0,0) --(.25,.-.25);
	\draw (.25,-.25) --(.5,-.5);
	\draw[->] (0,0) --(-.25,.25);
	\draw (-.25,.25) --(-.5,.5);
	\draw[->] (0,0) --(-.25,.-.25);
	\draw (-.25,-.25) --(-.5,-.5);
\end{tikzpicture}
\hspace{1in}
\begin{tikzpicture}[baseline=-.1cm]
	\draw (0,0) --(.25,.25);
	\draw[<-] (.25,.25) --(.5,.5);
	\draw (0,0) --(.25,.-.25);
	\draw[<-] (.25,-.25) --(.5,-.5);
	\draw (0,0) --(-.25,.25);
	\draw[<-] (-.25,.25) --(-.5,.5);
	\draw (0,0) --(-.25,.-.25);
	\draw[<-] (-.25,-.25) --(-.5,-.5);
	\filldraw [fill=white] (0,0) node [left] {$\star$} circle (1mm);
\end{tikzpicture}
$$

and relations
$$\begin{tikzpicture}[baseline=-.1cm]
	\filldraw (0,0) node [left] {$\star$} circle (1mm);
	\draw[->] (0,0) --(.25,.25);
	\draw (.25,.25) --(.5,.5);
	\draw[->] (0,0) --(.25,.-.25);
	\draw (.25,-.25) --(.5,-.5);
	\draw[->] (0,0) --(-.25,.25);
	\draw (-.25,.25) --(-.5,.5);
	\draw[->] (0,0) --(-.25,.-.25);
	\draw (-.25,-.25) --(-.5,-.5);
\end{tikzpicture}
= \zeta \begin{tikzpicture}[baseline=-.1cm]
	\filldraw (0,0) node [below] {$\star$} circle (1mm);
	\draw[->] (0,0) --(.25,.25);
	\draw (.25,.25) --(.5,.5);
	\draw[->] (0,0) --(.25,.-.25);
	\draw (.25,-.25) --(.5,-.5);
	\draw[->] (0,0) --(-.25,.25);
	\draw (-.25,.25) --(-.5,.5);
	\draw[->] (0,0) --(-.25,.-.25);
	\draw (-.25,-.25) --(-.5,-.5);
\end{tikzpicture}
,\hspace{.25in}
\begin{tikzpicture}[baseline=-.1cm]
	\draw (0,0) --(.25,.25);
	\draw[<-] (.25,.25) --(.5,.5);
	\draw (0,0) --(.25,.-.25);
	\draw[<-] (.25,-.25) --(.5,-.5);
	\draw (0,0) --(-.25,.25);
	\draw[<-] (-.25,.25) --(-.5,.5);
	\draw (0,0) --(-.25,.-.25);
	\draw[<-] (-.25,-.25) --(-.5,-.5);
	\filldraw [fill=white] (0,0) node [left] {$\star$} circle (1mm);
\end{tikzpicture}
= \zeta^{-1} \begin{tikzpicture}[baseline=-.1cm]
	\draw (0,0) --(.25,.25);
	\draw[<-] (.25,.25) --(.5,.5);
	\draw (0,0) --(.25,.-.25);
	\draw[<-] (.25,-.25) --(.5,-.5);
	\draw (0,0) --(-.25,.25);
	\draw[<-] (-.25,.25) --(-.5,.5);
	\draw (0,0) --(-.25,.-.25);
	\draw[<-] (-.25,-.25) --(-.5,-.5);
	\filldraw [fill=white] (0,0) node [below] {$\star$} circle (1mm);
\end{tikzpicture}
,\hspace{.25in}
\begin{tikzpicture}[baseline=.8cm]
	\draw[mid>] (0,.5) to (-.5,0);
	\draw[mid>] (0,.5) to (.5,0);
	\draw[mid>] (0,.5) to [out=45, in=-45] (0,1.5);
	\draw[mid>] (0,.5) to [out=135, in=-135] (0,1.5);
	\draw[mid<] (0,1.5) to (-.5,2);
	\draw[mid<] (0,1.5) to (.5,2);
	\filldraw (0,.5) node [left] {$\star$} circle (1mm);
	\filldraw[fill=white] (0,1.5) node [left] {$\star$} circle (1mm);
\end{tikzpicture}
= 
\begin{tikzpicture}[baseline=.4cm]
	\draw[mid>] (-.5, 1) to [out=-45, in = 45] (-.5,0);
	\draw[mid>] (.5, 1) to [out=-135, in = 135] (.5,0);
\end{tikzpicture}
,\hspace{.25in}
\begin{tikzpicture}[baseline=0cm]
	\draw[->] (0,0) arc (0:360:.5);
\end{tikzpicture} = d.
$$
\end{defn}

Using that the diagrams are bipartite, an easy Euler characteristic argument shows that you can evaluate all closed diagrams in $\cU_\zeta$, and that it has finite dimensional box spaces.  The simple objects in $\cU_\zeta$ are the tensor products of two Jones-Wenzls 
$\jw{i} \otimes \jw{j}$ where each Jones-Wenzl alternates up-down, with the first one starting with an up arrow, and the second one starting with the same up/down that the previous one ended with.  (In the degenerate case where $i=0$ the second Jones-Wenzl starts with a down arrow.)  The fusion graph for $\cU_\zeta$ is given by the following:

$$
\begin{tikzpicture}[baseline=-1mm, scale=.7]
	\node (00) at (0,2*0)  {\scriptsize{$\jw{0} \otimes \jw{0}$}};
	\node (01) at (0,2*1)  {\scriptsize{$\jw{0} \otimes \jw{1}$}};
	\node (02) at (0,2*2)  {\scriptsize{$\jw{0} \otimes \jw{2}$}};
	\node (03) at (0,2*3) {\scriptsize{$\vdots$}};
	\node (10) at (3*1,2*0)  {\scriptsize{$\jw{1} \otimes \jw{0}$}};
	\node (11) at (3*1,2*1)  {\scriptsize{$\jw{1} \otimes \jw{1}$}};	
	\node (12) at (3*1,2*2)  {\scriptsize{$\jw{1} \otimes \jw{2}$}};
	\node (13) at (3*1,2*3)  {\scriptsize{$\vdots$}};		
	\node (20) at (3*2,2*0)  {\scriptsize{$\jw{2} \otimes \jw{0}$}};		
	\node (21) at (3*2,2*1)  {\scriptsize{$\jw{2} \otimes \jw{1}$}};
	\node (22) at (3*2,2*2)  {\scriptsize{$\jw{2} \otimes \jw{2}$}};
	\node (23) at (3*2,2*3)  {\scriptsize{$\vdots$}};
	\node (30) at (3*3,2*0)  {\scriptsize{$\hdots$}};
	\node (31) at (3*3,2*1)  {\scriptsize{$\hdots$}};
	\node (32) at (3*3,2*2)  {\scriptsize{$\hdots$}};
	\node (33) at (3*3,2*3)  {\scriptsize{$\iddots$}};
	\draw[<-] (00) -- (01);
	\draw[->] (01) -- (02);
	\draw[<-] (02) -- (03);
	\draw[->] (10) -- (11);
	\draw[<-] (11) -- (12);
	\draw[->] (12) -- (13);
	\draw[<-] (20) -- (21);
	\draw[->] (21) -- (22);
	\draw[<-] (22) -- (23);

	\draw[->] (00) -- (10);
	\draw[<-] (10) -- (20);
	\draw[->] (20) -- (30);
	\draw[<-] (01) -- (11);
	\draw[->] (11) -- (21);
	\draw[<-] (21) -- (31);
	\draw[->] (02) -- (12);
	\draw[<-] (12) -- (22);
	\draw[->] (22) -- (32);
\end{tikzpicture}
$$

\begin{remark} $\cU_\zeta$ can be constructed algebraically as a $\mathbb{Z}/4\mathbb{Z}$ extension of $\mathrm{SO}(3)_q \boxtimes \mathrm{SO}(3)_q$ for $d=q+q^{-1}$. Such extensions can be classified following \cite{MR2677836}.  The Brauer--Picard group of $\mathrm{SO}(3)_q$ is $\mathbb{Z}/2\mathbb{Z}$ where the nontrivial bimodule is the the category of representations of $\mathrm{SU}(2)_q$ with odd highest weight.  The Brauer--Picard group of $\mathrm{SO}(3)_q\boxtimes \mathrm{SO}(3)_q$ is richer because of the outer automorphism interchanging the two factors.  
There are four module categories with the correct dual category (picking a parity for the highest weight in each factor), but each gives two bimodule categories using the outer automorphism.  So the whole group is the $8$-element dihedral group.  In particular, there's an element of order $4$ in the Brauer group which we use to build our map from $\mathbb{Z}/4\mathbb{Z}$.  One then checks that two obstructions vanish, the first because there are no non-trivial invertible objects or gradings, and the second because $H^4(\mathbb{Z}/4, \mathbb{C}^\times)$ vanishes because the group is cyclic.  Finally there's a choice of element of $H^3(\mathbb{Z}/4\mathbb{Z}, \mathbb{C}^\times)$, which corresponds exactly to $\zeta$.
\end{remark}

We now consider what it would mean to in addition have a Q-system structure on $1 \oplus (X \otimes X)$.  First, since Q-systems are symmetrically self-dual we see that $\zeta = \pm 1$.  Second, having a Q-system lets us build a shaded planar algebra where the $2$-strand shaded Jones-Wenzl corresponds to $(X \otimes X)$.  That is we have an oriented strand type corresponding to $X$ and an shaded strand type corresponding to $1 \oplus (X \otimes X)$ as a module over itself.  This gives us the following additional generators 

$$
\begin{tikzpicture}[baseline=-.1cm]
	\draw[->] (0,0) -- (.25,.25);
	\draw (.25,.25) -- (.5,.5);
	\draw[->] (0,0) -- (-.25,.25);
	\draw (-.25,.25) -- (-.5,.5);
	\filldraw[shaded] (-.5,-.5) -- (0,0) -- (.5,-.5);
	\filldraw[shaded] (0,0) circle (1mm);
\end{tikzpicture}
\hspace{1in}
\begin{tikzpicture}[baseline=-.1cm]
	\draw (0,0) -- (.25,.25);
	\draw[<-] (.25,.25) -- (.5,.5);
	\draw (0,0) -- (-.25,.25);
	\draw[<-] (-.25,.25) -- (-.5,.5);
	\filldraw[shaded] (-.5,-.5) -- (0,0) -- (.5,-.5);
	\filldraw[shaded] (0,0) circle (1mm);
\end{tikzpicture}
$$

and relations
$$
\begin{tikzpicture}[baseline=.5cm]
	\filldraw[shaded] (-.5,-.5) -- (0,0) -- (.5,-.5);
	\draw (0,1) --(.25,1.25);
	\draw[<-] (.25,1+.25) --(.5,1+.5);
	\draw (0,1+0) --(-.25,1+.25);
	\draw[<-] (-.25,1+.25) --(-.5,1+.5);
	\draw[->] (0,.0)[out=135,in=270] to (-.3,.5);
	\draw[->] (0,.0)[out=45,in=270] to (.3,.5);
	\draw (-.3,.5)[out=90,in=225] to (0,1);
	\draw (.3,.5)[out=90,in=315] to (0,1);
	\filldraw [fill=white] (0,1) node [left] {$\star$} circle (1mm);	
	\filldraw[shaded] (0,0) circle (1mm);
\end{tikzpicture} =
\begin{tikzpicture}[baseline=-.1cm]
	\draw (0,0) -- (.25,.25);
	\draw[<-] (.25,.25) -- (.5,.5);
	\draw (0,0) -- (-.25,.25);
	\draw[<-] (-.25,.25) -- (-.5,.5);
	\filldraw[shaded] (-.5,-.5) -- (0,0) -- (.5,-.5);
	\filldraw[shaded] (0,0) circle (1mm);
\end{tikzpicture}
,\hspace{.25in}
\begin{tikzpicture}[baseline=.5cm]
	\filldraw[shaded] (-.5,-.5) -- (0,0) -- (.5,-.5);
	\draw[->] (0,1) --(.25,1.25);
	\draw (.25,1+.25) --(.5,1+.5);
	\draw[->] (0,1+0) --(-.25,1+.25);
	\draw (-.25,1+.25) --(-.5,1+.5);
	\draw (0,.0)[out=135,in=270] to (-.3,.5);
	\draw (0,.0)[out=45,in=270] to (.3,.5);
	\draw[<-] (-.3,.5)[out=90,in=225] to (0,1);
	\draw[<-] (.3,.5)[out=90,in=315] to (0,1);
	\filldraw (0,1) node [left] {$\star$} circle (1mm);	
	\filldraw[shaded] (0,0) circle (1mm);
\end{tikzpicture} =
\begin{tikzpicture}[baseline=-.1cm]
	\draw[->] (0,0) -- (.25,.25);
	\draw (.25,.25) -- (.5,.5);
	\draw[->] (0,0) -- (-.25,.25);
	\draw (-.25,.25) -- (-.5,.5);
	\filldraw[shaded] (-.5,-.5) -- (0,0) -- (.5,-.5);
	\filldraw[shaded] (0,0) circle (1mm);
\end{tikzpicture}
,\hspace{.25in}
\begin{tikzpicture}[baseline=.5cm]
	\draw[->] (0,0) -- (.25,-.25);
	\draw (.25,-.25) -- (.5,-.5);
	\draw[->] (0,0) -- (-.25,-.25);
	\draw (-.25,-.25) -- (-.5,-.5);
	\draw[->] (0,1) --(.25,1.25);
	\draw (.25,1+.25) --(.5,1+.5);
	\draw[->] (0,1+0) --(-.25,1+.25);
	\draw (-.25,1+.25) --(-.5,1+.5);
	\filldraw[shaded] (0,0)[out=135,in=225] to (0,1);
	\filldraw[shaded] (0,0)[out=45,in=315] to (0,1);
	\filldraw[shaded] (0,1) circle (1mm);	
	\filldraw[shaded] (0,0) circle (1mm);
\end{tikzpicture}
= 
\begin{tikzpicture}[baseline=-.1cm]
	\draw[->] (0,0) --(.25,.25);
	\draw (.25,.25) --(.5,.5);
	\draw[->] (0,0) --(.25,.-.25);
	\draw (.25,-.25) --(.5,-.5);
	\draw[->] (0,0) --(-.25,.25);
	\draw (-.25,.25) --(-.5,.5);
	\draw[->] (0,0) --(-.25,.-.25);
	\draw (-.25,-.25) --(-.5,-.5);
	\filldraw (0,0) node [left] {$\star$} circle (1mm);
\end{tikzpicture}
,\hspace{.25in}
\begin{tikzpicture}[baseline=.5cm]
	\draw (0,0) -- (.25,-.25);
	\draw[<-] (.25,-.25) -- (.5,-.5);
	\draw (0,0) -- (-.25,-.25);
	\draw[<-] (-.25,-.25) -- (-.5,-.5);
	\draw (0,1) --(.25,1.25);
	\draw[<-] (.25,1+.25) --(.5,1+.5);
	\draw (0,1+0) --(-.25,1+.25);
	\draw[<-] (-.25,1+.25) --(-.5,1+.5);
	\filldraw[shaded] (0,0)[out=135,in=225] to (0,1);
	\filldraw[shaded] (0,0)[out=45,in=315] to (0,1);
	\filldraw[shaded] (0,1) circle (1mm);	
	\filldraw[shaded] (0,0) circle (1mm);
\end{tikzpicture}
=
\begin{tikzpicture}[baseline=-.1cm]
	\draw (0,0) --(.25,.25);
	\draw[<-] (.25,.25) --(.5,.5);
	\draw (0,0) --(.25,.-.25);
	\draw[<-] (.25,-.25) --(.5,-.5);
	\draw (0,0) --(-.25,.25);
	\draw[<-] (-.25,.25) --(-.5,.5);
	\draw (0,0) --(-.25,.-.25);
	\draw[<-] (-.25,-.25) --(-.5,-.5);
	\filldraw [fill=white] (0,0) node [left] {$\star$} circle (1mm);
\end{tikzpicture}$$
 
 $$
\begin{tikzpicture}[baseline=.5cm]
	\draw (0,0) -- (.25,-.25);
	\draw[<-] (.25,-.25) -- (.5,-.5);
	\draw (0,0) -- (-.25,-.25);
	\draw[<-] (-.25,-.25) -- (-.5,-.5);
	\draw[->] (0,1) --(.25,1.25);
	\draw (.25,1+.25) --(.5,1+.5);
	\draw[->] (0,1+0) --(-.25,1+.25);
	\draw (-.25,1+.25) --(-.5,1+.5);
	\filldraw[shaded] (0,0)[out=135,in=225] to (0,1);
	\filldraw[shaded] (0,0)[out=45,in=315] to (0,1);
	\filldraw[shaded] (0,1) circle (1mm);	
	\filldraw[shaded] (0,0) circle (1mm);
\end{tikzpicture}
= 
\begin{tikzpicture}[baseline=-.1cm]
	\draw[mid>](-.5,-.5) [out=45, in=315] to (-.5,.5);
	\draw[mid>](.5,-.5) [out=135, in=225] to (.5,.5);
\end{tikzpicture}
,\hspace{.25in}
\begin{tikzpicture}[baseline=.5cm]
	\filldraw[shaded] (-.5,-.5) -- (0,0) -- (.5,-.5);
	\draw[->] (0,.0)[out=135,in=270] to (-.3,.5);
	\draw[->] (0,.0)[out=45,in=270] to (.3,.5);
	\draw (-.3,.5)[out=90,in=225] to (0,1);
	\draw (.3,.5)[out=90,in=315] to (0,1);
	\filldraw[shaded] (-.5,1.5) -- (0,1) -- (.5,1.5);
	\filldraw[shaded] (0,1) circle (1mm);	
	\filldraw[shaded] (0,0) circle (1mm);
\end{tikzpicture} = 
\begin{tikzpicture}[baseline=-.1cm]
	\filldraw[shaded] (-.5,-.75) -- (0,0) -- (.5,-.75);
	\filldraw[shaded] (-.5,.75) -- (0,0) -- (.5,.75);
	\filldraw[fill=white,thick] (-.4,-.3) rectangle (.4,.3);
	\node at (0,0) {$\jw{2}$};
\end{tikzpicture}
,\hspace{.25in}
\begin{tikzpicture}[baseline=-.1cm]
	\draw[->] (0,0) -- (.25,.25);
	\draw (.25,.25) -- (.5,.5);
	\draw[->] (0,0) -- (-.25,.25);
	\draw (-.25,.25) -- (-.5,.5);
	\filldraw[shaded] (0,-.5) [out=180,in=225,looseness=2] to (0,0);
	\filldraw[shaded] (0,-.5) [out=0,in=315,looseness=2] to (0,0);
	\filldraw[shaded] (0,0) circle (1mm);
\end{tikzpicture}
= 0 =
\begin{tikzpicture}[baseline=-.1cm]
	\draw (0,0) -- (.25,.25);
	\draw[<-] (.25,.25) -- (.5,.5);
	\draw (0,0) -- (-.25,.25);
	\draw[<-] (-.25,.25) -- (-.5,.5);
	\filldraw[shaded] (0,-.5) [out=180,in=225,looseness=2] to (0,0);
	\filldraw[shaded] (0,-.5) [out=0,in=315,looseness=2] to (0,0);
	\filldraw[shaded] (0,0) circle (1mm);
\end{tikzpicture}
,\hspace{.25in}
\begin{tikzpicture}[baseline=-.1cm]
	\filldraw[shaded] (0,0) arc (0:360:.5);
\end{tikzpicture} = \sqrt{d^2+1}.
$$

In the case of $X \in \cE\cH_3$ the space $\Hom(X^{\otimes 3},\overline{X}^{\otimes 3})=\Hom(\overline{X} \oplus W, X \oplus W)$ is only $1$ dimensional, so unless the coefficient magically vanishes, we have a Yang-Baxter relation (in the sense of \cite{1507.06030}) saying the following diagrams are proportional (and the same if you reverse all arrows).

$$
\begin{tikzpicture}[baseline=-.1cm]
	\draw[mid>] (30:1) to (60:.5);
	\draw[mid>] (90:1) to (60:.5);
	\draw[mid>] (150:1) to (180:.5);
	\draw[mid>] (210:1) to (180:.5);
	\draw[mid>] (270:1) to (300:.5);
	\draw[mid>] (330:1) to (300:.5);
	\filldraw[shaded] (60:.5) -- (180:.5) -- (300:.5) -- (60:.5);
	\filldraw[shaded] (60:.5) circle (1mm);	
	\filldraw[shaded] (180:.5) circle (1mm);
	\filldraw[shaded] (300:.5) circle (1mm);
\end{tikzpicture} \; \sim \;
\begin{tikzpicture}[baseline=-.1cm]
	\draw[mid>] (30:1) to (0:.5);
	\draw[mid>] (90:1) to (120:.5);
	\draw[mid>] (150:1) to (120:.5);
	\draw[mid>] (210:1) to (240:.5);
	\draw[mid>] (270:1) to (240:.5);
	\draw[mid>] (330:1) to (0:.5);
	\filldraw[shaded] (0:.5) -- (120:.5) -- (240:.5) -- (0:.5);
	\filldraw[shaded] (0:.5) circle (1mm);	
	\filldraw[shaded] (120:.5) circle (1mm);
	\filldraw[shaded] (240:.5) circle (1mm);
\end{tikzpicture}
$$

Similarly (unless $\alpha$ is miraculously zero) there is a Yang-Baxter relation of the following form:
$$
\begin{tikzpicture}[baseline=-.1cm]
	\filldraw[shaded] (30:1) -- (60:.5) -- (90:1);
	\draw[mid>] (150:1) to (180:.5);
	\draw[mid>] (330:1) to (300:.5);
	\draw[mid>] (60:.5) -- (180:.5) ;
	\draw[mid>] (60:.5) -- (300:.5) ;
	\filldraw[shaded] (210:1) -- (180:.5) -- (300:.5) -- (270:1);
	\filldraw[shaded] (60:.5) circle (1mm);	
	\filldraw[shaded] (180:.5) circle (1mm);
	\filldraw[shaded] (300:.5) circle (1mm);
\end{tikzpicture}
= \alpha
\begin{tikzpicture}[baseline=-.1cm]
	\filldraw[shaded] (30:1) to (0:.5) to (120:.5) to (90:1);
	\draw[mid>] (150:1) to (120:.5);
	\filldraw[shaded] (210:1) to (240:.5) to (270:1);
	\draw[mid>] (330:1) to (0:.5);
	\draw[mid>] (120:.5) -- (240:.5);
	\draw[mid>] (0:.5) -- (240:.5);
	\filldraw[shaded] (0:.5) circle (1mm);	
	\filldraw[shaded] (120:.5) circle (1mm);
	\filldraw[shaded] (240:.5) circle (1mm);
\end{tikzpicture}
+ \beta
\begin{tikzpicture}[baseline=-.1cm]
	\filldraw[shaded] (30:1) to [out=210,in=270,looseness=2] (90:1);
	\draw[mid>] (150:1) to (0,0);
	\draw[mid>] (330:1) to (0,0);
	\filldraw[shaded] (210:1) to (0,0) to (270:1);
	\filldraw[shaded] (0,0) circle (1mm);	
\end{tikzpicture}
+ \gamma
\begin{tikzpicture}[baseline=-.1cm, rotate=180]
	\filldraw[shaded] (30:1) to [out=210,in=270,looseness=2] (90:1);
	\draw[mid>] (150:1) to (0,0);
	\draw[mid>] (330:1) to (0,0);
	\filldraw[shaded] (210:1) to (0,0) to (270:1);
	\filldraw[shaded] (0,0) circle (1mm);	
\end{tikzpicture}
$$

Unfortunately, these relations do not suffice to simplify all closed diagrams following \cite{1507.06030}, because several diagrams do not satisfy any Yang-Baxter relations.  Specifically the following two diagrams (together with their rotations, and switching the directions of all arrows) do not satisfy Yang-Baxter relations because there are no $4$-boxes with the appropriate shading/orientation on the other side.

$$
\begin{tikzpicture}[baseline=-.1cm]
	\draw[mid<] (30:1) to (60:.5);
	\draw[mid<] (90:1) to (60:.5);
	\draw[mid>] (150:1) to (180:.5);
	\draw[mid>] (210:1) to (180:.5);
	\draw[mid>] (270:1) to (300:.5);
	\draw[mid>] (330:1) to (300:.5);
	\filldraw[shaded] (60:.5) -- (180:.5) -- (300:.5) -- (60:.5);
	\filldraw[shaded] (60:.5) circle (1mm);	
	\filldraw[shaded] (180:.5) circle (1mm);
	\filldraw[shaded] (300:.5) circle (1mm);
\end{tikzpicture}
\; \text{ and } \;
\begin{tikzpicture}[baseline=-.1cm]
	\draw[mid<] (30:1) to (60:.5);
	\draw[mid<] (90:1) to (60:.5);
	\draw[mid>] (150:1) to (180:.5);
	\draw[mid>] (330:1) to (300:.5);
	\draw[mid>] (60:.5) -- (180:.5) ;
	\draw[mid>] (60:.5) -- (300:.5) ;
	\filldraw[shaded] (210:1) -- (180:.5) -- (300:.5) -- (270:1);
	\filldraw[fill=white] (60:.5) node [above left = -.15 and 0] {$\star$} circle (1mm);	
	\filldraw[shaded] (180:.5) circle (1mm);
	\filldraw[shaded] (300:.5) circle (1mm);
\end{tikzpicture}
.$$

There are closed diagrams which cannot be easily simplified using on the specific Yang-Baxter relations that we have.  The simplest such diagram we found involves $16$-vertices.  Nonetheless this approach seems promising, even if it's beyond current skein theory techniques.

\renewcommand*{\bibfont}{\small}
\setlength{\bibitemsep}{0pt}
\raggedright
\printbibliography

\end{document}